\theoremstyle{plain}
\newtheorem{Lemma}{Lemma}
\newtheorem{Thm}[Lemma]{Theorem}
\newtheorem{Cor}[Lemma]{Corollary}
\newtheorem{Prop}[Lemma]{Proposition}
\theoremstyle{definition}
\newtheorem{Example}[Lemma]{Example}
\newtheorem{Notation}[Lemma]{Notation}
\newtheorem{Defn}[Lemma]{Definition}
\theoremstyle{remark}
\newtheorem{Remark}[Lemma]{Remark}
\numberwithin{Lemma}{section}
\numberwithin{equation}{section}
\DeclareMathOperator{\uIsom}{\underline{Isom}}
\DeclareMathOperator{\uHom}{\underline{Hom}}
\DeclareMathOperator{\uH}{\underline{H}}
\DeclareMathOperator{\Image}{Im}
\DeclareMathOperator{\Ext}{Ext}
\DeclareMathOperator{\interior}{int}
\newcommand{\ww}{\mathbbm w}
\begin{document}

\title{Stratifications of Newton polygon strata and Traverso's conjectures for $p$-divisible groups}

\author{Eike Lau\footnote{elau@math.upb.de}, Marc-Hubert Nicole\footnote{nicole@iml.univ-mrs.fr} , Adrian
Vasiu\footnote{adrian@math.binghamton.edu}}

\maketitle

\centerline{\it dedicated to Thomas Zink, for his 60th anniversary}

\bigskip\medskip

\noindent{\scshape Abstract.\ } The isomorphism number (resp.\
isogeny cutoff) of a $p$-divisible group $D$ over an algebraically
closed field of characteristic $p$ is the least positive integer $m$ such that $D[p^m]$
determines $D$ up to isomorphism (resp.\ up to isogeny). We show
that these invariants are lower semicontinuous in families of
$p$-divisible groups of constant Newton polygon.
Thus they allow refinements of Newton polygon
strata. In each isogeny class of $p$-divisible groups, we
determine the maximal value of isogeny cutoffs and give an upper
bound for isomorphism numbers, which is shown to be optimal in the
isoclinic case. In particular, the latter
disproves a conjecture of Traverso. As an application, we answer a question of Zink on the liftability of an endomorphism of $D[p^m]$ to $D$.

\bigskip
\noindent{\scshape Key words:\ }
$p$-divisible groups, truncated Barsotti--Tate groups, displays, Dieudonn\'e modules, Newton polygons, and stratifications.

\bigskip
\noindent{\scshape MSC 2000:\ }
11E57, 11G10, 11G18, 11G25, 14F30, 14G35, 14L05,
14L15, 14L30, 14R20, and 20G25.


\section{Introduction}

Let $k$ be an algebraically closed field of positive
characteristic $p$. Let $D$ be a $p$-divisible group over $k$.
It is well-known that $D$ is determined by some
finite truncation $D[p^m]$ of sufficiently large level $m$.
This allows to associate to $D$ two numerical invariants:
The {\em isomorphism number} $n_D$ is the least
level $m$ such that $D[p^m]$ determines $D$ up
to isomorphism, and the {\em isogeny cutoff}\/ $b_D$ is
the least level $m$ such that $D[p^m]$ determines
$D$ up to isogeny.
\footnote{This differs from the definitions in \cite{NV2}
and \cite{Va3}, which give $n_D=b_D=0$ if either $D$ or its 
dual is \'etale, while the definitions we use here give
always $n_D,b_D\geq 1$.}

In this paper we study how these invariants of $D$ behave
in families and how large they can get. As it turns out,
the following distance function on isogeny classes of
$p$-divisible groups has closely related properties. The {\it distance} $q_{D,E}$ between two $p$-divisible groups $D$ and $E$ over $k$ is the minimal non-negative integer $m$ such that
there exists an isogeny $D\to E$ with kernel annihilated by $p^m$,
while $q_{D,E}=\infty$ if no such $m$ exists. The {\it minimal height}
of $D$ is defined to be $q_D=q_{D,D_0}$ where $D_0$ is the unique
(up to isomorphism) minimal $p$-divisible group in the isogeny class of $D$. We recall
that a minimal $p$-divisible group is characterized by its isomorphism number being $1$. 

The numbers $b_D$, $n_D$, $q_D$, and $q_{D,E}$ are invariant under extensions of the algebraically closed base field $k$; see Lemma \ref{Le-perm} and Corollary \ref{Co-perm}.
For a $p$-divisible group $\Delta$ over an arbitrary field $\kappa$ of characteristic $p$ we define $b_\Delta=b_{\Delta_{\bar\kappa}}$, $n_\Delta=n_{\Delta_{\bar\kappa}}$, etc., where $\bar\kappa$ is an algebraic closure of $\kappa$.


\subsection*{Families}

If $\scrD$ is a $p$-divisible group over an $\FF_p$-scheme $S$,
for each point $s\in S$ we denote by $b_\scrD(s)$, $n_\scrD(s)$,
and $q_\scrD(s)$ the isogeny cutoff, the isomorphism number, and
the minimal height (respectively) of the geometric fibre $\scrD_{\bar s}$ of $\scrD$ over $s$. If $\scrE$ is another $p$-divisible group over $S$ we write
$q_{\scrD,\scrE}(s)=q_{\scrD_{\bar s},\scrE_{\bar s}}$. 

\begin{Thm}
\label{Th-semicont}
Let $\scrD$ and $\scrE$ be two $p$-divisible groups with constant
Newton polygon over an $\FF_p$-scheme $S$ and let $m$ be
a non-negative integer. 

\smallskip
(a) The set $U_{b_\scrD}=\{s\in S\mid b_\scrD(s)\leq m\}$ is
closed in $S$.

\smallskip
(b) The set $U_{n_\scrD}=\{s\in S\mid n_\scrD(s)\leq m\}$ is
closed in $S$.

\smallskip
(c) The set $U_{q_\scrD}=\{s\in S\mid q_\scrD(s)\leq m\}$ is
closed in $S$.

\smallskip
(d) The set $U_{q_{\scrD, \scrE}}=\{s\in S\mid
q_{\scrD,\scrE}(s)\leq m\}$ is closed in $S$.
\end{Thm}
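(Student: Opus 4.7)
The plan is to derive all four closedness statements from a common extension principle for homomorphisms of $p$-divisible groups in families of constant Newton polygon, combined with a reduction to specialization along a discrete valuation ring. Concretely, I would reduce to the case $S=\mathrm{Spec}(R)$ where $R$ is a complete discrete valuation ring with algebraically closed residue field, since closedness of a subset of $S$ is detected by all such specializations and the invariants $b_\scrD$, $n_\scrD$, $q_\scrD$, $q_{\scrD,\scrE}$ are geometric. Writing $\eta$ for the generic and $s$ for the closed point, the question in each of (a)--(d) reduces to showing that if the relevant invariant is $\leq m$ at $\eta$, then it is also $\leq m$ at $s$.

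I would treat (d) as the cornerstone. The hypothesis $q_{\scrD,\scrE}(\eta)\leq m$ furnishes homomorphisms $f_\eta\colon \scrD_\eta\to\scrE_\eta$ and $g_\eta\colon\scrE_\eta\to\scrD_\eta$ with $g_\eta\circ f_\eta=p^m\cdot\mathrm{id}_{\scrD_\eta}$ and $f_\eta\circ g_\eta=p^m\cdot\mathrm{id}_{\scrE_\eta}$. The technical heart is to extend $f_\eta$ and $g_\eta$ over all of $R$: I would obtain this from a rigidity property of $\uHom(\scrD,\scrE)$ for families of constant Newton polygon, which in equicharacteristic $p$ one can access through the Dieudonn\'e display theory of Zink and Lau together with the fact that homomorphisms between $F$-isocrystals sharing Newton slopes are rigid. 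The extensions $\tilde f,\tilde g$ automatically satisfy $\tilde g\tilde f=p^m$ and $\tilde f\tilde g=p^m$ because $\uHom(\scrD,\scrE)$ is separated over $S$ and the equalities already hold on the schematically dense open $\{\eta\}$. Restriction to $s$ then produces the desired isogeny $\scrD_s\to\scrE_s$ with kernel killed by $p^m$. Part (c) is the special case of (d) with $\scrE=(D_0)_S$, where $D_0$ is the minimal $p$-divisible group in the (constant) isogeny class of the fibres of $\scrD$.

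For (a) and (b), I would recast $b_\scrD(s)\leq m$ and $n_\scrD(s)\leq m$ in terms of $q$-distances, respectively isomorphisms, to the auxiliary $p$-divisible groups $E$ over the residue field of $s$ satisfying $E[p^m]\cong\scrD_s[p^m]$. Such $E$ vary in a quasi-compact moduli space of truncated Barsotti--Tate groups of level $m$, so combining $\scrD$ with a universal family over $S$ times this moduli, and applying (d) to the resulting pair of $p$-divisible groups on the enlarged base, should deliver (a) and (b). The main obstacle is twofold: establishing the extension of homomorphisms across a constant Newton polygon degeneration in characteristic $p$, which is the technical core of the argument; and, for (a) and (b), verifying that the reduction to (d) is legitimate in view of the fact that the truncated BT type $\scrD_s[p^m]$ may itself vary with $s$ even when the Newton polygon is constant. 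Once both points are handled, the remaining passage from generic to special fibre is formal.
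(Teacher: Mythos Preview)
Your overall architecture has the right shape---reduce to specialization, prove ``going down'' over a DVR, and deduce (c) from (d)---but there are two genuine gaps.

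First, the reduction step is incomplete. Stability under specializations along complete DVRs does \emph{not} by itself imply closedness of a subset of an arbitrary $\FF_p$-scheme $S$; you also need the subset to be constructible (or at least pro-constructible). The paper handles this by passing through truncated BT groups: the sets $U_{b_\scrB}$, $U_{n_\scrB}$, $U_{q_{\scrB,\scrC}}$ for fixed-level truncations are shown to be constructible via Chevalley's theorem applied to suitable finite-type $\uHom$ and $\uIsom$ schemes (Propositions \ref{Pr-constr-isom}, \ref{Pr-constr-isog}, \ref{Pr-constr-q-trunc}), and only then is specialization-stability invoked. Your proposal contains no constructibility argument.

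Second, and more seriously, your reduction of (a) to (d) does not work. To apply (d) to the pair $(\scrD,\scrE)$ on your enlarged base you need $\scrE$ to have constant Newton polygon. But the auxiliary family of $p$-divisible groups $E$ with $E[p^m]\cong\scrD_s[p^m]$ has constant Newton polygon over the fibre at $s$ \emph{precisely when} $b_\scrD(s)\leq m$, which is what you are trying to prove. The paper's going-down argument for $b$ (Theorem \ref{Th-going-down-b}) is genuinely different: it builds an auxiliary display-theoretic family over a vector bundle (Lemma \ref{Le-aux-fam}), then uses \emph{purity of the Newton stratification} together with a maximality trick to force constant Newton polygon on the whole family. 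Purity is an essential ingredient you have not mentioned. Once (a) is in hand, (b) does reduce to (d) in the way you suggest, as in Theorem \ref{Th-going-down-n}.

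Your treatment of (d) itself is closer to correct: the paper proves going down for $q_{\scrD,\scrE}$ over $k[[t]]$ by invoking the Oort--Zink constancy theorem (or equivalently Katz's slope filtration over perfect rings) to replace $\scrD$ and $\scrE$ by constant groups up to isogeny, after which extension of the quasi-isogeny from the generic fibre to $\bar R$ is automatic because $\Hom(D,E)=\Hom(D_{\bar K},E_{\bar K})$. Your appeal to ``rigidity of $F$-isocrystals'' points in this direction but is not yet a proof; note in particular that the argument takes place over the non-noetherian ring $\bar R$, not over $R$ itself.
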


\noindent 
It follows that $S$ carries three natural
stratifications into a finite number of reduced locally closed
subschemes associated to $\scrD$: The strata of the {$b$-stratification} are the loci where the function
$b_\scrD:S\to\NN$ is constant; the {$n$-stratification} and the {$q$-stratification} are defined similarly.

Let us repeat in words part (d) for $m=0$: {\it The set of points
of $S$ over which the geometric fibres of $\scrD$ and $\scrE$ are
isomorphic, is a closed subset of $S$}. When either $\scrD$ or $\scrE$ is a constant $p$-divisible group, this is \cite[Thm.~2.2]{Oo2}.

The main step in the proof of Theorem \ref{Th-semicont} is the
following going down principle: {\em For $p$-divisible groups
over $k[[t]]$ with constant Newton polygon, the numbers $b_D$,
$n_D$, $q_D$, and $q_{D,E}$ go down under specialization}
(see Theorems \ref{Th-going-down-q}, \ref{Th-going-down-b},
\ref{Th-going-down-n}, and Corollary \ref{Co-going-down-q}).
To deduce the general case of Theorem \ref{Th-semicont} we pass through a truncated variant
of Theorem \ref{Th-semicont} presented in Theorems \ref{Th-semicont-trunc} and \ref{Th-semicont-trunc-q}; this allows us to use
the fact that truncated Barsotti--Tate groups have parameter spaces
(locally) of finite type. Theorem \ref{Th-semicont} is proved at the end of Subsection \ref{Se-pf-semicont}.

\subsection*{Explicit upper bounds}

In the following we fix a non-trivial $p$-divisible group $D$ 
over $k$ of dimension $d$, codimension $c$, and Newton polygon
$\nu:[0,c+d]\to\RR$. 
We are interested in upper bounds of $b_D$
and $n_D$ in terms of either $\nu$ or $c$ and $d$. We recall that
the $a$-number of $D$ is $a_D=\dim_k(\Hom(\pmb{\alpha}_p,D))$. 
Let us begin with the isogeny cutoff $b_D$ and let
$$
j(\nu)=
\begin{cases}
\nu(c)+1 & \text{if $(c,\nu(c))$ is a breakpoint of $\nu$,} \\
\lceil\nu(c)\rceil & \text{otherwise.}
\end{cases}
$$
Here endpoints of $\nu$ are considered as breakpoints.

\begin{Thm}
\label{Th-b-maximal}
We have $b_D\leq j(\nu)$ with equality when $a_D\leq 1$.
\end{Thm}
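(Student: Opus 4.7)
The plan is to pass to the world of contravariant Dieudonn\'e modules. Let $(M,F)$ denote the Dieudonn\'e module of $D$ over $W(k)$; it is free of rank $c+d$. Since $k$ is algebraically closed, the Dieudonn\'e--Manin classification equates isogeny with coincidence of Newton polygons, so the inequality $b_D\leq j(\nu)$ becomes: whenever $(M',F')$ is a Dieudonn\'e module with $(M'/p^{j(\nu)}M',F')\cong (M/p^{j(\nu)}M,F)$, the Newton polygon of $M'$ equals $\nu$.

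For the upper bound I would invoke Mazur's theorem together with the Hodge--Newton theory of Katz to show that $\nu(c)$ is already determined by the truncation $M/p^{j(\nu)}M$. Since the Hodge polygon of $M$ starts on the $x$-axis for $0\leq x\leq c$, the quantity $\nu(c)$ measures the precise gap between the Newton and Hodge polygons at abscissa $c$; concretely, it is $\mathrm{ord}_p\det(F|N)$ for a canonical rank-$c$ subquotient $N\subset M$ attached to the slope filtration. The shape of the definition of $j(\nu)$ is engineered so that this valuation is visible modulo $p^{j(\nu)}$: when $(c,\nu(c))$ is a breakpoint, one extra unit of $p$-adic precision is required to certify that the slopes to the right strictly exceed those to the left, giving the value $\nu(c)+1$, whereas otherwise $\lceil\nu(c)\rceil$ units already suffice. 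With the endpoints $(0,0)$ and $(c+d,d)$ fixed by the height and dimension, and convexity forced, pinning down $\nu'(c)=\nu(c)$ at this key abscissa is enough; iterating the argument via the slope-filtration pieces (or, equivalently, invoking Cartier duality to handle the positions on either side of $c$) yields $\nu'=\nu$ throughout.

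For the equality $b_D=j(\nu)$ when $a_D\leq 1$, I would exhibit a $p$-divisible group $D'$ with Newton polygon different from $\nu$ but with $D'[p^{j(\nu)-1}]\cong D[p^{j(\nu)-1}]$. The hypothesis $a_D\leq 1$, meaning $\dim(M/(F,V)M)\leq 1$, forces $M$ to be essentially cyclic over the Dieudonn\'e ring $W(k)[F,V]$. This leaves one controlled direction in which $F$ can be perturbed by an element of $p^{j(\nu)-1}\operatorname{End}(M)$; such a perturbation may be engineered to move one slope across the abscissa $c$, thereby changing the Newton polygon by exactly the increment that the threshold $j(\nu)$ was designed to detect.

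The main obstacle will be the explicit construction in the equality case: one must simultaneously verify that the perturbed lattice is of Barsotti--Tate type (and hence really comes from a $p$-divisible group), that the level-$(j(\nu)-1)$ truncation is genuinely preserved, and that the Newton polygon moves in the intended way. The hypothesis $a_D\leq 1$ is precisely what makes these three requirements compatible, and the bulk of the work will consist of finding the right basis-level description of $(M,F)$ in which all three can be checked at once.
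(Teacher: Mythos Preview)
Your proposal has a genuine gap in the upper-bound direction. The sketch via ``Mazur's theorem together with the Hodge--Newton theory of Katz'' is not an argument: there is no canonical rank-$c$ subquotient of $M$ on which $\mathrm{ord}_p\det(F)$ returns $\nu(c)$ and is readable from a finite truncation, and even granting such a gadget, the claim that pinning down $\nu'(c)=\nu(c)$ together with convexity and endpoints forces $\nu'=\nu$ is simply false---two distinct Newton polygons with the same endpoints can agree at the abscissa $c$. Your escape hatch, ``iterating the argument via the slope-filtration pieces,'' is circular: the slope filtration depends on the Newton polygon you are trying to determine, and its pieces are not visible in a fixed finite truncation.

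What the paper actually does, and what your plan is missing, is a \emph{reduction to the case $a_D=1$ for the upper bound as well}, not only for the equality. This reduction is nontrivial: it combines Oort's result that every $D$ is a specialisation within its Newton stratum of a $p$-divisible group with $a$-number $\le 1$, together with the going-down principle $b_{\scrD_k}\le b_{\scrD_{\bar K}}$ for families over $k[[t]]$ with constant Newton polygon (one of the main results of the paper). Once $a_D=1$, one has the cyclic presentation $M\cong\EE/\EE\Psi$ with $\Psi=\sum a_iF^{c-i}+\sum b_iV^i$, and the Newton polygon of $M$ is literally the Newton polygon of $\Psi$ (shifted). Then $D'[p^{j(\nu)}]\cong D[p^{j(\nu)}]$ translates into $\Psi'-\Psi\in p^{j(\nu)}\EE$, and an elementary coefficient-by-coefficient check shows the Newton polygon of $\Psi$ is unchanged under such a perturbation---the definition of $j(\nu)$ is calibrated exactly for this check. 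Your intuition for the equality case (perturb in the one cyclic direction) is correct in spirit, and the paper carries it out by modifying $\Psi$ rather than $F$: take $\Psi'=\Psi+p^{j(\nu)-1}$, or $\Psi'=\Psi-a_c$ when $(c,\nu(c))$ is a breakpoint, and verify the polygon moves.
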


\noindent 
This is proved in Section \ref{Se-val-isog}. Note that
$\nu(c)\leq cd/(c+d)$ with equality precisely when $\nu$ is
linear. Thus Theorem \ref{Th-b-maximal} refines the {Traverso
isogeny conjecture} which is proved in \cite{NV2} and which asserts that $b_D\leq\lceil cd/(c+d)\rceil$ when $cd>0$ with equality for some $D$ of dimension $d$ and codimension $c$.

We have a similar upper bound for isomorphism numbers.

\begin{Thm}
\label{Th-n-upper}
If $D$ is not ordinary, then $n_D\leq\lfloor 2\nu(c)\rfloor$.
\end{Thm}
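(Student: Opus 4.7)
The plan is to reduce the isomorphism-number bound to a minimal-height bound, via an analysis of the Dieudonn\'e lattice chain relating $D$ to the minimal $p$-divisible group $D_0$ in its isogeny class.

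\textbf{Step 1.} First I would bound the minimal height $q_D$ in terms of $\nu(c)$, adapting the Dieudonn\'e-theoretic linear algebra that underlies the proof of Theorem \ref{Th-b-maximal}. The expected bound is $q_D \leq \nu(c)$ (up to a floor adjustment in the non-breakpoint case), which produces an isogeny $\psi : D \to D_0$ whose kernel is contained in $D[p^{q_D}]$. Equivalently, writing $M$ and $M_0$ for the contravariant Dieudonn\'e modules of $D$ and $D_0$, one obtains a chain $p^{q_D} M \subseteq M_0 \subseteq M$ inside $M[1/p]$.

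\textbf{Step 2.} Next I would establish the inequality $n_D \leq 2q_D$. From $D[p^{2q_D}]$ one recovers $M/p^{2q_D}M$ as a Dieudonn\'e module. Since $p^{2q_D}M \subseteq p^{q_D}M_0 \subseteq M_0$, the image of $M_0$ inside $M/p^{2q_D}M$ identifies canonically as a sub-Dieudonn\'e lattice, characterized as the unique sublattice of the form prescribed by the classification of minimal Dieudonn\'e modules (Kraft--Oort--Zink). Because $D_0$ is minimal ($n_{D_0}=1$), the truncation $M_0/p^{q_D}M_0$ visible inside the recovered image suffices to pin down $M_0$ up to isomorphism. The position of $M$ above $M_0$ is encoded by the embedding $M/M_0 \hookrightarrow p^{-q_D}M_0/M_0 \simeq M_0/p^{q_D}M_0$, which is also readable off $M/p^{2q_D}M$. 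Together these data determine $M$ up to isomorphism, giving $n_D \leq 2q_D$.

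Combining Steps 1 and 2 yields $n_D \leq 2\nu(c)$, and since $n_D$ is an integer we conclude $n_D \leq \lfloor 2\nu(c)\rfloor$. The non-ordinary hypothesis enters only to ensure $\nu(c)>0$, so that the conclusion is compatible with the convention $n_D \geq 1$.

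The main obstacle is the sharpness of Step 2. A naive reconstruction argument typically loses one level of truncation when transferring the $F$ and $V$ structure across successive quotients, and closing this gap is what requires the precise classification of minimal Dieudonn\'e modules as direct sums of cyclic pieces indexed by simple slopes, together with the uniqueness of the minimal sublattice in its isomorphism class inside $M/p^{2q_D}M$. A secondary difficulty is Step 1, where translating the isogeny-cutoff bound of Theorem \ref{Th-b-maximal} into a minimal-height bound must respect the breakpoint versus non-breakpoint dichotomy encoded in $j(\nu)$; the breakpoint case is the harder one and is likely handled by exploiting that the slope filtration splits cleanly across a breakpoint, which is what accounts for the floor function in the final bound.
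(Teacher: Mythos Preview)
Your Step 2 inequality $n_D\le 2q_D$ is false as stated: for any non-ordinary minimal $D_0$ one has $q_{D_0}=0$ but $n_{D_0}=1$. The argument you sketch---locating the image of a minimal lattice inside $M/p^{2q_D}M$ and reconstructing $M$ from it---does not repair this, because the reconstruction of the overlattice from the truncated data genuinely costs one extra level. The correct general inequality obtained by the isogeny-jump method is $n_D\le 1+2q_D$ (each isogeny with kernel killed by $p$ changes $n_D$ by at most $2$, and $n_{D_0}=1$), which combined with Step~1 yields only $n_D\le 1+2\lfloor\nu(c)\rfloor$. This equals $\lfloor 2\nu(c)\rfloor$ when the latter is odd, but is off by $1$ when it is even; the paper records exactly this in Remark~\ref{Re-n-upper}.

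The paper's proof takes a different route that avoids the lost level. It invokes the Gabber--Vasiu characterisation (Theorem~\ref{Th-f-n}): for non-ordinary $D$, the isomorphism number $n_D$ equals the coarse endomorphism number $f_D$, i.e.\ the minimal $m$ such that the restriction $\End(D[p^{m+1}])\to\End(D[p])$ has finite image. One then bounds $f_D$ directly. After reducing to $a_D=1$ via the going-down principle (Theorem~\ref{Th-semicont-f}), the bound $f_D\le 2\nu(c)$ is read off the $F$-valuation filtration on the cyclic Dieudonn\'e module: the key input is Proposition~\ref{Pr-fil}, which says $N^{\beta+}\subseteq pM$, together with the finite-kernel statement of Lemma~\ref{Le-Phi}. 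No passage through $D_0$ or through an isogeny chain is needed, and this is precisely what buys the sharp floor $\lfloor 2\nu(c)\rfloor$ rather than $1+2\lfloor\nu(c)\rfloor$.
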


\noindent 
If $D$ is ordinary then $n_D=1$ and $\nu(c)=0$.
We refer to Corollary \ref{Co-fD-upper} for the proof of Theorem \ref{Th-n-upper}.
We expect that this upper bound of $n_D$ is optimal for every
Newton polygon
$\nu$, but in this paper this is proved only if $\nu$ is linear
i.e., if $D$ is isoclinic (see Proposition \ref{Pr-n-isocl}).
We thus conclude that:

\begin{Cor}
\label{Co-n-upper}
Assume that $cd>0$.
Then $n_D\leq\lfloor 2cd/(c+d)\rfloor$
with equality for some (isoclinic) $p$-divisible
group $D$ of dimension $d$ and codimension $c$.
\end{Cor}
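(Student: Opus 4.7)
The plan is to derive Corollary \ref{Co-n-upper} by combining Theorem \ref{Th-n-upper} with an elementary convexity estimate on the Newton polygon $\nu$, and then to invoke Proposition \ref{Pr-n-isocl} for the optimality claim.

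For the upper bound when $D$ is not ordinary, I would start from the bound $n_D \leq \lfloor 2\nu(c)\rfloor$ provided by Theorem \ref{Th-n-upper}. Since $\nu : [0,c+d] \to \RR$ is convex with $\nu(0)=0$ and $\nu(c+d)=d$, the chord inequality at $x=c$ gives $\nu(c) \leq cd/(c+d)$, with equality if and only if $\nu$ is linear, i.e., $D$ is isoclinic of slope $d/(c+d)$. This at once yields $n_D \leq \lfloor 2cd/(c+d)\rfloor$. The ordinary case is not covered by Theorem \ref{Th-n-upper} and must be handled separately: if $D$ is ordinary with $cd>0$, then $n_D=1$, and the identity $2cd-(c+d)=c(d-1)+d(c-1)\geq 0$ for $c,d\geq 1$ gives $\lfloor 2cd/(c+d)\rfloor \geq 1 = n_D$, so the bound still holds.

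For the optimality claim I would directly cite Proposition \ref{Pr-n-isocl}, which produces, for given $c,d\geq 1$, an isoclinic $p$-divisible group $D$ of dimension $d$ and codimension $c$ attaining $n_D=\lfloor 2cd/(c+d)\rfloor$. I do not expect any real obstacle here: the substantive work is entirely contained in Theorem \ref{Th-n-upper} and Proposition \ref{Pr-n-isocl}, and the corollary is essentially a packaging of those two results with the elementary chord estimate on the convex Newton polygon, together with the routine verification of the bound in the ordinary case.
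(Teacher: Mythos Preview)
Your proposal is correct and matches the paper's approach exactly: the corollary is derived from Theorem~\ref{Th-n-upper} together with the elementary inequality $\nu(c)\leq cd/(c+d)$ (stated explicitly in the paper just after Theorem~\ref{Th-b-maximal}), and optimality is obtained from Proposition~\ref{Pr-n-isocl}. Your separate verification of the ordinary case is a small elaboration that the paper leaves implicit.
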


Our search for upper bounds of $n_D$ was guided by the Traverso
truncation conjecture \cite[\S 40, Conj.~4]{Tr3} which predicts
that $n_D\leq\min\{c,d\}$ if $cd>0$. This estimate is 
well-known if $\min\{c,d\}=1$. It is verified
for supersingular $p$-divisible groups in \cite[Thm.~1.2]
{NV1} and for quasi-special $p$-divisible groups in \cite[Thm.~1.5.2]{Va3}; for $|d-c|\le 2$ it follows indeed from Corollary \ref{Co-n-upper}.
But Corollary \ref{Co-n-upper} also shows that the original 
conjecture is
wrong in general, even for isoclinic $p$-divisible groups; the first counterexamples show up when $\{c,d\}=\{2,6\}$.
For a fixed positive value of $t=\min\{c,d\}$,
the natural number $\lfloor 2cd/(c+d)\rfloor$ can be any integer
in the interval $[t,2t-1]$.

Quantitative upper bounds of $n_D$ in terms of $c$ and $d$ can be
traced back to \cite[Thm.~3]{Tr1}, where the inequality $n_D\leq
cd+1$ is established. A weaker upper bound with a simpler proof can be
found in \cite[Thm.~1]{Tr2}. More recently, \cite[Cor.~1]{GV} shows
that $n_D\le cd$ if $cd>0$ and that $n_D\le cd+1-a_D^2$ if $D$ is not ordinary.

A key result of \cite{GV} characterizes $n_D$ as the minimal positive integer such that the truncation homomorphism 
$\End(D[p^{n_D+1}])\to\End(D[p])$ has finite image, cf. \cite[Cor.~2 (b)]{GV}.
Using this characterization, we prove Theorem \ref{Th-n-upper}
in the case $a_D=1$ by a detailed analysis of Dieudonn\'e
modules over $k$. The general case of Theorem \ref{Th-n-upper} follows
by the going down principle, using the fact that $D$ is the specialization
of a $p$-divisible group over $k((t))$ with $a$-number at
most one and with Newton polygon $\nu$ by \cite
{Oo1}.

Assume that $D$ is equipped with a principal quasi-polarisation 
$\lambda$; thus $c=d>0$. The isomorphism number $n_{D,\lambda}$ 
is the least level $m$ such that $(D[p^m],\lambda[p^m])$ 
determines $(D,\lambda)$ up to isomorphism. Based on
\cite{GV}, \cite{NV1}, and Theorem \ref{Th-n-upper} we prove in Subsection
\ref{Se-p.q.p.} that $n_{D,\lambda}\le d$;
this bound is optimal.

\subsection*{Relation with minimal heights}

Another approach to bound $n_D$ and $b_D$ from above is based on
the fact that an isogeny of $D$ with kernel annihilated by $p$
changes $b_D$ at most by one and $n_D$ at most by two, see Lemma
\ref{Le-b-change} and Proposition \ref{Pr-f-change}. It turns 
out that the results on Dieudonn\'e modules used in the proof 
of Theorem \ref{Th-n-upper}
also give the following upper bound of minimal heights (see
end of Subsection \ref{Subsec-min-mod}):

\begin{Thm}
\label{Th-min-isog}
We have $q_D\leq\lfloor\nu(c)\rfloor$ with equality when $a_D=1$.
In other words, if $D_0$ is the unique (up to isomorphism)
minimal $p$-divisible group over $k$ of Newton polygon $\nu$,
then there exists an isogeny $D\to D_0$ whose kernel is
annihilated by $p^{\lfloor\nu(c)\rfloor}$, and this exponent
is optimal when $a_D=1$.
\end{Thm}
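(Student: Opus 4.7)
The plan is to derive Theorem~\ref{Th-min-isog} from the Dieudonn\'e-module computations underlying Theorem~\ref{Th-n-upper}, combined with the going-down principle for $q_D$. First I would reduce the upper bound $q_D\leq\lfloor\nu(c)\rfloor$ to the case $a_D=1$: by \cite{Oo1}, $D$ is the specialisation of some $p$-divisible group over $k((t))$ of Newton polygon $\nu$ whose generic $a$-number equals~$1$, and the going-down principle for the minimal height (Theorem~\ref{Th-going-down-q}) then transports the desired bound from the generic fibre to $D$. So it is enough to prove the bound when $a_D=1$.

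In this case the Dieudonn\'e module $M$ of $D$ is cyclic over the Dieudonn\'e ring, so $M$ admits a $W(k)$-basis obtained by iterating $F$ on a single generator. On the other hand, the minimal $p$-divisible group $D_0$ in the isogeny class of $D$ has a Dieudonn\'e module $M_0$ which decomposes as a direct sum of the elementary isoclinic Dieudonn\'e modules associated with the segments of $\nu$. Since $D$ and $D_0$ are isogenous, $M$ and $M_0$ can be realised as lattices in a common rational Dieudonn\'e module. I would write down an embedding $M_0\hookrightarrow M$ by expressing the standard generators of each elementary summand of $M_0$ as explicit $W(k)$-combinations of the cyclic basis of $M$, arranged so that $p^{\lfloor\nu(c)\rfloor}M\subseteq M_0$. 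The value $\nu(c)$ arises naturally as the total $p$-adic depth, summed across all slopes, needed to pass from a cyclic Dieudonn\'e module to the minimal one; the resulting inclusion yields the desired isogeny $D\to D_0$ with kernel annihilated by $p^{\lfloor\nu(c)\rfloor}$.

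For the optimality when $a_D=1$, I would show that no Dieudonn\'e sublattice of $M$ isomorphic to $M_0$ can satisfy $p^m M\subseteq M_0$ with $m<\lfloor\nu(c)\rfloor$: the cyclic generator of $M$ must project non-trivially onto each isoclinic summand of $M_0$, and the $p$-adic valuations of these projections are pinned down by the slope data of $\nu$, so any shallower embedding would contradict the breakpoint structure of $\nu$ at the codimension $c$. The main obstacle is precisely this explicit construction together with the complementary valuation lower bound: both amount to careful $p$-adic bookkeeping across the slope decomposition of $D_0$ inside the cyclic basis of $M$, which is the technical heart of Subsection~\ref{Subsec-min-mod}. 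Once these two points are established for $a_D=1$, the going-down step is purely formal.
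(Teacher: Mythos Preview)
Your plan is sound and would succeed, but the reduction to $a_D=1$ differs from the paper's. You propose to deform $D$ to a group with $a$-number~$1$ via \cite{Oo1} and invoke the going-down principle for $q_D$ (Corollary~\ref{Co-going-down-q}). The paper instead argues entirely inside Dieudonn\'e theory: for arbitrary $M$ and each $x\in M$, the cyclic submodule $M'=\EE x$ has $a(M')=1$, so the $a=1$ case gives $p^{\lfloor\nu'(c')\rfloor}x\in M'_-$; then $M'_-\subseteq M_-$ by the functoriality of $M_-$ (Lemma~\ref{Le-min-funct}) and $\nu'(c')\le\nu(c)$ finish the bound (see Corollary~\ref{Co-max-min}). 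Your route is shorter once the going-down machinery is in place; the paper's route is self-contained in Section~\ref{Complem} and yields the slightly finer statement that the common $p$-exponent of $M_+/M$, $M/M_-$, and $M_+/M_-$ is bounded by $\lfloor\nu(c)\rfloor$.

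For the $a_D=1$ case itself, the paper does not build an ad hoc embedding of the standard minimal module into $M$. It introduces the extremal minimal lattices $M_-\subseteq M\subseteq M_+$, identifies them valuation-theoretically as $M_+=N^{\underline 0}$ and $M_-=p^{-1}N^{\beta+}$ (Theorem~\ref{Th-max-min}), and then reads off the $p$-exponent via Lemma~\ref{Le-p-exp}. The crucial nontrivial step---that $N^{\beta+}\subseteq pM$ (Proposition~\ref{Pr-fil}), equivalently $N^{\beta+}=\tilde N^{\beta+}$---is exactly the ``$p$-adic bookkeeping across the slope decomposition'' you allude to. Your optimality sketch (projections of the cyclic generator onto the isoclinic pieces have valuations dictated by $\nu$) is morally the same computation; note only that since $q_{D,D_0}=q_{D_0,D}$, working with sublattices $M_0\subseteq M$ is legitimate, and the paper's Lemma~\ref{Le-three-p-exp} makes the equivalence of the two directions explicit.
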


By the new approach, this result gives the inequalities 
$b_D\leq 1+\lfloor\nu(c)\rfloor$ and $n_D\leq 1+2\lfloor\nu(c)\rfloor$ because $b_{D_0}=n_{D_0}=1$.
The first estimate coincides
with the upper bound in Theorem \ref{Th-b-maximal}
except when $\nu(c)$ is an integer and $\nu$ is linear at $c$,
in which case it is off by $1$. 
When $\lfloor 2\nu(c)\rfloor$ is odd, the second estimate is precisely 
Theorem \ref{Th-n-upper}, while it is again off by $1$ otherwise. 

We remark that the existence of some upper bound of $q_D$ is
already proved in \cite[p.~44]{Ma}; see also \cite[p.~270]{Oo2}.
The supersingular case of Theorem \ref{Th-min-isog} follows from \cite[Rmk.~2.6 and Cor.~3.2]{NV1}.

\subsection*{Lifting of endomorphisms}

We apply the explicit upper bounds of $n_D$ to the lifting
of endomorphisms of truncations of $D$ and to the 
level torsion $\ell_D$ of $D$ defined in \cite{Va3}.

There exists a non-negative integer $e_D$,
which we call the \emph{endomorphism number} of $D$,
characterized by the following property:
For positive integers $m\geq n$, the two restriction homomorphisms 
$$
\End(D)\xrightarrow{\tau_{\infty,n}}
\End(D[p^n])\xleftarrow{\tau_{m,n}}\End(D[p^m])
$$
have equal images if and only if $m\geq n+e_D$. In other words:
An endomorphism of $D[p^n]$ lifts to an endomorphism of $D$
if and only if it lifts to an endomorphism of $D[p^{n+e_D}]$,
and $e_D$ is minimal with this property for each positive 
integer $n$ separately; see Lemma \ref{Le-eDE}.

Let $(M,F,V)$ be the covariant Dieudonn\'e module of $D$.
In Subsection \ref{onell} we recall the definition of the \emph{level module} 
$O\subseteq\End_{W(k)}(M)$ introduced in \cite{Va3}. The \emph{level torsion} $\ell_D$ is
the smallest non-negative integer such that $p^{\ell_D}\End_{W(k)}(M)\subseteq O$.
\footnote{This differs from the definition in \cite{Va3},
which gives $\ell_D=1$ if $D$ is ordinary and $cd>0$, while the definition 
we use here gives $\ell_D=0$ when $D$ is ordinary.}

These new invariants are related to $n_D$ 
as follows (see Subsection \ref{Se-conclusions}):

\begin{Thm}
\label{Th-n-ell-e}
If $D$ is a non-ordinary $p$-divisible group over an algebraically
closed field $k$, then we have \
$
n_D=\ell_D=e_D.
$
\end{Thm}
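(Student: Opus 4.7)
The plan is to prove the two equalities $n_D = e_D$ and $e_D = \ell_D$ separately, each by translating the invariant into a statement about the images of the truncation maps $\End(D)\to\End(D[p^n])$ and $\End(D[p^m])\to\End(D[p^n])$.

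For the equality $n_D = e_D$: this is essentially a reformulation of the characterisation of $n_D$ from \cite{GV} recalled in the introduction, namely that $n_D$ is the least positive integer $m$ for which $\Image(\End(D[p^{m+1}]) \to \End(D[p]))$ coincides with $\Image(\End(D) \to \End(D[p]))$. By the defining property of $e_D$ specialised to $n=1$, this same least $m$ is exactly $e_D$, while Lemma~\ref{Le-eDE} ensures that the single level $n=1$ already controls $e_D$ for every $n$. The hypothesis that $D$ is non-ordinary enters only to guarantee that both $n_D$ and $e_D$ are $\geq 1$, so that the indexing conventions of \cite{GV} and of the definition of $e_D$ agree.

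For the equality $e_D = \ell_D$: I would pass through the covariant Dieudonn\'e module $(M,F,V)$, using that $\End(D)$ embeds into $\End(M)$ and that $\End(D[p^n])$ is naturally a subquotient of $\End(M)/p^n\End(M)$. The level module $O\subseteq\End(M)$ recalled in Subsection~\ref{onell} is designed precisely so that the image of $\End(D)\to\End(D[p^n])$ admits a clean lattice-theoretic description in terms of $O$ together with $p^n\End(M)$. Under this dictionary, the inclusion $p^{\ell_D}\End(M)\subseteq O$ translates into the statement that any endomorphism of $D[p^n]$ admitting a lift to $\End(D[p^{n+\ell_D}])$ already lifts to $\End(D)$, yielding $e_D\leq \ell_D$. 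Conversely, starting from a $\phi\in\End(M)$ realising the minimality of $\ell_D$, one constructs a truncation-level obstruction whose resolution requires jumping exactly $\ell_D$ levels, giving $\ell_D\leq e_D$.

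The principal obstacle is this last bidirectional matching between $\ell_D$ and $e_D$, since the former is defined lattice-theoretically (via the $W(k)$-module structure together with $F$ and $V$), while the latter is defined purely in terms of endomorphisms of finite truncations of $D$. Making this translation precise---in particular, turning a minimal element $\phi\notin p^{1-\ell_D}O$ into a non-liftable endomorphism of some $D[p^n]$ whose first lift appears only at level $n+\ell_D$---is where the main work will be concentrated, and where the detailed structural properties of $O$ established in \cite{Va3} and reviewed in Subsection~\ref{onell} are indispensable.
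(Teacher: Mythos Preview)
Your proposal has a genuine gap in Step~1. You state that the characterisation from \cite{GV} is that $n_D$ is the least $m$ for which $\Image\bigl(\End(D[p^{m+1}])\to\End(D[p])\bigr)$ \emph{equals} $\Image\bigl(\End(D)\to\End(D[p])\bigr)$. That is not what \cite{GV} proves. The result recalled in the introduction and stated as Theorem~\ref{Th-f-n} is that $n_D$ is the least $m$ for which the image of $\tau_{m+1,1}$ is \emph{finite}. This finiteness condition is the definition of the coarse endomorphism number $f_D$, not of $e_D$; the two are a~priori different, since a finite image containing $\Image(\tau_{\infty,1})$ need not coincide with it. So what you actually obtain from \cite{GV} is $n_D=f_D$, and combining with the easy observation $f_D\le e_D$ (Proposition~\ref{Pr-f-e}) you get only $n_D\le e_D$, not equality.

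This matters because even if your Step~2 succeeded and gave $e_D=\ell_D$, you would still be missing the inequality $e_D\le n_D$ (equivalently $\ell_D\le n_D$) to close the circle. The paper handles exactly this by keeping the auxiliary invariant $f_D$ in play and proving the chain $f_D\le e_D\le\ell_D\le f_D$; the hard step is $\ell_D\le f_D$ (Proposition~\ref{Pr-ell-f}), and its proof makes essential use of the \emph{finiteness} formulation. One constructs, for each $\eta$ in the maximal ideal of an auxiliary $k'[[t]]$, a homomorphism $y_\eta$ of level $\ell_D$, and the finiteness of the image modulo $p$ (guaranteed by $f_D<\ell_D$) forces some non-zero $y_\eta$ to vanish mod $p$, from which a contradiction is extracted via a power-series argument (Lemma~\ref{Le-series}). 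Your sketch for $\ell_D\le e_D$ --- produce from a witness $\phi\notin p^{1-\ell_D}O$ a truncated endomorphism whose first lift to $\End(D)$ appears only after exactly $\ell_D$ levels --- does not obviously furnish this, because controlling when such a lift \emph{first} agrees with something coming from $\End(D)$ is strictly harder than controlling when the image becomes finite. In short: reinstate $f_D$ as an intermediate invariant, correct the statement of the \cite{GV} input to $n_D=f_D$, and aim for $\ell_D\le f_D$ rather than $\ell_D\le e_D$ directly.
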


\noindent
If $D$ is ordinary we have $n_D=1$ and $\ell_D=e_D=0$. 
We assume now that $D$ is not ordinary.
In \cite{Va3} it was shown that $n_D\le\ell_D$ and that
the equality holds if $D$ is a direct sum of isoclinic
$p$-divisible groups; the equality was also expected to hold
in general therein. In this paper, we prove the inequalities
$$
n_D\leq e_D\leq\ell_D\leq n_D.
$$
The second inequality $e_D\leq\ell_D$ is not too difficult.
The other two inequalities use again the \cite{GV} characterization of 
$n_D$ mentioned above.
Then $n_D\leq e_D$ is immediate, but the inequality $\ell_D\leq n_D$ is a lot more involved.

Together with the upper bound of Theorem \ref{Th-n-upper}
we obtain the following effective lifting of endomorphisms,
which answers a question of Th.~Zink.

\begin{Cor}
\label{Co-end-lift}
Let $\nu$ and $c$ be the Newton polygon and the codimension of $D$. Let $n$ be a positive integer.
An endomorphism of $D[p^n]$ lifts to an endomorphism of $D$
if and only if it lifts to an endomorphism of 
$D[p^{n+\lfloor 2\nu(c)\rfloor}]$.
\end{Cor}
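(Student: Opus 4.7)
The plan is to deduce this corollary directly from Theorem~\ref{Th-n-upper} and Theorem~\ref{Th-n-ell-e} via the defining property of the endomorphism number $e_D$ recalled immediately before Theorem~\ref{Th-n-ell-e}.

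First I would invoke the characterisation of $e_D$: for integers $m\geq n\geq 1$, the image of $\End(D[p^m])\to\End(D[p^n])$ equals the image of $\End(D)\to\End(D[p^n])$ if and only if $m\geq n+e_D$; equivalently, an endomorphism of $D[p^n]$ lifts to an endomorphism of $D$ if and only if it lifts to an endomorphism of $D[p^{n+e_D}]$. Hence, to prove the corollary it suffices to establish the numerical inequality $e_D\leq\lfloor 2\nu(c)\rfloor$. Given this, any lift of $\phi\in\End(D[p^n])$ to $\End(D[p^{n+\lfloor 2\nu(c)\rfloor}])$ may be restricted to $\End(D[p^{n+e_D}])$ and therefore lifted to $\End(D)$ by the characterisation; the converse is obvious, since an endomorphism of $D$ restricts to every truncation $D[p^m]$.

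To establish $e_D\leq\lfloor 2\nu(c)\rfloor$, I would split into two cases. If $D$ is non-ordinary, then Theorem~\ref{Th-n-ell-e} identifies $e_D$ with $n_D$ and Theorem~\ref{Th-n-upper} yields $n_D\leq\lfloor 2\nu(c)\rfloor$, giving the inequality. If $D$ is ordinary, then the remark following Theorem~\ref{Th-n-ell-e} gives $e_D=0$, so the inequality is vacuous (one still has $\lfloor 2\nu(c)\rfloor\geq 0$; in fact $\nu(c)=0$ in this case).

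There is essentially no genuine obstacle at this stage: all of the content of the argument has already been assembled in Theorems~\ref{Th-n-upper} and~\ref{Th-n-ell-e}, and the corollary is their purely formal combination, phrased so as to bypass the invariant $e_D$ in favour of the explicit Newton-polygon quantity $\lfloor 2\nu(c)\rfloor$. The only subtlety worth flagging is the need to treat the ordinary case separately, since Theorem~\ref{Th-n-ell-e} is stated under the non-ordinary hypothesis.
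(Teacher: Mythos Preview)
Your proposal is correct and follows essentially the same route as the paper: combine the defining property of $e_D$ with the equality $e_D=n_D$ from Theorem~\ref{Th-n-ell-e} and the bound $n_D\leq\lfloor 2\nu(c)\rfloor$ from Theorem~\ref{Th-n-upper}, handling the ordinary case separately via $e_D=0$. The paper treats this corollary as an immediate consequence of those two theorems and gives no further argument, so your write-up is if anything more explicit than the original.
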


For similar results on homomorphisms we refer to Subsections \ref{Se-conclusions} and  \ref{Up-b}. As a special case, for each $h\in\NN$ 
we compute the minimal number $N_h$ such that 
for every two $p$-divisible groups
$D$ and $E$ over $k$ of height at most $h$, a homomorphism 
$D[p^n]\to E[p^n]$ lifts to a homomorphism $D\to E$ if and
only if it lifts to a homomorphism $D[p^{n+N_h}]\to E[p^{n+N_h}]$: By Proposition \ref{Pr-Nh} we have $N_h=\lfloor h/2\rfloor$. 

\bigskip
{\it Terminology.}
A {\em $BT$ group of level $n$} is a truncated Barsotti--Tate group 
of level $n$. 
We denote by $\NN^*$ the set of positive integers.


\section{Preliminaries}
\label{Se-prelim}

We begin with a lemma on homomorphisms. 

\begin{Lemma}
\label{Le-eDE}
Let $D$ and $E$ be $p$-divisible groups over $k$.

\smallskip
(a) For each positive integer $n$ there exists a non-negative
integer $e_{D,E}(n)$ with the following property:
For $e\in\NN$, the two restriction maps 
$$
\Hom(D,E)\to\Hom(D[p^{n}],E[p^{n}])
\leftarrow\Hom(D[p^{n+e}],E[p^{n+e}])
$$ 
have equal images if and only if $e\ge e_{D,E}(n)$.

\smallskip
(b) There exists an upper bound of $e_{D,E}(n)$  
in terms of the heights of $D$ and $E$.

\smallskip
(c) The number $e_{D,E}(n)$ does not depend on $n$.
\end{Lemma}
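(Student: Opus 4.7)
For (a), I will use that $D=\varinjlim_m D[p^m]$ as fppf sheaves, which yields $\Hom(D,E)=\varprojlim_m \Hom(D[p^m],E[p^m])$ under restriction. Write $H=\Hom(D,E)$ and $H_m=\Hom(D[p^m],E[p^m])$: each $H_m$ is a finite abelian $p$-group of exponent dividing $p^m$ and of cardinality bounded in terms of the heights. By $p$-divisibility of $D$, the kernel of the natural map $H\to H_n$ equals $p^nH$, so $H/p^nH\hookrightarrow H_n$. The descending chain of finite subsets $I_n^{(e)}:=\Image(H_{n+e}\to H_n)$ must stabilize, and the inverse-limit description forces the stable value to coincide with the image of $H$, namely $H/p^nH$. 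One then defines $e_{D,E}(n)$ as the smallest $e$ with $I_n^{(e)}=H/p^nH$, giving (a).

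For (b), I would translate into covariant Dieudonn\'e theory. Let $M,N$ be the Dieudonn\'e modules of $D,E$ and set $A=\Hom_{W(k)}(M,N)$, a free $W(k)$-module of finite rank. Requiring $\Phi\in A$ to commute with Frobenius and Verschiebung modulo $p^m$ cuts out a $W(k)$-submodule $A_m\subseteq A$ with $\bigcap_m A_m=H$, and $H_m$ is canonically identified with $A_m/p^mA$. The condition $I_n^{(e)}=H/p^nH$ then unwinds to $A_{n+e}\subseteq H+p^nA$, equivalently $\overline{A_{n+e}}\subseteq p^n(A/H)$ inside the torsion-free quotient $A/H$. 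An Artin--Rees argument applied to $A/H$ sitting in the codomain of the commutator maps $(\delta_F,\delta_V)$ yields a constant $c$ with $\overline{A_m}=p^{m-c}\overline{A_c}$ for all $m\geq c$; an elementary-divisor reading of $\overline{A_c}\subseteq A/H$ then identifies $e(n)$ with an integer independent of $n$ as soon as $n$ is sufficiently large. To reach every $n\geq 1$, I would use the auxiliary shift maps $\iota_n\colon H_n\to H_{n+1}$, $\phi\mapsto\phi\circ p$ (via $E[p^n]\hookrightarrow E[p^{n+1}]$), which are injective and satisfy $r_{n+1,n}\iota_n=\iota_n r_{n+1,n}=p$. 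These relations let one transport the large-$n$ statement to all levels and conclude that $e(n)$ is in fact constant.

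For (c), the Artin--Rees constant $c$ and the elementary-divisor exponents appearing in the analysis of (b) depend only on the $W(k)$-ranks of $M$ and $N$, which equal the heights of $D$ and $E$. Thus the common value of $e_{D,E}(n)$ from (b) admits an upper bound depending only on these heights.

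The main obstacle is part (b): showing that $e(n)$ is genuinely constant rather than only eventually constant. Artin--Rees gives both the uniform bound needed for (c) and the stable value of $e(n)$ for $n\gg 0$, but pinning down the small-$n$ behavior requires the additional leverage of the shift maps $\iota_n$ above, which is where the bulk of the work sits.
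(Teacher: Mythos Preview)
There is a genuine gap: the groups $H_m=\Hom(D[p^m],E[p^m])$ are \emph{not} finite in general. For any non-ordinary $D$ the scheme $\uEnd(D[p])$ has positive dimension over $k$ (this is precisely why the invariant $f_D$ studied later in the paper is nontrivial), so $\End(D[p])$ is an infinite set. Hence your descending-chain argument for (a) does not go through as stated. The same misconception undermines (b): the commutator maps $\delta_F,\delta_V$ are not $W(k)$-linear, since $F$ is $\sigma$-linear and thus $\delta_F(a\Phi)=\sigma(a)F\Phi-a\Phi F\neq a\,\delta_F(\Phi)$. Consequently $H=\ker\delta_F\cap\ker\delta_V$ is only a $\ZZ_p$-submodule of $A$, the quotient $A/H$ is not a $W(k)$-module, and Artin--Rees over $W(k)$ is unavailable; over $\ZZ_p$ it fails because $A$ has infinite $\ZZ_p$-rank. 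Even in settings where Artin--Rees does apply, its constant depends on the specific submodule rather than merely on ranks, so your justification of (c) is also incomplete.

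The paper's proof of (b) bypasses all of this. Using exactly the shift maps $\iota_n$ you mention, together with the exact sequences $0\to H_\infty\xrightarrow{p}H_\infty\to H_1$ and $0\to H_n\xrightarrow{\iota_n}H_{n+1}\to H_1$, a single application of the snake lemma to the resulting ladder of images $H_{\infty,n}\hookrightarrow H_{n+e,n}$ at levels $n$, $n{+}1$, and $1$ yields
\[
e(n)\le e(n+1)\le\max\{e(n),e(1)\},
\]
and induction gives $e(n)=e(1)$ immediately. No Dieudonn\'e theory, no asymptotic-then-transport step, and no separate treatment of small $n$ are needed; parts (a) and (c) are handled by citation.
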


\begin{proof}
For (a) and (b) we refer to \cite[Prop.~1.6]{Oo2} or \cite[Thm.~5.1.1 (c)]{Va1}. We prove (c). Let $H_{\infty}=\Hom(D,E)$. 
For $n\in\NN^*$ let $H_n=\Hom(D[p^n],E[p^n])$. 
Following \cite[Subsect.~2.1]{GV}, we have two exact sequences
$$
0 \to H_\infty \xrightarrow p H_\infty \to H_1, 
\qquad
0 \to H_n \xrightarrow{\iota_n} H_{n+1} \to H_1, 
$$
where $\iota_n$ maps $u\in H_n$ to the obvious composition 
$$
D[p^{n+1}]\to D[p^{n}]\xrightarrow uE[p^{n}]\to E[p^{n+1}].
$$
For $m\in\NN\cup\{\infty\}$ with $m\ge n$ 
let $H_{m,n}$ be the image of $H_m\to H_n$.
One deduces that for all $e\in\NN$ we have 
a homomorphism of exact sequences
with vertical injections:
\begin{equation}
\label{Eq-eED}
\xymatrix@M+0.2em{
0 \ar[r] & H_{\infty,n} \ar[r]^-{\iota_n} \ar[d] &
H_{\infty,n+1} \ar[r] \ar[d] & H_{\infty,1} \ar[d] \ar[r] & 0 \\
0 \ar[r] & H_{n+e,n} \ar[r]^-{\iota_n} & 
H_{n+1+e,n+1} \ar[r] & H_{1+e,1}.
}
\end{equation}
The snake lemma implies that $e_{D,E}(n)\leq e_{D,E}(n+1)\leq\max\{e_{D,E}(n),e_{D,E}(1)\}$. By induction on $n\in\NN^*$ we get that $e_{D,E}(n)=e_{D,E}(1)$. Thus (c) holds.
\end{proof}

Lemma \ref{Le-eDE} (c) allows to define:

\begin{Defn}
The \emph{homomorphism number} $e_{D,E}\in\NN$ of $D$ and $E$
is the constant value of $e_{D,E}(n)$ for $n\in\NN^*$. 
The \emph{endomorphism number} of $D$ is defined as $e_D=e_{D,D}$.  
\end{Defn}

By Lemma \ref{Le-eDE} (b), 
for $h\in\NN$ there exists a minimal number $N_h\in\NN$ 
such that for every two $p$-divisible groups $D$ and $E$ over $k$ of height 
at most $h$ we have $e_{D,E}\leq N_h$. In Proposition \ref{Pr-Nh},
we will see that $N_h=\lfloor h/2\rfloor$.

We continue with a formal definition of the numerical
invariants $n_D$ and $b_D$.

\begin{Defn}
\label{Def-main} Let $D$ be a $p$-divisible group over $k$. The
{\em isomorphism number} $n_D$ (resp.\ the {\em isogeny cutoff}
$b_D$) is the smallest non-negative integer $m$ with the following
property: If $E$ is a $p$-divisible group over $k$ such that
$E[p^m]$ is isomorphic to $D[p^m]$, then $E$ is isomorphic (resp.\
isogenous) to $D$.
\end{Defn}

\noindent 
Such integers $m$ exist, for example $m=1+N_h$ if $D$ 
has height $h$. Thus $b_D$ and $n_D$ are well-defined. 
The following properties are easily established.

\begin{Lemma}
\label{Le-prelim} For a  $p$-divisible group $D$ over $k$ let
$D^\vee$ be the dual of $D$ and let $D=D^{\text{\rm
\'et}}\times D^{\circ}$ be the canonical product decomposition, where
$D^{\text{\rm \'et}}$ is \'etale and $D^{\circ}$ is connected.
We have:

\smallskip
(a) {} $1\leq b_D\leq n_D$;

\smallskip
(b) {} $b_D=b_{D^\vee}\,$ and $\:n_D=n_{D^\vee}$;

\smallskip
(c) {} $b_D=b_{D^{\circ}}\,$ and $\:n_D=n_{D^{\circ}}$.
\end{Lemma}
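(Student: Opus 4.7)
Part (a) is essentially tautological. An isomorphism between $p$-divisible groups is, in particular, an isogeny, so the defining condition of $n_D$ is strictly stronger than that of $b_D$; this gives $b_D\leq n_D$ at once. To see $b_D\geq 1$, I would note that $E[p^0]$ is trivial for every $p$-divisible group $E$, so the hypothesis in the defining condition at level $m=0$ holds vacuously. If we had $b_D=0$ this would force \emph{every} $p$-divisible group over $k$ to be isogenous to $D$, which is absurd because there exist at least two distinct isogeny classes already in height one (the \'etale and the multiplicative ones).

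For (b) I would invoke Cartier duality: it is a contravariant autoequivalence of the category of $p$-divisible groups over $k$, sending isomorphisms to isomorphisms and isogenies to isogenies, and satisfying the natural identification $D^\vee[p^m]\cong(D[p^m])^\vee$ on $p^m$-torsion (Cartier dual of a finite flat group scheme). If $E$ satisfies $E[p^m]\cong D^\vee[p^m]$, then applying duality yields $E^\vee[p^m]\cong D[p^m]$; the defining property of $n_D$ (resp.\ $b_D$) forces $E^\vee\cong D$ (resp.\ isogenous), equivalently $E\cong D^\vee$ (resp.\ isogenous). This gives $n_{D^\vee}\leq n_D$ and $b_{D^\vee}\leq b_D$, and the reverse inequalities follow by the same argument applied to $D^\vee$ together with $(D^\vee)^\vee\cong D$.

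For (c) I would combine two standard facts: (i) the connected-\'etale decomposition is functorial and commutes with $p^m$-torsion, so any isomorphism $E[p^m]\cong D[p^m]$ splits compatibly into its \'etale and connected components; (ii) over the algebraically closed field $k$, an \'etale $p$-divisible group is determined up to isomorphism by its height alone. To prove $n_D\leq n_{D^\circ}$ (resp.\ $b_D\leq b_{D^\circ}$), I take $m=n_{D^\circ}$ (resp.\ $m=b_{D^\circ}$) and $E$ with $E[p^m]\cong D[p^m]$: the \'etale parts have equal heights and so are isomorphic, while the induced isomorphism $E^\circ[p^m]\cong D^\circ[p^m]$ forces $E^\circ\cong D^\circ$ (resp.\ isogenous) by definition, so $E\cong D$ (resp.\ isogenous). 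For the reverse direction, given any connected $E^\circ$ with $E^\circ[p^{n_D}]\cong D^\circ[p^{n_D}]$ (resp.\ at level $b_D$), I would form $E=D^{\text{\rm\'et}}\times E^\circ$, apply the definition of $n_D$ (resp.\ $b_D$) to conclude $E\cong D$ (resp.\ isogenous), and recover $E^\circ\cong D^\circ$ (resp.\ isogenous) via the uniqueness of the decomposition. I do not expect a serious obstacle: the whole lemma rests only on well-known structural facts about Cartier duality, the connected-\'etale decomposition, and the classification of \'etale $p$-divisible groups over an algebraically closed field.
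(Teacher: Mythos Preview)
Your proof is correct and uses exactly the standard arguments the paper has in mind; indeed, the paper does not spell out a proof at all, remarking only that these properties ``are easily established.'' One tiny point worth making explicit in part (c): when showing $n_{D^\circ}\le n_D$ you restrict to connected test groups $E^\circ$, which is legitimate because any $E$ with $E[p^m]\cong D^\circ[p^m]$ for $m\ge 1$ has connected $p$-torsion and hence is itself connected.
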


\noindent
In particular, if $D$ is ordinary, then $b_D=n_D=1$. Similarly, if
$\min\{c,d\}=1$, then $b_D=n_D=1$ because all one-dimensional connected $p$-divisible groups of given height are isomorphic (see \cite[Ch.~IV, Sect.~8, Prop.]{De}). Thus the invariants $b_D$ and $n_D$
are interesting only when $\min\{c,d\}\ge 2$.

\medskip

We consider the following distance function on isogeny classes.

\begin{Defn}
Let $D$ and $E$ be two $p$-divisible groups over $k$. If $D$ and $E$
are isogenous, then their distance $q_{D,E}$ is the smallest
non-negative integer $m$ such that there exists an isogeny $\rho:D\to
E$ with $\Ker(\rho)\subseteq D[p^m]$. If $D$ and $E$ are not
isogenous we define $q_{D,E}=\infty$.
\end{Defn}

Again, the next lemma is easily checked.

\begin{Lemma}
\label{Le-q} 
The following three properties hold:

\smallskip
(a) {} We have $q_{D,E}=0$ if and only if $D$ and $E$ are isomorphic.

\smallskip
(b) {} We have $q_{D,E}=q_{E,D}=q_{D^\vee,E^\vee}$.

\smallskip
(c) {} If $q_{D,E}<\infty$, then $q_{D,E}=q_{D^\circ,E^\circ}$.
\end{Lemma}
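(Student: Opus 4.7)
For part (a), the statement follows directly from unwinding the definition. The condition $q_{D,E}=0$ asks for an isogeny $\rho\colon D\to E$ with $\Ker(\rho)\subseteq D[p^0]=0$, i.e., for an isomorphism; conversely, any isomorphism is such an isogeny.

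For part (b), I would first handle $q_{D,E}=q_{E,D}$ by invoking the standard duality for isogenies of $p$-divisible groups: given an isogeny $\rho\colon D\to E$ with $\Ker(\rho)\subseteq D[p^m]$, there exists a unique isogeny $\rho'\colon E\to D$ satisfying $\rho'\circ\rho=[p^m]_D$ and $\rho\circ\rho'=[p^m]_E$, and in particular $\Ker(\rho')\subseteq E[p^m]$. This yields $q_{E,D}\leq q_{D,E}$, and the reverse inequality follows by symmetry. The equality $q_{D,E}=q_{D^\vee,E^\vee}$ is proved by applying Cartier duality: an isogeny $\rho\colon D\to E$ dualises to an isogeny $\rho^\vee\colon E^\vee\to D^\vee$, and $\Ker(\rho^\vee)$ is Cartier dual to $\Ker(\rho)$, so being annihilated by $p^m$ is preserved. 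Combined with the already proven symmetry and with the biduality $D^{\vee\vee}=D$, this gives the claim.

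For part (c), assuming $D$ and $E$ are isogenous, I would use the canonical product decompositions $D=D^{\text{\rm\'et}}\times D^{\circ}$ and $E=E^{\text{\rm\'et}}\times E^{\circ}$, together with the fact that every homomorphism $D\to E$ is diagonal with respect to these decompositions (there are no nontrivial homomorphisms between connected and \'etale $p$-divisible groups over $k$). Hence any isogeny $\rho\colon D\to E$ splits as $\rho^{\text{\rm\'et}}\times\rho^{\circ}$, and $\Ker(\rho)=\Ker(\rho^{\text{\rm\'et}})\times\Ker(\rho^{\circ})$; this immediately yields $q_{D^{\circ},E^{\circ}}\leq q_{D,E}$. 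For the reverse inequality, I would use that over an algebraically closed field $k$ any two \'etale $p$-divisible groups of the same height are isomorphic (both are isomorphic to $(\QQ_p/\ZZ_p)^h$); since $D$ and $E$ being isogenous forces $D^{\text{\rm\'et}}$ and $E^{\text{\rm\'et}}$ to have the same height, there is an isomorphism $\alpha\colon D^{\text{\rm\'et}}\xrightarrow{\sim}E^{\text{\rm\'et}}$. Combining $\alpha$ with any isogeny $\rho^{\circ}\colon D^{\circ}\to E^{\circ}$ realising $q_{D^{\circ},E^{\circ}}$ produces an isogeny $D\to E$ whose kernel is exactly $\Ker(\rho^{\circ})$, yielding $q_{D,E}\leq q_{D^{\circ},E^{\circ}}$.

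Since each step is a direct manipulation of the definition using well-known structural facts (isogeny duality, Cartier duality, the \'etale--connected decomposition, and the classification of \'etale $p$-divisible groups over $k$), no step presents a real obstacle; the only mild subtlety is being careful in (c) to replace the \'etale component of a given isogeny by an isomorphism in order to preserve the bound on the kernel.
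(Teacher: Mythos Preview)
Your proof is correct. The paper does not actually give a proof of this lemma; it simply states ``Again, the following is easily checked'' and leaves the verification to the reader, so your argument supplies precisely the standard details one would expect (isogeny factorisation through $[p^m]$, Cartier duality, and the connected--\'etale splitting over an algebraically closed field).
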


Recall that a $p$-divisible group $D$ over $k$ is called
\emph{minimal} if $\End(D)$ is a maximal order in
$\End(D)\otimes_\ZZ\QQ$; this is equivalent to the condition $n_D=1$.
For each $p$-divisible group $D$ over $k$, there exists a unique (up to isomorphism) minimal
$p$-divisible group $D_0$ over $k$ isogenous to $D$. See \cite[Subsects.~1.1-1.2]{Oo3} and \cite[Thm.~1.6]{Va3} for these facts. Thus following \cite{NV2} we define:

\begin{Defn}
The \emph{minimal height} of $D$ is $q_D=q_{D,D_0}$.
\end{Defn}

\noindent
As $(D_0)^\vee$ and $(D_0)^\circ$ are also minimal,
Lemma \ref{Le-q} implies that
$$
q_D=q_{D^\vee}=q_{D^\circ}.
$$

We have the following permanence properties.

\begin{Lemma}
\label{Le-perm}
Let $k\subseteq \kappa$ be an extension of
algebraically closed fields. For
$p$-divisible groups $D$ and $E$ over $k$ we have:

\medskip
(a) {} $\Hom(D,E)\cong\Hom(D_{\kappa},E_{\kappa})$;

\smallskip
(b) {} $q_{D,E}=q_{D_\kappa,E_\kappa}$ and $q_D=q_{D_\kappa}$;

\smallskip
(c) {} $e_{D,E}=e_{D_\kappa,E_\kappa}$.
\end{Lemma}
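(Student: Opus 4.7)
The three assertions are closely interrelated, and my plan is to establish (a) first and then deduce (b) and (c) from it.

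For (a), I would invoke the rigidity of homomorphisms of $p$-divisible groups. Concretely, identifying $\Hom(D,E)$ with the $F,V$-equivariant $W(k)$-linear maps between the (covariant) Dieudonn\'e modules of $D$ and $E$, one uses that $W(\kappa)$ is faithfully flat over $W(k)$ and that the equivariance conditions are preserved under tensoring to obtain the isomorphism $\Hom(D,E)\cong\Hom(D_\kappa,E_\kappa)$. Equivalently and more geometrically, $\uHom(D,E)$ is a pro-\'etale formal $\mathbb{Z}_p$-module (Drinfeld/Messing rigidity), so its $k$-points and its $\kappa$-points agree.

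For (b), the observation is that an isogeny $\sigma:D\to E$ has kernel killed by $p^m$ if and only if there exists a homomorphism $\tau:E\to D$ with $\tau\circ\sigma=p^m\cdot\mathrm{id}_D$. Thus the condition $q_{D,E}\leq m$ is phrased purely in terms of $\Hom$-groups, and by (a) the equality $q_{D,E}=q_{D_\kappa,E_\kappa}$ follows at once. For $q_D=q_{D_\kappa}$, I would use that the minimal $p$-divisible group in a given isogeny class is determined up to isomorphism by the Newton polygon, which is a base-change invariant; hence $(D_0)_\kappa$ is the minimal $p$-divisible group isogenous to $D_\kappa$, and the equality follows from the first half.

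For (c), the plan is to combine (a) with a scheme-theoretic argument. The subtle point is that $\Hom(D[p^m],E[p^m])$ may strictly increase under base change (already in the supersingular case), so one cannot quote (a) at each truncation level. Instead I would work with the $\uHom$-functors of the truncations, which are representable by commutative affine group schemes of finite type over $k$, and whose scheme-theoretic images commute with flat base change. When $e\geq e_{D,E}(n)$, the $k$-points of the image scheme $Z_e\subseteq\uHom(D[p^n],E[p^n])$ of $\uHom(D[p^{n+e}],E[p^{n+e}])\to\uHom(D[p^n],E[p^n])$ coincide with the image of $\Hom(D,E)$, which is finite by (a); hence $Z_e$ is $0$-dimensional and its $k$- and $\kappa$-points are in bijection, yielding $e_{D_\kappa,E_\kappa}(n)\leq e_{D,E}(n)$. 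Conversely, a $\kappa$-rational lift of a $k$-rational $\phi$ to the $(n+e)$-level exhibits a nonempty $\kappa$-fibre of a finite-type $k$-scheme, hence a nonempty $k$-fibre by the Nullstellensatz, and combining with (a) then gives $e_{D,E}(n)\leq e_{D_\kappa,E_\kappa}(n)$. Together with Lemma \ref{Le-eDE}(b), this proves (c). The main obstacle is the representability and base-change compatibility of $\uHom$ for truncated BT-groups, which I would deduce from general finiteness results for $\uHom$ of finite flat commutative group schemes.
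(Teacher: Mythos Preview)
Your proof is correct and follows essentially the same route as the paper: (a) is cited as well-known, (b) is deduced from (a), and (c) is proved via the scheme-theoretic images of the truncation maps $\uHom(D[p^m],E[p^m])\to\uHom(D[p^n],E[p^n])$, using that these are of finite type over $k$ and compatible with the base change $k\to\kappa$. The paper's argument for (c) is slightly more streamlined---it simply notes that the stabilisation condition $\uH_{m,n}(k)=\uH_{l,n}(k)$ for all $l\ge m$ characterising $e_{D,E}\le m-n$ is invariant under passing to $\kappa$---so your Nullstellensatz step in the converse direction is unnecessary: once a $k$-rational $\phi$ lifts to level $n+e$ over $k$ it trivially does so over $\kappa$, and then (a) finishes.
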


\begin{proof}
Part (a) is well-known, and (b) follows from (a). 
For positive integers $m\geq n$, let ${\uH}{}_{m,n}$ be the scheme theoretic image of the reduction homomorphism $\uHom(D[p^m],E[p^m])\to\uHom(D[p^n],E[p^n])$.
If $l\ge m$ is an integer, then ${\uH}{}_{l,n}$ is a subgroup scheme of ${\uH}{}_{m,n}$.
We have $m-n\ge e_{D,E}$ if and only if ${\uH}{}_{m,n}(k)={\uH}{}_{l,n}(k)$
for all integers $l\geq m$. As ${\uH}{}_{m,n}$ is of finite type over $k$ and its definition is compatible with the base change from $k$ to $\kappa$, we get (c).
\end{proof}

\noindent
The identities $n_D=n_{D_{\kappa}}$ and $b_D=b_{D_{\kappa}}$ also hold, cf.\ Corollary \ref{Co-perm} below.


\section{The going down principle}

In this section, we prove Theorem \ref{Th-semicont} in the case
$S=\Spec k[[t]]$. By standard arguments this implies that the
functions $n_\scrD$, $b_\scrD$, $q_\scrD$, and $q_{\scrD,\scrE}$ go
down under specialization when the base scheme is noetherian.

\subsection{Distances and minimal heights}

Going down of distance numbers follows from
the constancy results of \cite{OZ}:

\begin{Thm}
\label{Th-going-down-q}
Let $R=k[[t]]$, let $K=k((t))$,
and let $\bar K$ be an algebraic closure of $K$.
If $\scrD$ and $\scrE$ are $p$-divisible groups over
$R$ with constant Newton polygon $\nu$, then
$$
q_{\scrD_k,\scrE_k}\leq q_{\scrD_{\bar K},\scrE_{\bar K}}.
$$
\end{Thm}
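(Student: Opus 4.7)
The plan is to witness the value of $q_{\scrD_{\bar K},\scrE_{\bar K}}$ by an isogeny over $\bar K$, descend it to a finite extension of $K$, extend it across the DVR by a scheme-theoretic closure argument, and then reduce modulo the maximal ideal; the constancy machinery of \cite{OZ} will be invoked to identify the special fibre on the target side.

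In more detail, set $m=q_{\scrD_{\bar K},\scrE_{\bar K}}$ and assume $m<\infty$, as otherwise there is nothing to prove. Fix an isogeny $\phi\colon\scrD_{\bar K}\to\scrE_{\bar K}$ with $\Ker(\phi)\subseteq\scrD_{\bar K}[p^m]$. First I would descend $\phi$ to an isogeny $\phi_{K'}\colon\scrD_{K'}\to\scrE_{K'}$ defined over some finite subextension $K\subseteq K'\subseteq\bar K$. Let $R'\subseteq K'$ be the integral closure of $R$; after replacing $R'$ by one of its local factors, it is a complete DVR with fraction field $K'$ and residue field $k$ (the residue field extension is algebraic and $k$ is algebraically closed). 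Next I would form the scheme-theoretic closure $H=\overline{\Ker(\phi_{K'})}$ inside $\scrD_{R'}$. This is a finite flat closed subgroup scheme, and since $\scrD_{R'}[p^m]$ is closed and flat over $R'$ and contains $H_{K'}$ on the generic fibre, the inclusion $H\subseteq\scrD_{R'}[p^m]$ follows. The quotient $\scrE'':=\scrD_{R'}/H$ is then a $p$-divisible group over $R'$ equipped with an isogeny $\scrD_{R'}\to\scrE''$ whose kernel is annihilated by $p^m$, and whose generic fibre recovers $\phi_{K'}$.

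The main obstacle is to compare $\scrE''_k$ with $\scrE_k$: both $\scrE''$ and $\scrE_{R'}$ are $p$-divisible groups over $R'$ with the same constant Newton polygon $\nu$ that agree over $K'$, and one needs that they are isomorphic on the special fibre. Here I would invoke the constancy results of \cite{OZ} for $p$-divisible groups with constant Newton polygon over a complete DVR, possibly after enlarging $R'$ by a further finite extension, to deduce $\scrE''_k\cong\scrE_k$. Once this is in hand, reducing the isogeny $\scrD_{R'}\to\scrE''$ modulo the maximal ideal of $R'$ and composing with the identification $\scrE''_k\cong\scrE_k$ produces an isogeny $\scrD_k\to\scrE_k$ with kernel contained in $\scrD_k[p^m]$, so $q_{\scrD_k,\scrE_k}\le m=q_{\scrD_{\bar K},\scrE_{\bar K}}$. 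The constant Newton polygon hypothesis is essential at this step, since in general two $p$-divisible groups over $R'$ can be generically isomorphic with non-isomorphic special fibres, which would invalidate the whole strategy.
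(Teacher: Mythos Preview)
Your steps (1)--(4) are sound: descending $\phi$ to a finite extension, taking the integral closure $R'$, and spreading out the kernel by scheme-theoretic closure to obtain an isogeny $\scrD_{R'}\to\scrE''$ with kernel in $\scrD_{R'}[p^m]$ all work as stated. The difficulty is entirely in your step (5), and here there is a genuine gap.

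What you need in step (5) is precisely the case $m=0$ of the theorem you are trying to prove: two $p$-divisible groups over $R'$ with constant Newton polygon and isomorphic generic fibres have isomorphic special fibres. Citing \cite{OZ} does not discharge this. The result you are alluding to (their Cor.~3.2) says only that a $p$-divisible group with constant Newton polygon is, over a suitable extension, \emph{isogenous} to a constant one; it does not say that a generic isomorphism $\scrE''_{K'}\cong\scrE_{K'}$ propagates to the special fibre. To extract $\scrE''_k\cong\scrE_k$ from \cite{OZ} you would have to: choose isogenies from constant groups $E''_{0,\bar R}\to\scrE''_{\bar R}$ and $E_{0,\bar R}\to\scrE_{\bar R}$, observe that the resulting quasi-isogeny $E''_{0,\bar K}\to E_{0,\bar K}$ descends to $k$ because $\Hom(E''_0,E_0)=\Hom(E''_{0,\bar K},E_{0,\bar K})$, extend the generic isomorphism to a quasi-isogeny $\scrE''_{\bar R}\to\scrE_{\bar R}$, and finally invoke the closedness of the isogeny locus (Lemma~\ref{Le-isog-closed}) for both this quasi-isogeny and its inverse. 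But this is exactly the argument the paper runs \emph{directly} for arbitrary $m$: it uses \cite{OZ} to replace $\scrD_{\bar R}$ and $\scrE_{\bar R}$ by constant groups up to isogeny, descends the composite quasi-isogeny $D_{\bar K}\to E_{\bar K}$ to $k$, and thereby extends $\xi_{\bar K}$ to a quasi-isogeny $\xi:\scrD_{\bar R}\to\scrE_{\bar R}$ \emph{between the original groups}, so that no target-identification problem ever arises. Your detour through $\scrE''$ does not avoid any of this work; it merely relocates it to step (5), where you have not carried it out.
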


\begin{proof}
Let $\bar R$ be the normalization of $R$ in $\bar K$.
By \cite[Cor.~3.2]{OZ} there exist $p$-divisible groups
$D$ and $E$ over $k$ and isogenies
$$
\rho:D_{\bar R}\to\scrD_{\bar R}
\qquad\text{and}\qquad
\omega:E_{\bar R}\to\scrE_{\bar R}.
$$
Let $m=q_{\scrD_{\bar K},\scrE_{\bar K}}$. Note that $m<\infty$.
By definition there exists an isogeny
$\xi_{\bar K}:\scrD_{\bar K}\to\scrE_{\bar K}$
with kernel annihilated by $p^m$.
As $\Hom(D,E)$ is equal to $\Hom(D_{\bar K},E_{\bar K})$,
the composite quasi-isogeny over ${\bar K}$
$$
\chi_{\bar K}:D_{\bar K}\xrightarrow{\rho_{\bar K}}
\scrD_{\bar K}\xrightarrow{\xi_{\bar K}}\scrE_{\bar K}
\xleftarrow{\omega_{\bar K}}E_{\bar K}
$$
is defined over $k$ i.e., it arises from a quasi-isogeny
$\chi:D\to E$. Hence $\xi_{\bar K}$ extends to a quasi-isogeny
over ${\bar R}$
$$
\xi:\scrD_{\bar R}\xleftarrow{\rho}D_{\bar R}\xrightarrow{\chi_{\bar R}}
E_{\bar R}\xrightarrow{\omega}\scrE_{\bar R}.
$$
As $\xi$ and $p^m\xi^{-1}$ are isogenies over $\bar K$,
by the well-known Lemma \ref{Le-isog-closed} below the same is true over $\bar R$. As the residue field of the local ring $\bar R$
is $k$, the special fibre of $\xi$ is an isogeny
$\xi_k:\scrD_k\to\scrE_k$ with kernel annihilated by $p^m$,
which implies that $q_{\scrD_k,\scrE_k}\leq m$ as
required.
\end{proof}

\begin{Lemma}[{\cite[Prop.~2.9]{RZ}}]
\label{Le-isog-closed}
For a quasi-isogeny $\chi:\scrD\to\scrE$ of $p$-divisible
groups over a scheme $T$ of characteristic $p$, there exists a
unique closed subscheme $T_0\subseteq T$ such that a morphism
$T'\to T$ factors through $T_0$ if and only if
$\chi_{T'}$ is an isogeny.
\end{Lemma}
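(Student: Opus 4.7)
The plan is to reduce the statement, via a local normalization of the quasi-isogeny, to the fact that the vanishing locus of a morphism between finite locally free group schemes is representable by a closed subscheme; the latter I would then establish by exhibiting an explicit ideal. Since uniqueness of $T_0$ is forced by its universal property, it suffices to construct it on an affine cover and glue.

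For the reduction, I work on an affine open $T=\Spec R$ over which, by the definition of a quasi-isogeny, there exist $N\geq 0$ and a genuine isogeny $\xi:\scrD\to\scrE$ with $p^N\chi=\xi$. A quasi-isogeny that is also a morphism is automatically an isogeny, its quasi-inverse being a quasi-morphism; and because $\Hom$-groups between $p$-divisible groups are $p$-torsion-free (any morphism $\scrD\to\scrE[p^N]$ vanishes on the image of the surjective $[p^N]_\scrD$, hence is zero), the condition that $\chi_{T'}=p^{-N}\xi_{T'}$ lifts to the integral $\Hom$ is equivalent to $\xi_{T'}$ being $p^N$-divisible there, equivalently to $\xi_{T'}$ vanishing on $\scrD[p^N]_{T'}$. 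Since $\xi(\scrD[p^N])\subseteq\scrE[p^N]$, the question becomes one about the vanishing locus of a morphism $f:G\to H$ between finite locally free group schemes over $T$. Writing $G=\Spec A$ and $H=\Spec B$ with $A,B$ finite projective $R$-modules and letting $\delta=f^{\ast}-0^{\ast}:B\to A$ be the difference of the comorphisms of $f$ and of the zero morphism, I define
$$
I=\Image\bigl(B\otimes_R A^\vee\xrightarrow{\,\delta\otimes\operatorname{id}\,}A\otimes_R A^\vee\longrightarrow R\bigr)\subseteq R,
$$
using the canonical evaluation pairing afforded by the finite projectivity of $A$. Reflexivity of finite projective modules then shows that $R\to R'$ satisfies $\delta\otimes_R R'=0$ precisely when $I$ maps to zero in $R'$; setting $T_0=V(I)$ and gluing over an affine cover gives the required closed subscheme, the gluing being legitimate because $I$ is intrinsically defined as the image of an evaluation pairing.

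The main obstacle to watch is the verification that $T_0$ satisfies its universal property \emph{scheme-theoretically} rather than merely set-theoretically: this comes down to the compatibility of the formation of $I$ with arbitrary base change $R\to R'$, which is precisely what the finite projectivity of $A$ and $B$ guarantees. Once this compatibility is established, the reduction performed in the first step and the globalisation by gluing are both essentially formal.
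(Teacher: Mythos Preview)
The paper does not give its own proof of this lemma; it simply quotes the statement and cites \cite[Prop.~2.9]{RZ}. Your argument is correct and is essentially the proof found there: locally write $p^N\chi=\xi$ for an isogeny $\xi$, observe that $\chi$ becomes a genuine morphism (hence an isogeny) precisely when $\xi$ kills $\scrD[p^N]$, and represent the vanishing locus of the resulting morphism of finite locally free group schemes by the ideal generated by the ``matrix entries'' of $f^\ast-0^\ast$ with respect to a local basis, which is exactly your evaluation-pairing ideal $I$.

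One small remark on your write-up: the justification you give for gluing (``$I$ is intrinsically defined as the image of an evaluation pairing'') is slightly misleading, since on different affines you may be using different exponents $N$ and hence literally different ideals. The correct reason the local pieces glue is the one you already stated at the outset---uniqueness of $T_0$ from its universal property forces agreement on overlaps---so you might rephrase that sentence accordingly.
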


\begin{Cor}
\label{Co-going-down-q}
With $R$ and $\bar K$ as in Theorem \ref{Th-going-down-q}, for a $p$-divisible group $\scrD$ over $R$
with constant Newton polygon $\nu$ we have
$$
q_{\scrD_k}\leq q_{\scrD_{\bar K}}.
$$
\end{Cor}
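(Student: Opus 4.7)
The plan is to reduce the corollary to Theorem \ref{Th-going-down-q} by producing a constant family whose fibres are minimal $p$-divisible groups of Newton polygon $\nu$. Concretely, I would let $D_0$ be the (unique up to isomorphism) minimal $p$-divisible group over $k$ with Newton polygon $\nu$, and set $\scrE=(D_0)_R$, the constant $p$-divisible group over $R$ obtained by base change. By construction $\scrE$ has constant Newton polygon $\nu$, so the hypotheses of Theorem \ref{Th-going-down-q} apply to the pair $(\scrD,\scrE)$ and yield
$$
q_{\scrD_k,\scrE_k}\leq q_{\scrD_{\bar K},\scrE_{\bar K}}.
$$

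The remaining task is to identify each side with the appropriate minimal height. For the special fibre this is immediate from the definition: $\scrE_k=D_0$ is minimal and isogenous to $\scrD_k$, so $q_{\scrD_k,\scrE_k}=q_{\scrD_k,D_0}=q_{\scrD_k}$. For the generic geometric fibre we need to know that $\scrE_{\bar K}=(D_0)_{\bar K}$ is still a minimal $p$-divisible group in the isogeny class of $\scrD_{\bar K}$; once this is established, the uniqueness of the minimal object (recalled in the paragraph preceding the definition of $q_D$) gives $q_{\scrD_{\bar K},\scrE_{\bar K}}=q_{\scrD_{\bar K}}$, and the corollary follows.

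The one step that requires a small check — and is the main (minor) obstacle — is the preservation of minimality under the base change $k\hookrightarrow\bar K$. Minimality of $D_0$ means $n_{D_0}=1$, and by Lemma \ref{Le-perm}(a) we have $\End(D_0)\cong\End((D_0)_{\bar K})$, so the endomorphism ring of $(D_0)_{\bar K}$ is still a maximal order in $\End(D_0)\otimes\QQ$; equivalently, $(D_0)_{\bar K}$ is minimal. Alternatively, one can invoke the fact that the minimal $p$-divisible group of Newton polygon $\nu$ is defined over $\FF_p$ (so take $\scrE$ to be the base change from $\FF_p$ to $R$ of this canonical object), which makes the invariance under arbitrary algebraically closed base change manifest.

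Putting the pieces together gives a two-line argument: apply Theorem \ref{Th-going-down-q} to $\scrD$ and the constant family $\scrE=(D_0)_R$, and read off both sides via the identification $q_{-,D_0}=q_{-}$. Since $\scrD_{\bar K}$ and $\scrE_{\bar K}$ share the Newton polygon $\nu$ they are isogenous, so all distances involved are finite, and the chain of inequalities $q_{\scrD_k}=q_{\scrD_k,\scrE_k}\le q_{\scrD_{\bar K},\scrE_{\bar K}}=q_{\scrD_{\bar K}}$ is exactly the desired inequality.
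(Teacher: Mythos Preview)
Your proposal is correct and follows exactly the paper's approach: take $\scrE=D_0\times_{\Spec k}\Spec R$ for the minimal $D_0$ over $k$ and apply Theorem~\ref{Th-going-down-q}. The extra justification you give for minimality being preserved under $k\hookrightarrow\bar K$ is fine but not strictly needed here, since Lemma~\ref{Le-perm}(b) already records $q_D=q_{D_\kappa}$ for extensions of algebraically closed fields.
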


\begin{proof}
Let $D_0$ be the minimal $p$-divisible group over $k$
with Newton polygon $\nu$ and apply Theorem \ref{Th-going-down-q}
with $\scrE=D_0\times_{\Spec k}\Spec R$.
\end{proof}

\begin{Remark}
The proof of Theorem \ref{Th-going-down-q} could also be
formulated in terms of Dieudonn\'e modules over the
perfect ring $\bar R$. Then the reference to \cite{OZ}
can be replaced by \cite[Thm.~2.7.4]{Ka}. Here one only
needs to know that to a $p$-divisible group over $\bar R$
one can associate a covariant Dieudonn\'e module (by \cite[Cor.~3.4.3]{Be} 
this association is  actually an equivalence of categories).
 \end{Remark}

\subsection{An auxiliary family of $p$-divisible groups}

The following general construction is used in the proof of the going down 
principle for isogeny cutoffs, but we prove more than is actually needed.

\begin{Lemma}
\label{Le-aux-fam}
Let $\scrD$ be an infinitesimal $p$-divisible group
of height $h$ over an affine $\FF_p$-scheme $X$
and let $m\geq 1$ be an integer.
Then there exists a vector bundle $Y\to X$ of rank $h^2$ and
an infinitesimal $p$-divisible group $\scrE$ over $Y$
such that the following three properties hold:

\smallskip
(i) If $o:X\to Y$ denotes the zero section,
then we have $o^*\scrE\cong\scrD$.

\smallskip
(ii) If $\pi:Y\to X$ denotes the projection,
then we have $\scrE[p^m]\cong\pi^*\scrD[p^m]$.

\smallskip
(iii) For each geometric point $x:\Spec \kappa\to X$,
every $BT$ group $B$ of level $m+1$ over $\kappa$
with $B[p^m]\cong x^*\scrD[p^m]$ is isomorphic
to $y^*\scrE[p^{m+1}]$ for some geometric point
$y:\Spec \kappa\to Y$ with $\pi\circ y=x$.
\end{Lemma}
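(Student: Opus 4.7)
The plan is to invoke Zink's theory of (nilpotent) displays. Since $\scrD$ is infinitesimal over the affine $\FF_p$-scheme $X$, it corresponds to a nilpotent display $\mathcal{P}=(P,Q,F,\dot F)$ over $W(\mathcal O_X)$, with $P$ locally free of rank $h$. The prescribed rank $h^2$ of $Y$ suggests parameterizing an endomorphism $\phi$ of the rank-$h$ tangent module $M:=P/IP$, where $I$ denotes the augmentation ideal.

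Concretely, I would take $Y$ to be the total space of the rank-$h^2$ vector bundle $\uEnd_{\mathcal O_X}(M)$ over $X$, with zero section $o$, projection $\pi$, and universal endomorphism $\phi\in\End_{\mathcal O_Y}(\pi^*M)$. Choose any lift $\tilde\phi\in\End_{W(\mathcal O_Y)}(\pi^*P)$ of $\phi$ and define a perturbed display $\mathcal P_Y$ with the same underlying module and filtration as $\pi^*\mathcal P$ but with divided Frobenius $(1+p^m\tilde\phi)\circ\pi^*\dot F$ (and $F$ adjusted accordingly). Because the perturbation is $p^m$-divisible and $1+p^m\tilde\phi$ is a unit, the axioms of a nilpotent display are preserved, and $\mathcal P_Y$ is independent of the chosen lift $\tilde\phi$ up to isomorphism of displays. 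Let $\scrE$ be the associated $p$-divisible group over $Y$. Properties (i) and (ii) are then immediate from the construction: the perturbation vanishes along $o$, giving $o^*\scrE\cong\scrD$, and it is trivial modulo $p^m$, giving $\scrE[p^m]\cong\pi^*\scrD[p^m]$ via the standard compatibility between truncated displays and the $p^m$-torsion of the associated $p$-divisible group.

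The substantive step is (iii). Given a BT group $B$ over $\kappa$ of level $m+1$ with $B[p^m]\cong x^*\scrD[p^m]$, use that over an algebraically closed field every such $B$ lifts to a $p$-divisible group $D'$ over $\kappa$ to pass to its Dieudonn\'e display. Since the underlying $W(\kappa)$-module and filtration of a display are determined up to isomorphism by their reductions modulo $p^m$, fix an isomorphism with the mod-$p^{m+1}$ reduction of $x^*\mathcal P$ which is the identity on the level-$m$ truncation. After further correcting by a suitable automorphism of the display that acts trivially at level $m$, the divided Frobenius of $D'$ differs modulo $p^{m+1}$ from that of $x^*\mathcal P$ precisely by a factor of $(1+p^m\tilde\phi)$ for a uniquely determined $\phi\in\End_\kappa(M_x)$. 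The point $y\in Y(\kappa)$ corresponding to $\phi$ then satisfies $\pi\circ y=x$ and $y^*\scrE[p^{m+1}]\cong B$ by construction.

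The main obstacle is this last step: verifying that the perturbations of the form $(1+p^m\tilde\phi)\dot F$ with $\phi\in\End(M)$ indeed exhaust, up to display isomorphism trivial at level $m$, all lifts of a level-$m$ display to a level-$(m+1)$ display. The relevant rank calculation must work out to exactly $h^2$ parameters, matching the fibre dimension of $Y\to X$; the cleanest way to carry it out is probably via the crystalline deformation theory of truncated displays, exploiting the relation $F|_Q=p\dot F$ to absorb all the freedom in $F$ into the choice of $\phi$, after modding out by the stabilizing automorphisms.
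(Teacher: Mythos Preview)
Your approach is essentially the paper's: perturb the display of $\scrD$ by $p^m$ times a universal endomorphism ranging over a rank-$h^2$ bundle. The multiplicative perturbation $(1+p^m\tilde\phi)\circ\dot F$ is equivalent to the paper's additive perturbation $\Psi+p^m\tilde u$ of the $\sigma$-linear operator attached to a normal decomposition (compose your $\tilde\phi$ with $\Psi$ to pass between them), so there is no real difference here.

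There is, however, a genuine gap in your treatment of (ii). You assert that $\scrE[p^m]\cong\pi^*\scrD[p^m]$ is immediate from a ``standard compatibility between truncated displays and $p^m$-torsion''. Over an algebraically closed field this is indeed classical Dieudonn\'e theory, and that is enough to give the pointwise statement (ii$'$): for any two geometric points $y_1,y_2$ of $Y$ lying over the same point of $X$, one has $y_1^*\scrE[p^m]\cong y_2^*\scrE[p^m]$. But the lemma asks for the isomorphism over the scheme $Y$ itself, and over a general base the implication ``two displays with the same $\Psi$ modulo $p^m$ have isomorphic $BT[p^m]$'' is not obvious from the construction of $BT$. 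The paper isolates this as a separate lemma (its Lemma \ref{Le-disp-trunc}) and proves it by an explicit $2\times 2$ matrix trick: one builds auxiliary displays $\scrK,\scrK'$ on $P\oplus P$ whose $\Psi$-operators are block-triangular with the perturbation on the off-diagonal, exhibits an isomorphism $\scrK'\cong\scrK$, and then reads off a quasi-isomorphism between $[p^n:BT(\scrP)\to BT(\scrP)]$ and $[p^n:BT(\scrP')\to BT(\scrP')]$. You should either supply this argument or be explicit that only (ii$'$) is claimed, which in fact suffices for the application to Theorem \ref{Th-going-down-b}.

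For (iii), your detour through a $p$-divisible lift of $B$ works, but the paper's route is shorter: it sets up in advance a notion of level-$n$ display over $k$ and observes directly that, once the quadruple $(P',Q',\iota',\varepsilon')$ is fixed (which it is, up to isomorphism), the lifts of $F_1$ from level $m$ to level $m+1$ form a principal homogeneous space under the $\sigma$-linear maps $Q\to P/pP$, an $h^2$-dimensional space. This is exactly the rank calculation you defer to your last paragraph, and doing it at the truncated level avoids the extra normalizations you describe.
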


The proof uses displays. For a commutative ring $R$ with unit, let $W(R)$ be its
$p$-typical Witt ring, and let $I_R$ be the kernel of the projection
$W(R)\to R$. The Frobenius endomorphism of $W(R)$ is
denoted by $\sigma$.

Let us first recall how the display of a $p$-divisible group over
$k$ is related to the covariant Dieudonn\'e modules of its
truncations. For a positive integer $n$, let $W_n=W_n(k)=W(k)/(p^n)$ be the
truncated Witt ring, and let $W_\infty=W(k)$. A Dieudonn\'e module
over $k$ of level $n\in\NN^{\ast}\cup\{\infty\}$ is a triple $(P,F,V)$
where $P$ is a free $W_n$-module of finite rank, $F:P\to P$ is
$\sigma$-linear,
$V:P\to P$ is $\sigma^{-1}$-linear, and we have $FV=VF=p$. If $n=1$, we require
that $\Ker(F)=\Image(V)$. A different way to represent these
objects is as follows.

\begin{Defn}
Let $n\in\NN^{\ast}\cup\{\infty\}$. A display over $k$ of level $n$ is a collection $(P,Q,\iota,\varepsilon,F_1)$, where $P$ and $Q$ are free
$W_n$-modules of the same finite rank, where $\iota:Q\to P$ and $\varepsilon:P\to Q$ are $W_n$-linear maps with $\iota\varepsilon=p$ and $\varepsilon\iota=p$,
and where $F_1:Q\to P$ is a $\sigma$-linear isomorphism. For
$n=1$, we require that $\Ker(\varepsilon)=\Image(\iota)$.
\end{Defn}

For $n\in\NN^{\ast}\cup\{\infty\}$, Dieudonn\'e modules over $k$ of level $n$
are equivalent to displays over $k$ of level $n$ via the association
$(P,Q,\iota,\varepsilon,F_1)\mapsto(P,F,V)$ with
$F=F_1\varepsilon$ and $V=\iota F_1^{-1}$. On the other hand,
Dieudonn\'e modules over $k$ of level $n$ are equivalent to $BT$ groups of
level $n$ over $k$. These equivalences are compatible with the
natural truncation operations on all sides. Displays over $k$ of level
$\infty$ are equivalent to (not necessarily nilpotent) displays
over $k$ in the sense of \cite{Zi1} because for $n=\infty$ we can identify $Q$ with the $W_n$-submodule $\iota(Q)$ of $P$.

The essence of the proof of Lemma \ref{Le-aux-fam} is the
following. Assume that for $n\in\NN^{\ast}$ we want to lift a display
$(P,Q,\iota,\varepsilon,F_1)$ over $k$ of level $n$ to a display
$(P',Q',\iota',\varepsilon',F_1')$ over $k$ of level $n+1$. 
It is easy to
see that the quadruple $(P',Q',\iota',\varepsilon')$ is uniquely
determined up to isomorphism and thus we can fix it. The set of
lifts $F_1^\prime$ of $F_1$ is a principally homogeneous space
under the $k$-vector space of $\sigma$-linear maps $Q\to p^nP'\cong
P/pP$.

\begin{proof}[Proof of Lemma \ref{Le-aux-fam}]
Let $X=\Spec R$. We recall that the category of infinitesimal $p$-divisible groups over $R$ is equivalent to the category of nilpotent displays over $R$, cf.\ \cite{Zi1} and \cite{La}. Let $\scrP=(P,Q,F,F_1)$ be the nilpotent display over $R$ associated to $\scrD$ i.e.,\ $\scrD=BT(\scrP)$ in the sense of \cite[Thm.~81]{Zi1}. 
Here $F:P\to P$ and $F_1:Q\to P$ are $\sigma$-linear maps of $W(R)$-modules. 
We choose a normal decomposition $P=J\oplus T$ 
of finitely generated projective $W(R)$-modules such that $Q=J\oplus
I_RT$. Let $\Psi:P\to P$ be the $\sigma$-linear automorphism
given by $F_1$ on $J$ and by $F$ on $T$, cf. \cite[Lemma 9]{Zi1}.

Let $E=\Hom_{W(R)}(P^{(\sigma)},P)$ where 
$P^{(\sigma)}=W(R)\otimes_{\sigma,W(R)}P$. 
We define $Y=\Spec R'$ to be the vector bundle over $X$
associated to the projective $R$-module $\bar E=E/I_RE$,
in other words $R'=\Sym^*(\bar E^\vee)$ with $\bar E^\vee=\Hom_R(\bar E,R)$.
For a $W(R)$-module $M$ we write $M_{R'}=W(R')\otimes_{W(R)}M$.

Let $u=\id\in\bar E\otimes_R\bar E^\vee\subset\bar E\otimes_R{R'}$ be the 
tautological section. Let $\tilde u\in E_{R'}$ be a lift of $u$
that maps to zero under the
$W(R)$-linear map $E_{R'}\to E$ induced naturally by the zero section of $Y$.
Let $\tilde\scrP=(P_{R'},J_{R'}\oplus I_{R'}T_{R'},\tilde F,\tilde F_1)$
be the display over ${R'}$ such that the $\sigma$-linear automorphism
$\tilde\Psi:P_{R'}\to P_{R'}$ defined by $\tilde F_1$ on $J_{R'}$ and by 
$\tilde F$ on $T_{R'}$ is equal to $\sigma\otimes\Psi+p^m\tilde u$; 
here we view $\tilde u\in E_{R'}$
as a $\sigma$-linear endomorphism of $P_{R'}$. 
Then $\tilde\scrP$ is again
a nilpotent display because the nilpotency condition depends
only on $\tilde\Psi$ modulo $p$.
Finally let $\scrE=BT(\tilde\scrP)$ be the associated
infinitesimal $p$-divisible group over $Y$.

We have to verify that properties (i) to (iii) hold.
Condition (i) follows from the construction.
The discussion preceding this proof implies
that (iii) and the following weakening of (ii) hold
(they suffice for our application).

\smallskip
(ii)$'$
If $y_1,y_2:\Spec \kappa\to Y$ are geometric points with
equal image in $X(\kappa)$, then there exists an isomorphism
$y_1^*\scrE[p^m]\cong y_2^*\scrE[p^m]$.

\smallskip
Condition (ii) follows from the following general lemma.
\end{proof}

\begin{Lemma}
\label{Le-disp-trunc}
Let $R$ be a ring in which $p$ is nilpotent
and let
$$
\scrP=(P,Q,F,F_1)\qquad\text{and}\qquad\scrP'=(P,Q,F',F_1')
$$
be two displays over $R$ with the same $W(R)$-modules. For a given normal
decomposition $P=J\oplus T$ i.e., $Q=J\oplus I_RT$, let $\Psi:P\to
P$ (resp.\ $\Psi':P\to P$) be the associated $\sigma$-linear
automorphism given by $F_1$ (resp.\ $F_1'$) on $J$ and by $F$
(resp.\ $F'$) on $T$. If\/ $\Psi'-\Psi=p^n\Omega$ for a $\sigma$-linear
endomorphism $\Omega:P\to P$, then there exists an isomorphism of fppf
sheaves over $R$ 
$$
BT(\scrP)[p^n]\cong BT(\scrP')[p^n].
$$
\end{Lemma}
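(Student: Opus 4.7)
The plan is to invoke the compatibility of Zink's equivalence between nilpotent displays and formal $p$-divisible groups with truncation at level $n$, and then to use that the hypothesis $\Psi'-\Psi=p^n\Omega$ forces the displays $\scrP$ and $\scrP'$ to have the same reduction modulo $p^n$.

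First I would note that, with the normal decomposition $P=J\oplus T$ fixed, each of $\scrP$ and $\scrP'$ is completely recovered from its associated $\sigma$-linear operator via the recipe $F|_T=\Psi|_T$, $F|_J=p\Psi|_J$, and $F_1|_J=\Psi|_J$ (and similarly for $\scrP'$). Since $\sigma$ fixes $p$ in $W(R)$, the operators $\Psi,\Psi'$ preserve $p^n P$, and by hypothesis they induce one and the same $\sigma$-linear map $\bar\Psi\colon P/p^nP\to P/p^nP$. Consequently the structure maps $F,F'$ coincide modulo $p^n$ on $T$, coincide even modulo $p^{n+1}$ on $J$, and $F_1,F_1'$ agree modulo $p^n$ on $J$. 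In short, the displays $\scrP$ and $\scrP'$ have literally identical mod-$p^n$ truncation relative to the common normal decomposition.

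The second step is to appeal to the principle that the functor $BT$ is compatible with truncation in the following sense: for a nilpotent display $\scrP$, the $p^n$-torsion $BT(\scrP)[p^n]$ depends only on the mod-$p^n$ truncation of the display data. This is part of Zink's theory \cite{Zi1} and its extension to arbitrary base rings \cite{La}, where truncated displays of level $n$ are in equivalence with BT groups of level $n$. Combined with Step~1, this yields the required isomorphism $BT(\scrP)[p^n]\cong BT(\scrP')[p^n]$.

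The main obstacle is Step~2, which requires a careful invocation of the truncation-compatibility of the $BT$ functor; in particular one must verify that the mod-$p^n$ reduction is compatible not merely on $P$ but also on the submodule $Q$, a point handled here by fixing the same normal decomposition on both sides. A more explicit alternative, which we could pursue if a direct citation is unavailable, is to form an interpolating nilpotent display $\tilde\scrP$ over $R[t]$ with operator $\Psi+t\cdot p^n\Omega$ (nilpotency only depends on the mod-$p$ reduction, which is unchanged), show that the $p^n$-torsion of $BT(\tilde\scrP)$ is pulled back from $R$, and then specialize at $t=0$ and $t=1$ to obtain the isomorphism.
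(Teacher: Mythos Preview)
Your Step~2 is where the argument breaks down. The references \cite{Zi1} and \cite{La} establish the equivalence between nilpotent displays and formal $p$-divisible groups, but neither develops a theory of \emph{truncated} displays or proves that $BT(\scrP)[p^n]$ depends only on some mod-$p^n$ reduction of the display data. That theory came later and is not available in the paper's setting. Moreover, even granting such a theory, the relevant truncation of a display over a general ring $R$ lives over the truncated Witt ring $W_n(R)$, whose relation to $W(R)/p^nW(R)$ is straightforward only when $R$ is perfect; the hypothesis $\Psi'-\Psi\in p^n\End_{W(R)}(P)$ does not obviously imply that $\scrP$ and $\scrP'$ induce the same object over $W_n(R)$. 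So Step~1 does not feed into Step~2 in the way you suggest.

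Your interpolation alternative is circular: to conclude that $BT(\tilde\scrP)[p^n]$ over $R[t]$ is pulled back from $R$, you would need to know that deforming $\Psi$ by something in $p^n\End(P)$ leaves the $p^n$-torsion unchanged, which is exactly the content of the lemma.

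The paper avoids all of this by a direct, self-contained construction. It builds two auxiliary displays $\scrK,\scrK'$ on $P\oplus P$ whose $\Psi$-operators are the block-triangular matrices $\left(\begin{smallmatrix}\Psi&-\Omega\\0&\Psi'\end{smallmatrix}\right)$ and $\left(\begin{smallmatrix}\Psi&0\\\Omega&\Psi'\end{smallmatrix}\right)$, exhibits an explicit isomorphism $\scrK'\cong\scrK$ via the matrix $\left(\begin{smallmatrix}0&-1\\1&p^n\end{smallmatrix}\right)$, and then writes down maps $\Theta,\Theta'$ to $\scrP\oplus\scrP'$ so that the two-term complexes $[\scrP\xrightarrow{p^n}\scrP]$ and $[\scrP'\xrightarrow{p^n}\scrP']$ become quasi-isomorphic after applying $BT$. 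This uses only the exactness of the functor $BT$, which \emph{is} in \cite{Zi1}, and nothing about truncated displays.
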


\noindent Here $BT(\scrP)$ is the formal group over $R$ associated to $\scrP$ by
\cite[Thm.~81]{Zi1}, which is a $p$-divisible group when
$\scrP$ is nilpotent.

\begin{proof}
Let $P_2=P\oplus P$ and $Q_2=Q\oplus Q$ be endowed with the induced
normal decompositions $P_2=J\oplus T\oplus J\oplus T$
and $Q_2=J\oplus I_RT\oplus J\oplus I_RT$. We define two
displays $\scrK$ and $\scrK'$ with the same underlying
$W(R)$-modules $Q_2\subseteq P_2$ such that the associated
operators $\Psi$ with respect to these normal
decomposition are
$$
\Psi_{\!\scrK}=\left(\begin{matrix}
\Psi & -\Omega \\ 0 & \Psi' \end{matrix}\right)
\qquad\text{and}\qquad
\Psi_{\!\scrK'}=\left(\begin{matrix}
\Psi & 0 \\ \Omega & \Psi' \end{matrix}\right)
$$
as block matrices for $P_2=P\oplus P$. The matrix
$\Gamma=\left(\begin{smallmatrix}
0 & -1 \\ 1 & p^n \end{smallmatrix}\right)$
defines an isomorphism $\Gamma:\scrK'\cong\scrK$.
Indeed, as $\Gamma$ respects the normal
decompositions, this is equivalent to the
relation $\Gamma\Psi_{\!\scrK'}=\Psi_{\!\scrK}\Gamma$,
which is easily checked. Moreover, there exist
homomorphisms of displays
$$
\Theta:\scrK\to\scrP\oplus\scrP'
\qquad\text{and}\qquad
\Theta':\scrK'\to\scrP\oplus\scrP'
$$
given by the matrices
$\Theta=\left(\begin{smallmatrix}
p^n & 1 \\ 0 & 1 \end{smallmatrix}\right)$
and $\Theta'=\left(\begin{smallmatrix}
1 & 0 \\ 1 & p^n \end{smallmatrix}\right)$;
the required relations
$\Theta\Psi_{\!\scrK}=(\Psi\oplus\Psi')\Theta$
and $\Theta'\Psi_{\!\scrK'}=(\Psi\oplus\Psi')\Theta'$
are easily verified. We have $\Theta\Gamma=\Theta'$.
Consider the following homomorphisms of
complexes of displays:
$$
[\scrP'\xrightarrow{p^n}\scrP']
\xrightarrow{u'}
[\scrK'\xrightarrow{\Theta'}\scrP\oplus\scrP']
\cong
[\scrK\xrightarrow{\Theta}\scrP\oplus\scrP']
\xleftarrow{u}
[\scrP\xrightarrow{p^n}\scrP].
$$
The middle isomorphism is given by $\Gamma:\scrK'\cong\scrK$
and the identity of $\scrP\oplus\scrP'$.
The homomorphism $u$ is $x\mapsto(x,0)$ in both degrees,
while $u'$ is $x\mapsto(0,x)$ in both degrees.
We have an exact sequence of complexes of displays
$$
0\to[\scrP\xrightarrow{p^n}\scrP]\xrightarrow{u}
[\scrK\xrightarrow{\Theta}\scrP\oplus\scrP']\to
[\scrP'\xrightarrow{1}\scrP']\to 0
$$
and a similar one with the roles of $u$ and $\Theta$
taken by $u'$ and $\Theta'$. As the functor $BT$
preserves exact sequences, it follows that $BT(u)$ and $BT(u')$
are quasi-isomorphisms of complexes of fppf sheaves over $R$. 
Thus the complex $[p^n:BT(\scrP)\to BT(\scrP)]$ is quasi-isomorphic to 
$[p^n:BT(\scrP')\to BT(\scrP')]$, and the lemma holds.
\end{proof}

\subsection{Isogeny cutoffs}

\begin{Thm}
\label{Th-going-down-b}
Let $R=k[[t]]$
and let $\bar K$ be an algebraic closure of $K=k((t))$.
For every $p$-divisible group $\scrD$ over
$R$ with constant Newton polygon $\nu$ we have
$$
b_{\scrD_k}\leq b_{\scrD_{\bar K}}.
$$
\end{Thm}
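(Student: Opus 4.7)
Set $m := b_{\scrD_{\bar K}}$. We must show that every $p$-divisible group $E$ over $k$ with $E[p^m]\cong\scrD_k[p^m]$ is isogenous to $\scrD_k$. By Lemma~\ref{Le-prelim}(c) one reduces to the connected case: $b_{\scrD_k}=b_{\scrD_k^\circ}$ and $b_{\scrD_{\bar K}}=b_{\scrD_{\bar K}^\circ}$, while the \'etale parts of $E$ and $\scrD_k$ are isomorphic as they are determined by their common height (read off from the given truncation isomorphism). Thus we may assume $\scrD$ and $E$ are infinitesimal. The overall strategy is to build a $p$-divisible group $\scrF$ over $\bar R$ with $\scrF[p^m]\cong\scrD_{\bar R}[p^m]$ whose special fibre captures $E$; the hypothesis then yields an isogeny on the generic fibre, which is transferred to the special fibre via Theorem~\ref{Th-going-down-q}.

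Apply Lemma~\ref{Le-aux-fam} to $\scrD$ over $X=\Spec R$ with parameter $m$, obtaining a vector bundle $\pi:Y\to X$ and a $p$-divisible group $\scrE$ over $Y$ with $o^*\scrE\cong\scrD$, $\scrE[p^m]\cong\pi^*\scrD[p^m]$, and the universal property (iii). Since $E[p^{m+1}]$ is a BT of level $m+1$ extending $\scrD_k[p^m]$, (iii) produces $y_0\in Y(k)$ above the closed point of $X$ with $y_0^*\scrE[p^{m+1}]\cong E[p^{m+1}]$. As $Y\to X$ is an affine-space bundle, $y_0$ lifts to an $\bar R$-point $\tilde y:\Spec\bar R\to Y$ above the canonical map $\Spec\bar R\to X$. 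Set $\scrF:=\tilde y^*\scrE$; property (ii) gives $\scrF[p^m]\cong\scrD_{\bar R}[p^m]$, and by construction $\scrF_k[p^{m+1}]\cong E[p^{m+1}]$.

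The generic fibre satisfies $\scrF_{\bar K}[p^m]\cong\scrD_{\bar K}[p^m]$, so $b_{\scrD_{\bar K}}\le m$ forces $\scrF_{\bar K}$ to be isogenous to $\scrD_{\bar K}$. In particular $\scrF_{\bar K}$ has Newton polygon $\nu$; combining this with upper semicontinuity and the isogeny on the generic fibre shows that $\scrF$ has constant Newton polygon $\nu$. Theorem~\ref{Th-going-down-q} applied to $(\scrF,\scrD_{\bar R})$ then yields $\scrF_k$ isogenous to $\scrD_k$.

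The remaining task — upgrading the data ``$\scrF_k$ isogenous to $\scrD_k$ and $\scrF_k[p^{m+1}]\cong E[p^{m+1}]$'' into ``$E$ isogenous to $\scrD_k$'' — is the main obstacle of the proof. Over the algebraically closed field $k$, by Dieudonn\'e--Manin this reduces to showing that $E$ itself has Newton polygon $\nu$. My plan to handle this step is to refine the construction so that the special fibre agrees with $E$ on the nose, not merely at level $m+1$: one arranges a $p$-divisible group $\scrF'$ over $\bar R$ with $\scrF'_k\cong E$ and $\scrF'[p^m]\cong\scrD_{\bar R}[p^m]$. The existence of such a $\scrF'$ should follow from the formal smoothness of the deformation theory of $p$-divisible groups, together with the essential surjectivity (over complete local rings) of the truncation functor from deformations of $E$ to deformations of $E[p^m]$ — a principle which is essentially what the auxiliary family of Lemma~\ref{Le-aux-fam} encodes. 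Given $\scrF'$, the previous paragraph applies verbatim and delivers $E=\scrF'_k$ isogenous to $\scrD_k$, completing the proof.
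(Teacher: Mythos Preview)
Your argument has a genuine gap at the sentence ``combining this with upper semicontinuity and the isogeny on the generic fibre shows that $\scrF$ has constant Newton polygon $\nu$.'' Grothendieck semicontinuity only tells you that the Newton polygon of the special fibre $\scrF_k$ lies on or above $\nu$; there is no mechanism in your argument giving the opposite inequality. Indeed, knowing that $\scrF_k[p^m]\cong\scrD_k[p^m]$ is of no help: if this forced $\scrF_k$ to have polygon $\nu$, that would already say $b_{\scrD_k}\le m$, which is precisely the statement you are trying to prove. The same circularity infects your refined construction $\scrF'$ with $\scrF'_k\cong E$: you still need $\scrF'$ to have constant Newton polygon before invoking Theorem~\ref{Th-going-down-q}, and that amounts to knowing the Newton polygon of $E$. (Your construction of $\scrF'$ is exactly Illusie's lifting theorem, and it is used in the paper's proof of Theorem~\ref{Th-going-down-n} --- but there the constant Newton polygon of $\scrF'$ is established \emph{by citing Theorem~\ref{Th-going-down-b}}, so you cannot borrow that step.)

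The paper breaks this circularity by \emph{not} pulling $\scrE$ back along a single section of $Y\to X$. Instead it works with the entire family $\scrE$ over $Y$ and proves that $\scrE$ has constant Newton polygon on all of $Y$ via the purity theorem for Newton strata: the open $\nu$-stratum $Y_\nu$ contains both $Y_K$ (because $\scrE[p^m]\cong\pi^*\scrD[p^m]$ and $m\ge b_{\scrD_{\bar K}}$) and the zero section $o(X)$ (because $o^*\scrE\cong\scrD$), hence its complement has codimension $\ge 2$, so purity forces $Y_\nu=Y$. A second idea you are missing is the maximality trick: the paper assumes at the outset that $b_{\scrD_k}$ is maximal among all $\scrD'$ over $R$ with constant polygon $\nu$ and with $b_{\scrD'_{\bar K}}=b_{\scrD_{\bar K}}$; this pins down $b_{\scrE}(y)=m+1$ for every closed point $y\in Y_k$, and then property~(iii) of Lemma~\ref{Le-aux-fam} immediately yields $b_{\scrD_k}\le m$, a contradiction. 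Neither the purity step nor the maximality reduction has an analogue in your outline, and both are essential.
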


\begin{proof}
We can assume that $\scrD$
is infinitesimal because there exists an exact sequence of
$p$-divisible groups
$0\to\scrD^\circ\to\scrD\to\scrD^{\text{\rm \'et}}\to 0$ with
$\scrD^{\text{\rm \'et}}$ \'etale and $\scrD^\circ_k$ connected. Then
$\scrD^\circ$ is infinitesimal because its Newton polygon is
constant. By Lemma \ref{Le-prelim}, we can replace $\scrD$ by
$\scrD^\circ$. 

We show that the assumption $b_{\scrD_k}>b_{\scrD_{\bar K}}$ leads
to a contradiction. We can assume that $\scrD$ has the
following maximality property: For each $p$-divisible group
$\scrD'$ over $R$ of constant Newton polygon $\nu$ with
$b_{\scrD_{\bar K}}=b_{\scrD'_{\bar K}}$, we have $b_{\scrD_k}\geq
b_{\scrD'_k}$. Let $m=b_{\scrD_k}-1>0$. We consider a vector bundle $\pi:Y\to X$ and an infinitesimal $p$-divisible group
$\scrE$ over $Y$ such that properties (i) to (iii) of Lemma \ref{Le-aux-fam} hold. Note that
$Y\cong\Spec R[t_1,\ldots,t_{h^2}]$, as $R$ is local.

We claim that $\scrE$ has constant Newton polygon $\nu$. Let
$Y_{\nu}\subseteq Y$ be the $\nu$-stratum of the Newton
polygon stratification on $Y$ defined by $\scrE$. As
$\scrE[p^m]\cong\pi^*\scrD[p^m]$ and as $m\geq b_{\scrD_{\bar
K}}$ by assumption, $Y_\nu$ contains all the $\bar K$-valued points of the generic fibre $Y_K$ and thus it contains $Y_K$. In particular, $Y_\nu$ is the unique open stratum.
Moreover, $Y_\nu$ contains the image of the zero section
$o:X\to Y$ as $o^*\scrE\cong\scrD$. Hence the complement of
$Y_\nu$ in $Y$ has codimension $\geq 2$. By the (weak)
purity theorem for Newton polygon strata (see \cite[Thm. 4.1]{dJO},
\cite[Thm.~1.6]{Va1}, or \cite{Zi2}) we get that $Y=Y_\nu$ as claimed.

Next we consider the function $b_\scrE:Y\to\NN$. As
$\scrE[p^m]\cong\pi^*\scrD[p^m]$, the relation $b_{\scrD_{\bar
K}}\leq m$ implies that $b_\scrE(y)=b_{\scrD_{\bar K}}$ for all
 closed points $y\in Y_K$, and the relation $b_{\scrD_k}>m$ implies that
$b_\scrE(y)>m$ for all closed points $y\in Y_k$. By the maximality property of
$\scrD$ it follows that $b_\scrE(y)=m+1$ for all closed points
$y\in Y_k$. Thus, as $\scrE$ has constant Newton polygon, from the property (iii) of Lemma \ref{Le-aux-fam} we get that $b_{\scrD_k}\leq m$. Contradiction.
\end{proof}

\subsection{Isomorphism numbers}

\begin{Thm}
\label{Th-going-down-n}
Let $R$, $K$,
and $\bar K$ be as in the previous theorem. 
For every $p$-divisible group $\scrD$ over
$R$ with constant Newton polygon $\nu$ we have
$$
n_{\scrD_k}\leq n_{\scrD_{\bar K}}.
$$
\end{Thm}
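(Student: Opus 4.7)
The plan is to adapt the strategy of the proof of Theorem \ref{Th-going-down-b}, replacing the maximality argument there with an appeal to the already-established going-down principle for $q$ (Theorem \ref{Th-going-down-q}).

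Following that blueprint, I would first use Lemma \ref{Le-prelim} to reduce to the case where $\scrD$ is infinitesimal. Assume for contradiction that $n_{\scrD_k}>n_{\scrD_{\bar K}}=:n$ and set $m=n_{\scrD_k}-1\geq n$. Apply Lemma \ref{Le-aux-fam} with this $m$ to obtain a vector bundle $\pi:Y\to X=\Spec R$ of rank $h^2$ and a $p$-divisible group $\scrE$ over $Y$ satisfying properties (i)--(iii). The purity argument used in the proof of Theorem \ref{Th-going-down-b}, based on $\scrE[p^m]\cong\pi^*\scrD[p^m]$ and the inequality $m\geq n\geq b_{\scrD_{\bar K}}$, shows that $Y\setminus Y_\nu$ has codimension $\geq 2$, so by Newton polygon purity $Y=Y_\nu$; hence $\scrE$ has constant Newton polygon $\nu$.

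The key step is to prove $\scrE_{y_0}\cong\scrD_k$ for every closed point $y_0\in Y_k$. Identify $Y$ with $\Spec R[s_1,\ldots,s_{h^2}]$ and write the coordinates of $y_0$ as $a=(a_1,\ldots,a_{h^2})\in k^{h^2}$. Define the constant section $o_a:X\to Y$ by $s_i\mapsto a_i\in k\subset R$; this section passes through $y_0$ at $t=0$ and sends the generic point of $X$ into $Y_K$. Pulling back, $o_a^*\pi^*\scrD=\scrD$ and $o_a^*\scrE$ are $p$-divisible groups over $R$ with constant Newton polygon $\nu$, and property (ii) of Lemma \ref{Le-aux-fam} gives an isomorphism $(o_a^*\scrE)[p^m]\cong\scrD[p^m]$. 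Over $\bar K$ the hypothesis $n_{\scrD_{\bar K}}\leq m$ forces $(o_a^*\scrE)_{\bar K}\cong\scrD_{\bar K}$, so $q_{\scrD_{\bar K},(o_a^*\scrE)_{\bar K}}=0$. Applying Theorem \ref{Th-going-down-q} to these two families over $R$ yields $q_{\scrD_k,\scrE_{y_0}}=0$, i.e.\ $\scrE_{y_0}\cong\scrD_k$.

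To conclude, the assumption $n_{\scrD_k}=m+1$ provides some $E$ over $k$ with $E[p^m]\cong\scrD_k[p^m]$ but $E\not\cong\scrD_k$. Property (iii) of Lemma \ref{Le-aux-fam}, applied to $B=E[p^{m+1}]$, produces a point $y\in Y_k$ with $\scrE_y[p^{m+1}]\cong E[p^{m+1}]$. Combined with the isomorphism $\scrE_y\cong\scrD_k$ from the previous step this gives $E[p^{m+1}]\cong\scrD_k[p^{m+1}]$, and the relation $n_{\scrD_k}\leq m+1$ then forces $E\cong\scrD_k$, contradicting the choice of $E$. I expect the main conceptual obstacle to be the gap between the constant Newton polygon condition on $\scrE$ (which alone only makes $\scrE_{y_0}$ isogenous, not isomorphic, to $\scrD_k$) and the required isomorphism $\scrE_{y_0}\cong\scrD_k$; bridging this gap via the constant section $o_a$ and the going-down principle for $q$ is precisely what renders the maximality argument used in the $b$-proof unnecessary here.
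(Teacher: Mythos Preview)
Your argument is correct, but the paper's proof takes a considerably more direct route that avoids the auxiliary family, the purity theorem, and the contradiction set-up altogether. Setting $m=n_{\scrD_{\bar K}}$, the paper picks an arbitrary $E$ over $k$ with $E[p^m]\cong\scrD_k[p^m]$ and applies Illusie's lifting theorem \cite[Thm.~4.4 f)]{Ill} to produce a single $p$-divisible group $\scrE$ over $R$ with $\scrE_k\cong E$ and $\scrE[p^m]\cong\scrD[p^m]$. The latter isomorphism forces $\scrE_{\bar K}\cong\scrD_{\bar K}$, while the already-established Theorem \ref{Th-going-down-b} (rather than a fresh purity argument) shows $b_{\scrD_k}\le m$ and hence that $\scrE$ has constant Newton polygon. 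Theorem \ref{Th-going-down-q} then gives $E\cong\scrD_k$ directly.

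Your approach re-deploys the whole apparatus of Lemma \ref{Le-aux-fam} and Newton-polygon purity from the proof of Theorem \ref{Th-going-down-b}, and then uses constant sections $o_a$ to get, in effect, exactly the families over $R$ that Illusie's theorem hands you in one line. The section trick is a nice observation, and your proof has the mild virtue of not invoking \cite{Ill} explicitly (only the display-theoretic Lemma \ref{Le-aux-fam}); but the paper's version is shorter, structurally cleaner, and makes transparent that once going-down for $b$ and for $q$ are known, going-down for $n$ is essentially a formal consequence via a single lift.
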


\begin{proof}
As in the proof of Theorem \ref{Th-going-down-b}
we can assume that $\scrD$ is infinitesimal.
Let $m=n_{\scrD_{\bar K}}$. Let
$E$ be a $p$-divisible group over $k$ such that
$E[p^m]$ is isomorphic to $\scrD_k[p^m]$.
By \cite[Thm.~4.4 f)]{Ill} there exists a $p$-divisible
group $\scrE$ over $R$ such that $\scrE_k\cong E$
and $\scrE[p^m]\cong\scrD[p^m]$.
By the choice of $m$, the last isomorphism implies that
$\scrE_{\bar K}$ and $\scrD_{\bar K}$ are isomorphic. Thus $q_{\scrD_{\bar K},\scrE_{\bar K}}=0$.
By Theorem \ref{Th-going-down-b} and
Lemma \ref{Le-prelim} (a) we have
$b_{\scrD_k}\leq b_{\scrD_{\bar K}}\leq m$.
Therefore $\scrE_k$ and $\scrD_k$ have the same Newton polygon,
which implies that $\scrE$ has constant Newton polygon $\nu$.
As $q_{\scrD_{\bar K},\scrE_{\bar K}}=0$, from Theorem \ref{Th-going-down-q} we get that $q_{\scrD_k,E}=0$ i.e., $E$ is
isomorphic to $\scrD_k$. Thus $n_{\scrD_k}\leq m$.
\end{proof}


\section{Semicontinuity}

In this section, we prove Theorem \ref{Th-semicont} and truncated variants of it.

\subsection{Lifting the level of truncated $BT$ groups}

Assume that $n\geq m\geq 1$ are integers.
Let $\scrX_n$ be the algebraic stack of $BT$
groups of level $n$ and let
$\tau:\scrX_n\to\scrX_m$ be the truncation
morphism.

\begin{Defn}
Let $\scrB$ be a $BT$ group of level $m$ over a scheme $X$.
An \emph{exhaustive extension} of $\scrB$ to level $n$ is a
$BT$ group $\scrC$ of level $n$ over an $X$-scheme $Y$
together with an isomorphism $\scrB_Y\cong\scrC[p^m]$
such that the induced morphism
$Y\to X\times_{\scrX_m}\scrX_n$
is surjective on geometric points.
\end{Defn}

\noindent
It is easy to see that each $\scrB$ over $X$ as above has an
exhaustive extension to level $n$ over an affine $X$-scheme $Y$ of finite
type. Namely, if $Z\to\scrX_n$ is an affine smooth presentation, we can
take $Y=X\times_{\scrX_m}Z$. As $\tau$ is smooth and surjective by
\cite[Thm.~4.4]{Ill}, $Y\to X$ is as well affine smooth and surjective.  

With more effort, one can arrange that the universal $BT$ group of level
$n$ over $Z$ (and thus also the exhaustive extension $\scrC$ over
$Y$) comes from a $p$-divisible group (cf. \cite[Prop.~2.3]{NVW} via a natural passage to an affine open cover).
Here we need only the following consequence of this fact, which
can be deduced from the previous paragraph by a limit argument.

\begin{Lemma}
\label{Le-extend}
For a $BT$ group $\scrB$ of level $m$
over a scheme $X$ there exists a faithfully flat
affine morphism $Y\to X$ and a $p$-divisible group
$\scrD$ over $Y$ such that $\scrB_Y\cong\scrD[p^m]$.
\end{Lemma}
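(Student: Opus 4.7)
The plan is to deduce Lemma \ref{Le-extend} from the stronger assertion recalled just above it, namely \cite[Prop.~2.3]{NVW}, which produces, for each $n \geq m$, a smooth surjective morphism $Z \to \scrX_n$ together with a $p$-divisible group $\scrD_Z$ on $Z$ whose $p^n$-torsion is the universal BT group of level $n$ on $Z$.

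First, I would fix such a pair $(Z,\scrD_Z)$ and, by replacing $Z$ with a disjoint union of affine opens covering it, arrange that $Z$ is affine while preserving smoothness and surjectivity of $Z \to \scrX_n$. Composition with the truncation morphism $\tau:\scrX_n\to\scrX_m$, which is smooth and surjective by \cite[Thm.~4.4]{Ill}, then yields a smooth surjective morphism $Z \to \scrX_m$ from an affine scheme. Letting $X \to \scrX_m$ be the morphism classifying $\scrB$, I would define $Y := X\times_{\scrX_m} Z$ and let $\scrD$ be the pullback of $\scrD_Z$ along the projection $Y\to Z$; the identification $\scrB_Y\cong \scrD[p^m]$ is then built into the universal property of the fibre product, and $Y\to X$ is faithfully flat as a base change of a smooth surjective morphism.

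The main point to verify is that $Y\to X$ is affine. This follows from the stack-theoretic fact that $\scrX_m$ has affine diagonal, which in turn reflects the representability of $\uIsom(\scrB_1,\scrB_2)$ for any two BT groups $\scrB_1,\scrB_2$ of level $m$ by an affine scheme of finite presentation (as a closed subscheme of the affine scheme of homomorphisms, which sits inside a suitable vector bundle). Granted this, any morphism from an affine scheme to $\scrX_m$ is representable by affine morphisms, and base change along $X\to\scrX_m$ preserves affineness. The ``limit argument'' alluded to in the excerpt is what bridges the a priori finite-type situation of \cite[Prop.~2.3]{NVW} and the case of an arbitrary base $X$: one writes $X$ Zariski-locally as an inverse limit of finite-type pieces and spreads out, using that BT groups and $p$-divisible groups are of finite presentation. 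The subtle step is this affineness verification, since smoothness alone would not give the conclusion, and it is here that choosing $Z$ affine rather than merely smooth over $\scrX_m$ is essential.
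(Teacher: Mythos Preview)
Your argument is correct and matches the paper's intended approach: take an affine smooth presentation $Z\to\scrX_m$ carrying a $p$-divisible group (via \cite[Prop.~2.3]{NVW}), form $Y=X\times_{\scrX_m}Z$, and use the affine diagonal of $\scrX_m$ to conclude that $Y\to X$ is affine. Your justification of the affine diagonal through the affineness of $\uIsom$-schemes of finite locally free group schemes is the right one.

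One point deserves correction, though it does not affect validity. The ``limit argument'' in the paper is \emph{not} about writing an arbitrary $X$ as a limit of finite-type schemes; your construction $Y=X\times_{\scrX_m}Z$ already works for every $X$ with no further reduction, since $Z$ and $\scrX_m$ are fixed over $\Spec\ZZ$. Rather, the paper is pointing out an \emph{alternative} route that avoids invoking \cite[Prop.~2.3]{NVW}: using only the easy existence of exhaustive extensions to each finite level $n$, one obtains a tower of affine faithfully flat $X$-schemes $Y_n$ carrying compatible BT groups of level $n$, and the inverse limit $Y=\varprojlim_n Y_n$ is then affine and faithfully flat over $X$ with the BT groups assembling into a $p$-divisible group. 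Your spreading-out step is therefore unnecessary and should be removed; the proof is complete once you have established affineness of $Y\to X$ via the affine diagonal.
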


Next we show that having a unique lift
(up to isomorphism or up to isogeny)
is a constructible property of truncated
$BT$ groups.

\begin{Prop}
\label{Pr-constr-isom}
Let $\scrB$ be a $BT$ group of level $m$ over a scheme $X$
and let $n\geq m$ be an integer. There exists a constructible
subset $U$ of $X$ such that a geometric point
$\bar x:\Spec\kappa\to X$ lies in $U(\kappa)$ if and only if
all $BT$ groups of level $n$ over $\kappa$ which extend the 
geometric fibre $\scrB_{\bar x}$ are isomorphic.
\end{Prop}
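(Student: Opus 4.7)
The strategy is to reduce the question to a double application of Chevalley's constructibility theorem, comparing the fibres of a universal family of level-$n$ extensions via an $\uIsom$-scheme.

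First I would fix the auxiliary cover described just before the proposition. Since the stack $\scrX_n$ is algebraic and of finite type over $\FF_p$, choose a smooth affine presentation $Z\to\scrX_n$ of finite type and set $Y=X\times_{\scrX_m}Z$. (Constructibility is Zariski-local on $X$, so nothing is lost by shrinking $X$ to an affine.) Then $\pi:Y\to X$ is smooth, surjective and of finite presentation, and the universal BT group of level $n$ on $Z$ pulls back to a BT group $\scrC$ of level $n$ on $Y$ equipped with an isomorphism $\scrC[p^m]\cong\scrB_Y$. By the exhaustiveness of this extension, for every geometric point $\bar x:\Spec\kappa\to X$, the BT groups of level $n$ over $\kappa$ extending $\scrB_{\bar x}$ are, up to isomorphism, exactly the fibres $\scrC_y$ for $y\in Y_{\bar x}(\kappa)$.

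Next, form $W=Y\times_XY$ with projections $p_1,p_2$, let $\scrC_1=p_1^*\scrC$ and $\scrC_2=p_2^*\scrC$, and let
$$
J=\uIsom(\scrC_1,\scrC_2)
$$
be the scheme of isomorphisms of BT groups of level $n$ (with no compatibility requirement concerning the truncations). Since $\scrC_1$ and $\scrC_2$ are finite locally free over $W$, $J$ is representable by a scheme of finite presentation over $W$. Let $I\subseteq W$ be the image of $J\to W$; by Chevalley it is constructible in $W$. A second application of Chevalley to the morphism of finite presentation $f:W\to X$ shows that $f(W\setminus I)$ is constructible in $X$, and I propose
$$
U:=X\setminus f(W\setminus I).
$$

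The equivalence is now routine. By the first step, ``all BT groups of level $n$ over $\kappa$ extending $\scrB_{\bar x}$ are isomorphic'' translates into ``$\scrC_{y_1}\cong\scrC_{y_2}$ for every pair $y_1,y_2\in Y_{\bar x}(\kappa)$'', i.e.\ every $\kappa$-point of $W_{\bar x}$ lies in $I$. Since $(W\setminus I)_{\bar x}$ is a finite union of locally closed subschemes of finite type over the algebraically closed field $\kappa$, it has no $\kappa$-points iff it is empty, iff $\bar x\notin f(W\setminus I)$, iff $\bar x\in U$.

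\textbf{Main obstacle.} The argument is essentially formal once the exhaustive extension is available. The delicate point is that the proposition concerns isomorphisms of BT groups of level $n$ \emph{as such}, not isomorphisms respecting the given identifications of their truncations with $\scrB_{\bar x}$; one must therefore work with the plain $\uIsom$ on $W=Y\times_XY$, rather than with the stack-theoretic fibre product $Y\times_{\scrY}Y$ over $\scrY=X\times_{\scrX_m}\scrX_n$, which would only detect compatible isomorphisms.
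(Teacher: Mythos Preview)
Your proposal is correct and follows essentially the same route as the paper: form the self-fibre product $Y\times_XY$, take the $\uIsom$-scheme of the two pullbacks of the exhaustive level-$n$ extension, and apply Chevalley's theorem to the complement of its image. The only cosmetic difference is that the paper first reduces to $X$ of finite type over $\Spec\ZZ$ (so all schemes in sight are noetherian) rather than invoking the finite-presentation form of Chevalley as you do.
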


This includes the following invariance under field extensions.

\begin{Cor}
\label{Co-constr-isom}
Let $k\subseteq\kappa$ be an extension of algebraically closed
fields. A $BT$ group $B$ of level $m$ over $k$ extends uniquely to
level $n$ (up to isomorphism) if and only if $B_\kappa$ extends 
uniquely to level $n$ (up to isomorphism). \qed
\end{Cor}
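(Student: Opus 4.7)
The plan is to deduce the corollary directly from Proposition \ref{Pr-constr-isom} by applying it to the one-point base scheme $X = \Spec k$. Since $X$ has a unique point, any constructible subset $U \subseteq X$ is either all of $X$ or empty, so the property of containing a given geometric point is independent of the geometric point chosen.

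First I would take $\scrB = B$ as a BT group of level $m$ over $X = \Spec k$, and apply Proposition \ref{Pr-constr-isom} to produce the constructible subset $U \subseteq \Spec k$. Then I would consider two geometric points: the tautological point $\bar x_0 : \Spec k \to \Spec k$ and the point $\bar x_1 : \Spec \kappa \to \Spec k$ coming from the inclusion $k \subseteq \kappa$. By the characterization in the proposition, $\bar x_0 \in U(k)$ holds if and only if every BT group of level $n$ over $k$ extending $B$ is isomorphic to any other (i.e., $B$ extends uniquely to level $n$ up to isomorphism), and similarly $\bar x_1 \in U(\kappa)$ holds if and only if $B_\kappa$ extends uniquely to level $n$ up to isomorphism.

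Since both geometric points have the same (unique) image point in $\Spec k$, the conditions $\bar x_0 \in U(k)$ and $\bar x_1 \in U(\kappa)$ are equivalent (both amount to $U = \Spec k$). Therefore $B$ extends uniquely to level $n$ if and only if $B_\kappa$ does. This finishes the proof.

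There is essentially no obstacle here: the corollary is a formal consequence of the proposition together with the trivial fact that $\Spec k$ has only one point. The only thing to check is that the proposition's characterization applies to both geometric points in question, which is immediate from its statement.
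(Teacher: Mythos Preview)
Your proposal is correct and matches the paper's intent: the corollary is marked with \qed\ and no proof is given, the paper simply noting that the proposition ``includes the following invariance under field extensions.'' Your argument spelling out the specialization to $X=\Spec k$ and the comparison of the two geometric points is exactly the formal verification the paper leaves implicit.
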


\begin{proof}[Proof of Proposition \ref{Pr-constr-isom}]
We can assume that $X$ is of finite type over $\Spec\ZZ$.
Let $Y\to X$ be a morphism of finite type such that over $Y$
there exists an exhaustive extension $\scrC$ of $\scrB$ to level $n$.
Let $Y'=Y\times_XY$, let $p_1,p_2:Y'\to Y$
be the two projections, and consider
$$
Z=\uIsom(p_1^*\scrC,p_2^*\scrC)\xrightarrow{\psi} Y'.
$$
Let $x\in X$ be the image of $\bar x:\Spec\kappa\to X$.
The geometric fibre $\scrB_{\bar x}$ extends uniquely to 
level $n$ (up to isomorphism) if and only if the geometric
fibre $\psi_{\bar x}:Z_{\bar x}\to Y'_{\bar x}$ is surjective
on $\kappa$-valued points,
which is equivalent to the fibre $\psi_x:Z_x\to Y'_x$
being surjective. Hence $U$ is a well-defined subset of $X$,
and $X\setminus U$ is the image of $Y'\setminus\Image(\psi)\to X$.
As $X,Y',Z$ are of finite type, $U$ is constructible by 
Chevalley's theorem.
\end{proof}

\begin{Defn}
\label{Def-bn-truncated}
We say that a $BT$ group $B$ of level $m$ over $k$ has well-defined 
Newton polygon $\nu$ if each $p$-divisible
group $D$ over $k$ with $D[p^m]$ isomorphic to $B$ has Newton
polygon $\nu$. In this case we let $b_B=b_D$. Similarly, if all
$p$-divisible groups $D$ with $D[p^m]\cong B$ are isomorphic, we
let $n_B=n_D$, while $n_B$ is undefined otherwise.
\end{Defn}

\begin{Prop}
\label{Pr-constr-isog}
Let $\scrB$ be a $BT$ group of level $m$ over an $\FF_p$-scheme $X$.
There exists a constructible subset $U$ of $X$ such that
a geometric point $\bar x:\Spec\kappa\to X$ lies in
$U(\kappa)$ if and only if the geometric fibre 
$\scrB_{\bar x}$ has a well-defined Newton polygon.
\end{Prop}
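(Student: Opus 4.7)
The plan is to reduce to the case that $X$ is of finite type over $\Spec\ZZ$, to exhibit a finite-type $X$-scheme $Y$ whose geometric fibres parametrise all BT lifts of $\scrB$ to a sufficiently high level $n$, and then to combine the Newton polygon stratification of $Y$ with Chevalley's theorem.

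First I would reduce to the case that $X$ is of finite type over $\Spec\ZZ$ by a standard limit argument, and that the height $h$ of $\scrB$ is constant on $X$ (the locus where $\scrB$ has a given height is open and closed). Next, choose a level $n\geq m$ large enough that \emph{every} BT group of level $n$ and height $h$ over an algebraically closed field automatically has a well-defined Newton polygon. The choice $n\geq 1+N_h$ works: if $D$ and $D'$ are $p$-divisible groups of height $h$ with $D[p^n]\cong D'[p^n]$, then $n\geq 1+N_h\geq n_D\geq b_D$ by Lemma \ref{Le-prelim}\,(a) and the bound recorded after Definition \ref{Def-main}, so $D$ and $D'$ are isogenous and hence share their Newton polygon.

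Following the construction preceding Lemma \ref{Le-extend}, take an exhaustive extension of $\scrB$ to level $n$ given by a BT group $\scrC$ over a morphism $\pi:Y\to X$ of finite type, arranged via \cite[Prop.~2.3]{NVW} so that $\scrC$ comes from a $p$-divisible group $\scrD$ over $Y$ with $\scrD[p^n]\cong\scrC$. The Newton polygon stratification of $Y$ determined by $\scrD$ decomposes $Y$ into finitely many locally closed strata $Y_\nu$, indexed by Newton polygons $\nu$ of height $h$. Set $V_\nu=\pi(Y_\nu)\subseteq X$, which is constructible by Chevalley's theorem since $\pi$ is of finite type.

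The crux is to identify, for a geometric point $\bar x:\Spec\kappa\to X$, the set of Newton polygons of $p$-divisible groups $D$ over $\kappa$ with $D[p^m]\cong\scrB_{\bar x}$ with the set $\{\nu : \bar x\in V_\nu\}$. For the forward direction, exhaustivity gives $y\in Y_{\bar x}(\kappa)$ with $D[p^n]\cong y^*\scrC = y^*\scrD[p^n]$, and the choice of $n$ forces $D$ and $y^*\scrD$, which share their $p^n$-torsion, to have the same Newton polygon, namely the unique $\nu$ with $y\in Y_\nu$. Conversely, any $y\in Y_\nu$ mapping to the image of $\bar x$ yields, after base change to a sufficiently large algebraically closed extension (under which having well-defined Newton polygon is invariant), a $p$-divisible group extending $\scrB_{\bar x}$ of Newton polygon $\nu$. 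Consequently $\scrB_{\bar x}$ has a well-defined Newton polygon if and only if $\bar x$ lies in exactly one $V_\nu$, so
$$
U=X\setminus\bigcup_{\nu\neq\nu'}\bigl(V_\nu\cap V_{\nu'}\bigr)
$$
is constructible. The main obstacle is choosing $n$ correctly and arranging that the exhaustive extension comes from a $p$-divisible group so that $Y$ carries a Newton polygon stratification at all; once this is in place the conclusion is formal.
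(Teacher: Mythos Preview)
Your argument follows essentially the same strategy as the paper's: reduce to finite type, choose $n$ large enough that level-$n$ truncations determine the Newton polygon, pass to an exhaustive extension $\scrC$ on a finite-type $\pi:Y\to X$, stratify $Y$ by Newton polygon, and apply Chevalley. The paper obtains the Newton stratification on $Y$ via Lemma~\ref{Le-Newton-trunc} (which in turn passes to a faithfully flat cover carrying a $p$-divisible group), whereas you invoke \cite[Prop.~2.3]{NVW} to arrange $\scrC=\scrD[p^n]$ directly on $Y$; both routes work, and the paper in fact mentions yours just before Lemma~\ref{Le-extend}. Your description $U=X\setminus\bigcup_{\nu\neq\nu'}(V_\nu\cap V_{\nu'})$ with $V_\nu=\pi(Y_\nu)$ is dual to the paper's $U=\bigcup_\nu U_\nu$ with $U_\nu=X\setminus\pi(Y\setminus Y_\nu)$; a short check shows they coincide.

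One caution: in the converse direction you invoke the invariance of ``having well-defined Newton polygon'' under extension of algebraically closed fields. That invariance is precisely Corollary~\ref{Co-constr-isog}, which the paper deduces \emph{from} the present proposition, so as phrased this is circular. The fix is immediate and makes the field-extension detour unnecessary: since $\pi$ is of finite type and $\kappa$ is algebraically closed, the nonempty locally closed subscheme $(Y_\nu)_{\bar x}\subseteq Y_{\bar x}$ already has a $\kappa$-point $y$, and $y^*\scrD$ is the desired $p$-divisible group over $\kappa$ with Newton polygon $\nu$. (Minor slip: $X$ is an $\FF_p$-scheme, so the reduction is to finite type over $\Spec\FF_p$, not $\Spec\ZZ$.)
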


Proposition \ref {Pr-constr-isog} includes the following invariance under field extensions.

\begin{Cor}
\label{Co-constr-isog}
Let $k\subset\kappa$ be an extension of algebraically closed
fields. A $BT$ group $B$ of level $m$ over $k$ has a well-defined Newton
polygon if and only if $B_\kappa$ has a well-defined 
Newton polygon. \qed
\end{Cor}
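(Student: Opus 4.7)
The plan is to deduce the corollary directly from Proposition \ref{Pr-constr-isog} by the standard one-point trick. I would view the BT group $B$ of level $m$ over $k$ as a family $\scrB = B$ over the $\FF_p$-scheme $X = \Spec k$ (a valid base since $B$ lives in characteristic $p$). Applying the proposition produces a constructible subset $U \subseteq X$ characterized by the property that a geometric point $\bar x \colon \Spec \lambda \to \Spec k$ lies in $U(\lambda)$ if and only if $\scrB_{\bar x}$ has a well-defined Newton polygon. Since $\Spec k$ has a single underlying point, the only constructible (in fact, the only locally closed) subsets are $\emptyset$ and $\Spec k$; hence $U$ is one of these two.

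I would then evaluate $U$ at the two geometric points relevant to the corollary. The identity $\id_{\Spec k} \colon \Spec k \to \Spec k$ is a geometric point because $k$ is algebraically closed, and the characterization of $U$ gives that $\id_{\Spec k}$ lies in $U(k)$ if and only if $B = \scrB_{\id_{\Spec k}}$ has a well-defined Newton polygon. Similarly, the structural inclusion $\iota \colon \Spec \kappa \to \Spec k$ is a geometric point, and it lies in $U(\kappa)$ if and only if $B_\kappa = \scrB_\iota$ has a well-defined Newton polygon. Each of these two statements is equivalent to $U$ being non-empty, hence to $U = \Spec k$, so they are equivalent to each other.

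I do not anticipate any real obstacle, because Proposition \ref{Pr-constr-isog} does all the heavy lifting; the corollary is essentially automatic once one observes that a constructible subset of a one-point base is either empty or full. The only things to verify are the routine facts that $\Spec k$ is a legitimate $\FF_p$-scheme (immediate since $B$ has characteristic $p$) and that both $\id_{\Spec k}$ and $\iota$ qualify as geometric points in the sense of the proposition (immediate from the algebraic closedness of $k$ and $\kappa$). I note that Corollary \ref{Co-constr-isom} follows from Proposition \ref{Pr-constr-isom} by the very same argument, which is presumably why the author records both corollaries with \qed only.
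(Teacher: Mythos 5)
Your proposal is correct and is exactly the argument the paper intends (the corollary is stated with \qed precisely because it follows from Proposition \ref{Pr-constr-isog} by specializing to the one-point base $X=\Spec k$ and comparing the two geometric points $\id_{\Spec k}$ and $\Spec\kappa\to\Spec k$). The observation that a constructible subset of $\Spec k$ is either empty or everything is the whole content, and the same remark indeed disposes of Corollary \ref{Co-constr-isom}.
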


The proof of Proposition \ref{Pr-constr-isog}
uses the following standard fact.

\begin{Lemma}
\label{Le-Newton-trunc}
Let $\scrB$ be a $BT$ group of level $n$ over an $\FF_p$-scheme $X$
such that for each geometric point $\bar x:\Spec\kappa\to X$, 
the geometric fibre $\scrB_{\bar x}$
has a well-defined Newton polygon $\nu(\bar x)$.
If $x\in X$ is the image of $\bar x$, then $\nu(x):=\nu(\bar x)$ is well-defined. Moreover, for each Newton polygon $\nu$ the set
$X_\nu=\{x\in X\mid\nu(x)\preceq\nu\}$ is closed in $X$.
\end{Lemma}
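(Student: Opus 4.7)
\medskip\noindent\textbf{Plan of proof.}
The statement splits naturally into two parts: well-definedness of $\nu(x)$, and closedness of the strata $X_\nu$. I will handle them in that order, as the second uses the first.

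For well-definedness, suppose $\bar x_1\colon\Spec\kappa_1\to X$ and $\bar x_2\colon\Spec\kappa_2\to X$ are two geometric points with the same image $x\in X$. Choose a common algebraically closed extension $\Omega$ of $\kappa_1$ and $\kappa_2$ over $k(x)$. By hypothesis, each $\scrB_{\bar x_i}$ has a well-defined Newton polygon, which means every $p$-divisible group $D_i$ over $\kappa_i$ extending $\scrB_{\bar x_i}$ has Newton polygon $\nu(\bar x_i)$. The base change $D_i\otimes_{\kappa_i}\Omega$ is a $p$-divisible group extending $\scrB_\Omega$, and the Newton polygon is invariant under base change of algebraically closed fields. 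By Corollary~\ref{Co-constr-isog} (applied to $\kappa_i\subseteq\Omega$), $\scrB_\Omega$ also has a well-defined Newton polygon, which must therefore coincide with both $\nu(\bar x_1)$ and $\nu(\bar x_2)$. This shows $\nu(x):=\nu(\bar x)$ depends only on $x$.

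For the closedness of $X_\nu$, I would first reduce to the case where $\scrB$ lifts to a $p$-divisible group. By Lemma~\ref{Le-extend} there exists a faithfully flat affine morphism $\pi\colon Y\to X$ and a $p$-divisible group $\scrD$ over $Y$ with $\scrB_Y\cong\scrD[p^n]$. For every geometric point $\bar y\to Y$ with image $\bar x\to X$, the $p$-divisible group $\scrD_{\bar y}$ is an extension of $\scrB_{\bar y}\cong\scrB_{\bar x}$ to a $p$-divisible group, so its Newton polygon equals $\nu(\bar x)=\nu(x)$ where $x=\pi(\bar y)$. In other words, the Newton polygon function of $\scrD$ on $Y$ agrees with $\nu\circ\pi$.

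Now the classical Grothendieck--Katz theorem gives a Newton polygon stratification of $Y$ by the $p$-divisible group $\scrD$: for every Newton polygon $\nu$, the set $Y_\nu=\{y\in Y\mid\nu_\scrD(y)\preceq\nu\}$ is closed in $Y$. By the paragraph above, $Y_\nu=\pi^{-1}(X_\nu)$. Since $\pi$ is faithfully flat and quasi-compact (being affine and surjective), it is universally submersive, so a subset of $X$ is closed if and only if its preimage in $Y$ is closed. Thus $X_\nu$ is closed in $X$, completing the proof.

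\medskip\noindent\textbf{Main obstacle.} None of the steps is really difficult: the serious input is Corollary~\ref{Co-constr-isog}, Lemma~\ref{Le-extend}, and the Grothendieck--Katz semicontinuity of Newton polygons for genuine $p$-divisible groups. The only point requiring care is ensuring that the Newton polygon function on $Y$ defined via $\scrD$ matches the one on $X$ defined via $\scrB$, which is why invariance under base change of algebraically closed fields (used already in the well-definedness step) is crucial.
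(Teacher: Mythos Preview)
Your argument for closedness is exactly the paper's: lift via Lemma~\ref{Le-extend} to a $p$-divisible group on a faithfully flat affine cover $\pi\colon Y\to X$, invoke Grothendieck--Katz semicontinuity (cited as \cite[Thm.~2.3.1]{Ka}) on $Y$, and descend closedness along the faithfully flat $\pi$.

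Your well-definedness argument, however, has a circular citation. Corollary~\ref{Co-constr-isog} is deduced from Proposition~\ref{Pr-constr-isog}, and the paper explicitly introduces Lemma~\ref{Le-Newton-trunc} as a ``standard fact'' needed in the proof of that proposition. So you cannot invoke Corollary~\ref{Co-constr-isog} here. Fortunately you do not need it: since $\Omega$ is algebraically closed, the composite $\Spec\Omega\to X$ is itself a geometric point of $X$ with image $x$, so the \emph{hypothesis of the lemma} already tells you that $\scrB_\Omega$ has a well-defined Newton polygon $\nu(\bar x_\Omega)$. Each $D_i\otimes_{\kappa_i}\Omega$ extends $\scrB_\Omega$ and has Newton polygon $\nu(\bar x_i)$ (Newton polygons being invariant under base change of fields), whence $\nu(\bar x_1)=\nu(\bar x_\Omega)=\nu(\bar x_2)$. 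This is the paper's one-line argument, just unpacked.
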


We recall that $\nu'\preceq\nu$ if and only if the
polygons $\nu'$ and $\nu$ share the same endpoints
and all points of $\nu'$ lie on or above $\nu$.

\begin{proof}
As Newton polygons of $p$-divisible groups are preserved under extensions of the base field, $\nu(x)$ is well-defined.
By Lemma \ref{Le-extend} there exists a faithfully flat
affine morphism $f:Y\to X$ such that $\scrB_Y$ extends
to a $p$-divisible group $\scrD$ over $Y$.
Then $\pi^{-1}(X_\nu)$ is the set of points
where the Newton polygon of $\scrD$ is $\preceq\nu$,
which is closed in $Y$ by \cite[Thm.~2.3.1]{Ka} applied to
the Dieudonn\'e $F$-crystal of $\Dscr$.
It follows that $X_\nu$ is closed in $X$ as $f$ is faithfully flat.
\end{proof}

\begin{proof}[Proof of Proposition \ref{Pr-constr-isog}]
We can assume that $X$ is of finite type over $\Spec\FF_p$.
Choose $n\in\NN^{\ast}$ such that for each geometric point
$\bar x:\Spec\kappa\to X$ and every $p$-divisible group $D$
over $\kappa$ that extends $\scrB_{\bar x}$, we have $b_D\leq n$.
This is possible because $\scrB$ has bounded height. 
Let $Y\to X$ be a morphism of finite type such that over $Y$
there exists an exhaustive extension $\scrC$ of $\scrB$ to level $n$.
For $y\in Y$ let $\nu(y)$ be the well-defined Newton polygon
of the fibre $\scrC_y$. By Lemma \ref{Le-Newton-trunc} the set
$Y^\nu=\{y\in Y\mid \nu(y)\neq \nu\}$ is locally closed in $Y$. 
Let $U_\nu\subseteq X$ be the complement of the image of
$Y^\nu\to X$. This is a constructible set by Chevalley's
theorem. The required subset $U$ of $X$ is the union of all $U_\nu$,
which is constructible because only finitely many $U_\nu$'s are non-empty.
\end{proof}

\begin{Cor}
\label{Co-perm}
Let $k\subseteq\kappa$ be an extension of algebraically
closed fields. For a $p$-divisible group $D$ over $k$
we have $n_D=n_{D_\kappa}$ and $b_D=b_{D_\kappa}$.
\end{Cor}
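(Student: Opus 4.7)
The plan is to reinterpret each of $b_D\le m$ and $n_D\le m$ as a property of the BT group $D[p^m]$ that is already known to be invariant under passage from $k$ to $\kappa$, namely one of the properties studied in Corollaries~\ref{Co-constr-isog} and~\ref{Co-constr-isom}.

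For the isogeny cutoff, the Dieudonn\'e--Manin classification over an algebraically closed field identifies ``isogenous'' with ``having the same Newton polygon'', so unwinding the definition shows that $b_D\le m$ is equivalent to $D[p^m]$ having a well-defined Newton polygon in the sense of the introduction. The same rephrasing applies to $D_\kappa[p^m]$, and Corollary~\ref{Co-constr-isog} identifies the two conditions; this yields $b_D\le m\iff b_{D_\kappa}\le m$ for every $m$, hence $b_D=b_{D_\kappa}$.

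For the isomorphism number, I would first prove the finite-level characterisation that $n_D\le m$ if and only if for every integer $n\ge m$, $D[p^m]$ admits a unique extension (up to isomorphism) to a BT group of level $n$ over $k$. In the direction ``$\Rightarrow$'', any BT group of level $n$ extending $D[p^m]$ lifts to a $p$-divisible group over the algebraically closed field $k$: a non-empty smooth morphism to $\Spec k$ has a $k$-point, so inductive use of the smoothness and surjectivity of the truncation morphisms $\scrX_{n+1}\to\scrX_n$ (\cite[Thm.~4.4]{Ill}) produces a compatible tower of lifts that assembles to a $p$-divisible group; the hypothesis $n_D\le m$ then forces this lift to be $D$ and the original BT group to be $D[p^n]$. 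In the direction ``$\Leftarrow$'', given a $p$-divisible group $E$ over $k$ with $E[p^m]\cong D[p^m]$, the hypothesis yields isomorphisms $E[p^n]\cong D[p^n]$ for every $n\ge m$, and I would upgrade these to $E\cong D$ via the Mittag-Leffler stabilisation of the system $\Hom(E[p^\bullet],D[p^\bullet])$ coming from \cite{GV} (the content of Lemma~\ref{Le-eDE}~(c)): for $N$ sufficiently large, the restriction of a level-$N$ isomorphism to $E[p^n]$ already lies in the image of $\Hom(E,D)\to\Hom(E[p^n],D[p^n])$, producing a global homomorphism $\psi:E\to D$ whose restriction to $E[p^n]$ is an isomorphism, and any such $\psi$ is automatically an isomorphism (its restriction to $E[p]$ is already bijective). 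Granting this characterisation, Corollary~\ref{Co-constr-isom} applied for each $n\ge m$ separately transfers uniqueness at level $n$ between $k$ and $\kappa$; taking the conjunction over all $n\ge m$ gives $n_D\le m\iff n_{D_\kappa}\le m$, whence $n_D=n_{D_\kappa}$.

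The main obstacle is the ``$\Leftarrow$'' direction of the finite-level characterisation of $n_D$: passing from ``isomorphic at every finite level'' to ``isomorphic as $p$-divisible groups'' is not formal and genuinely relies on the Mittag-Leffler property of the endomorphism system, i.e.\ Lemma~\ref{Le-eDE}~(c). Once this and the auxiliary lifting of BT groups to $p$-divisible groups over $k$ are in place, both equalities reduce formally to the two constructibility corollaries proved in this subsection (together with Dieudonn\'e--Manin for $b_D$).
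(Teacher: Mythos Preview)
Your argument is correct. For $b_D$ it coincides with the paper's proof: both identify $b_D\le m$ with $D[p^m]$ having a well-defined Newton polygon and invoke Corollary~\ref{Co-constr-isog}.

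For $n_D$ you take a genuinely longer route than the paper. You characterise $n_D\le m$ as ``$D[p^m]$ extends uniquely to every level $n\ge m$'', which forces you through the Mittag--Leffler step (Lemma~\ref{Le-eDE}(c)) to pass from level-wise isomorphisms $E[p^n]\cong D[p^n]$ to a global isomorphism $E\cong D$. The paper sidesteps this entirely by applying Corollary~\ref{Co-constr-isom} at the \emph{single} level $n=\max\{n_D,n_{D_\kappa}\}$ with $m=\min\{n_D,n_{D_\kappa}\}$: say $m=n_D$, so over $k$ the BT group $D[p^m]$ extends uniquely to level $n$ (lift to a $p$-divisible group and use $n_D\le m$, exactly your ``$\Rightarrow$'' step); transferring this to $\kappa$, any $E$ over $\kappa$ with $E[p^m]\cong D_\kappa[p^m]$ satisfies $E[p^n]\cong D_\kappa[p^n]$, and since $n=n_{D_\kappa}$ the \emph{definition} of $n_{D_\kappa}$ already gives $E\cong D_\kappa$. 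Thus the paper never needs to upgrade an infinite family of truncated isomorphisms to a global one. Your approach buys a reusable intrinsic description of $n_D$ in purely finite-level terms; the paper's trick is shorter and avoids the dependence on Lemma~\ref{Le-eDE}, but is tailored to this specific corollary.
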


\begin{proof}
Corollary \ref{Co-constr-isom} applied to $n=\max\{n_D,n_{D_\kappa}\}$ and $B=D[p^m]$ 
with $m=\min\{n_D,n_{D_\kappa}\}$ gives $n_D=n_{D_\kappa}$. 
Corollary \ref{Co-constr-isog} applied to $B=D[p^m]$ with
$m=\min\{b_D,b_{D_\kappa}\}$ gives $b_D=b_{D_\kappa}$.
\end{proof}

\subsection{Distances of truncated $BT$ groups}

In order to deduce semicontinuity of distance numbers from their going down property 
we need to define distance numbers also for truncated $BT$ groups.

\begin{Defn}
Let $B$ and $C$ be truncated $BT$ groups of level $n$ over $k$
The distance $q_{B,C}$ is the smallest non-negative
integer $m$ such that there exist homomorphisms
$B\to C$ and $C\to B$ whose kernels are annihilated by $p^m$.
\end{Defn}

If $\kappa$ is an algebraically closed extension of $k$, we have $q_{B,C}=q_{B_{\kappa},C_{\kappa}}$. 
This can be viewed as a consequence of the following proposition.

\begin{Prop}
\label{Pr-constr-q-trunc}
Let $\scrB$ and $\scrC$ be truncated $BT$ groups of
level $n$ over an $\FF_p$-scheme $X$.
There exists a constructible subset $U$ of $X$ such that
a geometric point $\bar x:\Spec\kappa\to X$ lies in $U(\kappa)$
if and only if $q_{\scrB_{\bar x},\scrC_{\bar x}}\leq m$.
\end{Prop}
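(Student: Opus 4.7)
The plan is to realize $U$ as the intersection of two constructible subsets of $X$, each corresponding to the existence of one of the two homomorphisms required by the definition of $q_{\scrB_{\bar x},\scrC_{\bar x}}$.

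Since $\scrB$ and $\scrC$ are finite and flat over $X$, the functors $\uHom_X(\scrB,\scrC)$ and $\uHom_X(\scrC,\scrB)$ are representable by $X$-schemes of finite type, which we denote by $H_1$ and $H_2$ with structure morphisms $\pi_i:H_i\to X$. Over each $H_i$ sits a universal homomorphism, and its kernel $K_i$ is a closed subgroup scheme of $\scrB_{H_1}$ (respectively $\scrC_{H_2}$), hence finite over $H_i$.

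Let $V_i\subseteq H_i$ be the locus where the universal kernel $K_i$ is annihilated by $p^m$. To see $V_i$ is constructible, view $[p^m]|_{K_i}$ and $0$ as two $H_i$-morphisms from $K_i$ into $\scrB_{H_1}$ (respectively $\scrC_{H_2}$). Since the target is separated over $H_i$, their equalizer $E_i\hookrightarrow K_i$ is a closed subscheme, and $V_i$ is the complement in $H_i$ of the image of the open subscheme $K_i\setminus E_i$ under the finite morphism $K_i\to H_i$; this image is constructible by Chevalley's theorem, hence so is $V_i$.

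Set $U=\pi_1(V_1)\cap\pi_2(V_2)$, which is constructible by a second application of Chevalley. For a geometric point $\bar x:\Spec\kappa\to X$ with image $x\in X$, $\bar x\in U(\kappa)$ holds iff $x\in\pi_i(V_i)$ for $i=1,2$, iff the fibres $(V_i)_{\bar x}$ are non-empty, iff there exist homomorphisms $\scrB_{\bar x}\to\scrC_{\bar x}$ and $\scrC_{\bar x}\to\scrB_{\bar x}$ each with kernel annihilated by $p^m$. By definition this is exactly the condition $q_{\scrB_{\bar x},\scrC_{\bar x}}\leq m$. The genuinely delicate step is the second one: realizing the pointwise condition ``kernel killed by $p^m$'' as a constructible subset of $H_i$. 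It is handled by the equalizer construction (made possible by separatedness of the target) combined with Chevalley's theorem.
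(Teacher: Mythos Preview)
Your overall architecture matches the paper's: pass to the Hom schemes, cut out the locus where the universal kernel is $p^m$-torsion, and apply Chevalley. The decomposition $U=\pi_1(V_1)\cap\pi_2(V_2)$ in place of the paper's single product locus $Z\subseteq H_1\times_X H_2$ is harmless, since the two existence conditions are independent.

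However, your argument that $V_i$ is constructible is wrong. You claim that $V_i$ equals the complement of the image of $K_i\setminus E_i$ in $H_i$. This complement is the set of points $h$ where $(E_i)_h=(K_i)_h$ \emph{as topological spaces}; but the condition ``$\Ker(f)$ annihilated by $p^m$'' is that $(E_i)_h=(K_i)_h$ \emph{as schemes}. Over a field of characteristic $p$ these differ whenever the kernel has a nontrivial infinitesimal part. Concretely, take $X=\Spec k$, $n=2$, $\scrB=\mu_{p^2}$, $\scrC=\mu_{p^2}\times\mu_{p^2}$, and $m=1$. Every homomorphism $g\colon\scrC\to\scrB$ has kernel of order at least $p^2$ and not annihilated by $p$ (one checks that the kernel always contains a copy of $\mu_{p^2}$), so $q_{\scrB,\scrC}=2$ and the correct $U$ is empty. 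But each such kernel is infinitesimal, hence $K_2\setminus E_2$ is topologically empty over every point; your recipe therefore gives $V_2=H_2$ and $U=\Spec k$.

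The paper (after reducing to $X$ of finite type over $\FF_p$) simply asserts that the locus is closed and then applies Chevalley. A correct justification along your lines would be to compare the fibre degrees $\dim_{\kappa(h)}\mathcal{O}_{(K_i)_h}$ and $\dim_{\kappa(h)}\mathcal{O}_{(E_i)_h}$: both are upper semicontinuous functions on the noetherian scheme $H_i$ (as $K_i,E_i\to H_i$ are finite), the first dominates the second, and $V_i$ is exactly the locus where they agree---a finite union of locally closed sets, hence constructible. Alternatively, rewrite the condition as the vanishing of the composite $K_i\hookrightarrow\scrB_{H_i}\to\scrB_{H_i}/\scrB[p^m]_{H_i}$ into a finite \emph{flat} target; this avoids the non-flatness of $K_i$.
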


\begin{proof}
We can assume that $X$ is of finite type over $\Spec\FF_p$.
Let
$$
Z\subseteq\uHom(\scrB,\scrC)
\times\uHom(\scrC,\scrB)
$$
be the subscheme of all pairs of homomorphisms $(f,g)$ such
that $p^m$ annihilates both $\Ker(f)$ and $\Ker(g)$.
This is a closed subscheme.
The set $U$ is the image of $Z\to X$, which
is constructible by Chevalley's theorem.
\end{proof}

\begin{Prop}
\label{Pr-q-trunc}
Let $h\in\NN$.
Let $D$ and $E$ be $p$-divisible groups over $k$ of height at most $h$.
Let $B=D[p^n]$ and $C=E[p^n]$ for some $n\in\NN^{\ast}$.

\smallskip
(a) {} We have $q_{B,C}\leq q_{D,E}$.

\smallskip
(b) {} If $q_{B,C}<n-N_h$, then $q_{B,C}=q_{D,E}$.
\end{Prop}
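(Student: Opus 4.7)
The plan is to handle (a) directly by restricting isogenies, and (b) by lifting truncated homomorphisms to genuine ones via the bound $e_{D,E}\le N_h$ from Lemma~\ref{Le-eDE}(c), and then verifying that the lifts are isogenies with small kernel.

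For part (a), I would fix isogenies $\rho:D\to E$ and $\rho':E\to D$ whose kernels are contained in $D[p^{q_{D,E}}]$ and $E[p^{q_{D,E}}]$ respectively (using $q_{E,D}=q_{D,E}$ from Lemma~\ref{Le-q}(b)). Restricting $\rho$ and $\rho'$ to the $p^n$-torsion yields homomorphisms $B\to C$ and $C\to B$ whose kernels are subgroup schemes of the original kernels, hence still annihilated by $p^{q_{D,E}}$. This gives $q_{B,C}\le q_{D,E}$.

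For part (b), let $m = q_{B,C}$ and $r = n - N_h$, so $m < r$. Fix $f:B\to C$ and $g:C\to B$ witnessing $q_{B,C}=m$. I would restrict $f$ to $f_r := f|_{D[p^r]}:D[p^r]\to E[p^r]$ and observe that $f$ itself is a lift of $f_r$ from level $r$ up to level $r+N_h = n$. Since $e_{D,E}\le N_h$ by Lemma~\ref{Le-eDE}(c), there exists $F\in\Hom(D,E)$ with $F|_{D[p^r]}=f_r$. Similarly, construct $G\in\Hom(E,D)$ with $G|_{E[p^r]} = g_r$ from $g$.

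The main step, where the strict inequality $m < r$ is essential, is to show $\ker F\subseteq D[p^m]$. I would argue by contradiction: if some $x\in\ker F$ had order $p^l$ with $l > m$, then $m+1 \le r$ guarantees that $y := p^{l-m-1}x$, which has order $p^{m+1}$, lies in $D[p^r]$; but $F|_{D[p^r]} = f_r$ forces $y\in\ker f_r = \ker f \cap D[p^r] = \ker f \subseteq D[p^m]$ (using $\ker f\subseteq D[p^m]\subseteq D[p^r]$), hence $p^{l-1}x = p^m y = 0$, contradicting the order of $x$. Symmetrically, $\ker G\subseteq E[p^m]$. With these bounds, $G\circ F: D\to D$ has finite kernel, so it is an isogeny of $D$; its surjectivity cascades to show that $G$ and then $F$ are isogenies (via equality of heights and $F(D)\subseteq E$ being a $p$-divisible subgroup of maximal height). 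Therefore $q_{D,E}\le m = q_{B,C}$, which combined with (a) yields the equality in (b).
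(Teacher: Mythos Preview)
Your argument is correct and follows essentially the same route as the paper: both parts restrict/lift along the $p^{n-N_h}$-truncation and control the kernel of the lifted map. Your direct contradiction argument for $\ker F\subseteq D[p^m]$ is exactly what the paper isolates as Lemma~\ref{Le-q-aux} (applied inductively), and you are in fact more explicit than the paper in verifying that the lifts $F,G$ are isogenies rather than merely homomorphisms with small kernel.
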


\noindent
The number $N_h$ was defined in Section \ref{Se-prelim}.

\begin{proof}
(a) Let $m=q_{D,E}$ and let $\rho:D\to E$ be an isogeny
such that $\Ker(\rho)$ is contained in $D[p^m]$.
The isogenies $\rho$
and $p^m\rho^{-1}$ induce homomorphisms $B\to C$ and
$C\to B$ with kernels annihilated by $p^m$; thus
$q_{B,C}\leq m$.

(b) Let $m=q_{B,C}$ and let $f:B\to C$ and
$g:C\to B$ be homomorphisms with kernels annihilated
by $p^m$. We must show that $q_{D,E}\leq m$.
By the choice of $N_h$, there exist homomorphisms
$f':D\to E$ and $g':E\to D$ which coincide with
$f$ and $g$ on $D[p^{n-N_h}]$ and $E[p^{n-N_h}]$ (respectively).
As $m<n-N_h$, by Lemma \ref{Le-q-aux} below it follows that 
$\Ker(f')=\Ker(f)$ and $\Ker(g')=\Ker(g)$ are annihilated by $p^m$.
Hence $q_{D,E}\leq m$ as required.
\end{proof}

\begin{Lemma}
\label{Le-q-aux}
For a positive integer $l$, let $B$ and $C$ be truncated
$BT$ groups over $k$ of level $l+1$. Let $f:B\to C$ be a
homomorphism with restriction $f_l:B[p^l]\to C[p^l]$.
If $\Ker(f_l)\subseteq B[p^{l-1}]$, then we have $\Ker(f)=\Ker(f_l)$.
\end{Lemma}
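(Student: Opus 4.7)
The plan is a short diagram chase on the level of the functor of points. The inclusion $\Ker(f_l)\subseteq\Ker(f)$ is tautological since $f_l$ is the restriction of $f$ to $B[p^l]$, so everything reduces to proving the reverse inclusion $\Ker(f)\subseteq B[p^l]$.

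Concretely, for a test $k$-scheme $T$ and a section $x\in\Ker(f)(T)\subseteq B(T)$, I would set $y=px\in B(T)$. Since $B$ has level $l+1$ we have $p^{l+1}x=0$, hence $p^l y=0$, which means $y\in B[p^l](T)$. Moreover $f(y)=pf(x)=0$, so $y$ lies in $\Ker(f_l)(T)=(\Ker(f)\cap B[p^l])(T)$. The hypothesis $\Ker(f_l)\subseteq B[p^{l-1}]$ then forces $p^{l-1}y=0$, i.e.\ $p^l x=0$, so $x\in B[p^l](T)$ and therefore $x\in\Ker(f_l)(T)$. This gives $\Ker(f)\subseteq\Ker(f_l)$ as subfunctors, hence as closed subgroup schemes.

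There is essentially no obstacle here; the only thing to be careful about is that the argument is phrased scheme-theoretically rather than in terms of geometric points, since the statement is about kernels as finite group schemes. The inclusion $\Ker(f_l)\subseteq B[p^{l-1}]$ is used as a scheme-theoretic inclusion, which is exactly what the construction $y=px$ applied to an arbitrary $T$-valued section $x$ of $\Ker(f)$ needs in order to conclude that $x$ is killed by $p^l$.
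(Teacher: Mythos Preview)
Your proof is correct and follows essentially the same argument as the paper: take $x\in\Ker(f)$, observe that $px\in\Ker(f_l)\subseteq B[p^{l-1}]$, conclude $x\in B[p^l]$ and hence $x\in\Ker(f_l)$. Your version is simply more explicit about working at the level of $T$-valued points, whereas the paper states the same diagram chase in one line.
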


\begin{proof}
Let $S$ be a $k$-scheme.
If $x\in\Ker(f)(S)$, then $px\in\Ker(f_l)(S)$, thus $px\in B[p^{l-1}](S)$
by the assumption. Hence $x\in B[p^l](S)$, which implies
that $x\in\Ker(f_l)(S)$ as $x\in\Ker(f)(S)$.
\end{proof}

\subsection{Proof of the semicontinuity results}
\label{Se-pf-semicont}

If $\Bscr$ is a truncated $BT$ group over an $\FF_p$-scheme $S$,
for each point $s\in S$ we write $b_\Bscr(s)=b_{\Bscr_{\bar s}}$ and
$n_\Bscr(s)=n_{\Bscr_{\bar s}}$, where $\Bscr_{\bar s}$ is the geometric
fibre of $\Bscr$ at $s$; see Definition \ref{Def-bn-truncated}.

\begin{Thm}
\label{Th-semicont-trunc} Let $m\leq l$ be positive integers. Let
$\scrB$ be a $BT$ group of level $l$ over
an $\FF_p$-scheme $S$ with well-defined and constant Newton
polygon $\nu$ i.e., all geometric fibres of $\scrB$ have
well-defined Newton polygon $\nu$. Then the following two properties hold:

\smallskip
(a) The set $U_{b_\scrB}= \{s\in S\mid b_\scrB(s)\leq m\}$ is
closed in $S$.

\smallskip
(b) The set $U_{n_\scrB}= \{s\in S\mid \text{$n_\scrB(s)$ is
defined and $\,\leq m$}\}$ is closed in $S$.
\end{Thm}

\begin{proof}
Let $\Box\in\{n,b\}$. The set $U_{\Box_\scrB}$ is functorial
in the sense that for a morphism $\pi:S'\to S$ we have
$\pi^{-1}U_{\Box_\scrB}=U_{\Box_{\pi^*(\scrB)}}$; here
we use Corollary \ref{Co-perm}. Thus we can assume that $S$ 
is of finite type over $\Spec\FF_p$. 
The set $U_{b_\scrB}$ (resp.\ $U_{n_\scrB}$) is constructible because
it coincides with the set $U$ associated to $\scrB[p^m]$ in
Proposition \ref{Pr-constr-isog} (resp.\ in Proposition
\ref{Pr-constr-isom} applied with sufficiently large $n$). Hence
it suffices to show that $U_{\Box_\scrB}$ is
stable under specialization. To prove this we can assume that
$S=\Spec k[[t]]$ for some algebraically closed field $k$; here
we use functoriality again. 
By \cite[Thm.~4.4]{Ill} there exists a $p$-divisible group $\scrD$
over $S$ that extends $\scrB$. As for $s\in S$ we have
$b_{\scrB_{\bar s}}=b_{\scrD_{\bar s}}$, and $n_{\scrB_{\bar s}}=n_{\scrD_{\bar s}}$ when $n_{\scrD_{\bar s}}\leq l$ while
$n_{\scrB_{\bar s}}$ is undefined otherwise, Theorem \ref{Th-semicont-trunc} follows
from Theorems \ref{Th-going-down-b} and \ref{Th-going-down-n}.
\end{proof}

We note that the results of \cite{GV} and \cite{Va2}
also allow a short and different proof of Theorem \ref{Th-semicont-trunc} (b) 
(see Remark \ref{7.13}). For distance numbers we have the following analogue of
Theorem \ref{Th-semicont-trunc}.

\begin{Thm}
\label{Th-semicont-trunc-q}
Let $\scrB$ and $\scrC$ be truncated $BT$ groups of level $n$
over an $\FF_p$-scheme $S$ with well-defined and constant
Newton polygons and height $\leq h$. 
Then for each non-negative integer $m<n-N_h$
the set
$$
U_{q_{\scrB,\scrC}} =\{s\in S\mid q_{\scrB,\scrC}(s)\leq m\}
$$
is closed in $S$. Here we write 
$q_{\scrB,\scrC}(s)=q_{\scrB_{\bar s},\scrC_{\bar s}}$ as above.
\end{Thm}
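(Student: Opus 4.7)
The plan is to mirror the blueprint used for Theorem \ref{Th-semicont-trunc}: combine constructibility of $U_{q_{\scrB,\scrC}}$ with stability under specialization, where the latter will come from the going-down principle Theorem \ref{Th-going-down-q}. First, I would invoke the remark preceding Proposition \ref{Pr-constr-q-trunc} to the effect that the truncated distance $q_{B,C}$ is invariant under extensions of the algebraically closed base field, so that the formation of $U_{q_{\scrB,\scrC}}$ commutes with arbitrary base change. Standard noetherian approximation then reduces the assertion to the case where $S$ is of finite type over $\Spec\FF_p$. In this setting Proposition \ref{Pr-constr-q-trunc} shows that $U_{q_{\scrB,\scrC}}$ is constructible, so it suffices to verify stability under specialization. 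A further base-change reduction brings us to the local case $S = \Spec R$ with $R = k[[t]]$ for some algebraically closed field $k$.

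Next I would lift to the $p$-divisible setting. By \cite[Thm.~4.4]{Ill} there exist $p$-divisible groups $\scrD$ and $\scrE$ over $R$ with $\scrD[p^n] \cong \scrB$ and $\scrE[p^n] \cong \scrC$. Since the fibres of $\scrB$ and $\scrC$ have well-defined and constant Newton polygons, $\scrD$ and $\scrE$ automatically inherit the same constant Newton polygons, for any $p$-divisible lift of a given fibre of $\scrB$ (resp.\ $\scrC$) is forced to have the prescribed Newton polygon. With the height bound and the inequality $m < n - N_h$ in hand, Proposition \ref{Pr-q-trunc}(b) provides the key translation
$$
q_{\scrD_{\bar K}, \scrE_{\bar K}} = q_{\scrB_{\bar K}, \scrC_{\bar K}} \leq m
$$
whenever the geometric generic fibre lies in $U_{q_{\scrB,\scrC}}$; here $\bar K$ denotes an algebraic closure of $K = k((t))$.

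To finish, Theorem \ref{Th-going-down-q} applied to $\scrD$ and $\scrE$ gives $q_{\scrD_k, \scrE_k} \leq q_{\scrD_{\bar K}, \scrE_{\bar K}} \leq m$, and Proposition \ref{Pr-q-trunc}(a) then yields $q_{\scrB_k, \scrC_k} \leq q_{\scrD_k, \scrE_k} \leq m$, closing the specialization step. The main subtlety is the role of the hypothesis $m < n - N_h$: without it the truncated distance may strictly drop below the distance of every $p$-divisible lift, breaking the going-down input; Proposition \ref{Pr-q-trunc}(b) is tailored exactly to make the truncated-to-untruncated comparison valid in this range, and this is where one should expect the argument to be least formal.
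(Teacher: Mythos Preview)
Your proposal is correct and follows essentially the same approach as the paper: reduce to finite type, invoke Proposition \ref{Pr-constr-q-trunc} for constructibility, reduce to $S=\Spec k[[t]]$, lift $\scrB$ and $\scrC$ to $p$-divisible groups via \cite[Thm.~4.4]{Ill}, and then combine Proposition \ref{Pr-q-trunc} with Theorem \ref{Th-going-down-q} for stability under specialization. The paper compresses your two-step use of Proposition \ref{Pr-q-trunc} (part (b) at the generic point, part (a) at the special point) into the single biconditional ``$s\in U_{q_{\scrB,\scrC}}$ if and only if $q_{\scrD_s,\scrE_s}\leq m$'', but the content is identical.
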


\begin{proof}
We can assume that $S$ is of finite type over $\Spec\FF_p$.
The set
$U_{q_{\scrB,\scrC}}$ is constructible, cf.\ Proposition \ref{Pr-constr-q-trunc}. Thus, as in the previous
proof, we can assume that $S=\Spec k[[t]]$ and that
$\scrB=\scrD[p^n]$ and $\scrC=\scrE[p^n]$ for $p$-divisible groups
$\scrD$ and $\scrE$ over $S$. By Proposition \ref{Pr-q-trunc}, for
$s\in S$ we have $s\in U_{q_{\scrB,\scrC}}$ if and only if
$q_{\scrD_{\bar s},\scrE_{\bar s}}\leq m$. Thus
$U_{q_{\scrB,\scrC}}$ is stable under specialization by Theorem
\ref{Th-going-down-q}. We conclude that $U_{q_{\scrB,\scrC}}$ is closed in $S$.
\end{proof}

\begin{proof}[Proof of Theorem \ref{Th-semicont}]
We can assume that $S$ is quasi-compact. Then $\scrD$ and $\scrE$
have height $\leq h$ for some integer $h$. We choose an
integer $l\geq m$ such that $\scrD[p^l]=\scrB$ and $\scrE[p^l]=\scrC$ have
well-defined (necessarily constant) Newton polygons. Then the sets
$U_{b_\scrD}$ and $U_{n_\scrD}$ coincide with the sets
$U_{b_\scrB}$ and $U_{n_\scrB}$ (respectively) considered in Theorem \ref{Th-semicont-trunc}, which implies
that $U_{b_\scrD}$ and $U_{n_\scrD}$ are closed. Clearly (c)
follows from (d). To prove (d) we assume in addition that
$l>m+N_h$. By Proposition \ref{Pr-q-trunc} it follows that
$U_{q_{\scrD, \scrE}}$ coincides with the set
$U_{q_{\scrB,\scrC}}$ in Theorem \ref{Th-semicont-trunc-q}, 
which implies that $U_{q_{\scrD, \scrE}}$ is closed.
\end{proof}


\section{Complements on Dieudonn\'e modules}
\label{Complem}

In this section we collect several properties of Dieudonn\'e
modules which will be used later on to study $n_D$ and $b_D$.
The results of Subsections \ref{Subsec-val} and \ref{Subsec-pres-cyc}
are either well-known or trivial. Subsections \ref{Subsec-val-cyc}
and \ref{Subsec-min-mod} are more involved and contain new material.

In this section, to be short we write $W=W(k)$ and 
$W_\QQ=W(k)[1/p]$.
Let $\sigma:W\to W$ be the Frobenius automorphism and
let $v:{W_\QQ}\to\ZZ\cup\{\infty\}$ be the $p$-adic valuation. 
Let ${W_\QQ}\{F,F^{-1}\}$ be the non-commutative Laurent
polynomial ring. We consider the quotient ring
$$
\DD={W_\QQ}\{F,F^{-1}\}/I
$$
where $I$ is the two-sided ideal generated by
all elements $Fa-\sigma(a)F$ with $a\in {W_\QQ}$.
Let $\EE\subset\DD$ be the $W$-subalgebra generated by $F$ and $V=pF^{-1}$.

\subsection{Valuations on Frobenius modules}
\label{Subsec-val}

A {\em valuation} on a $W$-module $M$ is a map
$w:M\to\RR\cup\{\infty\}$ that has the following two properties:

\smallskip
(i) $w(ax)=v(a)+w(x)$ for all $a\in W$ and $x\in M$;

\smallskip
(ii) $w(x+y)\geq\min\{w(x),w(y)\}$ for all $x,y\in M$.

\smallskip
\noindent The valuation is called non-degenerate if $w(x)=\infty$
implies that $x=0$. It is called non-trivial if $w(x)\neq\infty$
for some $x\in M$. If $M_{\rm tors}$ is the maximal torsion $W$-submodule of $M$, then $w$ always factors through $M/M_{\rm tors}$. 
Denoting $M_\QQ=M\otimes_W {W_\QQ}$, 
valuations on $M$ extend uniquely to valuations on $M_\QQ$. 
If $M$ is a $W_\QQ$-vector space, then (i) holds for all $a\in {W_\QQ}$.

\begin{Defn}
Let $w$ be a valuation on a $W$-module $M$.
A direct sum decomposition $M=\bigoplus_{i\in I} M_i$ is called
\emph{valuative} if we have
$$
w\Big(\sum_{i\in I} m_i\Big)=\min\{w(m_i)\mid i\in I\}$$ 
whenever $m_i\in M_i$ with almost all $m_i=0$.
A $W$-basis or ${W_\QQ}$-basis of $M$ is called valuative
if the associated direct sum decomposition into modules of
rank one is valuative.
\end{Defn}

A direct sum decomposition $M=\bigoplus_{i\in I} M_i$ is valuative if and only if
$w$ is minimal among all valuations of $M$
that coincide with $w$ on each $M_i$.
A direct sum decomposition of $M$ is valuative if and only if the
induced direct sum decomposition of $M_\QQ$ is valuative.

\begin{Defn}
A pair $(M,F)$, where $M$ is a $W$-module and $F$ is a
$\sigma$-linear endomorphism of $M$, is called a Frobenius
module (over $k$). A valuation $w$ on $M$ is called an {\em $F$-valuation
of slope $\lambda\in\RR$} if for all $x\in M$ we have 
$$w(Fx)=w(x)+\lambda.$$
\end{Defn}

An $F$-valuation on $M$ extends uniquely to
an $F$-valuation on $M_\QQ$.
We view each left $\DD$-module as a Frobenius module.
For each $\lambda\in\RR$ there exists a unique $F$-valuation
$\ww_\lambda$ on $\DD$ of slope $\lambda$ such that
the ${W_\QQ}$-basis $(F^i)_{i\in\ZZ}$ of $\DD$ is valuative and we have
$\ww_\lambda(1)=0$. It is given by the formula
$$
\ww_\lambda\Big(\sum_{i\in\ZZ} e_iF^i\Big)=\min\{v(e_i)+i\lambda\mid i\in\ZZ\}
$$
whenever $e_i\in W_\QQ$ with almost all $e_i=0$.
This can also be expressed in terms of Newton polygons.
For a non-zero element $\Phi\in\DD$ we define its Newton polygon
$\nu_\Phi$ as follows. Write
$\Phi=\sum_{i=n}^{m}a_iF^{-i}$ with $n\le m$
and $a_i\in W_\QQ$
such that $a_n\ne 0$ and $a_m\ne 0$.
Then $\nu_\Phi:[n,m]\to\RR$ is
the maximal upper convex function such that 
$v(a_i)\geq \nu_{\Phi}(i)$ for each integer
$i\in[n,m]$. 
For $\lambda\in\RR$ let $\nu_{\Phi,\lambda}:\RR\to\RR$
be the maximal linear function of slope $\lambda$
such that $v(a_i)\geq\nu_{\Phi,\lambda}(i)$
for each integer $i\in[n,m]$. For $t\in[n,m]$ we have 
$$
\nu_{\Phi}(t)=\max\{\nu_{\Phi,\lambda}(t)\mid\lambda\in\RR\},
$$
and the functions $\nu_{\Phi,\lambda}$
are maximal with this property. We have
\begin{equation}
\label{Eq-ww-lambda}
\ww_{\lambda}(\Phi)=\nu_{\Phi,\lambda}(0).
\end{equation}

In the following let $N$ be a left $\DD$-module of
finite dimension over ${W_\QQ}$.

\begin{Lemma}
\label{Le-F-val}
There exists a non-degenerate $F$-valuation of slope $\lambda$
on $N$ if and only if $N$ is isoclinic of slope $\lambda$.
When $N$ is simple of slope $\lambda$, then any two non-trivial
$F$-valuations on $N$ differ by the addition of a constant.
\end{Lemma}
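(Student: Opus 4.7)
My plan is to invoke the Dieudonn\'e--Manin decomposition $N=\bigoplus_\mu N_\mu$ into isoclinic components, which is available over $B(k)$ because $k$ is algebraically closed, and to reduce everything to the standard simple modules $E_{s,r}=\DD/\DD(F^r-p^s)$, on which $F^r$ acts as multiplication by $p^s$. For the "if" direction, assume $N$ is isoclinic of slope $\lambda=s/r$ with $\gcd(s,r)=1$, so $N$ is a finite direct sum of copies of $E_{s,r}$. On one such copy with generator $e$, I would declare the $B(k)$-basis $e,Fe,\ldots,F^{r-1}e$ to be valuative with $w(F^ie)=i\lambda$; since $F^re=p^se$, a direct computation shows the resulting valuation satisfies $w(Fx)=w(x)+\lambda$ and is non-degenerate. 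Declaring the direct sum decomposition of $N$ valuative as well then yields the required $w$ on $N$.

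For the "only if" direction, suppose $w$ is a non-degenerate $F$-valuation on $N$ of slope $\lambda$, and let $x\in N_\mu$ be nonzero with $\mu=s/r$ in lowest terms. Since $F^r$ acts as $p^s$ on $N_\mu$,
$$w(x)+r\lambda=w(F^rx)=w(p^sx)=s+w(x),$$
and $w(x)<\infty$ by non-degeneracy, so $r\lambda=s$; this forces $\mu=\lambda$ and hence $N=N_\lambda$ is isoclinic of slope $\lambda$.

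For the uniqueness statement, let $N$ be simple of slope $\lambda=s/r$ with $\gcd(s,r)=1$, so $N\cong E_{s,r}$ over $B(k)$ with basis $e_i=F^ie$ for $i=0,\ldots,r-1$. Let $w$ be a non-trivial $F$-valuation of some slope $\mu$; then $\{x\in N\mid w(x)=\infty\}$ is a sub-$\DD$-module of $N$ (closure under $F^{-1}$ uses that $F$ is invertible), hence zero by simplicity, so $w$ is non-degenerate, and the computation of the previous paragraph applied to $x=e$ forces $\mu=\lambda$. The arithmetic fact I would then exploit is that, since $\gcd(s,r)=1$, the residues of $i\lambda=is/r$ modulo $\ZZ$ for $i=0,\ldots,r-1$ are pairwise distinct. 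Consequently, for any nonzero $x=\sum a_ie_i$ with $a_i\in B(k)$, the real numbers $v(a_i)+i\lambda$ (for those $i$ with $a_i\neq 0$) are pairwise distinct, so their minimum is attained at a unique index; the strict triangle inequality then forces $w(x)=w(e)+\min_i(v(a_i)+i\lambda)$, showing that $w$ is completely determined by the scalar $w(e)$. Hence any two non-trivial $F$-valuations on $N$ differ by an additive constant. The delicate ingredient is this last step: the ordinary triangle inequality only gives $\geq$, and upgrading to equality requires the coprimality of $s$ and $r$ to ensure a unique minimising index, at which point the strict triangle inequality closes the argument.
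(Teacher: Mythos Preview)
Your proof is correct and follows essentially the same route as the paper's sketch: both invoke the Dieudonn\'e--Manin classification to reduce to simple modules on which $F^r=p^s$, and use the resulting relation $r\lambda=s$ to pin down the slope. Your treatment is considerably more detailed than the paper's two-line sketch, and your uniqueness argument---exploiting that $\gcd(s,r)=1$ forces the residues $i\lambda\bmod\ZZ$ to be distinct, so the strict ultrametric inequality determines $w(x)$ from $w(e)$---is a clean way to fill in what the paper leaves implicit.
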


\begin{proof}[Sketch of proof]
Use the facts that $N$ has a ${W_\QQ}$-basis
consisting of elements $x$ with $F^{s_x}(x)=p^{r_x}x$
for some integers $s_x\ne 0$ and $r_x$, and that
$N$ is isoclinic of slope $\lambda$ if and only if
we have $r_x=\lambda s_x$ for all $x$ in the ${W_\QQ}$-basis.
\end{proof}

\begin{Lemma}
\label{Le-min-val} Let $N$ be as above and let $\lambda\in\RR$. We
consider a free $W$-submodule $M\subset N$ with $M_\QQ=N$. Let
$\scrW$ be the set of all $F$-valuations $w$ of slope $\lambda$ on
$N$ with $w(x)\geq 0$ for all $x\in M$. Then $\scrW$ has a minimal
element $w_\circ$ i.e., we have $w_\circ(x)\leq w(x)$ for all $w\in\scrW$
and $x\in N$. The valuation $w_\circ$ is non-degenerate if and
only if $N$ is isoclinic of slope $\lambda$.
\end{Lemma}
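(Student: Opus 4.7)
The plan is to define $w_\circ$ pointwise by $w_\circ(x) := \inf\{w(x) \mid w \in \scrW\}$, verify that the axioms of an $F$-valuation of slope $\lambda$ carry over to this pointwise infimum, and then deduce both the minimality and the non-degeneracy dichotomy from this single construction.

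First I would check that $w_\circ$ takes values in $\RR \cup \{\infty\}$. Fix a $W$-basis $e_1, \ldots, e_n$ of $M$. For $x = \sum_i c_i e_i \in N$ with $c_i \in B(k)$ and any $w \in \scrW$, axiom (ii) together with $w(e_i) \geq 0$ gives $w(x) \geq \min_i(v(c_i) + w(e_i)) \geq \min_i v(c_i)$, a uniform lower bound independent of $w$; note $\scrW$ is nonempty because the identically-$\infty$ map trivially lies in it. Next I would verify that $w_\circ \in \scrW$: property (i) and the slope-$\lambda$ identity follow by factoring $v(a)$, resp.\ $\lambda$, out of the infimum; for property (ii), each $w \in \scrW$ already dominates $w_\circ$, so $w(x+y) \geq \min\{w(x),w(y)\} \geq \min\{w_\circ(x),w_\circ(y)\}$, and one may infimize in $w$; the condition $w_\circ(e_i) \geq 0$ is immediate. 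Minimality of $w_\circ$ is then built into the definition.

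For the non-degeneracy equivalence, suppose first that $N$ is isoclinic of slope $\lambda$. Lemma \ref{Le-F-val} furnishes a non-degenerate $F$-valuation $w_*$ of slope $\lambda$ on $N$. A real-constant shift preserves axioms (i), (ii), the slope condition, and non-degeneracy, so for $c$ large enough that $w_*(e_i) + c \geq 0$ for all $i$ we obtain $w_* + c \in \scrW$; then $w_\circ \leq w_* + c < \infty$ on every nonzero vector, whence $w_\circ$ is non-degenerate. Conversely, suppose $N$ admits a nonzero isoclinic summand $N_\mu$ with $\mu \neq \lambda$, and take a $B(k)$-basis of $N_\mu$ consisting of eigenvectors $x_j$ satisfying $F^{s_j} x_j = p^{r_j} x_j$ with $r_j/s_j = \mu$. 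For any $w \in \scrW$, applying $w$ yields $s_j \lambda + w(x_j) = r_j + w(x_j)$, and since $s_j \lambda \neq r_j$ this forces $w(x_j) = \infty$; the ultrametric axiom propagates $w \equiv \infty$ to all of $N_\mu$, so $w_\circ$ is degenerate on the nonzero submodule $N_\mu$.

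The main obstacle is very mild: there is nothing conceptually hard beyond the observation that pointwise infima respect all the valuation axioms, together with the shift-by-a-constant trick needed to bring the non-degenerate valuation of Lemma \ref{Le-F-val} into $\scrW$ so that its minimality-descended version $w_\circ$ inherits non-degeneracy.
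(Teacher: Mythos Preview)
Your proof is correct and follows exactly the approach of the paper: define $w_\circ$ as the pointwise infimum over $\scrW$ and invoke Lemma~\ref{Le-F-val} for the non-degeneracy dichotomy. The paper's proof is a two-line sketch, and you have simply supplied the details it omits (the uniform lower bound, the axiom-by-axiom verification, the constant-shift trick to place a non-degenerate valuation inside $\scrW$, and the eigenvector argument for the degenerate direction).
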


\begin{proof}
For $x\in N$ let $w_\circ(x)=\inf\{w(x)\mid w\in\scrW\}$.
Then $w_\circ$ is an $F$-valuation on $N$ of slope $\lambda$.
The last assertion follows from Lemma \ref{Le-F-val}.
\end{proof}

Let $N$ be as above. For an $F$-valuation $w$ on $N$
of slope $\lambda$ we consider the $W$-submodules of $N$
defined for each $\alpha\in\RR$ by:
\begin{gather*}
N^{w\geq\alpha}=\{x\in N\mid w(x)\geq\alpha\}\qquad\text{and}\qquad
N^{w>\alpha}=\{x\in N\mid w(x)>\alpha\}.
\end{gather*}
Let $gr^\alpha N$ be the $k$-vector
space $N^{w\geq\alpha}/N^{w>\alpha}$.

\begin{Lemma}
\label{Le-Phi}
Let $\Phi\in\DD\setminus\{0\}$ be a sum $\Phi=\sum_{i\in\ZZ}  e_iF^i$
with each $e_i\in {W_\QQ}$ (only a finite number of the $e_i$'s are non-zero). 
Let $\delta=\ww_\lambda(\Phi)$.
Then the multiplication by $\Phi$ induces a group homomorphism
$$
\bar\Phi_\alpha:gr^\alpha N\to gr^{\alpha+\delta}N
$$
which is surjective with finite kernel.
\end{Lemma}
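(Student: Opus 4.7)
The plan is to first reduce to the case where $N$ is isoclinic of slope $\lambda$ and $w$ is non-degenerate, then to factor the leading part of $\Phi$ as a product of a $\DD$-unit and a polynomial in the operator $\phi := p^{-r}F^s$, and finally to apply Lang's theorem to the $\text{Frob}^s$-semilinear action of $\phi$ on $gr^\alpha N$. For the reduction, $\ker w := \{y \in N : w(y) = \infty\}$ is a $\DD$-submodule of $N$ (closure under $F$ follows from the slope property), and the quotient $N/\ker w$ inherits a non-degenerate $F$-valuation of slope $\lambda$; by Lemma \ref{Le-F-val} it is isoclinic of slope $\lambda$. Since $\ker w \subseteq N^{w > \alpha}$ for every finite $\alpha$, the projection induces isomorphisms $gr^\alpha N \cong gr^\alpha(N/\ker w)$ compatible with multiplication by $\Phi$, so we may assume $N$ is isoclinic and $w$ is non-degenerate; write $\lambda = r/s$ in lowest terms with $s \geq 1$.

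Every monomial $e_iF^i$ with $v(e_i) + i\lambda > \delta$ sends $N^{w \geq \alpha}$ into $N^{w > \alpha+\delta}$ and therefore induces zero on graded pieces, so $\bar\Phi_\alpha$ is induced by the leading part $\Phi_0 := \sum_{i \in S} e_iF^i$, where $S = \{i : v(e_i) + i\lambda = \delta\}$. Since $v(e_i) \in \ZZ$ and $\gcd(r,s) = 1$, all indices in $S$ share a residue class $i_0$ modulo $s$, so I may write $e_{i_0+sk} = u_k\, p^{\delta - (i_0+sk)r/s}$ with $u_k \in W^\times$. Combining the identity $F^{sk} = p^{rk}\phi^k$ with the commutation relation $u \cdot F^{i_0} = F^{i_0} \cdot \sigma^{-i_0}(u)$ and the centrality of $p$, a short rearrangement produces the factorisation
\[
\Phi_0 \;=\; p^{\delta - i_0 r/s} \cdot F^{i_0} \cdot P(\phi), \qquad P(\phi) := \sum_k \tilde u_k\, \phi^k,
\]
where $\tilde u_k := \sigma^{-i_0}(u_k) \in W^\times$. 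The prefactor $p^{\delta - i_0 r/s} \cdot F^{i_0}$ is a unit of $\DD$ inducing a $k$-linear isomorphism $gr^\alpha N \cong gr^{\alpha+\delta} N$, so it remains to prove that $P(\phi)$ is surjective with finite kernel on $gr^\alpha N$.

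The slope property of $w$ gives $w(\phi y) = w(y)$ and $\phi$ is a unit of $\DD$, so $\phi$ induces a $\text{Frob}^s$-semilinear bijection $\bar\phi$ of the finite-dimensional $k$-vector space $gr^\alpha N$. By Lang's theorem there is a natural identification $gr^\alpha N = V \otimes_{\FF_{p^s}} k$, where $V := (gr^\alpha N)^{\bar\phi}$ is a finite-dimensional $\FF_{p^s}$-vector space. Writing $c_k \in k^\times$ for the reduction of $\tilde u_k$ modulo $p$, the operator $P(\phi)$ acts as $\id_V \otimes Q$, with $Q(X) := \sum_k c_k X^{p^{sk}} \in k[X]$ a nonzero additive polynomial. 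Since $k$ is algebraically closed, any nonzero additive polynomial on $k$ is surjective with finite kernel, and this property transfers to $P(\phi)$ and hence to $\bar\Phi_\alpha$. The main obstacle is the noncommutative rearrangement producing the clean factorisation $\Phi_0 = (\text{unit in }\DD) \cdot P(\phi)$: careful tracking is needed of how units of $W^\times$ travel across $F^{i_0}$ and across powers of $\phi$; once that is done, Lang's theorem and the theory of additive polynomials dispatch surjectivity and finite kernel essentially for free.
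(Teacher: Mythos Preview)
Your proof is correct and follows essentially the same strategy as the paper: drop the non-leading terms of $\Phi$, peel off an invertible prefactor, and observe that what remains is the identity plus a sum of Frobenius-twisted linear maps on the finite-dimensional $k$-vector space $gr^\alpha N$. The only small wrinkle is that your $P(\phi)=\sum_k\tilde u_k\phi^k$ need not be a polynomial in $\phi$ unless you take $i_0=\min S$ (otherwise some $k$ are negative and $Q(X)=\sum_k c_kX^{p^{sk}}$ is not an additive polynomial); this is harmless since one may absorb $\phi^{k_{\min}}$ into the invertible prefactor, but it should be said.

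Where your write-up differs from the paper is in the endgame. You invoke Lang's theorem to diagonalise $\bar\phi$ and then reduce to a nonzero additive polynomial on $k$. The paper is more terse: after reducing to $\delta=0$ and $\Phi=1+\sum_{i\ge 1}e_iF^i$, it simply observes that $\bar\Phi_\alpha$ is an \'etale homomorphism of vector groups over $k$ (the differential at $0$ is the identity, since every $F^i$ with $i>0$ has zero differential), and an \'etale homomorphism of connected commutative algebraic groups over an algebraically closed field is surjective with finite kernel. This bypasses both your initial reduction to the isoclinic case and the Lang-theorem diagonalisation; those steps are correct but unnecessary. Your argument has the virtue of being entirely explicit, while the paper's is shorter and does not require $\lambda$ to be rational or $w$ to be non-degenerate.
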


\begin{proof}
Clearly $\Phi(N^{w\geq\alpha})\subseteq N^{w\geq\alpha+\delta}$
and $\Phi(N^{w>\alpha})\subseteq N^{w>\alpha+\delta}$.
The induced homomorphism $\bar\Phi_\alpha$ does not change if we
omit from $\Phi$ all terms $e_iF^i$ with $v(e_i)+i\lambda>\delta$. As the
validity of the lemma is invariant under multiplying $\Phi$ with
integral powers of $F$ or with non-zero elements of ${W_\QQ}$, we can assume that
$\delta=0$ and that $\Phi=1+\sum_{i\in\NN^{\ast}} e_iF^i$ 
with $e_i\in W_\QQ$ such that only a finite number of them 
are non-zero and we have $v(e_i)=-i\lambda$ for all non-zero $e_i$. 
Then $\bar\Phi_\alpha$ can be viewed naturally as an \'etale endomorphism
of the vector group scheme over $k$ defined by
$gr^{\alpha}N$, and the assertion follows.
\end{proof}

For later use we record the following elementary result.

\begin{Lemma}
\label{Le-p-exp}
Let $\alpha, \alpha'\in \RR$ be such that
$gr^\alpha N\neq 0$. The minimal non-negative
integer $m$ such that $p^mN^{w\ge\alpha}\subseteq N^{w>\alpha'}$
is equal to $\lfloor\alpha'-\alpha\rfloor+1$. \qed
\end{Lemma}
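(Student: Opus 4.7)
The plan is to reduce the asserted inequality to the single condition $m > \alpha'-\alpha$ by exploiting the two defining properties of a valuation.

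First I would note that $w$ being a valuation gives $w(p^mx)=v(p^m)+w(x)=m+w(x)$ for every $x\in N$. Hence the inclusion $p^mN^{w\ge\alpha}\subseteq N^{w>\alpha'}$ is equivalent to the implication
\[
w(x)\ge\alpha\;\Longrightarrow\;w(x)+m>\alpha'\qquad(x\in N),
\]
and the strongest instance of this implication occurs at the smallest admissible value of $w(x)$.

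The hypothesis $gr^\alpha N\ne 0$ guarantees that this smallest value is attained and equals $\alpha$: indeed, any lift to $N^{w\ge\alpha}$ of a non-zero class in $gr^\alpha N=N^{w\ge\alpha}/N^{w>\alpha}$ provides an element $x_0$ with $w(x_0)=\alpha$. Consequently, the inclusion is equivalent to $\alpha+m>\alpha'$, i.e.\ to $m>\alpha'-\alpha$.

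Finally, the smallest non-negative integer $m$ satisfying $m>\alpha'-\alpha$ is precisely $\lfloor\alpha'-\alpha\rfloor+1$, since for any real number $r$ the smallest integer strictly greater than $r$ is $\lfloor r\rfloor+1$ (this treats uniformly the cases $r\in\ZZ$ and $r\notin\ZZ$). There is no genuine obstacle here; the only point to be a little careful with is that the minimum of $w$ on $N^{w\ge\alpha}$ is actually attained, which is exactly what the assumption $gr^\alpha N\ne 0$ secures.
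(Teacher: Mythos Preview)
Your argument is correct and is exactly the elementary verification the paper has in mind; indeed the paper gives no proof at all, marking the lemma with a bare \qed{} as an exercise for the reader. The one cosmetic point is that ``smallest non-negative integer $m$ with $m>\alpha'-\alpha$'' equals $\lfloor\alpha'-\alpha\rfloor+1$ only when the latter is itself non-negative (i.e.\ when $\alpha'-\alpha\ge -1$); this is implicitly assumed in the lemma's formulation and in every application the paper makes of it, so it is not a genuine gap.
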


\subsection{Presentations of cyclic Dieudonn\'e modules}
\label{Subsec-pres-cyc}

In this subsection we fix a non-zero bi-nilpotent Dieudonn\'e module $M$ over
$k$ i.e., $M$ is an $\EE$-module which is a free $W$-module
of finite positive rank, and $F$ and $V$ are nilpotent on $\bar M=M/pM$. Let
$d=\dim_k(M/FM)$ and $c=\dim_k(M/VM)$ and $h=c+d$. We have $cd>0$,
and $h$ is the rank of $M$. 

The $a$-number $a(M)=\dim_k(M/(FM+VM))$ is positive.
An element $z\in M$ generates $M$ as an $\EE$-module
if and only if $z$ generates $M/(FM+VM)$ as a $k$-vector space.
For completeness we prove the following
well-known lemma.

\begin{Lemma}
\label{Le-present}
Assume that $z$ generates $M$ as an $\EE$-module.
Then the following three properties hold:

\smallskip
(a) The $h$-tuple $\Upsilon=(F^iz)_{1\le i\le c}\sqcup(V^iz)_{0\leq i\leq d-1}$
is a $W$-basis of $M$.

\smallskip
(b) There exists an element $\Psi\in\EE$ for which we have
$\Psi z=0$ and which is of the form
$$
\Psi=\sum_{i=0}^ca_iF^{c-i}+\sum_{i=1}^{d}b_iV^i,
$$
with $a_0$ and $b_d$ as units in $W$
and with $a_i\in pW$ for $i\in\{1,\ldots,c\}$ and $b_i\in pW$ for $i\in \{1,\ldots,d-1\}$.

\smallskip
(c) We have an $\EE$-linear isomorphism
$\EE/\EE\Psi\cong M$ given by $1+\EE\Psi\mapsto z$.
\end{Lemma}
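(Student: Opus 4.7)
The plan is to establish (a), (b), (c) in sequence: (a) from a Jordan normal form argument for cyclic nilpotent semilinear operators; (b) from expanding $V^dz$ in the basis of (a) and analyzing the expansion modulo $p$; (c) from formal reduction of $\mathcal{E}$ modulo $\mathcal{E}\Psi$ using the units $a_0$ and $b_d$.

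For (a): Since $z$ generates $M$ as an $\mathcal{E}$-module, its image generates $M/FM$ as a $k[V]$-module (the elements $F^iz$ for $i\ge 1$ lie in $FM$), and $V$ acts on $M/FM$ as a $\sigma^{-1}$-semilinear nilpotent operator. A cyclic strictly nilpotent semilinear operator on a finite-dimensional space over an algebraically closed field has a single Jordan block: if $v$ is a generator, then $v,Tv,\ldots,T^{n-1}v$ is a basis and $T^nv=0$, where $n=\dim$. Applied here, $\bar z,V\bar z,\ldots,V^{d-1}\bar z$ is a $k$-basis of $M/FM$; symmetrically, $\bar z,F\bar z,\ldots,F^{c-1}\bar z$ is a basis of $M/VM$. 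The map $\bar F:\bar M\to\bar M$ has kernel $V\bar M$ (as $VF=FV=0$ on $\bar M$ and $\dim V\bar M=d=\dim\ker\bar F$), so it induces a $\sigma$-semilinear isomorphism $\bar M/V\bar M\xrightarrow{\sim}F\bar M$ that transports the basis of $\bar M/V\bar M=M/VM$ to $F\bar z,F^2\bar z,\ldots,F^c\bar z$, a basis of $F\bar M$. Combining with the basis of $\bar M/F\bar M=M/FM$ via the short exact sequence $0\to F\bar M\to\bar M\to M/FM\to 0$ exhibits $\Upsilon$ as a $k$-basis of $\bar M$, and Nakayama lifts this to a $W$-basis of $M$.

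For (b): Expand $V^dz$ uniquely in the basis $\Upsilon$ as
\begin{equation*}
V^dz=\sum_{i=0}^c\gamma_iF^{c-i}z+\sum_{j=1}^{d-1}\delta_jV^jz,\quad\gamma_i,\delta_j\in W,
\end{equation*}
and set $\Psi=\sum_{i=0}^c\gamma_iF^{c-i}+\sum_{j=1}^{d-1}\delta_jV^j-V^d\in\mathcal{E}$. Then $\Psi z=0$ and $\Psi$ has the stated shape with $b_d=-1$ a unit. The main obstacle is to show that modulo $p$ we have $V^d\bar z=\gamma F^c\bar z$ for some $\gamma\in k^{\times}$, which forces $\bar\gamma_0\neq 0$ (so $a_0$ is a unit) and $\bar\gamma_i=\bar\delta_j=0$ for $i\ge 1$ and $j\ge 1$ (so $a_i,b_j\in pW$). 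To this end, the $F$-stable cyclic subspace $\bar M_F=\sum_{i=0}^ck\cdot F^i\bar z$ has dimension $c+1$ by (a), and $\bar F$ restricted to it is cyclic strictly nilpotent; the Jordan-type argument above then forces $F^{c+1}\bar z=0$, and since also $F^c\bar z\in F\bar M=\ker\bar V$, we obtain $F^c\bar z\in\ker\bar F\cap\ker\bar V$. Applying (a) symmetrically, with $F$ and $V$ swapped, produces an alternative basis of $\bar M$ containing $V^d\bar z$ and in which $V^{d+1}\bar z=0$, so $V^d\bar z\in\ker\bar F\cap\ker\bar V$ as well. Since the hypothesis that $z$ generates $M$ as an $\mathcal{E}$-module is equivalent to $a(M)=\dim_kM/(FM+VM)=1$, this intersection is one-dimensional, and the two nonzero elements $V^d\bar z$ and $F^c\bar z$ must be proportional.

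For (c): The surjection $\phi:\mathcal{E}/\mathcal{E}\Psi\to M$ with $1\mapsto z$ exists by construction. The relation $\Psi\equiv 0\pmod{\mathcal{E}\Psi}$ together with $b_d\in W^{\times}$ expresses $V^d$ modulo $\mathcal{E}\Psi$ as a $W$-linear combination of $\{F^i:0\le i\le c\}\cup\{V^j:1\le j\le d-1\}$. Left multiplication of $\Psi$ by $V$, combined with $VF=p$, yields a further relation in which the leading coefficient of $V^{d+1}$ is $\sigma^{-1}(b_d)$, again a unit, reducing $V^{d+1}$; iterating handles $V^k$ for all $k\ge d$. Symmetrically, left multiplication by $F$ uses $FV=p$ and the unit $a_0$ to reduce $F^{c+1}$, and hence $F^k$ for all $k\ge c+1$. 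Thus $\mathcal{E}/\mathcal{E}\Psi$ is generated over $W$ by the $h=c+d$ elements above, whose images under $\phi$ form the basis $\Upsilon$ of $M$ by (a); hence $\phi$ is an isomorphism.
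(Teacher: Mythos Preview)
Your proof is correct and largely parallels the paper's. Parts (a) and (c) match the paper's argument closely: the same reduction to $\bar M$ via Nakayama for (a), and the same reduction of higher powers of $F$ and $V$ using the unit coefficients $a_0,b_d$ for (c).

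For (b) there is a small but genuine difference worth recording. Both you and the paper expand $V^dz$ in the basis $\Upsilon$, but the justification of the congruences modulo $p$ diverges. You show directly that $F^c\bar z$ and $V^d\bar z$ are nonzero elements of the one-dimensional socle $\ker\bar F\cap\ker\bar V=F\bar M\cap V\bar M$ (which has $k$-dimension $a(M)=1$), hence proportional; this immediately yields $\bar\gamma_0\neq 0$ and $\bar\gamma_i=\bar\delta_j=0$ for $i,j\ge 1$. The paper instead only extracts from $V^dz\in FM$ the conditions $a_c\in pW$ and $b_i\in pW$ for $1\le i\le d-1$, then writes down a \emph{second} relation $\Psi'z=0$ using the $F$--$V$-swapped basis (with $a_0'=1$ and $a_i'\in pW$ for $i\ge 1$), and observes that by uniqueness of such a relation one has $\Psi'=u\Psi$ for a unit $u\in W$, transferring all the congruence conditions at once. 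Your socle argument is more direct and conceptual; the paper's uniqueness trick avoids having to verify $F^{c+1}\bar z=0$ and $V^{d+1}\bar z=0$ separately.
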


\begin{proof}
As $V$ is nilpotent on $M/FM$ and as
$M/FM$ is a $k$-vector space of dimension $d$
generated by the iterates of $z$ under $V$,
we get that $(V^iz)_{0\leq i<d}$ is a $k$-basis of $M/FM$.
Similarly we argue that $(F^iz)_{0\leq i<c}$ is a
$k$-basis of $M/VM$, which implies that
$(F^iz)_{1\leq i\leq c}$ is a $k$-basis of $FM/pM$.
We conclude that $\Upsilon$ is a $k$-basis of $M/pM$.
From this (a) follows.

By (a) there exists a relation $\Psi z=0$ with
$\Psi=\sum_{i=0}^c a_iF^{c-i}+\sum_{i=1}^d b_iV^i$ and $b_d=1$. As $V^dz\in FM$ we have
$a_c\in pW$ and $b_i\in pW$ for $i\in \{1,\ldots,d-1\}$.
By interchanging the roles of $F$ and $V$ in (a), there exists also a
relation $\Psi'z=0$ with $\Psi'=\sum_{i=0}^c a_i'F^{c-i}+\sum_{i=1}^d b_i'V^i$ and $a_0'=1$ and such that
$a_i'\in pW$ for $i\in\{1,\ldots,c\}$. The element $\Psi'-b_d'\Psi$ must be zero by (a),
which implies that $a_i'=b_d'a_i$ for all $i\in\{0,\ldots,c\}$. As $a_0'=1$, we get that $b_d'$ and $a_0$ are units of $W$ and that we have $a_i\in pW$ for all $i\in\{1,\ldots,c-1\}$. Therefore (b) holds.

We have an $\EE$-linear epimorphism $\EE/\EE\Psi\to M$ that maps $1+\EE\Psi$ to $z$.
It is easy to see that 
$\{F^i\mid1\leq i\leq c\}\cup\{V^i\mid0\leq i\leq d-1\}$ 
generates $\EE/\Psi\EE$ over $W$, which proves (c).
\end{proof}

\begin{Lemma}
\label{Le-Newton}
Let $M=\EE/\EE\Psi$ be as in Lemma \ref{Le-present}.
Let $\nu_M:[0,h]\to\RR$ be the Newton polygon of $M$
and let $\nu_{\Psi}:[-c,d]\to\RR$ be the Newton
polygon of $\Psi$ defined above. Then
$\nu_M(t)=\nu_\Psi(t-c)$ for $t\in[0,h]$.
\end{Lemma}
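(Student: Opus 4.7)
The plan is to identify $M_\QQ := M \otimes_W B(k)$ with the cyclic $\DD$-module $\DD/\DD\Psi$ and read the Newton polygon of the resulting isocrystal off from $\Psi$.

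By Lemma \ref{Le-present}(c), $M \cong \EE/\EE\Psi$ with $z \leftrightarrow 1$. Since the leading coefficient $a_0$ of $\Psi$ is a unit in $W$, $F$ acts invertibly on $M_\QQ$ (with $F^{-1} = p^{-1}V$), so the presentation extends to $M_\QQ \cong \DD/\DD\Psi$ as left $\DD$-modules of $B(k)$-dimension $c+d = h$. Thus the lemma reduces to the intrinsic claim: for any non-zero $\Phi = \sum_{i=n}^{m} a_i F^{-i} \in \DD$ with $a_n, a_m \neq 0$, the Newton polygon of the isocrystal $\DD/\DD\Phi$ (read on $[0, m-n]$) equals $t \mapsto \nu_\Phi(t + n)$. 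Applied to $\Psi$ with $n = -c$ and $m = d$, this yields $\nu_M(t) = \nu_\Psi(t - c)$.

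The core step is a non-commutative Newton factorization $\Psi = \Phi_r \Phi_{r-1} \cdots \Phi_1$ in $\DD$, where each $\Phi_i$ is \emph{pure} of slope $\lambda_i$ with horizontal length $m_i$, the $\lambda_1 < \ldots < \lambda_r$ being the distinct slopes of $\nu_\Psi$ with multiplicities $m_i$. This factorization is the skew analogue of the classical Newton factorization of polynomials and would be established by iterated Hensel-type lifts in the filtered ring $(\DD, \ww_\lambda)$ for $\lambda$ strictly between successive slopes. From the factorization we obtain short exact sequences of left $\DD$-modules
\begin{equation*}
0 \to \DD/\DD(\Phi_r \cdots \Phi_2) \xrightarrow{\cdot\,\Phi_1} \DD/\DD\Psi \to \DD/\DD\Phi_1 \to 0,
\end{equation*}
where the injection sends $[\xi] \mapsto [\xi\Phi_1]$ (well-defined and injective because $\DD$ is a domain and $\Phi_1$ is therefore right-cancellable). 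Iterating gives a filtration of $\DD/\DD\Psi$ with subquotients $\DD/\DD\Phi_i$, and since Newton polygons of isocrystals are additive on short exact sequences, it suffices to show that each $\DD/\DD\Phi_i$ is isoclinic of slope $\lambda_i$ with $B(k)$-dimension $m_i$. Dimensionality follows by directly counting a basis using that the top and bottom coefficients of $\Phi_i$ are units up to powers of $p$. For isoclinicity, a hypothetical non-zero isoclinic piece of slope $\mu \neq \lambda_i$ in $\DD/\DD\Phi_i$ would admit a non-degenerate $F$-valuation of slope $\mu$ by Lemma \ref{Le-F-val}; applying Lemma \ref{Le-Phi} to the induced action of $\Phi_i$ (which annihilates the cyclic generator and hence satisfies $\bar{\Phi_i}_\alpha = 0$ on the relevant graded pieces) then forces those graded pieces to vanish, a contradiction.

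The main obstacle is establishing the non-commutative Newton factorization cleanly. Although the outline parallels the classical commutative case of factoring a polynomial along its Newton polygon, the $\sigma$-twist in $\DD$ demands careful management of the graded ring associated to $\ww_\lambda$ at each inductive step, and the correct normalization must be chosen so that the lift at each stage sits inside the correct slope window.
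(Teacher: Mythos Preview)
The paper does not prove this lemma at all; it simply cites \cite[Lemma~2 on p.~82]{De}. Your outline---pass to $M_\QQ\cong\DD/\DD\Psi$, factor $\Psi$ along its Newton polygon into pure pieces $\Phi_i$, filter $\DD/\DD\Psi$ with subquotients $\DD/\DD\Phi_i$, and identify each subquotient as isoclinic of the expected slope and length---is exactly Demazure's strategy, so you are reconstructing the reference rather than offering an alternative route. The short exact sequence and the additivity of Newton polygons are handled correctly.

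There is, however, a real gap in your isoclinicity step. You write that $\Phi_i$ ``annihilates the cyclic generator and hence satisfies $\bar\Phi_{i,\alpha}=0$ on the relevant graded pieces.'' This implication fails: in the non-commutative ring $\DD$, left multiplication by $\Phi_i$ on the left module $\DD/\DD\Phi_i$ is \emph{not} the zero map---it kills only the class of $1$, not the class of a general $\xi$, since $\Phi_i\xi\ne\xi\Phi_i$. So Lemma~\ref{Le-Phi} gives you a surjection $\bar\Phi_{i,\alpha}:gr^\alpha N'\to gr^{\alpha+\delta}N'$ with finite kernel on a putative slope-$\mu$ piece $N'$, and the image of the cyclic generator lies in that finite kernel, but this is no contradiction: a nonzero element of a one-dimensional $k$-space can perfectly well lie in the finite kernel of a $p$-polynomial map. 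To close the argument you need a different mechanism---for instance, show that when $\Phi_i$ is pure of slope $\lambda_i$ the dominant term of $\Phi_i$ for the $F$-valuation of slope $\mu\ne\lambda_i$ is a single monomial $a F^j$, so that $\bar\Phi_{i,\alpha}$ is actually bijective and $\Phi_i z'=0$ forces $z'=0$; or, as Demazure does, reduce by further factorization to elements of the form $F^s-p^r u$ with $u\in W^\times$, for which isoclinicity of the quotient is immediate.
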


\begin{proof}
This is proved in \cite[Lemma~2 on p.~82]{De}.
\end{proof}

\begin{Remark}
In view of Lemma \ref{Le-Newton} it would be natural
to shift the Newton polygon of $M$ so that its domain
is $[-c,d]$. This would cause $c$ to be replaced by $0$ in
many formulas, including the assertions of
Theorems \ref{Th-b-maximal}, \ref{Th-n-upper}, and 
\ref{Th-min-isog}. 
We keep the traditional notation in order to avoid confusion.
\end{Remark}

\subsection{Valuations on cyclic Dieudonn\'e modules}
\label{Subsec-val-cyc}

We assume now that $M$ is a non-zero bi-nilpotent
Dieudonn\'e module over $k$ generated
as an $\EE$-module by a fixed element $z\in M$.
Thus $M\cong\EE/\Psi\EE$ with $\Psi$ as in Lemma \ref{Le-present} (b)
and we have $a(M)=1$.

\begin{Lemma}
\label{Le-val-basis}
Assume that $M$ is isoclinic of slope $\lambda$.
Let $w$ be the minimal $F$-valuation on $M$ of
slope $\lambda$ with $w(x)\geq 0$ for all $x\in M$,
cf.\ Lemma \ref{Le-min-val}. Then the $W$-basis $\Upsilon$
of $M$ introduced in Lemma \ref{Le-present} (a)
is valuative for $w$.
\end{Lemma}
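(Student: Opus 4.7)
The plan is to exhibit an explicit candidate valuation $w'$ on $M_\QQ$ for which $\Upsilon$ is valuative by construction, to verify that $w'\in\scrW$, and then to deduce $w=w'$ from the minimality of $w$ combined with the triangle inequality.

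First I will use Lemma \ref{Le-Newton} to extract valuation bounds on the coefficients of $\Psi$ from the isoclinic hypothesis. Setting $\lambda=d/(c+d)$, the Newton polygon $\nu_\Psi$ must be the linear function on $[-c,d]$ of slope $\lambda$ with $\nu_\Psi(-c)=0$, which translates, through the identifications of Subsection \ref{Subsec-pres-cyc}, into
\begin{equation*}
v(a_i)\geq i\lambda\ \ (0\leq i\leq c)\qquad\text{and}\qquad v(b_j)\geq(c+j)\lambda-j\ \ (1\leq j\leq d),
\end{equation*}
with equality at $i=0$ and $j=d$, where $a_0$ and $b_d$ are units. I then define $w':M_\QQ\to\RR\cup\{\infty\}$ by declaring $\Upsilon$ to be valuative with prescribed values $w'(F^iz)=i\lambda$ for $0\leq i\leq c$ and $w'(V^jz)=j(1-\lambda)$ for $0\leq j\leq d-1$. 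Properties (i) and (ii) of Subsection \ref{Subsec-val} are immediate from the construction, and $w'(x)\geq 0$ for every $x\in M$ since $M$ is the $W$-span of $\Upsilon$ by Lemma \ref{Le-present} (a) and each prescribed $w'$-value on $\Upsilon$ is non-negative.

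The next step is to check that $w'$ is an $F$-valuation of slope $\lambda$. Since $\Upsilon$ is $w'$-valuative and $F$ is $\sigma$-linear, the identity $w'(Fy)=w'(y)+\lambda$ only needs to be verified for $y\in\Upsilon$. For $y=F^iz$ with $0\leq i<c$ and for $y=V^jz$ with $j\geq 1$ (using $FV^jz=pV^{j-1}z$) this is immediate. The only non-trivial case is $y=F^cz$: applying $F$ to the relation $\Psi z=0$ and solving for $F^{c+1}z$ gives
$$
F^{c+1}z=-\sigma(a_0)^{-1}\!\left(\sum_{i=1}^c\sigma(a_i)F^{c-i+1}z+\sum_{j=1}^d p\,\sigma(b_j)V^{j-1}z\right),
$$
and the valuation estimates above show that every summand has $w'$-valuation $\geq(c+1)\lambda$, with equality attained at the $j=d$ term because $v(b_d)=0$; hence $w'(F^{c+1}z)=(c+1)\lambda=w'(F^cz)+\lambda$ and $w'\in\scrW$.

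Finally, I deduce valuativity of $\Upsilon$ for $w$ from the minimality of $w$. Since $w\leq w'$ on $M_\QQ$, evaluating at $F^cz$ yields $w(z)+c\lambda\leq c\lambda$, so $w(z)\leq 0$; combined with $w(z)\geq 0$ (because $z\in M$) this forces $w(z)=0$, so $w$ agrees with $w'$ on $\Upsilon$. For arbitrary $x=\sum_i\alpha_iF^iz+\sum_j\beta_jV^jz\in M_\QQ$, the triangle inequality yields $w(x)\geq\min\{v(\alpha_i)+i\lambda,\,v(\beta_j)+j(1-\lambda)\}=w'(x)$, while minimality gives the reverse inequality; thus $w=w'$, and $\Upsilon$ is valuative for $w$. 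The main obstacle throughout is the computation at $y=F^cz$: the isoclinic hypothesis must be used precisely, via the linearity of $\nu_\Psi$, to rule out any cancellation that would drop $w'(F^{c+1}z)$ below $(c+1)\lambda$.
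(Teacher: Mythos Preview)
Your approach is essentially the same as the paper's: define the explicit candidate valuation for which $\Upsilon$ is valuative by construction, show it lies in $\scrW$, and conclude $w=w'$ from minimality together with the triangle inequality. The paper writes the basis as $(F^{c-i}z)_{0\le i\le h-1}$ and sets $w_1(\sum e_iF^{c-i}z)=\min_i\{v(e_i)+(c-i)\lambda\}$, but this is exactly your $w'$ after the rescaling $V^jz=p^jF^{-j}z$.

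There is one small gap. Your claim that ``since $\Upsilon$ is $w'$-valuative and $F$ is $\sigma$-linear, the identity $w'(Fy)=w'(y)+\lambda$ only needs to be verified for $y\in\Upsilon$'' is not quite right as stated. Checking $w'(Fu)=w'(u)+\lambda$ for each $u\in\Upsilon$ only yields, via the triangle inequality, the one-sided estimate $w'(Fx)\ge w'(x)+\lambda$ for general $x$; it does not give the reverse inequality, because the image set $F\Upsilon$ is not a priori valuative for $w'$. To close the argument you must also check the $F^{-1}$ direction (equivalently the $V$ direction) on basis elements: the cases $y=F^iz$ with $i\ge 1$ and $y=V^jz$ with $j<d-1$ are again trivial, and the sole non-trivial case $y=V^{d-1}z$ is handled by the relation $\Psi z=0$ directly (without applying $F$), using $v(a_i)\ge i\lambda$ with equality at $i=0$, exactly symmetric to your $F^cz$ computation. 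Once both inequalities are in hand, $w'(Fx)=w'(x)+\lambda$ follows for all $x$. The paper leaves this computation to the reader, so your proof is no less complete than the paper's once you add the one symmetric check.
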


\begin{proof}
We write $\Psi=\sum_{i=0}^ha_iF^{c-i}$ with $a_i\in W$; 
thus for $i\in\{1,\ldots,d\}$ we have $a_{c+i}=p^ib_i$.
For $x=\sum_{i=0}^{h-1} e_iF^{c-i}z\in M$ with $e_i\in W$
let
$$
w_1(x)=\min\{v(e_i)+(c-i)\lambda\mid 0\le i\le h-1\}.
$$
Then $w_1$ is a valuation for which the $W$-basis $\Upsilon$
is valuative.
We claim that $w=w_1$. It is easy to see that $w(x)\geq w_1(x)\geq
0$ for all $x\in M$. Hence we must show that $w_1$ is an
$F$-valuation of slope $\lambda$ i.e., that we have
$w_1(Fx)=w_1(x)+\lambda$. This is a straightforward computation based on the relation $v(a_i)\geq i\lambda$ for all $0\leq i\leq h$ with equality for $i=0$ and $i=h$. The details are left to the reader.
\end{proof}

For the general (non-isoclinic) case we need some additional notations.

\begin{Notation}
\label{Notat}
Let $N=M_\QQ$ and
let $N=N_1\oplus\cdots\oplus N_r$ be the direct sum decomposition
into isoclinic components, ordered such that each $N_j$ with $j\in\{1,\ldots,r\}$ is isoclinic of slope $\lambda_j$ and
$0<\lambda_1<\cdots<\lambda_r<1$.
Let $h_j$ be the dimension of $N_j$ i.e., the multiplicity of $\lambda_j$ in $\nu$.
We write $h_j=c_j+d_j$ such that $\lambda_j=d_j/h_j$.
Let $M_j\subseteq N_j$ be the image of $M$.
Then $a(M_j)=1$. Let $w_j$ be the minimal $F$-valuation
on $N_j$ of slope $\lambda_j$ such that $w_j(M_j)\geq 0$,
see Lemma \ref{Le-min-val}.
For $\alpha=(\alpha_1,\ldots,\alpha_r)\in\RR^r$ let
$$
N^\alpha=\bigoplus_{j=1}^rN_j^{w_j\geq\alpha_j},\qquad
N^{\alpha+}=\bigoplus_{j=1}^rN_j^{w_j>\alpha_j},
$$
and $gr^\alpha N=N^\alpha/N^{\alpha+}$.
Let $\nu:[0,h]\to\RR$ be the Newton polygon of $M$.
For each $j\in\{1,\ldots,r\}$ let
$\nu_j:\RR\to\RR$ be the unique linear function
of slope $\lambda_j$ such that for all $t\in[0,h]$
we have
$$
\nu(t)=\max\{\nu_j(t)\mid 1\leq j\leq r\}.
$$
We define $\beta=(\beta_1,\ldots,\beta_r)\in\RR^r$ by $\beta_j=\nu_j(c)$.
\end{Notation}

The following result will be used in the
proof of Theorem \ref{Th-fDE-upper}.
Another application of it, to minimal Dieudonn\'e modules,
is given in Subsection \ref{Subsec-min-mod}.

\begin{Prop}
\label{Pr-fil}
We have $N^{\beta+}\subseteq pM$ as $W$-submodules of $N$.
\end{Prop}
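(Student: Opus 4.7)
My plan proceeds in two stages.

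\textbf{Stage 1 (isoclinic reduction).} I would first verify $\beta_j \geq c_j\lambda_j$ for every $j$. With $H_{j-1} = \sum_{l<j}h_l$ and $D_{j-1} = \sum_{l<j}d_l$ one computes $\beta_j = \nu_j(c) = D_{j-1} + \lambda_j(c - H_{j-1})$ and then
$$
\beta_j - c_j\lambda_j \;=\; D_{j-1}(1 - \lambda_j) + \lambda_j\sum_{l>j}c_l \;\geq\; 0,
$$
using $\lambda_j < 1$ (bi-nilpotency). Since each summand $M_j$ is itself cyclic with generator $z_j$ of $a$-number $1$, the $r=1$ case of the proposition applied to $M_j$ --- a direct consequence of Lemma \ref{Le-val-basis}, because the valuativity of $\Upsilon_j$ for $w_j$ translates $w_j(y) > c_j\lambda_j$ into the termwise bound $v(e_{\phi'}) > c_j\lambda_j - w_j(\phi') \geq 0$ forcing every coefficient $e_{\phi'} \in pW$ --- gives $N_j^{w_j > c_j\lambda_j} \subseteq pM_j$ for each $j$, and therefore $N^{\beta+} \subseteq p(M_1 \oplus \cdots \oplus M_r)$.

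\textbf{Stage 2 (upgrade to $pM$).} For $x \in N^{\beta+}$, write $x = gz$ with $g \in \DD$ and reduce $g$ by right-division modulo the left ideal $\DD\Psi$ to the canonical representative $r = \sum_{i=-(d-1)}^c r_i F^i$, $r_i \in B(k)$ (possible because the extreme $F^c$- and $V^d$-coefficients of $\Psi$ are units). The $r_i$ encode the unique expansion of $x$ in the $B(k)$-basis $\Upsilon$: for $i \geq 1$ the coefficient of $F^i z$ is $r_i$, and for $-d+1 \leq i \leq 0$ the coefficient of $V^{-i}z = p^{-i}F^i z$ is $p^i r_i$. Hence $x \in pM$ is equivalent to $v(r_i) \geq \max(1, 1-i)$ for every $i \in [-(d-1), c]$.

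The main obstacle is deriving these coefficient bounds from the hypothesis $w_j(rz_j) > \beta_j$ for all $j$: the naive bound $w_j(rz_j) \geq \ww_{\lambda_j}(r) = \min_i(v(r_i) + i\lambda_j)$ goes in the wrong direction, and equality can fail due to cancellation (in a suitable example one has $V^3 z = F^3 z + pz$, producing a $w_1$-valuation strictly greater than the naive minimum). What I expect to make the argument work is that the hypothesis is imposed simultaneously for \emph{every} $j$, which should rule out such bad cancellation and force the termwise bound $v(r_i) + i\lambda_j > \beta_j$. I would attempt to establish this either by induction on the number of slopes $r$ via the slope filtration of $M$, or by exploiting a non-commutative factorization $\Psi = \Phi_r \Phi_{r-1}\cdots\Phi_1$ in $\DD$ realising the isoclinic decomposition of $N$ and applying Lemma \ref{Le-Phi} to each factor $\Phi_j$; this is where I expect most of the proof's difficulty to lie. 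Granting the termwise bound, the remainder is routine: $v(r_i) > \beta_j - i\lambda_j = \nu_j(c-i)$ for all $i, j$, so $v(r_i) > \max_j\nu_j(c-i) = \nu(c-i)$ whenever $c-i \in [0, h]$ (our range). For $0 \leq i \leq c$, $\nu(c-i) \geq 0$ gives $v(r_i) \geq 1$. For $-(d-1) \leq i < 0$, convexity of $\nu$ together with the tangent bound at $(h, d)$ of slope $\lambda_r < 1$ yield $\nu(c+l) \geq d(1-\lambda_r) + \lambda_r l > l$ for $0 \leq l = -i \leq d-1$, hence $v(r_i) > -i$, i.e., $v(r_i) \geq 1 - i$. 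Combining gives $v(r_i) \geq \max(1, 1-i)$, and so $x \in pM$.
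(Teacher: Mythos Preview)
Your proposal correctly isolates the heart of the matter but leaves the main step unproved. The ``termwise bound'' you ask for in Stage~2, namely $v(r_i)+i\lambda_j>\beta_j$ for all $i,j$, is precisely the statement that $x$ lies in the auxiliary module $\tilde N^{\beta+}$ the paper introduces (defined via valuations $\tilde w_j$ on $N$ built from the basis $\Upsilon$). The ``routine'' deduction you give after granting this bound is the content of Lemma~\ref{Le-fil}. So what you are really missing is the inclusion $N^{\beta+}\subseteq\tilde N^{\beta+}$, which is exactly the nontrivial direction of Proposition~\ref{Pr-fil+} and is indeed where the work lies.

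The paper does not prove this by induction on $r$ or by factoring $\Psi$. Instead it argues as follows. For $x=\sum_i e_iF^{c-i}z$ let $\Ical_j(x)$ be the set of indices $i$ where the minimum defining $\tilde w_j(x)$ is attained, and let $s_j(x)$ be its diameter. The key Lemma~\ref{Le-ww+} says: if $s_j(x)<h_j$ then $w_j(x_j)=\tilde w_j(x)$; the point is that any $h_j$ consecutive elements $F^{c-i}z$ project to a valuative basis of $N_j$, by Lemma~\ref{Le-val-basis} applied to $M_j$ together with the fact that $F$-shifts preserve valuativity. Now assume $x\in N^{\beta+}\setminus\tilde N^{\beta+}$ and take a maximal interval $\Jcal=\{j_1,\ldots,j_2\}$ of indices with $\tilde w_j(x)\le\beta_j$. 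For each $j\in\Jcal$ the strict inequality $\tilde w_j(x)<w_j(x_j)$ forces $s_j(x)\ge h_j$ by the key lemma, so the total span of the intervals $\Lambda_j(x)$ where $\nu_x$ has slope $\lambda_j$ ($j\in\Jcal$) is at least $\sum_{j\in\Jcal}h_j$. On the other hand, a comparison of the supporting lines of $\nu$ and of $\nu_x$ at the boundary slopes $\lambda_{j_1-1},\lambda_{j_2+1}$ (using maximality of $\Jcal$) forces this span to be strictly less than $\sum_{j\in\Jcal}h_j$, a contradiction.

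Your Stage~1 is correct as far as it goes, but it only gives $N^{\beta+}\subseteq p(M_1\oplus\cdots\oplus M_r)$; since $M$ is in general a proper submodule of $\bigoplus_j M_j$, this does not by itself help with the upgrade, and the paper's argument does not use it. The isoclinic input (Lemma~\ref{Le-val-basis}) enters instead through Lemma~\ref{Le-ww+} as described above.
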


We will deduce this from a description of $N^{\beta+}$
in terms of certain auxiliary valuations $\tilde w_j$
on $N$. Let $\Ical=\{0,1,\ldots,h-1\}$.
We recall that $(F^{c-i}z)_{i\in \Ical}$ is a
${W_\QQ}$-basis of $N$; see Lemma \ref{Le-present} (a).
For $j\in\{1,\ldots,r\}$ let $\tilde w_j$ be the valuation on $N$
which for $x=\sum_{i\in \Ical}e_iF^{c-i}z\in N$
with $e_i\in {W_\QQ}$ is given by
$$
\tilde w_j(x)=\min\{v(e_i)+(c-i)\lambda_j\mid i\in \Ical\}.
$$
This is an $F$-valuation only when $M$ is isoclinic.
A different way to look at $\tilde w_j$ is the following one.
Let $\nu_{x,j}:\RR\to\RR$ be the maximal linear
function of slope $\lambda_j$ such that we have
$\nu_{x,j}(i)\leq v(e_i)$ for all $i\in \Ical$. Then
$$
\tilde w_j(x)=\nu_{x,j}(c).
$$
Let $\nu_x:[0,h-1]\to\RR$ be defined by
$$
\nu_x(t)=\max\{\nu_{x,j}(t)\mid 1\leq j\leq r\}.
$$
For $\alpha\in\RR^r$ let
$$
\tilde N^{\alpha+}=\{x\in N\mid \tilde w_j(x)>\alpha_j
\text{ for } 1\leq j\leq r\}.
$$

\begin{Lemma}
\label{Le-fil}
We have $\tilde N^{\beta+}\subseteq pM$ as $W$-submodules of $N$.
\end{Lemma}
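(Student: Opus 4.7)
\emph{Plan.} The idea is to write a general element $x \in N$ in the $B(k)$-basis $(F^{c-i}z)_{i \in \Ical}$, translate the condition $x \in \tilde N^{\beta+}$ into valuation estimates on the coordinates, and then re-express $x$ in the $W$-basis $\Upsilon$ of $M$ supplied by Lemma \ref{Le-present}\,(a) in order to check membership in $pM$ coordinate-wise.

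First, write $x = \sum_{i \in \Ical} e_i F^{c-i}z$ with $e_i \in B(k)$. As $\nu_j$ is linear of slope $\lambda_j$, the identity $\beta_j = \nu_j(c) = \nu_j(i) + (c-i)\lambda_j$ shows that the condition $\tilde w_j(x) > \beta_j$ is equivalent to $v(e_i) > \nu_j(i)$ for every $i \in \Ical$. Taking the maximum over the finitely many indices $j$ and using $\nu(t) = \max_j \nu_j(t)$ on $[0,h]$ yields the key estimate $v(e_i) > \nu(i)$ for every $i \in \Ical$. On the other hand, from $V = pF^{-1}$ in $\DD$, i.e.\ $F^{c-i}z = p^{-(i-c)}V^{i-c}z$ for $i \ge c$, we obtain
\[
x \;=\; \sum_{i=0}^{c-1} e_i\, F^{c-i}z \;+\; \sum_{i=c}^{h-1} e_i p^{-(i-c)}\, V^{i-c}z ,
\]
and since $\Upsilon$ is a $W$-basis of $M$, the condition $x \in pM$ is equivalent to $v(e_i) \ge 1$ for $0 \le i \le c-1$ together with $v(e_i) \ge i-c+1$ for $c \le i \le h-1$.

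It then remains to derive these coefficient bounds from $v(e_i) > \nu(i)$. For $i \le c-1$ one has $\nu(i) \ge \nu(0) = 0$ by monotonicity, and since $v(e_i) \in \ZZ$ this forces $v(e_i) \ge 1$. For $c \le i \le h-1$ one invokes bi-nilpotency of $M$: every slope of $\nu$ satisfies $\lambda_j \le \lambda_r < 1$, so on each linear piece the function $t \mapsto \nu(t) - (t-c)$ has slope $\le \lambda_r - 1 < 0$; as this function takes the values $c$ at $t=0$ and $0$ at $t=h$, one gets $\nu(i) > i-c$ for $i < h$. Combined with $v(e_i) > \nu(i)$ and the integrality of $v(e_i)$ and $i-c$, this yields $v(e_i) \ge i-c+1$. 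Hence $x \in pM$. The real content is the convexity/slope estimate $\nu(t) > t-c$ on $[0,h)$, for which the bi-nilpotency condition $\lambda_r < 1$ is essential; everything else is bookkeeping between the $B(k)$-basis $(F^{c-i}z)_{i \in \Ical}$ and the $W$-basis $\Upsilon$.
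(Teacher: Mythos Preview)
Your proof is correct and follows essentially the same route as the paper: both reduce the condition $x\in\tilde N^{\beta+}$ to the coordinate inequalities $v(e_i)>\nu(i)$ for all $i\in\Ical$, and then conclude $x\in pM$. The paper states this last implication in a single sentence without justification, whereas you spell it out by passing from the $B(k)$-basis $(F^{c-i}z)_{i\in\Ical}$ to the $W$-basis $\Upsilon$ and invoking the slope bounds $0<\lambda_1$ and $\lambda_r<1$; this added detail is accurate and the convexity argument $\nu(i)>i-c$ for $i<h$ is exactly what makes the implication work.
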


\begin{proof}
Let $x\in N$. As $\beta_j=\nu_j(c)$
we see that $x\in \tilde N^{\beta+}$ if and only if we have $\nu_{x,j}>\nu_j$
for all $1\leq j\leq r$. In other words, $x\in \tilde N^{\beta+}$ if and
only if $v(e_i)>\nu(i)$ for all $i\in \Ical$.
The last condition implies that $x\in pM$.
\end{proof}

By Lemma \ref{Le-fil} the following description
of $N^{\beta+}$ implies Proposition \ref{Pr-fil}.

\begin{Prop}
\label{Pr-fil+}
We have $N^{\beta+}=\tilde N^{\beta+}$ as $W$-submodules of $N$.
\end{Prop}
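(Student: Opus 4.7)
The inclusion $\tilde N^{\beta+}\subseteq N^{\beta+}$ follows directly from the triangle inequality. If $x=\sum_{i\in\Ical}e_iF^{c-i}z$ lies in $\tilde N^{\beta+}$, then by Lemma \ref{Le-fil} we have $v(e_i)>\nu(i)=\max_j\nu_j(i)$ for every $i$, so $v(e_i)+(c-i)\lambda_j>\nu_j(i)+(c-i)\lambda_j=\beta_j$ for every $i$ and $j$. Since each $w_j$ is an $F$-valuation of slope $\lambda_j$ on $N_j$ normalised by $w_j(z_j)=0$,
$$w_j(\pi_j(x))=w_j\Bigl(\sum_ie_iF^{c-i}z_j\Bigr)\geq\min_i\bigl(v(e_i)+(c-i)\lambda_j\bigr)>\beta_j,$$
so $\pi_j(x)\in N_j^{w_j>\beta_j}$ for every $j$, whence $x\in N^{\beta+}$.

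For the reverse inclusion, my plan is a colength comparison inside $pM$. By Lemma \ref{Le-fil}, $\tilde N^{\beta+}\subseteq pM$, and the containment $N^{\beta+}\subseteq pM$ (the content of Proposition \ref{Pr-fil}) will follow from Proposition \ref{Pr-fil+} itself. Using the $W$-basis of $M$ from Lemma \ref{Le-present}, where the $i$-th basis vector $F^{c-i}z$ carries $p$-exponent $\max(0,i-c)$, together with the characterisation $v(e_i)\geq\lfloor\nu(i)\rfloor+1$ of $\tilde N^{\beta+}$, one immediately obtains
$$\ell_W(pM/\tilde N^{\beta+})=\sum_{i=0}^{h-1}\bigl(\lfloor\nu(i)\rfloor-\max(0,i-c)\bigr).$$
For $\ell_W(pM/N^{\beta+})$ I exploit the isoclinic decomposition $N=\bigoplus_jN_j$: each projection $M_j=\pi_j(M)$ is cyclic with $a(M_j)=1$, so Lemma \ref{Le-val-basis} provides a valuative basis $(F^{c_j-k}z_j)_{k=0}^{h_j-1}$ of $N_j$ for $w_j$ with values $(c_j-k)\lambda_j$. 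Combined with $\beta_j=\nu_j(c)$, this makes $N_j^{w_j>\beta_j}$ explicit as a direct sum of $W$-modules, allowing the local colengths $\ell_W(pM_j/(pM_j\cap N_j^{w_j>\beta_j}))$ to be computed term by term. I then assemble $\ell_W(pM/N^{\beta+})$ via the natural inclusion $pM\hookrightarrow\bigoplus_jpM_j$, correcting by the image of the cokernel $\bigoplus_jM_j/M$ in the relevant quotient.

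The hard part is the verification that these two colengths coincide. This reduces to an arithmetic identity about the Newton polygon $\nu$: the floors $\lfloor\nu(i)\rfloor$ measured against the $p$-exponents of the $F,V$-adapted $M$-basis decompose correctly into per-slope floors plus a breakpoint correction tracked precisely by the cokernel $\bigoplus_jM_j/M$. The synchronisation $\beta_j=\nu_j(c)$ aligns reference levels across slopes and is the essential feature making the identity hold. Once the colength equality is established, the inclusion $\tilde N^{\beta+}\subseteq N^{\beta+}$ proved above forces $\tilde N^{\beta+}=N^{\beta+}$.
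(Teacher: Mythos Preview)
Your first paragraph establishing $\tilde N^{\beta+}\subseteq N^{\beta+}$ is correct and is exactly what the paper records as Lemma~\ref{Le-ww}.

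The reverse inclusion is where your proposal breaks down. The colength plan hinges on computing $\ell_W(pM/N^{\beta+})$, but you acknowledge that $N^{\beta+}\subseteq pM$ is not known a priori --- it is Proposition~\ref{Pr-fil}, which the paper deduces \emph{from} Proposition~\ref{Pr-fil+}. You try to circumvent this by passing to $\bigoplus_j pM_j$ and ``correcting by the image of the cokernel $\bigoplus_jM_j/M$'', but this correction term is exactly the crux: the colength $\ell(\bigoplus_jM_j/M)$ measures how $M$ sits inside its isotypic hull, and you give no way to compute it independently. It is \emph{not} determined by the Newton polygon $\nu$ alone in any evident way --- different cyclic $M$ with the same $\nu$ embed differently in $\bigoplus_jM_j$ --- so your claim that everything ``reduces to an arithmetic identity about $\nu$'' is unjustified. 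The ``hard part'' you defer is the entire content of the proposition.

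The paper's proof is completely different and avoids global colength bookkeeping. It argues by contradiction on a single element $x\in N^{\beta+}\setminus\tilde N^{\beta+}$. The key input is Lemma~\ref{Le-ww+}: if the set of indices $i$ where $v(e_i)=\nu_{x,j}(i)$ spans an interval of length $<h_j$, then $w_j(x_j)=\tilde w_j(x)$. One then takes a maximal block $\Jcal$ of slopes where $\tilde w_j(x)\le\beta_j$, compares the interval $[a,b]$ where $\nu$ has those slopes with the interval $[a',b']$ where $\nu_x$ does, and shows simultaneously $b-a>b'-a'$ (from maximality of $\Jcal$) and $b-a\le b'-a'$ (from Lemma~\ref{Le-ww+} and $x\in N^{\beta+}$). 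This local, combinatorial argument is what you are missing. Incidentally, the paper does carry out a colength computation of the type you envision, but only later in Theorem~\ref{Th-max-min}, and there Proposition~\ref{Pr-fil+} is used as input rather than proved.
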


As the following lemma shows, 
the inclusion $\tilde N^{\beta+} \subseteq N^{\beta+}$ does not
depend on the specific choice of $\beta$.

\begin{Lemma}
\label{Le-ww} 
The following two properties hold:

\smallskip
(a) Let $j\in\{1,\ldots,r\}$. 
For $x\in N$ with projection $x_j\in N_j$ we have
$w_j(x_j)\geq\tilde w_j(x)$.

\smallskip
(b) For $\alpha\in\RR^r$ we have
$\tilde N^{\alpha+}\subseteq N^{\alpha+}$.
\end{Lemma}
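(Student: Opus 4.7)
My plan is to derive (a) directly from the defining properties of $w_j$ and $\tilde w_j$, after which (b) will be an immediate consequence; the whole statement simply propagates the ultrametric inequality through the isoclinic decomposition.

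For (a), I will decompose the fixed generator as $z = z_1 + \cdots + z_r$ with $z_j \in N_j$, and observe that each $z_j$ lies in the projection $M_j$ of $M$ into $N_j$, so $w_j(z_j) \geq 0$ by the defining property of $w_j$ (Lemma \ref{Le-min-val}). Since $F$ respects the isoclinic decomposition of $N$, the projection to $N_j$ of an element $x = \sum_{i\in \Ical} e_i F^{c-i} z$ is precisely $x_j = \sum_{i\in \Ical} e_i F^{c-i} z_j$. Because $w_j$ is an $F$-valuation of slope $\lambda_j$ on $N_j$ and $F$ is invertible on the isoclinic piece $N_j$, one has $w_j(F^{c-i} z_j) = (c-i)\lambda_j + w_j(z_j)$ regardless of the sign of $c-i$. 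The ultrametric inequality for $w_j$ then yields
\[
w_j(x_j) \;\geq\; \min_{i\in \Ical}\bigl\{v(e_i) + (c-i)\lambda_j\bigr\} + w_j(z_j) \;=\; \tilde w_j(x) + w_j(z_j) \;\geq\; \tilde w_j(x),
\]
which is exactly (a).

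For (b), an element $x \in \tilde N^{\alpha+}$ satisfies $\tilde w_j(x) > \alpha_j$ for every $j \in \{1,\dots,r\}$; applying (a) gives $w_j(x_j) \geq \tilde w_j(x) > \alpha_j$ for every $j$, so each component $x_j$ lies in $N_j^{w_j > \alpha_j}$ and hence $x = \sum_j x_j$ lies in $N^{\alpha+}$, as required.

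There is no real obstacle in this argument. The only point that deserves attention is verifying that the projection formula $x_j = \sum_i e_i F^{c-i} z_j$ is correct, which is immediate from the fact that each $N_j$ is $F$-stable; and noting that the nonnegativity $w_j(z_j) \geq 0$ is exactly what makes the ultrametric bound collapse to the inequality we want rather than one that loses a constant.
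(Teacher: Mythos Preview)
Your proof is correct and follows essentially the same route as the paper: verify the inequality on the basis elements $F^{c-i}z$ and then pass to arbitrary $x$ via the ultrametric property of $w_j$, after which (b) is immediate from (a). The only cosmetic difference is that the paper records the sharper fact $w_j(z_j)=0$ (so that $w_j(x_j)=\tilde w_j(x)$ exactly on basis elements), whereas you use only $w_j(z_j)\ge 0$; your weaker input already suffices for the stated inequality.
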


\begin{proof}
If $x=F^{c-i}z$ with $i\in \Ical$,
then $w_j(x_j)=(c-i)\lambda_j=\tilde w_j(x)$.
As $w_j$ is a valuation and as
the ${W_\QQ}$-basis $(F^{c-i}z)_{i\in \Ical}$
of $N$ is valuative for $\tilde w_j$ we get (a).
Clearly (a) implies (b).
\end{proof}

The opposite inclusion $N^{\alpha+}\subseteq\tilde N^{\alpha+}$
does not hold in general.
To prove Proposition \ref{Pr-fil+} we need
conditions on $x$ under which equality
holds in Lemma \ref{Le-ww} (a).
For $x=\sum_{i\in \Ical} e_iF^{c-i}z$ with $e_i\in {W_\QQ}$
let $\Ical_j(x)\subseteq \Ical$ be the (non-empty)
set of those indices $i$ with
$v(e_i)=\nu_{x,j}(i)$ and let $s_j(x)$ be
the difference between the maximal and minimal elements
of $\Ical_j(x)$. Similarly let $\Lambda_j(x)\subseteq[0,h-1]$
be the closed interval of all $t$ with $\nu_{x,j}(t)=\nu_x(t)$
and let $s_j'(x)$ be the length of $\Lambda_j(x)$. 
We have $\Ical_j(x)\subseteq\Lambda_j(x)$; thus 
$s_j(x)\leq s_j'(x)$.

\begin{Lemma}
\label{Le-ww+}
Let $j\in\{1,\ldots,r\}$.
Let $x\in N$ and let $x_j\in N_j$ be its projection.
If $s_j(x)<h_j$, then $w_j(x_j)=\tilde w_j(x)$.
\end{Lemma}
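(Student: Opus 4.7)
The plan is to split $x = y + y''$ with $y = \sum_{i \in \Ical_j(x)} e_i F^{c-i} z$ and $y'' = x - y$, so that projecting to $N_j$ gives $x_j = y_j + y''_j$. Each summand of $y''_j$ has $w_j$-value $v(e_i) + (c-i)\lambda_j > \alpha$, where $\alpha := \tilde w_j(x)$, by the very definition of $\Ical_j(x)$; here I use $w_j(F^k z_j) = k\lambda_j$ for every $k \in \ZZ$, which follows from the slope property of $w_j$ together with $w_j(z_j) = 0$ (the latter being forced by the minimality of $w_j$ and the fact that $z_j$ is the element of $\Upsilon_j$ on which $w_j$ attains its minimum). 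The ultrametric inequality then gives $w_j(y''_j) > \alpha$, and it suffices to prove $w_j(y_j) = \alpha$, since then $w_j(x_j) = w_j(y_j + y''_j) = \alpha = \tilde w_j(x)$.

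To evaluate $w_j(y_j)$ I would exploit the translation invariance of valuative bases under powers of $F$. Lemma~\ref{Le-val-basis} applied to the cyclic Dieudonn\'e module $M_j$, generated by $z_j$, says that $\Upsilon_j$ is valuative for $w_j$. Rewriting $V^i z_j = p^i F^{-i} z_j$ in $N_j$ and absorbing the factors $p^i$ into scalars -- an operation that plainly preserves valuativity -- converts $\Upsilon_j$ into the valuative $B(k)$-basis $\{F^k z_j\}_{k = -d_j + 1}^{c_j}$ of $N_j$, with $w_j(F^k z_j) = k\lambda_j$. Since $F$ is a $\sigma$-linear bijection on $N_j$ satisfying $w_j(Fu) = w_j(u) + \lambda_j$, and since $v \circ \sigma = v$, a short direct computation shows that for every $a \in \ZZ$ the shifted family $\{F^k z_j\}_{k = a}^{a + h_j - 1}$ is again a valuative $B(k)$-basis of $N_j$, with the same formula $w_j(F^k z_j) = k\lambda_j$.

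The hypothesis $s_j(x) < h_j$ now enters decisively: the exponents $\{c - i : i \in \Ical_j(x)\}$ lie in an integer interval of length $s_j(x) \leq h_j - 1$, so they fit inside some window $[a, a + h_j - 1]$ of $h_j$ consecutive integers. Applying the valuative formula for the corresponding shifted basis to $y_j$ yields
\[
w_j(y_j) = \min_{i \in \Ical_j(x)}\{v(e_i) + (c-i)\lambda_j\} = \alpha,
\]
which completes the argument. The only mildly delicate step is the translation invariance of the valuative basis under $F^a$, which is a routine consequence of the slope property of $w_j$ combined with $v \circ \sigma = v$; the bound $s_j(x) < h_j$ is precisely what provides enough room to position all the indices appearing in $y_j$ inside a single such valuative window of length $h_j$.
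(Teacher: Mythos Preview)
Your proof is correct and follows essentially the same approach as the paper: both arguments reduce to showing that any window $\{F^k z_j\}_{k=a}^{a+h_j-1}$ of $h_j$ consecutive $F$-translates of $z_j$ is a valuative $B(k)$-basis of $N_j$, obtained from Lemma~\ref{Le-val-basis} by applying powers of $F$. The only cosmetic difference is that the paper disposes of the tail $y''$ by invoking the inequality $w_j(x_j)\ge\tilde w_j(x)$ from Lemma~\ref{Le-ww}~(a), whereas you compute $w_j(y''_j)>\alpha$ directly from $w_j(F^k z_j)=k\lambda_j$; both are equally short.
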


\begin{proof}
As $w_j(x_j)\geq\tilde w_j(x)$, to prove that
$w_j(x_j)=\tilde w_j(x)$ we can replace $x$ by an arbitrary element $x'\in N$
with $\tilde w_j(x-x')>\tilde w_j(x)$. Using this and the inequality $s_j(x)<h_j$, we can assume that
for some integer $i_0$ the element $x$ lies in the $W_\QQ$-vector subspace of $N$
spanned by the finite set
$\Upsilon_{i_0,j}=\{F^{c-i}z\mid i_0\leq i\leq i_0+h_j-1\}$.
If $x\in\Upsilon_{i_0,j}$, then $w_j(x_j)=\tilde w_j(x)$.
Thus it suffices to show that $\Upsilon_{i_0,j}$ projects to a valuative
$W_\QQ$-basis of $N_j$ for $w_j$. For $i_0=c-c_j$ this is true
by Lemma \ref{Le-val-basis}. The general case follows because
the operators $F$ and $F^{-1}$ on $N_j$ preserve valuative
$W_\QQ$-bases as $w_j$ is an $F$-valuation.
\end{proof}

\begin{proof}[Proof of Proposition \ref{Pr-fil+}]
We know that $\tilde N^{\beta+}\subseteq N^{\beta+}$,
see Lemma \ref{Le-fil} (b). Thus to prove Proposition \ref{Pr-fil+}, it suffices to show that the assumption that there exists an element $x\in N^{\beta+}\setminus\tilde N^{\beta+}$ leads to a contradiction.

As $x\not\in\tilde N^{\beta+}$ there exists at least one
index $j\in \{1,\ldots,r\}$ such that $\tilde w_j(x)\leq\beta_j$.
Choose a maximal chain
$$
\Jcal=\{j_1,j_1+1,\ldots,j_2\}\subseteq \{1,\ldots,r\}
$$
such that $\tilde w_j(x)\leq\beta_j$ for all $j\in \Jcal$. Let
$[a,b]\subseteq[0,h]$ be the closed interval of all $t$
with $\nu(t)\in\{\nu_j(t)\mid j\in \Jcal\}$, and let
$[a',b']\subseteq[0,h-1]$ be the closed interval
of all $t$ with $\nu_x(t)\in\{\nu_{x,j}(t)\mid j\in \Jcal\}$.
In other words, $[a,b]$ (resp.\ $[a',b']$) is the
maximal interval where the slopes of $\nu$ (resp.\ of $\nu_x$)
lie in the set $\{\lambda_j\mid j\in \Jcal\}$.
We claim that the following implications hold:
\begin{gather*}
j_1>1\Rightarrow a'>a \\
j_2<r\Rightarrow b'<b.
\end{gather*}
As the proofs are similar, we will only check here the first implication. 
Assume that the implication does not hold i.e., we have $j_1>1$ and $a'\leq a$.
Then $j_0=j_1-1$ lies in $\{1,\ldots,r\}\setminus \Jcal$.
We have $\nu_{j_0}(a)=\nu_{j_1}(a)$ and
$\nu_{x,j_0}(a')=\nu_{x,j_1}(a')$, which implies that
$\nu_{x,j_0}(a)\leq\nu_{x,j_1}(a)$ as $a'\leq a$. Thus
we compute
\begin{align*}
\tilde w_{j_0}(x)-\beta_{j_0}
& {} = (\nu_{x,j_0}-\nu_{j_0})(c)
= (\nu_{x,j_0}-\nu_{j_0})(a) \\
& {} \leq (\nu_{x,j_1}-\nu_{j_1})(a)
= (\nu_{x,j_1}-\nu_{j_1})(c)
= \tilde w_{j_1}(x)-\beta_{j_1}
\leq 0
\end{align*}
which contradicts the maximality of $\Jcal$. This proves our claim.

We note that $j_1=1$ implies $a'=a=0$, while
$j_2=r$ implies $b'=h-1$ and $b=h$. Thus in all
cases we have $a'\geq a$ and $b'<b$, and hence
$$
b-a>b'-a'.
$$

On the other hand, as $x\in N^{\beta+}$,
for $j\in \Jcal$ we have
$$
\tilde w_j(x)\leq\beta_j<w_j(x_j).
$$
From this and Lemma \ref{Le-ww+} we get that $s_j'(x)\geq s_j(x)\geq h_j$.
Thus we get
$$
b-a=\sum_{j\in \Jcal}h_j\leq\sum_{j\in \Jcal}s_j'(x)=b'-a'.
$$
Contradiction.
This ends the proof of Proposition \ref{Pr-fil+}
(and thus also of Proposition \ref{Pr-fil}).
\end{proof}

\subsection{Minimal Dieudonn\'e modules}
\label{Subsec-min-mod}

Following Oort \cite{Oo3}, a Dieudonn\'e module $M$ over $k$ is
called minimal if $\End_\EE(M)$ is a maximal order in
$\End_\DD(M_\QQ)$ i.e., $M$ is the Dieudonn\'e module of a
minimal $p$-divisible group in the sense recalled in Section \ref{Se-prelim}.
A Dieudonn\'e module is minimal if and only if it
is a direct sum of isoclinic minimal Dieudonn\'e modules. 
In the isoclinic case we have the following
characterization of  minimality; see also \cite[Sect.~3]{Yu}.

\begin{Prop}
\label{Pr-isocl-min}
Let $M$ be an isoclinic Dieudonn\'e module of slope $\lambda$.
Then the following three statements are equivalent.

\smallskip
(a) {}
$M$ is minimal.

\smallskip
(b) {}
If $\Phi\in\DD$ satisfies $\ww_\lambda(\Phi)\geq 0$,
then $\Phi(M)\subseteq M$.

\smallskip
(c) {}
For some $F$-valuation $w$ on $N=M_\QQ$ of slope $\lambda$
we have $M=N^{w\geq 0}$.
\end{Prop}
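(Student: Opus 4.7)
The plan is to prove (c) $\Rightarrow$ (b), (b) $\Rightarrow$ (c), and the equivalence (c) $\Leftrightarrow$ (a), thereby establishing the three-way equivalence. The implication (c) $\Rightarrow$ (b) is a direct valuation calculation: if $M=N^{w\geq 0}$ and $\Phi=\sum_i e_iF^i\in\DD$ satisfies $\ww_\lambda(\Phi)\geq 0$, then for $x\in M$,
$$
w(\Phi x)\geq\min_i(v(e_i)+i\lambda)+w(x)=\ww_\lambda(\Phi)+w(x)\geq 0,
$$
so $\Phi x\in M$.

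For (b) $\Rightarrow$ (c), I would first apply Lemma \ref{Le-min-val} to produce the minimal $F$-valuation $w$ of slope $\lambda$ on $N$ with $w(M)\geq 0$; then $M\subseteq N^{w\geq 0}$ automatically, and the task is to prove the reverse inclusion. In the case where $N$ is simple of slope $\lambda=d/h$ (with $\gcd(d,h)=1$), the Dieudonn\'e--Manin theorem applied over the algebraically closed field $k$ furnishes an element $y\in N$ satisfying $F^hy=p^dy$; after modification by an element of $\End_\DD(N)^\times=D_\lambda^\times$ and a rescaling by a power of $p$, one arranges an eigenvector $x_0\in M$ with $F^hx_0=p^dx_0$ and $w(x_0)=0$, the latter being achievable because $w$ is minimal. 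Lemma \ref{Le-val-basis} then guarantees that $\{F^ix_0\}_{0\leq i<h}$ is a valuative $B(k)$-basis of $N$, so any $y\in N^{w\geq 0}$ decomposes uniquely as $y=\sum_i a_iF^ix_0=\Phi x_0$ with $\Phi=\sum_i a_iF^i\in\DD$ and $\ww_\lambda(\Phi)=w(y)\geq 0$; hypothesis (b) then yields $y\in M$. For general isoclinic $N\cong N_0^m$, the argument extends by choosing a family of $F^h$-eigenvectors compatible with a $W$-generating set of $M$ and running the simple-case reasoning on each cyclic piece.

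The equivalence (c) $\Leftrightarrow$ (a) follows from the standard structure theory of the central simple $\QQ_p$-algebra $A:=\End_\DD(N)$. The natural valuation $v_A(\phi):=w(\phi x)-w(x)$ (independent of the choice of nonzero $x$ when $N$ is simple, and extended componentwise in general) identifies $\End_\EE(N^{w\geq 0})$ with $\{\phi\in A\mid v_A(\phi)\geq 0\}$, which is a maximal order of $A$; this establishes (c) $\Rightarrow$ (a). For (a) $\Rightarrow$ (c), fix any reference $F$-valuation $w_0$ so that $M_0:=N^{w_0\geq 0}$ has maximal endomorphism ring by the previous implication; conjugacy of maximal orders in $A$ provides $g\in A^\times$ with $\End_\EE(M)=g\End_\EE(M_0)g^{-1}$, and matching the two $\End_\EE(M)$-stable lattices $M$ and $g(M_0)$ yields $M=N^{w\geq 0}$ for $w:=w_0\circ g^{-1}$.

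The main obstacle I anticipate is the non-simple isoclinic case of (b) $\Rightarrow$ (c), where assembling cyclic eigenvectors compatibly with $M$ requires a further use of (b), and the concluding identification in (a) $\Rightarrow$ (c), which rests on a precise classification of $\EE$-lattices sharing a prescribed maximal endomorphism order as a torsor under the normaliser of that order in $A^\times$.
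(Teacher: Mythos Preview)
Your outline is in the right spirit and shares ingredients with the paper's argument (eigenvectors for $\Phi_0=F^hp^{-d}$, valuative bases, the simple case as a building block), but there is a genuine gap in your (b) $\Rightarrow$ (c), and the paper organises the logic differently to avoid exactly this difficulty.

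The gap is the sentence ``after modification by an element of $\End_\DD(N)^\times$ and a rescaling by a power of $p$, one arranges an eigenvector $x_0\in M$ with $F^hx_0=p^dx_0$ and $w(x_0)=0$, the latter being achievable because $w$ is minimal.'' Modifying by $D_\lambda^\times$ and powers of $p$ lets you adjust $w(x_0)$ to any value in $\tfrac1h\ZZ$, \emph{or} push $x_0$ into $M$, but not both simultaneously: decreasing $w(x_0)$ may throw $x_0$ out of $M$. Minimality of $w$ only tells you that $w$ attains $0$ on $M$, not on the eigenspace $\Pi\cap M$. What actually forces an eigenvector in $M$ with $w=0$ is hypothesis (b) itself: since $\ww_\lambda(\Phi_0^{\pm1})=0$, (b) gives $\Phi_0(M)=M$, hence $M=\Pi\otimes_{W(\FF_q)}W(k)$ where $\Pi=M^{\Phi_0=1}$; writing any $z\in M$ with $w(z)=0$ in a $W(\FF_q)$-basis of $\Pi$ then yields some basis vector $e_i\in\Pi$ with $w(e_i)=0$. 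You never invoke (b) at this step, so as written the argument is circular (you would need $M=N^{w\ge 0}$ to know $x_0\in M$). A second, smaller issue: Lemma~\ref{Le-val-basis} applies to the cyclic module $\EE x_0$, not to $M$ (which need not have $a$-number $1$ even when $N$ is simple); you must then check that the minimal $F$-valuation for $\EE x_0$ agrees with the one for $M$, which is true by Lemma~\ref{Le-F-val} once you know $w(x_0)=0$.

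The paper sidesteps both issues by reversing the order: it first uses (b) to produce $M=\Pi\otimes_{W(\FF_q)}W(k)$ and then, using a uniformiser $\Phi\in\DD$ with $\ww_\lambda(\Phi)=1/n$ (which preserves $M$ by (b)), constructs an explicit $W$-basis $\{\Phi^ix_j\}$ of $M$ from an $\FF_q$-basis of $\Pi/\Phi\Pi$. The valuation $w$ is then \emph{defined} so that this basis is valuative, and (c) follows immediately. This handles the non-simple case uniformly (your ``running the simple-case reasoning on each cyclic piece'' is exactly what lifting an $\FF_q$-basis of $\Pi/\Phi\Pi$ accomplishes). For (a) $\Rightarrow$ (c) the paper also avoids your lattice-matching step: it uses that a maximal order in $M_\mu(\Gamma)$ is a matrix ring over $\Gamma_0$, so $M$ decomposes as a direct sum of simple Dieudonn\'e modules, reducing to Lemma~\ref{Le-simple-min}. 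Your conjugacy argument is essentially equivalent but the final identification of $M$ with $g(M_0)$ up to scalar needs the same Morita-type input.
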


\noindent
We begin with a special case; see also \cite[Subsects.~5.3-5.6]{dJO}.

\begin{Lemma}
\label{Le-simple-min}
If $N$ is a simple $\DD$-module of slope $\lambda$, then the statements (a) and (c) 
of Proposition \ref{Pr-isocl-min} are equivalent.
\end{Lemma}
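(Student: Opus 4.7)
My plan is to exploit the fact that for a simple $\DD$-module $N$ of slope $\lambda$, the algebra $D := \End_\DD(N)$ is a central division algebra over $\QQ_p$ and $N$ is a one-dimensional $D$-vector space. By Lemma~\ref{Le-F-val}, any two non-degenerate $F$-valuations on $N$ of slope $\lambda$ differ by an additive constant. Fixing any such $w$, for $\Phi \in D^\times$ the difference $w(\Phi x) - w(x)$ is independent of $0 \ne x \in N$, and this yields a discrete valuation $v_D$ on $D$ whose valuation ring is the unique maximal order $\mathcal{O}_D$ of $D$.

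For the implication (c) $\Rightarrow$ (a), I would take the given $F$-valuation $w$ with $M = N^{w \geq 0}$ and note that every $\Phi \in \mathcal{O}_D$ satisfies $w(\Phi x) = w(x) + v_D(\Phi) \geq w(x)$, and hence preserves $M$. Thus $\mathcal{O}_D \subseteq \End_\EE(M) \subseteq D$. Since $M$ is a $W$-lattice in $N$, $\End_\EE(M)$ is a $\ZZ_p$-order in $D$; as $\mathcal{O}_D$ is already the maximal order, equality holds.

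For the converse (a) $\Rightarrow$ (c), I would choose a non-degenerate $F$-valuation $w$ on $N$ of slope $\lambda$ and normalize it by an additive constant so that $\min_{0 \ne x \in M} w(x) = 0$; this minimum exists because $w$ has discrete value group and $M$ is finitely generated over $W$. The normalization yields $M \subseteq N^{w \geq 0}$. By hypothesis $\End_\EE(M) = \mathcal{O}_D$, so $M$ is a rank-one $\mathcal{O}_D$-lattice in $N$; the argument of the previous paragraph shows that $N^{w \geq 0}$ is also a rank-one $\mathcal{O}_D$-lattice. In the one-dimensional $D$-vector space $N$, any two rank-one $\mathcal{O}_D$-lattices are related by scalar multiplication by a power of a uniformizer $\pi_D$, and such lattices are precisely the level sets $N^{w \geq t}$ for $t$ ranging over the value group of $w$. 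The chosen normalization pins down $t = 0$, giving $M = N^{w \geq 0}$.

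The main delicate point will be confirming that $\End_\EE(M)$ is a full $\ZZ_p$-order in $D$ (rather than merely a subring), so that containment of $\mathcal{O}_D$ forces equality; this follows because $M$ is a $W$-lattice in $N$, which makes $\End_\EE(M)$ a $\ZZ_p$-lattice in the finite-dimensional $\QQ_p$-algebra $D$. With this in hand, both implications reduce to the standard identification of lattices in a one-dimensional vector space over a local division algebra with level sets of its canonical valuation.
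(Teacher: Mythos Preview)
Your overall strategy coincides with the paper's: define a valuation $v_D$ on $D=\End_\DD(N)$ via $v_D(\Phi)=w(\Phi x)-w(x)$ (using the uniqueness in Lemma~\ref{Le-F-val}), identify the maximal order with the valuation ring, and for (a)$\Rightarrow$(c) normalize $w$ so that $\min w(M)=0$. The implication (c)$\Rightarrow$(a) is argued correctly and is exactly what the paper calls ``clear.''

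There is, however, a genuine gap in your (a)$\Rightarrow$(c). You assert that $N$ is a one-dimensional $D$-vector space and that $M$ is a rank-one $\mathcal O_D$-lattice, then invoke the classification of such lattices. But $D$ is a $\QQ_p$-algebra of dimension $h^2$, while $N$ is a $B(k)$-vector space of dimension $h$; since $k$ is algebraically closed (hence infinite), $B(k)$ is infinite-dimensional over $\QQ_p$, so $N$ is \emph{not} finite-dimensional over $D$ and $M$ is \emph{not} finitely generated over $\mathcal O_D$. The structure theorem for $\mathcal O_D$-lattices in a one-dimensional $D$-space therefore does not apply.

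The paper closes this gap by working with $w$ directly rather than through $\mathcal O_D$-module theory. One first checks that $gr^{\alpha}N$ is one-dimensional over $k$ when $h\alpha\in\ZZ$ and zero otherwise, so $w(N\setminus\{0\})=(1/h)\ZZ$. Since a uniformizer $\pi\in\mathcal O_D$ has $\tilde v(\pi)=1/h$ and preserves $M$, the set $w(M\setminus\{0\})$ is upward-closed under addition of $1/h$, hence equals $\{i/h\mid i\ge i_0\}$. After shifting $w$ so that $i_0=0$, the one-dimensionality of the graded pieces forces $M=N^{w\ge 0}$: the elements of $M$ hitting each valuation $0,1/h,\ldots,(h-1)/h$ reduce to a $k$-basis of $N^{w\ge 0}/pN^{w\ge 0}$, so they generate $N^{w\ge 0}$ over $W$. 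This is the step your ``rank-one $\mathcal O_D$-lattice'' shortcut was meant to replace; once you substitute the correct argument, your proof becomes the paper's.
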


\begin{proof}
Let $\Gamma=\End_\DD(N)$. Let $w$ be an $F$-valuation
of slope $\lambda$ on $N$. We note that $w$ is unique up to
adding a constant, see Lemma \ref{Le-F-val}. Thus
for each $\varphi\in\Gamma$ there exists a
$\tilde v(\varphi)\in\RR\cup\{\infty\}$ such that
$w(\varphi x)=w(x)+\tilde v(\varphi)$ for all $x\in N$.
Then $\tilde v$ is the unique valuation on the division
algebra $\Gamma$ that
extends the $p$-adic valuation on $\QQ_p$. The maximal
order in $\Gamma$ is $\Gamma_0=\{\varphi\in\Gamma\mid
\tilde v(\varphi)\geq 0\}$.

In view of these remarks, (c)$\Rightarrow$(a) is clear.
We prove (a)$\Rightarrow$(c). Let $h=\dim(N)$. We
choose $w$ such that $\ZZ\subseteq w(N)$. The $k$-vector space $gr^\alpha N$ is $1$-dimensional if $h\alpha\in\ZZ$,
and it is $0$ otherwise. In particular, $w(N)=(1/h)\ZZ$.
Let $\pi\in\Gamma_0$ be a generator of the maximal ideal,
which means that $\tilde v(\pi)=1/h$. As $M$ is stable
under $\pi$, the subset $w(M)$ of $w(N)$ takes the form
$\{i/h\mid i\in\ZZ,\;i\geq i_0\}$ for some integer $i_0$.
By replacing $w$ with $w-{i_0/h}$ we can assume that
$i_0=0$. It follows easily that $M=N^{w\geq 0}$.
\end{proof}

\begin{proof}[Proof of Proposition \ref{Pr-isocl-min}]
Let $\lambda=l/n$ with coprime integers $l,n$ and $n\geq 1$.
Let $\Phi_0=F^np^{-l}\in\DD$ and choose
$\Phi=F^ap^m\in\DD$ such that $\ww_\lambda(\Phi)=1/n$.
The element $\Phi$ is unique up to multiplication by an
integral power of $\Phi_0$. Let $q=p^n$. Let $\mu=\dim(N)/n$
be the multiplicity of the $\DD$-module $N$.
First we show that (b) implies the existence
of a $W$-basis of $M$ of the form
$$
\Upsilon_1=(\Phi^ix_j)_{0\leq i<n,\;1\leq j\leq\mu}
$$
such that each $x_j\in M$ satisfies the equation $\Phi_0(x_j)=x_j$.
Indeed, let $\Pi=\{x\in M\mid\Phi_0(x)=x\}$.
This is a $W(\FF_q)$-submodule of $M$.
As $\Phi_0$ has slope zero and preserves $M$ by the assumption (b),
we have $M=\Pi\otimes_{W(\FF_q)}W$.
As $\Phi^n:\Pi\to\Pi$ is multiplication by $p$,
the quotient $\Pi/\Phi\Pi$ is an $\FF_q$-vector space
of dimension $\mu$.
We choose elements $x_1,\ldots,x_\mu\in\Pi$ which project
to an $\FF_q$-basis of $\Pi/\Phi\Pi$. Then for each $i\geq 0$,
$(\Phi^ix_j)_{1\leq j\leq\mu}$ projects to an $\FF_q$-basis of
$\Phi^i\Pi/\Phi^{i+1}\Pi$.
We conclude that $\Upsilon_1$ is a $W$-basis of $M$.

The implication (c)$\Rightarrow$(b) is clear.
We prove (b)$\Rightarrow$(c). Let $\Upsilon_1$ be as above.
There exists a unique $F$-valuation $w$ of slope $\lambda$ on $N$
such that $w(x_j)=0$ for $1\leq j\leq \mu$
and such that the $W$-basis $\Upsilon_1$ is valuative for $w$.
As $w(\Phi^ix_j)=i/n$ lies in the interval $[0,1)$
when $0\leq i<n$, property (c) follows easily.

We prove (b)$\Rightarrow$(a). Let $\Upsilon_1$ be as above
and let $M_j=\DD^{\ww_\lambda\geq 0}x_j$ for $1\leq j\leq\mu$.
Then as $\Phi_0(x_j)=x_j$, it follows that $M=M_1\oplus\cdots\oplus M_\mu$ is a direct sum decomposition
into pairwise isomorphic simple Dieudonn\'e modules.
Hence $\End_\EE(M)$ is a matrix algebra over $\End_\EE(M_1)$, and
(a) follows by Lemma \ref{Le-simple-min}.

Finally, we prove (a)$\Rightarrow$(c). Let $N_1$ be a simple
constituent of $N$ and let $\Gamma=\End_\DD(N_1)$. 
As each maximal
order in $\End_\DD(N)$ is isomorphic to the matrix algebra over
the maximal order $\Gamma_0$ of $\Gamma$ we see that $M$ 
is the direct sum of simple Dieudonn\'e modules.
Hence (c) follows from Lemma \ref{Le-simple-min} again.
\end{proof}

The next proposition is proved as well in \cite[Lemma 4.2]{Yu}.

\begin{Prop}
\label{Pr-min-min-ex}
Let $M$ be a Dieudonn\'e module and let $N=M_\QQ$. There exists a
minimal minimal Dieudonn\'e module $M_+$ with $M\subseteq
M_+\subset N$ i.e., for every minimal Dieudonn\'e module $M'$ with
$M\subseteq M'\subset N$ we have $M_+\subseteq M'$.
\end{Prop}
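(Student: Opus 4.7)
My plan is to reduce to the isoclinic case via the slope decomposition of $N$ and to construct $M_+$ there using the characterisation of minimality in Proposition~\ref{Pr-isocl-min}(b).

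\emph{Reduction.} Write $N=\bigoplus_{j=1}^r N_j$ for the decomposition into isoclinic components of slopes $\lambda_j$. Since $F$ and $V$ preserve this decomposition, the projections $p_j:N\to N_j$ are $F,V$-equivariant, and each $M_j:=p_j(M)$ is a Dieudonn\'e module in $N_j$ with $(M_j)_\QQ=N_j$. Any minimal Dieudonn\'e module $M'\subset N$ is by definition a direct sum of isoclinic minimal modules; regrouping by slope shows $M'=\bigoplus_j M'_j$ with $M'_j\subseteq N_j$ isoclinic minimal, and in particular $p_j(M')=M'_j$. Granting the proposition in the isoclinic case, set $M_+:=\bigoplus_j (M_j)_+$, where $(M_j)_+$ is the smallest minimal Dieudonn\'e module in $N_j$ containing $M_j$. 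This $M_+$ is minimal; it contains $M$ since $p_j(m)\in M_j\subseteq (M_j)_+$ for all $m\in M$ and all $j$; and if $M'=\bigoplus_j M'_j\supseteq M$ is minimal, then $p_j(M)\subseteq M'_j$ forces $(M_j)_+\subseteq M'_j$, whence $M_+\subseteq M'$.

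\emph{The isoclinic case.} Assume $N$ is isoclinic of slope $\lambda$, and set $\mathcal{O}:=\DD^{\ww_\lambda\geq 0}$. By Proposition~\ref{Pr-isocl-min}(b), a Dieudonn\'e module $M'\subseteq N$ is minimal if and only if it is $\mathcal{O}$-stable. The natural candidate is therefore
$$
M_+:=\mathcal{O}\cdot M\subseteq N,
$$
the $W$-submodule of $N$ generated by the elements $\Phi(m)$ for $\Phi\in\mathcal{O}$ and $m\in M$. By construction $M_+$ is $\mathcal{O}$-stable, contains $M$, and is contained in every $\mathcal{O}$-stable $W$-submodule of $N$ containing $M$. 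Once $M_+$ is shown to be a $W$-lattice, Proposition~\ref{Pr-isocl-min}(b) guarantees that it is a minimal Dieudonn\'e module, necessarily smallest among those containing $M$.

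\emph{Main obstacle.} The delicate point is to verify that $M_+$ is finitely generated over $W$ (it clearly spans $N$ over $B(k)$, as $M$ does). The idea is to sandwich $M_+$ inside a known lattice: fix any minimal Dieudonn\'e module $M_0\subset N$, for instance the isoclinic summand of slope $\lambda$ of a minimal $p$-divisible group of that slope, cf.~\cite{Oo3}. Since $M$ and $M_0$ are both $W$-lattices in $N$, there exists $n\geq 0$ with $M\subseteq p^{-n}M_0$, and $p^{-n}M_0$ is still $\mathcal{O}$-stable. Hence $M_+\subseteq p^{-n}M_0$, and since $W$ is Noetherian, the $W$-submodule $M_+$ of the finitely generated $W$-module $p^{-n}M_0$ is itself finitely generated, completing the construction.
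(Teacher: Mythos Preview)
Your proof is correct. The reduction to the isoclinic case via the slope decomposition is exactly what the paper does. In the isoclinic case, however, the paper takes a slightly different route: instead of using characterisation (b) of Proposition~\ref{Pr-isocl-min} and forming $M_+=\mathcal O\cdot M$, the paper uses characterisation (c), observing that every minimal $M'\supseteq M$ has the form $N^{w\ge 0}$ for an $F$-valuation $w$ of slope $\lambda$ with $w(M)\ge 0$, and then invokes Lemma~\ref{Le-min-val} to obtain the minimal such valuation $w_\circ$, so that $M_+=N^{w_\circ\ge 0}$. Your construction is more direct and avoids Lemma~\ref{Le-min-val}; the paper's description via $w_\circ$, on the other hand, is the one that is exploited later in Theorem~\ref{Th-max-min}~(a), where $M_+$ is identified explicitly as $N^{\underline 0}$.
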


\begin{proof}
If $N$ is isoclinic, then by Proposition \ref{Pr-isocl-min} every minimal
$M'$ as above takes the form $M'=N^{w\geq 0}$ for some
$F$-valuation $w$ on $N$ of slope $\lambda$
with $w(M)\geq 0$. Hence in the isoclinic
case the assertion follows from Lemma \ref{Le-min-val}. In
general, let $N=\bigoplus_{j=1}^r N_j$ be the direct sum decomposition into isoclinic
components and let $M_j$ be the image of $M\to N_j$. As each
$M'$ as above is a direct sum $M'=\bigoplus_{j=1}^r M'_j$ with
$M_j\subseteq M'_j\subset N_j$, we have $M_+=\bigoplus_{j=1}^r (M_j)_+$.
\end{proof}

\noindent By duality there exists also a maximal minimal
Dieudonn\'e module $M_-\subseteq M$.

\begin{Lemma}
\label{Le-min-funct}
A homomorphism of Dieudonn\'e modules $f:M'\to M$ induces
homomorphisms $f_-:M'_-\to M_-$ and $f_+:M'_+\to M_+$.
\end{Lemma}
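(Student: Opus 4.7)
The plan is to extend $f$ to a $\DD$-linear map $f_\QQ : M'_\QQ \to M_\QQ$ and reduce to the isoclinic case. Since $\DD$-module homomorphisms respect slopes, $f_\QQ$ splits as $\bigoplus_j f_j$ with $f_j : N'_j \to N_j$, where $N'_j, N_j$ denote the isoclinic components of $N' = M'_\QQ, N = M_\QQ$ of slope $\lambda_j$. Letting $\pi'_j, \pi_j$ denote the projections and $M'_j = \pi'_j(M')$, $M_j = \pi_j(M)$, the relation $\pi_j \circ f = f_j \circ \pi'_j$ yields $f_j(M'_j) \subseteq M_j$. By the description $M'_+ = \bigoplus_j (M'_j)_+$ and $M_+ = \bigoplus_j (M_j)_+$ established in the proof of Proposition \ref{Pr-min-min-ex}, it suffices to construct $f_j : (M'_j)_+ \to (M_j)_+$ for each $j$, reducing to the isoclinic situation.

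In the isoclinic case of slope $\lambda$, I would combine Proposition \ref{Pr-isocl-min} with Lemma \ref{Le-min-val} to write $M_+ = N^{w_\circ \geq 0}$ and $M'_+ = N'^{w'_\circ \geq 0}$, where $w_\circ$ and $w'_\circ$ are the minimal $F$-valuations of slope $\lambda$ with $w_\circ(M) \geq 0$ and $w'_\circ(M') \geq 0$. The key step is to pull back the target valuation: define $w^* : N' \to \RR \cup \{\infty\}$ by $w^*(x) = w_\circ(f_\QQ(x))$, with the convention $w_\circ(0) = \infty$. Then $w^*$ is an $F$-valuation of slope $\lambda$, and for $x \in M'$ one has $w^*(x) = w_\circ(f(x)) \geq 0$ since $f(x) \in M$. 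Hence $w^*$ lies in the set $\scrW'$ appearing in Lemma \ref{Le-min-val}, so by minimality $w'_\circ(x) \leq w^*(x)$ for all $x \in N'$. For $x \in M'_+$ this gives $w_\circ(f_\QQ(x)) = w^*(x) \geq w'_\circ(x) \geq 0$, so $f_\QQ(x) \in M_+$. This produces $f_+$.

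For $f_-$ I would invoke duality. The Serre (or Cartier) dual $M^\vee$ of a Dieudonn\'e module is again a Dieudonn\'e module; minimality is preserved by dualisation, and there are canonical identifications $(M^\vee)_+ = (M_-)^\vee$ and $(M^\vee)_- = (M_+)^\vee$ (both follow directly from Proposition \ref{Pr-isocl-min} by interchanging the roles of $F$ and $V$ and flipping the sign of $\lambda$). Applying the result for $f_+$ to the dual homomorphism $f^\vee : M^\vee \to (M')^\vee$ yields a map $(M^\vee)_+ \to ((M')^\vee)_+$; dualising back produces $f_- : M'_- \to M_-$.

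The main obstacle I foresee is ensuring that the pull-back $w^*$ behaves as a valuation when $f_\QQ$ fails to be injective. This is handled cleanly by allowing $w^*$ to take the value $\infty$ on $\ker(f_\QQ)$: Lemma \ref{Le-min-val} is formulated for all $F$-valuations, not only non-degenerate ones, so the minimality comparison $w'_\circ \leq w^*$ remains valid. A secondary point to verify is that the direct-sum decomposition of $M_+$ into isoclinic pieces is compatible with $f_\QQ$, which follows from the canonicity of the slope decomposition and the fact that $f_\QQ$ is $\DD$-linear.
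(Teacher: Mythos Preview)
Your proof is correct. Both you and the paper reduce to the isoclinic case and handle one of $f_\pm$ directly and the other by duality, but the direct arguments differ. The paper treats $M_-$ first, using the characterisation (derived from Proposition~\ref{Pr-isocl-min}~(b)) that $M_-=\{x\in M_\QQ\mid \Phi x\in M\text{ for all }\Phi\in\DD\text{ with }\ww_\lambda(\Phi)\ge 0\}$; since $f_\QQ$ is $\DD$-linear, this condition is visibly preserved in one line, and $f_+$ then follows by duality. You instead treat $M_+$ first via the valuation description (Proposition~\ref{Pr-isocl-min}~(c) together with Lemma~\ref{Le-min-val}) and the pull-back $w^*=w_\circ\circ f_\QQ$, then dualise for $f_-$. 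Your pull-back argument is a pleasant alternative and your handling of the non-injective case is correct, but the paper's operator characterisation of $M_-$ avoids the minimality comparison and yields the shortest proof.
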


\begin{proof}
By duality it suffices to show that $f(M'_-)\subseteq M_-$.
We can assume that $M'$ and $M$ are isoclinic of the same
slope $\lambda$.
Using Proposition \ref{Pr-isocl-min} it is easy to see that
$M_-$ is the set of all elements $x\in M_\QQ$ such that we have $\Phi x\in M$
for each $\Phi\in\DD$ with $\ww_\lambda(\Phi)\geq 0$,
and the analogous statement holds for $M'_-$. As $f_\QQ:M_\QQ\to M'_\QQ$ is a
$\DD$-linear map it follows that $f$ maps $M'_-$
to $M_-$.
\end{proof}

If $M=M_b\oplus M_o$ is the unique decomposition such that
$M_b$ is bi-nil\-potent and $M_o$ has integral slopes, then
$M_\pm=M_{b\pm}\oplus M_o$. 
We have the following explicit descriptions of $M_+$ and $M_-$ in the case when $a(M)=a(M_b)=1$.

\begin{Thm}
\label{Th-max-min}
Let $M$ be a bi-nilpotent Dieudonn\'e module and let
$M_-\subseteq M\subseteq M_+$ be the 
minimal Dieudonn\'e modules considered above.
We assume that $a(M)=1$ and we use Notation \ref{Notat}.

\smallskip
(a) We have $M_+=N^{\underline 0}$, where
$\underline 0=(0,\ldots,0)\in\RR^r$.

\smallskip
(b) We have $M_-=p^{-1}N^{\beta+}$.
\end{Thm}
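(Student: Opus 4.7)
For part (a), I invoke the isoclinic decomposition of minimal Dieudonn\'e modules from the proof of Proposition \ref{Pr-min-min-ex}: the module $M_+$ splits as $M_+ = \bigoplus_j (M_j)_+$, where $M_j$ is the image of $M$ in $N_j$. By Proposition \ref{Pr-isocl-min}, the minimal Dieudonn\'e modules contained in $N_j$ are precisely the lattices of the form $N_j^{w \geq 0}$ for $F$-valuations $w$ of slope $\lambda_j$. The smallest such containing $M_j$ is realised by the minimal such $w$ satisfying $w(M_j) \geq 0$, which is exactly $w_j$ by its definition in Notation \ref{Notat}. Summing across $j$ gives $M_+ = \bigoplus_j N_j^{w_j \geq 0} = N^{\underline{0}}$.

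For part (b), I first verify that $p^{-1} N^{\beta+} = \bigoplus_j N_j^{w_j > \beta_j - 1}$ is a minimal Dieudonn\'e module: each summand has the form $N_j^{w_j \geq \gamma_j}$ for $\gamma_j$ the smallest value of $w_j(N_j)$ strictly exceeding $\beta_j - 1$, and each is minimal by Proposition \ref{Pr-isocl-min}. The inclusion $p^{-1} N^{\beta+} \subseteq M$ is immediate from Proposition \ref{Pr-fil}. Combined with Lemma \ref{Le-min-funct}, this yields $p^{-1} N^{\beta+} \subseteq M_-$.

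The main step is the reverse inclusion $M_- \subseteq p^{-1} N^{\beta+}$. Writing $M_- = \bigoplus_j N_j^{w_j \geq \gamma_j^-}$, it suffices to show $\gamma_j^- > \beta_j - 1$ for every $j$. My plan is to exhibit, for each $j$, an element $y_j \in N_j$ with $w_j(y_j) = \beta_j - 1$ such that $y_j \notin M$; then no $N_j^{w_j \geq \gamma}$ with $\gamma \leq \beta_j - 1$ can be contained in $M$. In the isoclinic case ($r = 1$) the choice $y_1 = F^c z / p$ works, since $\nu_1(c) = c\lambda$ gives $w_1(y_1) = \beta_1 - 1$, and $F^c z \in \Upsilon$ is a $W$-basis element of $M$, so $F^c z / p \notin M$. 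In the general case I would take $y_j$ of the form $F^{c - t_j} z_j / p^{k_j}$ in $N_j$, where $t_j$ lies in a residue class modulo $h_j$ determined by the Newton polygon data and $k_j$ is adjusted so that $w_j(y_j) = \beta_j - 1$. The main obstacle is verifying $y_j \notin M$: this should follow from the cyclicity hypothesis $a(M) = 1$, which prevents $M$ from splitting along the slope decomposition and thus forces elements of $M$ supported on a single $N_j$ to satisfy the tight divisibility conditions cutting out $p^{-1} N^{\beta+}$. An alternative route is via duality: Lemma \ref{Le-min-funct} gives $M_- = ((M^\vee)_+)^\vee$, and applying part (a) to $M^\vee$ reduces the problem to identifying $((N^\vee)^{\underline{0}})^\vee = p^{-1} N^{\beta+}$, which should unwind after tracking the slope reversal and the swap of $c$ and $d$ under Dieudonn\'e duality.
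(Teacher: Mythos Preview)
Your argument for (a) is correct and is exactly the paper's proof. Your first step for (b), establishing $p^{-1}N^{\beta+}\subseteq M_-$ via Propositions \ref{Pr-isocl-min} and \ref{Pr-fil}, is also correct and matches the paper. The gap is entirely in the reverse inclusion $M_-\subseteq p^{-1}N^{\beta+}$, and neither of your two proposed routes closes it.

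For the direct route, you need that no element of $N_j$ with $w_j$-valuation $\le\beta_j-1$ lies in $M$. In the isoclinic case this is easy as you observe, but in general the intersection $M\cap N_j$ is genuinely hard to pin down: $M$ is cut out by the basis $\Upsilon$ of Lemma \ref{Le-present}, while membership in $N_j$ is a condition on projections, and the two descriptions do not mesh in any obvious way. Your duality route has the same hidden difficulty. Applying (a) to $M^\vee$ gives $(M^\vee)_+=\bigoplus_j (N_j^\vee)^{w_j^\vee\ge 0}$, but $w_j^\vee$ is the minimal valuation nonnegative on the image of $M^\vee$ in $N_j^\vee$, and that image is the dual lattice of $M\cap N_j$, not of $M_j$. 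So identifying $((N^\vee)^{\underline 0})^\vee$ with $p^{-1}N^{\beta+}$ again requires controlling $M\cap N_j$, which is exactly what you do not have.

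The paper avoids this entirely by a length count. It sets $\ell_1=\ell(M/p^{-1}N^{\beta+})$ and $\ell_2=\ell(N^{\underline 0}/p^{-1}N^{\beta+})$ and proves $\ell_2=2\ell_1$; this forces $\ell(M_+/M)\ge\ell(M/M_-)$ with equality iff $M_-=p^{-1}N^{\beta+}$, and the reverse inequality comes from running the same argument on $M^\vee$. The identity $\ell_2=2\ell_1$ is established by computing both sides explicitly: $\ell_2$ via the valuations $\beta_j$ and multiplicities $h_j$, and $\ell_1$ via Proposition \ref{Pr-fil+} (the key input $\tilde N^{\beta+}=N^{\beta+}$) together with a lattice-point count in the region between the Newton polygon and the ordinary polygon; an integration by parts ties the two expressions together. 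So the paper uses duality only to symmetrise an inequality, not to transport an explicit identification, and the real content lies in the combinatorial length computation rather than in any direct analysis of $M\cap N_j$.
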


\noindent
By Proposition \ref{Pr-fil+} the $W$-module 
$N^{\beta+}$ has a $W$-basis consisting of easily
computable $W$-multiples 
of $F^{c-i}z$ for $i\in \Ical=\{0,\ldots,h-1\}$, which makes it
more explicit than the $W$-module $N^{\underline 0}$.

\begin{proof}[Proof of Theorem \ref{Th-max-min}]
Let $M_j$ be the image of $M\to N_j$. 
We recall that $w_j$ is the minimal $F$-valuation of slope
$\lambda_j$ on $N_j$ such that $w_j(M_j)\ge 0$.
This implies (a); see the proof of Proposition
\ref{Pr-min-min-ex}. 

The Dieudonn\'e module $p^{-1}N^{\beta+}$ is
minimal by Proposition \ref{Pr-isocl-min} and contained in $M$ by Proposition \ref{Pr-fil}. Thus we have inclusions
$$
p^{-1}N^{\beta+}\subseteq M_-\subseteq M
\subseteq M^+=N^{\underline 0}.
$$

For a $W$-module $A$, let $A^\vee=\Hom_W(A,W)$. If $A$ has finite length, let
$\ell(A)$ be its length. 
Consider 
$$
\ell_1 =\ell(M/p^{-1}N^{\beta+}) 
\qquad \text{and} \qquad
\ell_2 =\ell (N^{\underline 0}/p^{-1}N^{\beta+}).
$$
We claim that $\ell_2=2\ell_1$. This implies that $\ell(M_+/M)\geq\ell(M/M_-)$ with equality if and only if
$M_-=p^{-1}N^{\beta+}$. The same reasoning applied to the dual
Dieudonn\'e module $M^\vee$ gives the opposite inequality $\ell(M/M_-)\geq\ell(M_+/M)$. Here we use that $a(M^\vee)=1$ and
$(M^\vee)_+=(M_-)^\vee$ and $(M^\vee)_-=(M_+)^\vee$. Thus (b)
follows from our claim.

It remains to show that $\ell_2=2\ell_1$.
Let $s$ be the multiplicity of $N$ (i.e., the sum of the multiplicities of the isoclinic direct factors $N_j$ of $N$). Then
$$
\ell_2+h-s= \ell(pN^{\underline 0}/N^{\beta+})+\ell(N^{\underline 0}/pN^{\underline 0})-\ell(N^\beta/N^{\beta+})=
\ell(N^{\underline 0}/N^\beta)=\sum_{j=1}^r\beta_jh_j.
$$

On the other hand, let $\rho$ be the ordinary Newton
polygon with the same endpoints as $\nu$ and let
$\varOmega\subseteq\RR^2$ be the compact set 
enclosed by $\nu$ and $\rho$. 
A $W$-basis of $pM$ is formed by the elements
$p^{n_i}F^{c-i}z$ for $0\le i<h$ where $n_i$ is
the minimal integer such that $n_i>\rho(i)$.
A $W$-basis of $\tilde N^{\beta+}$ is formed
by the elements $p^{m_i}F^{c-i}z$ for $0\le i<h$ 
where $m_i$ is the minimal integer with $m_i>\nu(i)$.
Thus $\ell(pM/\tilde N^{\beta+})$ is the number of
elements of $\ZZ^2$ which lie strictly above $\rho$
and on or below $\nu$. Hence the cardinality of
the finite set $\interior(\varOmega)\cap\ZZ^2$ 
is equal to $\ell(pM/\tilde N^{\beta+})-s+1$, 
which is equal to $\ell_1-s+1$ by Proposition
\ref{Pr-fil+}. The set $\partial\varOmega\cap\ZZ^2$ has $s+h$
elements. Hence the area of $\varOmega$ can be expressed in two ways
as follows.
$$
\int_0^h\nu(t)\,dt-d^2/2=\ell_1-s+1+\frac{s+h}2-1
=\ell_1+\frac{h-s}2.
$$

Finally, the function $g(t)=\nu(t)-\nu'(t)(t-c)$ is
well-defined for those $t\in[0,h]$ where $\nu$ is linear. Its
value is $g(t)=\beta_j$ if $\nu'(t)=\lambda_j$. Thus we get
$$
\sum_{j=1}^r\beta_jh_j=\int_0^h(\nu(t)-\nu'(t)(t-c))\,dt
=2\int_0^h\nu(t)\,dt-d^2
$$
by integration by parts. The last three displayed equations give
$\ell_2=2\ell_1$.
\end{proof}

The $p$-exponent of a finitely generated torsion $W$-module
$\bar M$ is the smallest non-negative integer $m$ such that we have
$p^mx=0$ for all $x\in\bar M$.

\begin{Lemma}
\label{Le-three-p-exp}
Let $M$ be a Dieudonn\'e module. Then the three $W$-modules
$M_+/M_-$, $M/M_-$, and $M_+/M$ have the same $p$-exponent.
\end{Lemma}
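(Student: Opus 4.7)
The plan is to show that both outer $p$-exponents equal the middle one by exploiting the chain $M_-\subseteq M\subseteq M_+$ together with the operation of multiplication by $p$. Write $e$, $e_+$, $e_-$ for the $p$-exponents of $M_+/M_-$, $M_+/M$, and $M/M_-$ respectively. Since $M_+/M$ is a quotient and $M/M_-$ is a submodule of $M_+/M_-$, the inequalities $e_+\leq e$ and $e_-\leq e$ are immediate. The task is to establish the reverse inequalities $e\leq e_+$ and $e\leq e_-$.

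The key auxiliary observation is that the extremal operations $M\mapsto M_\pm$ commute with multiplication by $p$, i.e., for every $n\in\NN$ we have $(p^nM)_+=p^nM_+$ and $(p^nM)_-=p^nM_-$. Indeed, multiplication by $p$ is an automorphism of $N=M_\QQ$ that identifies any Dieudonn\'e module with an isomorphic one, so minimality is preserved (the endomorphism rings coincide). The universal properties of $M_\pm$ from Proposition~\ref{Pr-min-min-ex} then transport across the $p$-multiplication. One may also verify this in the isoclinic case directly from Proposition~\ref{Pr-isocl-min}, since $M=N^{w\geq 0}$ implies $pM=N^{w\geq 1}$.

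Granted this, both inequalities fall out. By definition of $e_+$ we have $p^{e_+}M_+\subseteq M$; but $p^{e_+}M_+$ is minimal, and $M_-$ is the largest minimal Dieudonn\'e module contained in $M$, so $p^{e_+}M_+\subseteq M_-$, which gives $e\leq e_+$. For the other direction, $p^{e_-}M\subseteq M_-$ by definition of $e_-$; applying $(-)_+$ and using the commutation with $p$-multiplication, $p^{e_-}M_+=(p^{e_-}M)_+$ is the smallest minimal Dieudonn\'e module containing $p^{e_-}M$, while $M_-$ is one such, so $p^{e_-}M_+\subseteq M_-$, giving $e\leq e_-$. Combining, $e_+=e_-=e$.

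The only mildly delicate point is the commutation $(p^nM)_\pm=p^nM_\pm$, but it is essentially formal from the characterizations of minimality already established; once that lemma is in hand, the two inequalities are both one-line arguments playing the universal property of $M_-$ (largest minimal submodule) against the universal property of $M_+$ (smallest minimal supermodule).
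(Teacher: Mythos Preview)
Your proof is correct and follows essentially the same approach as the paper: both directions use that $p$-multiples of minimal Dieudonn\'e modules remain minimal, and then invoke the universal properties of $M_+$ (smallest minimal supermodule) and $M_-$ (largest minimal submodule). The paper phrases the second inequality as $M\subseteq p^{-m}M_-\Rightarrow M_+\subseteq p^{-m}M_-$, which is exactly your argument with $p^m$ moved to the other side; your explicit statement of the commutation $(p^nM)_\pm=p^nM_\pm$ is slightly more than is needed, since only the preservation of minimality under $p$-multiplication is used.
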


\begin{proof}
If $p^m$ annihilates $M_+/M$, then $p^mM_+\subseteq M$ and thus
we have $p^mM_+\subseteq M_-$ as $p^mM_+$ is
minimal. Similarly, if $p^m$ annihilates $M/M_-$, then we
have $M\subseteq p^{-m}M_-$ which implies that
$M_+\subseteq p^{-m}M_-$. 
\end{proof}

\begin{Cor}
\label{Co-max-min}
Let $M$ be a Dieudonn\'e module of Newton polygon $\nu$. Let $m$ be the $p$-exponent of $M_+/M_-$. We have $m\leq\lfloor\nu(c)\rfloor$ with equality if $a(M)=1$.
\end{Cor}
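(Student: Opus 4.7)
My plan is to reduce first to bi-nilpotent $M$, then to compute the $p$-exponent of $M_+/M_-$ when $a(M)=1$ via Theorem~\ref{Th-max-min}, and finally to extend the upper bound to arbitrary $M$ by comparison with a cyclic sub-Dieudonn\'e module.

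To reduce, write $M = M_b \oplus M_o$ where $M_o$ is the summand of integral slopes and is automatically minimal, so $(M_o)_\pm = M_o$ and $M_\pm = (M_b)_\pm \oplus M_o$. The $p$-exponent of $M_+/M_-$ therefore equals that of $(M_b)_+/(M_b)_-$. When $M_b \neq 0$, one has $\nu(c)=\nu_b(c_b)$ by examining the shape of $\nu$ at $t=c$, which lies past the initial slope-$0$ segment; so we may assume $M$ is bi-nilpotent.

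If $a(M)=1$, Theorem~\ref{Th-max-min} gives $M_+ = N^{\underline 0}$ and $M_- = p^{-1}N^{\beta+}$, so the $p$-exponent is the least $m\ge 0$ with $p^{m+1}N^{\underline 0}\subseteq N^{\beta+}$. Both sides split over the isoclinic components $N_j$. Since each $w_j$ is non-degenerate, $gr^0 N_j \neq 0$, and Lemma~\ref{Le-p-exp} gives on the $j$-th component the minimal value $m+1 = \lfloor\beta_j\rfloor + 1$. Taking the maximum over $j$ and using $\nu(c) = \max_j \beta_j$ (which commutes with the floor since the maximum is attained), we conclude $m = \lfloor\nu(c)\rfloor$; this yields the equality.

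For the upper bound with arbitrary $a(M)$, the key step is to find a sub-Dieudonn\'e module $M^\sharp\subseteq M$ with $a(M^\sharp)=1$ and $M^\sharp_\QQ = N$. Such an $M^\sharp$ exists: Lemma~\ref{Le-present} constructs a cyclic Dieudonn\'e module with prescribed Newton polygon $\nu$, whose rational span is isomorphic to $N$ by Dieudonn\'e--Manin, and scaling by a sufficiently large power of $p$ embeds it into $M$. Proposition~\ref{Pr-fil} applied to $M^\sharp$ gives $N^{\beta+}_{M^\sharp}\subseteq pM^\sharp\subseteq pM$. The inclusion $M^\sharp\subseteq M$ forces $w_j^{M^\sharp}\le w_j^M$, whence $N^{\beta+}_M\subseteq N^{\beta+}_{M^\sharp}$, so $p^{-1}N^{\beta+}_M\subseteq M_-$ as it is minimal and contained in $M$. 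Combined with the identity $M_+ = N^{\underline 0}_M$, which holds for general $M$ because the smallest minimal module containing $M$ decomposes as $\bigoplus_j N_j^{w_j^M\ge 0}$, the same calculation as in the equality case bounds the $p$-exponent of $M_+/M_-$ by $\lfloor\nu(c)\rfloor$. The most delicate point is showing that the scaling and Dieudonn\'e--Manin identification give an honest Dieudonn\'e submodule $M^\sharp\subseteq M$ of full rational span, which is automatic once one notes that $a$-number is invariant under $p$-scaling and that cyclic modules exist for every Newton polygon.
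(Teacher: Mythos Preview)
Your reduction to the bi-nilpotent case and your treatment of the case $a(M)=1$ are correct and match the paper's argument.

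The general case, however, contains a genuine error. From $M^\sharp\subseteq M$ you correctly deduce $w_j^{M^\sharp}\le w_j^M$ pointwise on $N_j$. But this gives $N_j^{w_j^{M^\sharp}>\beta_j}\subseteq N_j^{w_j^M>\beta_j}$, i.e.\ $N^{\beta+}_{M^\sharp}\subseteq N^{\beta+}_M$, which is the \emph{reverse} of the inclusion you claim. With the inclusion in the correct direction, Proposition~\ref{Pr-fil} for $M^\sharp$ only yields $N^{\beta+}_{M^\sharp}\subseteq pM^\sharp\subseteq pM$, and you cannot conclude $N^{\beta+}_M\subseteq pM$. There is no reason to expect the latter for arbitrary $M$: Proposition~\ref{Pr-fil} is proved only under the hypothesis $a(M)=1$, and a single cyclic $M^\sharp$ lying deep inside $M$ gives no control over the (larger) module $N^{\beta+}_M$. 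So the chain $p^{-1}N^{\beta+}_M\subseteq M_-$ is not established, and the bound does not follow.

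The paper's argument for $a(M)\ge 2$ avoids this obstacle by working pointwise rather than with one global $M^\sharp$. Lemma~\ref{Le-three-p-exp} reduces the problem to bounding the $p$-exponent of $M/M_-$, i.e.\ to showing $p^{\lfloor\nu(c)\rfloor}x\in M_-$ for every $x\in M$. For each such $x$ one takes $M'=\EE x$, which is cyclic with $a(M')=1$; the already-proved equality case gives $p^{\lfloor\nu'(c')\rfloor}x\in M'_-$, Lemma~\ref{Le-min-funct} gives $M'_-\subseteq M_-$, and an elementary check shows $\nu'(c')\le\nu(c)$. Note that $M'$ depends on $x$ and is not required to have full rational span or Newton polygon $\nu$; this flexibility is precisely what makes the argument go through.
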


\begin{proof}
We can assume that $M$ is bi-nilpotent and non-zero.
If $a(M)=1$, then $m$ is the $p$-exponent of
$p^{-1}N^{\beta+}/N^{\underline 0}$
by Theorem \ref{Th-max-min} and thus we have $m=\max\{\lfloor\nu_j(c)\rfloor\mid 1\leq j\leq r\}=\lfloor\nu(c)\rfloor$ (cf.\ Lemma \ref{Le-p-exp}). If $a(M)\ge 2$, then by Lemma \ref{Le-three-p-exp}
it suffices to show that for every non-zero element $x\in M$ we have
$p^{\lfloor\nu(c)\rfloor}x\in M_-$. Let $M'=\EE x$ be the
Dieudonn\'e module generated by $x$. Let $\nu'$ be its
Newton polygon and let $c'$ be the dimension of $M'/FM'$.
As $a(M')=1$, we know that $p^{\lfloor\nu'(c')\rfloor}x\in M'_-$.
By Lemma \ref{Le-min-funct}, $M'_-$ is contained in $M_-$.
Thus $\nu'(c')\leq\nu(c)$ by Lemma \ref{5.24} below. 
Hence $p^{\lfloor\nu(c)\rfloor}x\in M_-$ as desired.
\end{proof}

\begin{Lemma}
\label{5.24}
Let $\nu$ and $\nu'$ be Newton polygons with endpoints $(c+d,d)$ and 
$(c'+d',d')$ (respectively). If the slopes of $\nu'$ form a subset of the slopes 
of $\nu$ (counted with multiplicities), then $\nu'(c') \le \nu(c)$.
\end{Lemma}

\begin{proof}
It is easy to see that the number $\nu(c)$ is invariant under duality in the sense
that $\nu(c)=\nu^\vee(d)$ if $\nu^\vee$ is defined by $\nu^\vee(x)=\nu(c+d-x)+x-d$ 
for $x\in[0,c+d]$. To prove the lemma, by induction it suffices to consider the
case where $\nu'$ arises from $\nu$ by deleting a line (i.e., an isoclinic part)
of some slope $\lambda$. Then at least one of the following holds:

\smallskip
(1) {} The slopes of $\nu'(x)$ for $x\le c'$ are less or equal to $\lambda$; 

\smallskip
(2) {} The slopes of $\nu'(x)$ for $x\ge c'$ are greater or equal to $\lambda$. 

\smallskip
\noindent
The passage to the duals of $\nu$ and $\nu'$ 
interchanges (1) and (2) and therefore we can assume that (1) holds. 
Then $\nu(x)=\nu'(x)$ for $x\le c$ and thus $\nu'(c')=\nu(c')\le\nu(c)$
 as $c'\le c$ and the function $\nu$ is increasing.
\end{proof}

\begin{proof}[Proof of Theorem \ref{Th-min-isog}]
Let $M$ be the covariant Dieudonn\'e module of the given
$p$-divisible group $D$ over $k$. Isogenies $f:D\to D_0$
with minimal $D_0$ correspond to minimal Dieudonn\'e
modules $M_0$ with $M\subseteq M_0\subseteq M_\QQ$ in such
a way that the $p$-exponents of $\Ker(f)$ and of $M_0/M$ coincide.
Hence Theorem \ref{Th-min-isog} follows from Corollary
\ref{Co-max-min} together with Lemma \ref{Le-three-p-exp}.
\end{proof}


\section{Values of isogeny cutoffs}
\label{Se-val-isog}

In this section we fix a $p$-divisible group $D$
over $k$ of dimension $d$ and codimension $c$ with Newton polygon $\nu$. 
We will prove Theorem \ref{Th-b-maximal} and 
list all possible values of the isogeny cutoff $b_D$ of $D$.

\begin{Lemma}
\label{Le-b-upper-prelim}
To prove Theorem \ref{Th-b-maximal} we can
assume that $D$ is connected with connected dual
and that $a_D=1$.
\end{Lemma}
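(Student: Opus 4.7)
The plan is to make two successive reductions, corresponding to the two assertions of the lemma: first, that we may remove the étale and multiplicative parts of $D$; second, that we may force the $a$-number to be $1$. Both reductions must preserve $b_D$, the Newton polygon invariant $j(\nu)$, and (for the equality statement in Theorem \ref{Th-b-maximal}) the value of $a_D$.

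For the first reduction, I would appeal to Lemma \ref{Le-prelim} (b) and (c), which give $b_D = b_{D^\circ} = b_{D^\vee}$. Applying these operations in sequence replaces $D$ by its bi-infinitesimal part $D_1 = ((D^\circ)^\vee)^\circ$ without changing $b_D$ or $a_D$ (the étale and multiplicative factors contribute zero to $a_D$). The routine verification to record is that $j(\nu_1) = j(\nu)$, where $\nu_1$ is the Newton polygon of $D_1$: since $\nu_1$ is obtained from $\nu$ by discarding the slope-$0$ strip of width $h_{\text{\rm \'et}}$ on the left and the slope-$1$ strip on the right, one has $\nu_1(c_1) = \nu(c)$ for $c_1 = c - h_{\text{\rm \'et}}$, and the breakpoint status at $c_1$ matches that at $c$ under the endpoint convention. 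The degenerate case where $D_1$ is trivial corresponds to $D$ ordinary, in which case $b_D = 1 = j(\nu)$ is checked directly.

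For the second reduction, I would invoke Oort's deformation theorem from \cite{Oo1}, already referenced in the introduction: any bi-infinitesimal $D$ of Newton polygon $\nu$ arises as the special fiber of some $p$-divisible group $\tilde D$ over $R = k[[t]]$ with constant Newton polygon $\nu$ and $a_{\tilde D_{\bar K}} \leq 1$, where $\bar K$ is an algebraic closure of $k((t))$. Since the generic fiber is still non-trivial and bi-infinitesimal, we in fact have $a_{\tilde D_{\bar K}} = 1$. The going-down result Theorem \ref{Th-going-down-b} then yields $b_D \leq b_{\tilde D_{\bar K}}$, so the inequality $b_D \leq j(\nu)$ for general bi-infinitesimal $D$ follows once it is known for $\tilde D_{\bar K}$, which has $a$-number one. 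For the equality statement, the hypothesis $a_D \leq 1$ places us in one of two cases: either $a_D = 0$, and $D$ is ordinary (equality trivial, as above), or $a_D = 1$, in which case no deformation is needed and $D$ itself satisfies the reduced hypotheses.

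The main obstacle I anticipate is purely bookkeeping: checking that $D \mapsto D^\circ$ and $D \mapsto D^\vee$ preserve $j(\nu)$. The delicate point is duality, which reverses slopes $\lambda \mapsto 1 - \lambda$ and swaps the roles of $c$ and $d$; the identity $\nu_{D^\vee}(d) = \nu_D(c)$ follows from a short computation with the top and bottom $c$ slopes, and the breakpoint comparison at $(d, \nu_{D^\vee}(d))$ versus $(c, \nu_D(c))$ is immediate from slope reversal. Beyond this, no new input is needed beyond the going-down principle Theorem \ref{Th-going-down-b} and Oort's theorem.
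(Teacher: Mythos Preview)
Your proposal is correct and follows essentially the same approach as the paper: use Oort's deformation result \cite[Prop.~2.8]{Oo1} together with the going-down principle Theorem~\ref{Th-going-down-b} for the $a$-number reduction, and use Lemma~\ref{Le-prelim} for passing to the bi-infinitesimal part. The only cosmetic difference is that the paper performs the two reductions in the opposite order (first deforming to $a_D\le 1$, then splitting off the ordinary factor via a direct decomposition $D=D^\circ\times D^{\mathrm{ord}}$), which lets it record $\nu(c)=\nu_0(c_0)$ in one line and avoids your duality bookkeeping for $j(\nu)$; but your order works equally well.
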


\begin{proof}
There exists a $p$-divisible group
over $k[[t]]$ whose special fibre is $D$ and whose geometric generic fibre has $a$-number
at most $1$, cf.\ \cite[Prop.~2.8]{Oo1}.
By Theorem \ref{Th-going-down-b}, after replacing if needed $k$
by an algebraic closure of $k((t))$,
we can assume that $a_D\le 1$.
Let $D=D^{\circ}\times D^{\text{ord}}
$, where $D^{\circ}$ is connected
with connected dual and $D^{\text{ord}}$ is ordinary.
Let $\nu_0$ be the Newton polygon of $D^{\circ}$ and let
$c_0$ be the codimension of $D^{\circ}$.
We have $\nu(c)=\nu_0(c_0)$, $b_D=b_{D^{\circ}}$
(by Lemma \ref{Le-prelim} (c)), and $a_{D^{\circ}}=a_D\le 1$. 
If $a_{D^\circ}=0$ then $D^\circ$ is trivial.
Thus to prove Theorem \ref{Th-b-maximal} we can assume that $D=D^{\circ}$ and $a_D=1$.
\end{proof}

\begin{proof}[Proof of Theorem \ref{Th-b-maximal}]
Let $M$ be the covariant Dieudonn\'e module of $D$.
By Lemma \ref{Le-b-upper-prelim} we can assume
that $M$ is bi-nilpotent and that $a(M)=1$.
We write $M=\EE/\EE\Psi$ as in Lemma \ref{Le-present}
with $a_0=1$.

Let $D'$ be another $p$-divisible
group over $k$ with Dieudonn\'e module $M'$
and with Newton polygon $\nu'$.

First we show that $b_D\leq j(\nu)$.
Assume that $D[p^{j(\nu)}]\cong D'[p^{j(\nu)}]$;
we must show that $\nu=\nu'$.
As $j(\nu)\geq 1$, the $p$-divisible group $D'$ and its
 dual are connected and we have $a_{D'}=1$. Choose an element
$z'\in M'$ such that the class of $z$ maps to the class of
$z'$ under the isomorphism $M/p^{j(\nu)}M\cong M'/p^{j(\nu)}M'$,
and let $\Psi'z'=0$ be the associated relation given
by Lemma \ref{Le-present} (b) with $a_0=1$.
Then $\Psi'-\Psi\in p^{j(\nu)}\EE$.
Let us write $\Psi-\Psi'=\sum_{i=1}^h e_iF^{c-i}$.
By the definition of $j(\nu)$, as $\Psi'-\Psi\in p^{j(\nu)}\EE$ we have $v(e_i)\geq\nu(i)$ always and $v(e_i)>\nu(i)$ if $(i,\nu(i))$ is a breakpoint of $\nu$.
From this and Lemma \ref{Le-Newton} we get that $\nu'=\nu$.

Next we show that $b_D\geq j(\nu)$. If $j(\nu)=1$ this is clear.
Thus we can assume that $m=j(\nu)-1>0$.
If $m<\nu(c)$, we take $\Psi'=\Psi+p^m$.
If $m\geq\nu(c)$, then we have $m=\nu(c)$ and $(c,\nu(c))$
is a breakpoint of $\nu$, in particular $v(a_c)=\nu(c)=m$.
In this case, we take $\Psi'=\Psi-a_c$.
We define $D'$ by $M'=\EE/\EE\Psi'$.
Then $D[p^m]\cong D'[p^m]$. From Lemma \ref{Le-Newton} we get that $\nu'\neq\nu$. Thus $m<b_D$.
\end{proof}

\begin{Prop}
\label{Pr-b-values}
If $m$ is an integer with $1\leq m\leq j(\nu)$, then
there exists a $p$-divisible group $D$ over $k$ with
Newton polygon $\nu$ and $b_D=m$.
\end{Prop}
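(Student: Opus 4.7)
The plan is to exhibit a chain of mutually isogenous $p$-divisible groups of Newton polygon $\nu$ whose isogeny cutoffs form an integer sequence with jumps of absolute value at most one, joining $j(\nu)$ at one end to $1$ at the other. A discrete intermediate value argument then hits every integer in $[1,j(\nu)]$.

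First I would fix a $p$-divisible group $D^*$ over $k$ of Newton polygon $\nu$ with $a_{D^*}\leq 1$; such a $D^*$ exists by the reduction used in the proof of Lemma \ref{Le-b-upper-prelim} (via \cite[Prop.~2.8]{Oo1}), and Theorem \ref{Th-b-maximal} then supplies $b_{D^*}=j(\nu)$. Let $D_0$ be the minimal $p$-divisible group in the isogeny class of $D^*$, so $b_{D_0}=1$ by the definition of minimality together with Lemma \ref{Le-prelim} (a). Pick any isogeny $f:D^*\to D_0$ with finite kernel $K$ and filter $K$ by the closed subgroup schemes $K[p^i]=\Ker(p^i\colon K\to K)$. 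Setting $D^{(i)}=D^*/K[p^i]$ one has $D^{(0)}=D^*$ and $D^{(N)}=D_0$ for $N$ large enough, and each transition morphism $D^{(i)}\to D^{(i+1)}$ is an isogeny whose kernel $K[p^{i+1}]/K[p^i]$ is annihilated by $p$. All the $D^{(i)}$ are mutually isogenous, hence share the Newton polygon $\nu$.

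To conclude, Lemma \ref{Le-b-change} yields $|b_{D^{(i+1)}}-b_{D^{(i)}}|\leq 1$. The integer sequence $i\mapsto b_{D^{(i)}}$ thus starts at $j(\nu)$ and ends at $1$ with unit jumps at most, and therefore takes every value in $\{1,\dots,j(\nu)\}$ by the discrete intermediate value property; for each such $m$, an index $i$ with $b_{D^{(i)}}=m$ furnishes the required $p$-divisible group. Beyond Theorem \ref{Th-b-maximal}, the only nontrivial input is Lemma \ref{Le-b-change} itself, which is established elsewhere in the paper; the filtration of $K$ by its $p^i$-torsion and the discrete intermediate value step are routine, and I do not foresee any serious obstacle.
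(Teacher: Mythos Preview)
Your proposal is correct and follows essentially the same route as the paper: realize the extreme values $b=j(\nu)$ (via a group with $a$-number $\le 1$ and Theorem~\ref{Th-b-maximal}) and $b=1$ (via the minimal group), connect them by a chain of isogenies with $p$-annihilated kernels, and invoke Lemma~\ref{Le-b-change} together with a discrete intermediate value argument. Your explicit filtration of $\Ker f$ by its $p^i$-torsion is exactly the standard way to produce such a chain, which the paper simply asserts exists.
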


\begin{proof}
The assertion is true
for $m=j(\nu)$, cf.\ Theorem \ref{Th-b-maximal}. It is also true for $m=1$, as for a minimal $p$-divisible group $D_0$ we have $n_{D_0}=1$ (see either \cite[Thm.~1.2]{Oo3} or \cite[Thm.~1.6]{Va3}) and thus also $b_{D_0}=1$.
As two $p$-divisible groups over $k$ of the same
Newton polygon $\nu$ can be linked by a chain of isogenies
with kernels annihilated by $p$, the proposition
follows from the next lemma.
\end{proof}

\begin{Lemma}
\label{Le-b-change}
Let $g:D\to E$ be an isogeny of $p$-divisible groups over
$k$ such that the kernel of $g$ is annihilated by $p$.
Then $|b_D-b_E|\leq 1$.
\end{Lemma}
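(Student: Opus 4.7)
The plan is to exploit the canonical dual isogeny attached to $g$. Since $\ker(g)\subseteq D[p]$, multiplication by $p$ on $D$ vanishes on $\ker(g)$ and hence factors as $p_D=h\circ g$ for a unique isogeny $h\colon E\to D$; then also $g\circ h=p_E$, and $\ker(h)=g(D[p])\subseteq E[p]$. Thus $h$ satisfies the hypothesis of the lemma with the roles of $D$ and $E$ swapped, so by symmetry it suffices to prove the single inequality $b_E\leq b_D+1$.

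Set $m=b_D$ and let $E'$ be a $p$-divisible group over $k$ together with an isomorphism $\phi\colon E[p^{m+1}]\xrightarrow{\sim}E'[p^{m+1}]$. Put $L=\ker(h)\subseteq E[p]$ and $L'=\phi(L)\subseteq E'[p]$; define $D'=E'/L'$ and let $h'\colon E'\to D'$ be the quotient isogeny, with analogous dual isogeny $g'\colon D'\to E'$ satisfying $\ker(g')\subseteq D'[p]$. The crucial identity is
\[
D[p^m]=\{y\in E[p^{m+1}]\mid p^m y\in L\}/L,
\]
which holds because $L\subseteq E[p]$ forces any $y\in E$ with $p^m y\in L$ to satisfy $p^{m+1}y=0$. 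The same description applies to $D'$ in terms of $E'[p^{m+1}]$ and $L'$, and since $\phi$ commutes with multiplication by $p^m$ and sends $L$ onto $L'$, it descends to an isomorphism $D[p^m]\cong D'[p^m]$. By the definition of $b_D=m$, this forces $D$ and $D'$ to be isogenous. Composing with $g$ on one side and with $g'$ on the other, we conclude that $E$ and $E'$ are isogenous, so $b_E\leq m+1=b_D+1$.

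The reverse inequality $b_D\leq b_E+1$ then follows by applying the same argument to $h\colon E\to D$. The main point — and the reason the bound is off by one rather than zero — is that the subgroup $L\subseteq E[p]$ lies entirely in the level-$1$ torsion, so it is completely recovered from any isomorphism $\phi$ at level $m+1$, while passing from $E[p^{m+1}]$ down to the quotient $D[p^m]$ costs exactly one level of truncation. No step in the plan is seriously delicate; the only thing to verify carefully is the displayed identity for $D[p^m]$ and the compatibility with $\phi$, which is immediate from $L\subseteq E[p]$.
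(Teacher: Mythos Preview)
Your proof is correct and follows essentially the same approach as the paper: exploit the dual isogeny $h=pg^{-1}$ to obtain symmetry, then transfer a truncation isomorphism through the quotient by a subgroup contained in the $p$-torsion, losing one level in the process. The paper phrases the direct half as $b_E\ge b_D-1$ (starting from a witness $D'$ to $b_D>m$ and forming $E'=D'/u(\ker g)$), whereas you prove the equivalent $b_E\le b_D+1$ by the mirror construction; your final sentence should invoke $g\colon D\to E$ rather than $h$, but this is a harmless slip given the symmetry you already established.
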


\begin{proof}
As $pg^{-1}$ is an isogeny $E\to D$ with kernel annihilated by
$p$, by symmetry it suffices to show that $b_E\geq b_D-1$.
Let $m$ be an integer with $0<m<b_D$. This means that there exists
a $p$-divisible group $D'$ over $k$ which is not isogenous to $D$
 and for which there exists
an isomorphism $u:D[p^m]\cong D'[p^m]$. Let $E'=D'/u(\Ker g)$.
Then $E'$ is not isogenous to $E$ and $u$ induces an isomorphism
$E[p^{m-1}]\cong E'[p^{m-1}]$. Thus $m-1<b_E$. Therefore $b_E\ge b_D-1$.
\end{proof}

\begin{Remark}
\label{Re-b-bound}
Here is another approach to
bound $b_D$ from above. By Theorem \ref{Th-min-isog}
there exists an isogeny $D\to D_0$ with kernel annihilated
by $p^{\lfloor\nu(c)\rfloor}$ and with $D_0$ minimal.
As $b_{D_0}=n_{D_0}=1$,
either Lemma \ref{Le-b-change} or \cite[Lemma 2.9]{NV2} gives
$b_D\leq 1+\lfloor\nu(c)\rfloor$.
This estimate is
equivalent to the upper bound in Theorem \ref{Th-b-maximal}
except when $\nu(c)\in\ZZ$ and $\nu$ is linear at $c$;
then it is off by $1$. 
\end{Remark}

\begin{Remark}
We assume that either $\nu(c)\notin\ZZ$ or $\nu$ is not linear at $c$; equivalently, we have $j(\nu)=\lfloor \nu(c)\rfloor+1$. 
In this case we have the following refinement of
Proposition \ref{Pr-b-values}.
By Theorem \ref{Th-min-isog} there exists a chain $D_1\leftarrow\cdots\leftarrow D_{j(\nu)}$ of isogenies of $p$-divisible groups of Newton polygon $\nu$, where $D_1$ is minimal, where $a_{D_{j(\nu)}}=1$ and thus $b_{D_{j(\nu)}}=j(\nu)$ by Theorem \ref{Th-b-maximal}, and where the kernel of each $D_i\leftarrow D_{i+1}$ with $i\in\{1,\ldots,j(\nu)-1\}$ is annihilated by $p$. Lemma \ref{Le-b-change} implies that we have $b_{D_i}=i$ for all $i\in\{1,\ldots,j(\nu)\}$.
\end{Remark}

\section{A variant of homomorphism numbers}

Let $D$ and $E$ be $p$-divisible groups over the
algebraically closed field $k$. As suggested by the
results of \cite{GV} (see Theorem \ref{Th-f-n} below) 
we consider the following variant 
of the homomorphism numbers $e_{D,E}$.

\begin{Lemma}[{\cite[Subsect.~6.1]{GV}}]
\label{Le-f}
There exists a non-negative integer $f_{D,E}$ such that for positive
integers $m\geq n$ the restriction homomorphism 
$$
\tau_{m,n}:\Hom(D[p^m],E[p^m])\to\Hom(D[p^n],E[p^n])
$$
has finite image if and only if $m\geq n+f_{D,E}$.
\end{Lemma}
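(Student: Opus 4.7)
The plan is to adapt the snake lemma argument of Lemma~\ref{Le-eDE}(b), with the predicate ``finite'' replacing the predicate ``equal to the image of $\Hom(D,E)$''.

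First I would, for each pair of positive integers $m\ge n$, consider the image $H_{m,n}=\tau_{m,n}(\Hom(D[p^m],E[p^m]))\subseteq\Hom(D[p^n],E[p^n])$, together with its limit $H_{\infty,n}=\Image(\Hom(D,E)\to\Hom(D[p^n],E[p^n]))$. Since $\Hom(D,E)$ is a finitely generated $\ZZ_p$-module and $H_{\infty,n}$ is annihilated by $p^n$, the group $H_{\infty,n}$ is finite. On the scheme-theoretic side, the images $\uH_{m,n}\subseteq\uHom(D[p^n],E[p^n])$ form a descending chain of closed subschemes, which stabilises by the noetherianity of $\uHom(D[p^n],E[p^n])$; a standard Mittag--Leffler argument identifies the stable value with $\uH_{\infty,n}$. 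Hence $H_{m,n}=H_{\infty,n}$ is finite for $m$ sufficiently large, and since subgroups of finite groups are finite, the set $\{m\ge n\mid H_{m,n}\text{ is finite}\}$ is upward closed. Defining $f_{D,E}(n)=\min\{m-n\mid m\ge n,\ H_{m,n}\text{ is finite}\}$, we obtain a well-defined non-negative integer for which the stated equivalence holds at level $n$.

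Next I would verify that $f_{D,E}(n)$ is independent of $n$, by recycling the commutative diagram from the proof of Lemma~\ref{Le-eDE}(b). Its bottom row is the exact sequence
\begin{equation*}
0\to H_{n+e,n}\xrightarrow{\iota_n} H_{n+1+e,n+1}\to H_{1+e,1},
\end{equation*}
from which two implications follow at once. Finiteness of $H_{n+1+e,n+1}$ forces finiteness of its subgroup $H_{n+e,n}$, yielding $f_{D,E}(n)\le f_{D,E}(n+1)$. Conversely, if both $H_{n+e,n}$ and $H_{1+e,1}$ are finite, then $H_{n+1+e,n+1}$, sitting as an extension of a subgroup of $H_{1+e,1}$ by $H_{n+e,n}$, is itself finite, giving $f_{D,E}(n+1)\le\max\{f_{D,E}(n),f_{D,E}(1)\}$. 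Induction on $n$ starting from $n=1$ combines the two inequalities into $f_{D,E}(n)=f_{D,E}(1)$ for all $n\in\NN^{\ast}$; then $f_{D,E}:=f_{D,E}(1)$ is the required constant.

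The only real subtlety lies in the first paragraph: one must confirm that the chain $\uH_{m,n}$ stabilises at the image coming from $\Hom(D,E)$, and not merely at some a priori larger closed subscheme. Once this standard Mittag--Leffler issue is addressed, the snake lemma step is a direct transcription of the argument for $e_{D,E}$, with the condition ``equal to $H_{\infty,n}$'' weakened to ``finite''.
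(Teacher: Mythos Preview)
Your argument is correct and follows the paper's approach: the independence-of-$n$ step via the exact sequence $0\to H_{n+e,n}\to H_{n+1+e,n+1}\to H_{1+e,1}$ and the resulting string of inequalities is exactly what the paper does. The only difference is in the existence step, where you take an unnecessary detour through scheme-theoretic images and a Mittag--Leffler argument. The paper instead simply invokes Lemma~\ref{Le-eDE} directly: since $H_{m,n}=H_{\infty,n}$ as soon as $m\ge n+e_{D,E}$, and $H_{\infty,n}$ is finite (being a quotient of the finitely generated $\ZZ_p$-module $\Hom(D,E)$ killed by $p^n$), the image of $\tau_{m,n}$ is finite for large $m$ without any further work. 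Your flagged ``subtlety'' therefore disappears: you need not identify the stable value of the chain $\uH_{m,n}$, because Lemma~\ref{Le-eDE} already hands you the group-theoretic equality $H_{m,n}=H_{\infty,n}$ at a known level.
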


\begin{proof}
As $\Hom(D,E)$ is a finitely generated $\ZZ_p$-module, its
image in the $p^n$-torsion group $\Hom(D[p^n],E[p^n])$ is
finite for each $n$. By Lemma \ref{Le-eDE} it follows that 
for each $n$ there exists an $m$ such that $\tau_{m,n}$ has finite
image. We have to show that the minimal such $m$ takes the
form $m=n+f$ where $f$ does not depend on $n$. This
follows from the lower exact sequence in \eqref{Eq-eED} in the proof of
Lemma \ref{Le-eDE} as for the numbers $e_{D,E}(n)$.
\end{proof}

\begin{Defn}
We call $f_{D,E}$ the \emph{coarse homomorphism number} of $D$ and $E$. 
The \emph{coarse endomorphism number} of $D$ is $f_D=f_{D,D}$.
\end{Defn}

Using this notion we can state \cite[Cor.~2 (b)]{GV} as follows.

\begin{Thm}[{\cite{GV}}]
\label{Th-f-n}
If $D$ is not ordinary, then we have $f_D=n_D$.
\end{Thm}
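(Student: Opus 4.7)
The strategy is to translate both invariants into the language of Dieudonn\'e modules and compare. Write $(M,F,V)$ for the covariant Dieudonn\'e module of $D$, so that $\End(D[p^m])$ identifies with the ring of $F,V$-commuting endomorphisms of $M/p^m M$, and the restriction map $\tau_{m,n}$ becomes reduction modulo $p^n$.

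For the inequality $f_D \le n_D$, the plan is to fix $n\ge 1$, set $m = n + n_D$, and show that every $\phi \in \End(D[p^m])$ restricts under $\tau_{m,n}$ to an endomorphism of $D[p^n]$ induced from $\End(D)$. To $\phi$ I would associate its graph $\Gamma_\phi \subset M/p^m M \oplus M/p^m M$, an $F,V$-stable submodule, and then construct a sub-Dieudonn\'e module of $M \oplus M$ whose reduction modulo $p^m$ contains $\Gamma_\phi$ by taking an appropriate $F,V$-saturation. On the $p$-divisible group side this yields a sub-$p$-divisible group $G$ of $D\times D$ whose two projections $\pi_i:G\to D$ are isogenies and whose level-$n_D$ restrictions coincide with the identity of $D[p^{n_D}]$. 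The defining property of $n_D$ then forces each $\pi_i$ to be an isomorphism, producing a genuine element of $\End(D)$ whose restriction modulo $p^n$ equals $\tau_{m,n}(\phi)$. Hence $\tau_{m,n}$ factors through $\End(D) \to \End(D[p^n])$; since $\End(D)$ is finitely generated over $\ZZ_p$ and the target is $p^n$-torsion, that image is finite, giving $f_D \le n_D$.

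For the reverse inequality $n_D \le f_D$, I would argue by contraposition. Assuming $m < n_D$, I need to exhibit an infinite image for $\tau_{n+m,n}$ for some $n$. By the definition of $n_D$ there exists $D'$ over $k$ with $D[p^m] \cong D'[p^m]$ and $D \not\cong D'$. The non-ordinary hypothesis implies that the moduli stack of BT groups of any level $\ge m$ extending $D[p^m]$ has positive dimension at $D[p^m]$, so the single example $D'$ can be spread out to a positive-dimensional family $\scrD'$ over a connected $k$-variety $Y$, whose geometric fibres all have $p^m$-torsion isomorphic to $D[p^m]$ but vary in isomorphism class as full $p$-divisible groups. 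Each chosen isomorphism $D[p^{n+m}] \cong \scrD'_y[p^{n+m}]$ composed with the corresponding one for $y=y_0$ yields an element of $\End(D[p^{n+m}])$; as $y$ varies these produce an infinite collection of endomorphisms whose restrictions to $D[p^n]$ are forced to take infinitely many values, since only finitely many can lie in the image of the finitely generated module $\End(D)$. This gives $f_D > m$, and letting $m = n_D - 1$ yields $f_D \ge n_D$.

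The main obstacle is the second direction, where one must upgrade the single non-isomorphic companion $D'$ into an infinite family of lifts contributing distinct elements to the image of $\tau_{n+m,n}$. The non-ordinary hypothesis is essential here: in the ordinary case the relevant moduli of BT-extensions is zero-dimensional and the construction degenerates, consistent with $n_D = 1$ and $f_D = 0$ in that case. A secondary technical point in the first direction is the existence of the $F,V$-saturation of $\Gamma_\phi$ inside $M \oplus M$ with the required properties, which one handles by a standard minimality argument on Dieudonn\'e submodules.
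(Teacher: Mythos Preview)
This theorem is not proved in the paper; it is quoted as \cite[Cor.~2 (b)]{GV}. There is therefore no proof in the paper to compare against, and your task was really to supply (or sketch) an argument where the paper gives none.

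Your sketch for the inequality $f_D\le n_D$ has a structural problem. You construct $G\subset D\times D$ so that the projections $\pi_i:G\to D$ restrict to the identity on $D[p^{n_D}]$. But an isogeny of $p$-divisible groups of equal height that is injective on $p$-torsion has trivial kernel and is therefore an isomorphism, independently of $n_D$. So as written your construction would show that \emph{every} $\phi\in\End(D[p^m])$ lifts to $\End(D)$, i.e.\ $e_D=0$, which is false. The culprit is the ``$F,V$-saturation'' step: there is no canonical way to lift the graph $\Gamma_\phi\subset (M/p^mM)^2$ to a rank-$h$ Dieudonn\'e submodule of $M\oplus M$ with the properties you claim, and any honest construction will not yield projections that are isomorphisms on a fixed truncation. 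The role of $n_D$ is not visible in your argument precisely because the key step is left vague.

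The reverse inequality is more seriously broken. From $m<n_D$ you get a single $D'$ with $D'[p^m]\cong D[p^m]$ and $D'\not\cong D$, and you then assert the existence of isomorphisms $D[p^{n+m}]\cong\scrD'_y[p^{n+m}]$ at level $n+m$. No such isomorphisms are available: only the level-$m$ truncations agree. Without them you have produced no elements of $\End(D[p^{n+m}])$ at all, let alone infinitely many with distinct restrictions. The actual argument in \cite{GV} is of a different nature: it relates $n_D$ and $f_D$ via the dimensions $\gamma_D(m)=\dim\uEnd(D[p^m])$ (see Lemma~\ref{Le-gamma-f}) and the geometry of the truncation morphisms between moduli stacks of BT groups, rather than by manipulating individual homomorphisms.
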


\begin{Remark}
\label{Re-f-n}
We note that \cite{GV} also gives a similar interpretation
of $f_{D,E}$ in general. Namely, let 
$n_{D,E}$ be the minimal non-negative integer $m$ such that
the truncation map $\Ext^1(D,E)\to\Ext^1(D[p^m],E[p^m])$
is injective. Then we have $f_{D,E}=n_{D,E}$, cf.\ 
\cite[Subsect.~6.1 (iii)]{GV}. The resulting 
equality $n_D=n_{D,D}$ if $D$ is not ordinary
seems not to be obvious from the definitions. 
\end{Remark}

The following is the first and easiest of three related
inequalities.

\begin{Prop}
\label{Pr-f-e}
We have $f_{D,E}\leq e_{D,E}.$
\end{Prop}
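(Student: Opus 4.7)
The plan is to unwind the definitions: $f_{D,E}$ is characterized by when $\tau_{m,n}$ has finite image, while $e_{D,E}$ is characterized by when the image of $\tau_{m,n}$ stabilizes to the image of $\Hom(D,E)\to\Hom(D[p^n],E[p^n])$. So the natural route is to show that already at level $m = n+e_{D,E}$ the image of $\tau_{m,n}$ is finite; by minimality of $f_{D,E}$ this will give $f_{D,E}\le e_{D,E}$.

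Concretely, I would fix an arbitrary positive integer $n$ and set $m = n + e_{D,E}$. By the defining property of $e_{D,E}$ (applied at level $n$, using Lemma \ref{Le-eDE} (b)), the image of
\[
\tau_{m,n}:\Hom(D[p^{m}],E[p^{m}])\to\Hom(D[p^{n}],E[p^{n}])
\]
coincides with the image of the restriction $\Hom(D,E)\to\Hom(D[p^n],E[p^n])$. Now $\Hom(D,E)$ is a finitely generated $\ZZ_p$-module (as it embeds into $\Hom(D[p],E[p])\times\Hom(D,E)$ via Lemma \ref{Le-eDE}'s exact sequence, and $\Hom(D[p],E[p])$ is finite), so its image in the $p^n$-torsion group $\Hom(D[p^n],E[p^n])$ is a finitely generated $\ZZ_p$-module annihilated by $p^n$, hence finite.

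Therefore $\tau_{m,n}$ has finite image for $m = n+e_{D,E}$, and by Lemma \ref{Le-f} this forces $f_{D,E}\le e_{D,E}$. There is no real obstacle here; the argument is essentially a one-line comparison of the two definitions, the only minor point being the observation that $\Hom(D,E)$ is a finitely generated $\ZZ_p$-module, which is standard and is already invoked in the proof of Lemma \ref{Le-f}.
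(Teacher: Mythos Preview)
Your proposal is correct and follows essentially the same approach as the paper's proof, which simply notes that the inequality is immediate from the definitions together with the finiteness of the image of $\Hom(D,E)\to\Hom(D[p^n],E[p^n])$. You have merely spelled out this one-line observation in more detail.
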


\begin{proof}
This is immediate from the definitions and the finiteness
of the image of the reduction homomorphism
$\Hom(D,E)\to\Hom(D[p^n],E[p^n])$.
\end{proof}

The following analogue of Lemma \ref{Le-prelim} is easily checked.

\begin{Lemma}
\label{Le-prelim-f}
Let $D=D^\text{ord}\times D^\circ$ and 
$E=E^\text{ord}\times E^\circ$ be the canonical
decompositions such that $D^\text{ord}$ and $E^\text{ord}$
are the maximal ordinary subgroups of $D$ and $E$ (respectively).
Then $f_{D,E}=f_{D^\circ,E^\circ}$. \qed
\end{Lemma}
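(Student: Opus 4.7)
My plan is to use the canonical decomposition to split $\Hom(D[p^m],E[p^m])$ into an ``ordinary'' summand that is finite at every level and a ``$\circ$'' summand that controls $f_{D,E}$.

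The decompositions $D=D^{\text{ord}}\times D^\circ$ and $E=E^{\text{ord}}\times E^\circ$ of $p$-divisible groups descend to the truncated level, giving
\[
\Hom(D[p^m],E[p^m])=\bigoplus_{X,Y\in\{\text{ord},\circ\}}\Hom(D^X[p^m],E^Y[p^m]),
\]
a decomposition compatible with the truncation maps $\tau_{m,n}$ for $m\geq n$. The main step will be to show that the two cross summands vanish. Since $E^\circ$ has slopes strictly between $0$ and $1$, both $E^\circ[p^m]$ and its Cartier dual are infinitesimal. Writing $D^{\text{ord}}[p^m]$ as the direct sum of its \'etale and multiplicative parts, a homomorphism from the \'etale part to $E^\circ[p^m]$ must land in the \'etale part of the target, which is zero; a homomorphism from the multiplicative part to $E^\circ[p^m]$ corresponds under Cartier duality to a map from a connected finite group scheme to an \'etale one, again zero. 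Thus $\Hom(D^{\text{ord}}[p^m],E^\circ[p^m])=0$, and the symmetric argument yields $\Hom(D^\circ[p^m],E^{\text{ord}}[p^m])=0$.

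The ordinary-ordinary summand further splits as $\Hom(D^{\text{\rm \'et}}[p^m],E^{\text{\rm \'et}}[p^m])\oplus\Hom(D^{\text{mult}}[p^m],E^{\text{mult}}[p^m])$, each factor being a finite group (directly for the \'etale case, and via Cartier duality for the multiplicative case). Its image under $\tau_{m,n}$ is therefore automatically finite at every level. Putting the pieces together, the image of $\tau_{m,n}$ on $\Hom(D[p^m],E[p^m])$ is finite if and only if its image on $\Hom(D^\circ[p^m],E^\circ[p^m])$ is finite, which by the definition of the coarse homomorphism number happens precisely when $m\geq n+f_{D^\circ,E^\circ}$. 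This gives $f_{D,E}=f_{D^\circ,E^\circ}$. The only point requiring real care is the vanishing of the two cross Hom terms at finite level, which relies on the infinitesimal nature of $E^\circ[p^m]$ and its Cartier dual rather than merely the obvious vanishing $\Hom(D^{\text{ord}},E^\circ)=0$ between $p$-divisible groups.
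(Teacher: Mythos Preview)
Your proof is correct and is exactly the straightforward verification the paper has in mind: the lemma is stated with a \qed\ and the remark that it ``is easily checked,'' and your splitting of $\Hom(D[p^m],E[p^m])$ into four summands, showing the cross terms vanish and the ordinary--ordinary term is finite, is precisely that check.
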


\begin{Lemma}
\label{Le-perm-f}
For each algebraically closed field 
$\kappa\supseteq k$ we have 
$f_{D,E}=f_{D_\kappa,E_\kappa}.$
\end{Lemma}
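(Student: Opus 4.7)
The plan is to adapt the scheme-theoretic argument from the proof of Lemma \ref{Le-perm} (c). For positive integers $m \geq n$, let $\uH_{m,n}$ denote the scheme-theoretic image of the reduction homomorphism
$$
\uHom(D[p^m],E[p^m]) \to \uHom(D[p^n],E[p^n]),
$$
a closed subscheme of finite type over $k$. Since the scheme-theoretic image of a morphism between affine noetherian schemes is cut out by the kernel of the associated ring map and flat base change preserves this kernel, the formation of $\uH_{m,n}$ commutes with $k \to \kappa$; thus $(\uH_{m,n})_\kappa$ is the analogous object associated to $D_\kappa, E_\kappa$.

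The central step is to reformulate the finiteness of $\Image(\tau_{m,n})$ geometrically. Because $k$ is algebraically closed and $\uH_{m,n}$ is of finite type, the set $\tau_{m,n}(\Hom(D[p^m],E[p^m]))$ is a constructible subset of $\uH_{m,n}(k)$ whose closure is the underlying space of $\uH_{m,n}$. A finite constructible subset of closed points is itself closed, so this image is finite if and only if $\dim \uH_{m,n} = 0$, equivalently $\uH_{m,n}$ is a finite scheme over $k$.

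Since Krull dimension for schemes of finite type is preserved under base change to a field extension, $\dim \uH_{m,n} = \dim (\uH_{m,n})_\kappa$. Applying the previous geometric reformulation over both $k$ and over $\kappa$, I obtain that for each pair $(m,n)$ the map $\tau_{m,n}$ has finite image for $D, E$ if and only if its base change to $\kappa$ has finite image for $D_\kappa, E_\kappa$. Taking the minimal $e = m - n$ for which this holds (uniformly in $n$, by Lemma \ref{Le-f}) then yields $f_{D,E} = f_{D_\kappa, E_\kappa}$.

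The main point that requires care is the geometric reformulation in the second paragraph, which relies on the density of the set-theoretic image in the scheme-theoretic image over an algebraically closed base. The other ingredients, namely the compatibility of scheme-theoretic image with flat base change in the affine noetherian setting and the invariance of Krull dimension under base change to a field extension, are standard and will not need detailed verification.
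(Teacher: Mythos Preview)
Your proof is correct and follows essentially the same approach as the paper: both argue that finiteness of $\Image(\tau_{m,n})$ is equivalent to the scheme-theoretic image of $\uHom(D[p^m],E[p^m])\to\uHom(D[p^n],E[p^n])$ having dimension zero, a condition invariant under the base change $k\to\kappa$. The paper states this in two lines, whereas you spell out more carefully why the set-theoretic and scheme-theoretic images match up over an algebraically closed field.
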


\begin{proof}
For positive integers $m\geq n$ we have $f_{D,E}> m-n$
if and only if the image of $\uHom(D[p^m],E[p^m])\to\uHom(D[p^n],E[p^n])$ has positive dimension. 
This property is invariant under the base change from $k$ to $\kappa$.
\end{proof}

\subsection{Semicontinuity of coarse homomorphism numbers}

We will show that the coarse homomorphism numbers $f_{D,E}$
are lower semicontinuous in families of $p$-divisible 
groups with constant Newton polygon.
The following characterization of $f_{D,E}$ is taken
from \cite{GV}. 

\begin{Lemma}[{\cite[Subsect.~6.1 (i) and (ii)]{GV}}]
\label{Le-gamma-f}
For $m\in\NN$ let
$$\gamma_{D,E}(m)=\dim(\uHom(D[p^m]),E[p^m]).$$
We have $\gamma_{D,E}(m)\leq\gamma_{D,E}(m+1)$
with equality if and only if $m\geq f_{D,E}$.
\end{Lemma}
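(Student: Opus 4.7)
The plan is to build an exact sequence of commutative affine group schemes of finite type over $k$,
\[
0\to\uHom(D[p^m],E[p^m])\xrightarrow{\iota_m}\uHom(D[p^{m+1}],E[p^{m+1}])\xrightarrow{\tau}\uHom(D[p],E[p]),
\]
which lifts the short exact sequence of Hom groups used in the proof of Lemma \ref{Le-eDE} to the level of representing group schemes. The monotonicity and the equality characterization will then both drop out from a dimension count, combined with Lemma \ref{Le-f} applied at $n=1$.

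On functorial points I would take $\iota_m$ to send $u:D[p^m]\to E[p^m]$ to the composition
\[
D[p^{m+1}]\xrightarrow{p} D[p^m]\xrightarrow{u} E[p^m]\hookrightarrow E[p^{m+1}],
\]
and $\tau$ to send $v:D[p^{m+1}]\to E[p^{m+1}]$ to its restriction $v|_{D[p]}:D[p]\to E[p]$ (this lands in $E[p]$ because $v$ is a group homomorphism). The relation $\tau\circ\iota_m=0$ is immediate because $\iota_m(u)$ factors through $p:D[p^{m+1}]\to D[p^m]$ and hence vanishes on $D[p]$. Injectivity of $\iota_m$ as a morphism of sheaves uses the $p$-divisibility of $D$: for any $y\in D[p^m]_R$, pick $\tilde y\in D[p^{m+1}]_R$ with $p\tilde y=y$, so that $\iota_m(u)(\tilde y)=u(y)$, and $\iota_m(u)=0$ forces $u=0$. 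For the reverse inclusion at the middle spot, if $v$ vanishes on $D[p]$ then $v$ factors as $v=\bar v\circ p$ for some $\bar v:D[p^m]\to E[p^{m+1}]$, and the identity $p^m\bar v(y)=v(p^m\tilde y)=0$ shows that $\bar v$ lands in $E[p^m]$, whence $v=\iota_m(\bar v)$. Thus $\iota_m$ realizes $\uHom(D[p^m],E[p^m])$ as the scheme-theoretic kernel of $\tau$, and in particular it is a closed immersion of finite-type group schemes.

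Taking dimensions in the exact sequence yields
\[
\gamma_{D,E}(m+1)=\gamma_{D,E}(m)+\dim\Image(\tau),
\]
which immediately gives $\gamma_{D,E}(m)\leq\gamma_{D,E}(m+1)$. Equality holds exactly when $\Image(\tau)$ is $0$-dimensional, equivalently when its set of $k$-points---which coincides with the image of the truncation $\tau_{m+1,1}:\Hom(D[p^{m+1}],E[p^{m+1}])\to\Hom(D[p],E[p])$---is finite. By Lemma \ref{Le-f} applied with $n=1$, this occurs precisely when $m+1\geq 1+f_{D,E}$, i.e.\ when $m\geq f_{D,E}$, which is the desired equivalence. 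The only delicate step is checking that $\iota_m$ is a monomorphism of fppf sheaves so that the dimension identity is legitimate; this is what the $p$-divisibility argument above supplies, and no further input is needed.
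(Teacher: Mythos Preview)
Your proof is correct and follows exactly the paper's approach: the paper invokes the same exact sequence $0\to\uH_m\xrightarrow{\iota_m}\uH_{m+1}\xrightarrow{\tau_{m+1,1}}\uH_1$ (by reference to the proof of Lemma~\ref{Le-eDE}) and then appeals to the definition of $f_{D,E}$ in Lemma~\ref{Le-f}. You have simply spelled out the verification of exactness and the dimension count that the paper leaves implicit.
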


\begin{proof}
Let ${\uH}{}_m=\uHom(D[p^m]),E[p^m])$.
We have an exact sequence of algebraic groups
$
0\to {\uH}{}_m\xrightarrow{\iota_m} {\uH}{}_{m+1}\xrightarrow{\tau_{m+1,1}} {\uH}{}_1
$, cf.\ the proof of Lemma \ref{Le-eDE}. Thus the lemma follows from the definition of $f_{D,E}$,
see Lemma \ref{Le-f}.
\end{proof}

\begin{Defn}
The stable value of $\gamma_{D,E}$ is denoted
$s_{D,E}=\gamma_{D,E}(f_{D,E})$. If $D=E$ we write
$\gamma_{D,D}(m)=\gamma_D(m)$ and $s_{D,D}=s_D=\gamma_D(f_D)$.
\end{Defn}

The semicontinuity of $f_{D,E}$ relies on
the following result of \cite{Va2}.

\begin{Thm}[{\cite[Thm.~1.2 (f)]{Va2}} and {\cite[Rm.~4.5]{GV}}]
\label{Th-gamma-isog} If $D$ and $D'$ are isogenous $p$-divisible
groups over $k$, then we have $s_D=s_{D'}$.
\end{Thm}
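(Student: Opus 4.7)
The goal is to show that the stable value $s_D = \gamma_D(f_D)$ depends only on the isogeny class of $D$. The strategy is to reduce to elementary isogenies and then compare the endomorphism group schemes of the truncations on both sides via Dieudonn\'e module manipulations.

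First I would reduce to the case of an elementary isogeny $\rho: D \to D'$ with kernel annihilated by $p$. Any isogeny factors as a chain of such steps, so by an iteration it suffices to treat a single step. On covariant Dieudonn\'e modules this corresponds to an inclusion $M \subseteq M'$ with $pM' \subseteq M$, yielding the sandwich $pM' \subseteq M \subseteq M'$, together with the analogous sandwich in the dual direction.

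Next I would use this sandwich to construct, for each sufficiently large $m$, comparison morphisms of algebraic $k$-groups
$$
\Lambda_m: \uEnd(D[p^m]) \longrightarrow \uEnd(D'[p^{m-c}]), \qquad \Lambda'_m: \uEnd(D'[p^m]) \longrightarrow \uEnd(D[p^{m-c}])
$$
for an absolute constant $c$. Concretely, an element $\varphi \in \uEnd(D[p^m])$ corresponds to an $\EE$-endomorphism of $M/p^m M$. Using the chain $pM'/p^m M' \hookrightarrow M/p^m M \hookrightarrow M'/p^m M'$ one restricts and re-extends $\varphi$ to produce a well-defined endomorphism of $M'/p^{m-c} M'$. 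By a direct check, the composition $\Lambda'_{m-c} \circ \Lambda_m$ agrees with the truncation morphism $\tau_{m,m-2c}: \uEnd(D[p^m]) \to \uEnd(D[p^{m-2c}])$ up to an error with finite kernel and cokernel of bounded length.

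Finally, by Lemma \ref{Le-gamma-f} the functions $\gamma_D$ and $\gamma_{D'}$ are non-decreasing in $m$ and stabilize to $s_D$ and $s_{D'}$ respectively for $m$ large enough. The existence of $\Lambda_m, \Lambda'_m$ imposes the inequalities $|\gamma_D(m) - \gamma_{D'}(m-c)| \leq C$ for some constants $c,C$ independent of $m$, and in the stable regime both sides reach their limits, forcing $s_D = s_{D'}$. The main obstacle is to verify that in the stable regime the morphisms $\Lambda_m$ are actually dimension-preserving on the identity components, i.e., their kernels and cokernels are infinitesimal (zero-dimensional) group schemes rather than merely of bounded dimension. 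This requires a Lie-algebra analysis of $\uEnd(D[p^m])$ showing that the ``new'' positive-dimensional deformations of endomorphisms accumulating as $m$ grows are transported faithfully across the elementary isogeny, reflecting the isocrystal identification $\End(D) \otimes \QQ_p \cong \End(D') \otimes \QQ_p$.
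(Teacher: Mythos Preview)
The paper does not prove this statement; it is quoted from \cite{Va2} and \cite{GV} and used as a black box in the proof of Theorem~\ref{Th-semicont-f}. So there is no in-paper argument to compare against, only the external one.

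Your proposal has a genuine gap, and it is exactly the one you flag in your final paragraph. The sandwich $pM'\subseteq M\subseteq M'$ lets you build comparison maps $\Lambda_m$, $\Lambda'_m$ whose composite is essentially multiplication by a power of $p$ (or a truncation $\tau_{m,m-2c}$) on $\uEnd(D[p^m])$. But multiplication by $p$ on $\uEnd(D[p^m])$ has positive-dimensional kernel whenever $D$ is non-ordinary: its kernel is a copy of $\uEnd(D[p])$, of dimension $\gamma_D(1)>0$. So the factorisation through $\uEnd(D'[p^{m-c}])$ only yields $|\gamma_D(m)-\gamma_{D'}(m-c)|\le C$ for a constant $C>0$, and letting $m\to\infty$ gives $|s_D-s_{D'}|\le C$, not equality. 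Your proposed Lie-algebra fix does not help: the Lie algebra of $\uEnd(D[p^m])$ already sees the same positive-dimensional kernel, and showing that the positive-dimensional pieces on the two sides match up exactly is tantamount to the statement you are trying to prove. There is also a secondary issue you pass over: your $\Lambda_m$ is constructed via Dieudonn\'e modules, hence a priori only on $k$-points; promoting it to a morphism of group schemes over $k$ requires additional work.

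The argument in the cited references takes a different route: rather than comparing the two towers of group schemes directly, one identifies the stable dimension $s_D$ with a quantity computed from the $F$-isocrystal $(M[\tfrac 1p],F)$ alone. Since isogenous $p$-divisible groups have isomorphic isocrystals, $s_D=s_{D'}$ follows immediately. This sidesteps the kernel-dimension bookkeeping entirely.
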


As earlier, if $\scrD$ and $\scrE$ are $p$-divisible groups over an
$\FF_p$-scheme $S$, we define functions 
$f_{\scrD,\scrE},\gamma_{\scrD,\scrE}(m),s_{\scrD,\scrE},s_{\scrD}=s_{\scrD,\scrD}:S\to\NN$ 
by $f_{\scrD,\scrE}(s)=f_{\scrD_{\bar s},\scrE_{\bar s}}$, etc., 
where $\scrD_{\bar s}$ and $\scrE_{\bar s}$ are 
the geometric fibres of $\scrD$ and $\scrE$ over $s\in S$.

\begin{Thm}
\label{Th-semicont-f}
Let $\scrD$ and $\scrE$ be $p$-divisible groups of constant
Newton polygon over an $\FF_p$-scheme $S$.

\smallskip
(a) The function $s_{\scrD,\scrE}$ is locally constant on $S$.

\smallskip
(b) For $m\in\NN$, the set $U_{f_{\scrD,\scrE}} =\{s\in S\mid f_{\scrD,\scrE}(s)\leq m\}$
is closed in $S$.
\end{Thm}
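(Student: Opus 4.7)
The plan is to establish (a) by a direct-sum reduction to Theorem \ref{Th-gamma-isog}, and then to deduce (b) by combining (a) with the pointwise upper semicontinuity of the functions $\gamma_{\scrD,\scrE}(m)$. Both statements are local on $S$, so I may assume $S$ is quasi-compact; then the heights of $\scrD$ and $\scrE$ are bounded by some integer $h$. By Proposition \ref{Pr-f-e} combined with the definition of $N_h$, the invariants $f_\scrD$, $f_\scrE$, $f_{\scrD,\scrE}$, $f_{\scrE,\scrD}$, and $f_{\scrD\oplus\scrE}$ are uniformly bounded by $N_h$ on $S$, so for any $M \ge N_h$ each of $s_\scrD$, $s_\scrE$, $s_{\scrD,\scrE}$, $s_{\scrE,\scrD}$, $s_{\scrD\oplus\scrE}$ agrees pointwise with the corresponding $\gamma(\cdot)(M)$.

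For each fixed $m$, the group scheme $\uHom(\scrD[p^m],\scrE[p^m])$ is of finite type over $S$, so $\gamma_{\scrD,\scrE}(m)$ is upper semicontinuous on $S$; by the preceding paragraph the same holds for $s_{\scrD,\scrE}$ and for the analogous stable values of $\scrD$, $\scrE$, $\scrE\oplus\scrD$, and $\scrD\oplus\scrE$. On the other hand, since $\scrD$ has constant Newton polygon, any two geometric fibers of $\scrD$ become isogenous after base change to a common algebraically closed field, using that two $p$-divisible groups over an algebraically closed field are isogenous if and only if they share a Newton polygon. Theorem \ref{Th-gamma-isog}, combined with the fact that $s_D$ is preserved under algebraically closed base change (which follows from Lemma \ref{Le-perm-f} and the invariance of fiber dimensions of $\uHom$-schemes under such base change), then forces $s_\scrD$ to be constant on $S$. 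The same argument applied to $\scrE$ and to $\scrD\oplus\scrE$ (which also has constant Newton polygon) shows that $s_\scrE$ and $s_{\scrD\oplus\scrE}$ are constant on $S$.

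The canonical splitting $\uHom(A\oplus A',B\oplus B') \cong \uHom(A,B)\oplus\uHom(A,B')\oplus\uHom(A',B)\oplus\uHom(A',B')$, applied to the $p^m$-torsion of $\scrD\oplus\scrE$, yields
\begin{equation*}
\gamma_{\scrD\oplus\scrE}(m) = \gamma_\scrD(m) + \gamma_\scrE(m) + \gamma_{\scrD,\scrE}(m) + \gamma_{\scrE,\scrD}(m).
\end{equation*}
Passing to the stable value (legitimate because each sequence in $m$ stabilizes by $m=N_h$) gives $s_{\scrD\oplus\scrE} = s_\scrD + s_\scrE + s_{\scrD,\scrE} + s_{\scrE,\scrD}$. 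At any $s_0 \in S$, upper semicontinuity bounds each summand on the right above by its value at $s_0$ in a neighborhood of $s_0$, whereas the left side is constant; hence each summand is locally constant at $s_0$. This proves (a). For (b), Lemma \ref{Le-gamma-f} shows that $f_{\scrD,\scrE}(s) \le m$ is equivalent to $\gamma_{\scrD,\scrE}(m)(s) \ge s_{\scrD,\scrE}(s)$ (the reverse inequality is automatic), and the latter locus is closed since $\gamma_{\scrD,\scrE}(m)$ is upper semicontinuous and $s_{\scrD,\scrE}$ is locally constant by (a).

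The main obstacle is extracting from Theorem \ref{Th-gamma-isog} a statement about the mixed invariants $s_{\scrD,\scrE}$: that theorem controls $s_D$ only along isogeny classes in the endomorphism case, and the trick is to reduce to it by considering $\scrD\oplus\scrE$ and playing the resulting constancy of $s_{\scrD\oplus\scrE}$ off the upper semicontinuity of the four summands. Without this input, the method would yield only upper semicontinuity of $s_{\scrD,\scrE}$, which falls short of what is needed to obtain closedness in (b).
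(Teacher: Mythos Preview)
Your proof is correct and follows essentially the same approach as the paper's: both use the direct-sum identity $s_{\scrD\oplus\scrE}=s_\scrD+s_\scrE+s_{\scrD,\scrE}+s_{\scrE,\scrD}$, invoke Theorem \ref{Th-gamma-isog} to get constancy of the three ``diagonal'' stable values, and then play this off against the upper semicontinuity of $\gamma_{\scrD,\scrE}(m)$ and $\gamma_{\scrE,\scrD}(m)$ to force $s_{\scrD,\scrE}$ to be locally constant; part (b) is deduced identically via Lemma \ref{Le-gamma-f}. Your write-up supplies a few details the paper leaves implicit (the quasi-compact reduction and uniform bound $N_h$, and the base-change argument needed to apply Theorem \ref{Th-gamma-isog} to fibers over different points), but the structure of the argument is the same.
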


\begin{proof}
We have $s_{\scrD\oplus\scrE}=s_{\scrD}+s_{\scrE}+s_{\scrD,\scrE}+s_{\scrE,\scrD}$ as functions on $S$. The functions $s_{\scrD\oplus\scrE}$,
$s_\scrE$, and $s_\scrD$ are constant by Theorem \ref{Th-gamma-isog}.
Locally in $S$, for sufficiently large $m$ we have
$s_{\scrD,\scrE}=\gamma_{\scrD,\scrE}(m)$ and 
$s_{\scrE,\scrD}=\gamma_{\scrE,\scrD}(m)$. 
These functions are upper semicontinuous.
Hence $s_{\scrD,\scrE}=s_{\scrD\oplus\scrE}-s_{\scrD}-s_{\scrE}-s_{\scrE,\scrD}$ is upper and lower semicontinuous,
thus locally constant, which proves (a).

To prove (b) we note that for $s\in S$ we have
$(s_{\scrD,\scrE}-\gamma_{\scrD,\scrE}(m))(s)\geq 0$ with 
equality if and only if $s\in U_{f_{\scrD,\scrE}} $, see Lemma \ref{Le-gamma-f}.
As $s_{\scrD,\scrE}$ is locally constant and $\gamma_{\scrD,\scrE}(m)$
is upper semicontinuous we get that $U_{f_{\scrD,\scrE}} $ is closed. 
\end{proof}

\begin{Remark}\label{7.13}
We have $U_{n_{\scrD}}=U_{f_{\scrD,\scrD}}$, cf.\ Theorem \ref{Th-f-n}. Thus Theorem \ref{Th-semicont} (b) also follows from Theorem \ref{Th-semicont-f} (b). 
\end{Remark}

\section{Homomorphisms of truncated $BT$ groups}
\label{Se-Hom}

In this section, we prove Theorem \ref{Th-n-ell-e}, 
but we work more generally with homomorphisms instead of endomorphisms.

\subsection{The level torsion $\ell_{D,E}$}
\label{onell}

Let $W(k)$ and $\sigma$ be as in Section \ref{Complem}. Let $D$ and $E$ be two $p$-divisible groups over $k$.
Let $(M,F,V)$ and $(L,F,V)$ be the covariant Dieudonn\'e modules of $D$ and $E$ (respectively). Let $H=H_{D,E}=\Hom_{W(k)}(M,L)$. Let $(H[\frac 1p],F)$ be the 
$F$-isocrystal defined by the rule 
$$
F(\flat)=F\circ\flat\circ F^{-1}= V^{-1}\circ \flat\circ V.
$$
We consider the direct sum decomposition into $W(k)[1/p]$-vector spaces
$$H[\tfrac 1p]=N_+\oplus N_0\oplus N_-$$
which is invariant under $F$ and such that all slopes of $(N_+,F)$ are positive, all slopes of $(N_0,F)$ are $0$, and all slopes of $(N_-,F)$ are negative. 

\begin{Defn}
\label{Def-ell}
Let $O_+\subseteq H\cap N_+$, $O_0\subseteq H\cap N_0$, and
$O_-\subseteq H\cap N_-$
be the maximal $W(k)$-submodules such that $F(O_+)\subseteq O_+$,
$F(O_0)=O_0$, and $F^{-1}(O_-)\subseteq O_-$. The \emph{level module}
of $D$ and $E$ is defined as
$$ O=O_{D,E}=O_+\oplus O_0\oplus O_-.$$
If $D=E$ we write $O_{D,D}=O_D$.
\end{Defn}

For a more explicit description, let $A_0=\{z\in H\mid F(z)=z\}$, 
which we identify with the free $\ZZ_p$-module $\Hom(D,E)$.
We have
$$O_+=\{z\in N_+\mid F^t(z)\in H\quad\forall t\in\NN\},$$
$$O_0=A_0\otimes_{\ZZ_p} W(k)=\bigcap_{t\in\NN} F^t(H\cap N_0)
=\bigcap_{t\in\NN} F^{-t}(H\cap N_0),$$
$$O_-=\{z\in N_-\mid F^{-t}(z)\in H\quad\forall t\in\NN\}.$$

\begin{Lemma}
The $W(k)$-module $O$ is a lattice of $H[\frac 1p]$.
\end{Lemma}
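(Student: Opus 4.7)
The plan is to verify the lattice property on each summand of the decomposition $O=O_+\oplus O_0\oplus O_-$ inside $H[\tfrac 1p]=N_+\oplus N_0\oplus N_-$. Since $H=\Hom_{W(k)}(M,L)$ is a free $W(k)$-module of finite rank, every $W(k)$-submodule of $H$ is automatically finitely generated; in particular each $O_\epsilon\subseteq H\cap N_\epsilon$ is finitely generated. It therefore suffices to show that each $O_\epsilon$ contains a $W(k)$-lattice of the slope component $N_\epsilon$.

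For $O_+$, I would invoke the Dieudonn\'e--Manin decomposition of the positive-slope isocrystal $(N_+,F)$ over the algebraically closed residue field to produce an $F$-stable $W(k)$-lattice $L_+$ of $N_+$: on each simple factor of slope $a/h>0$ pick an element $x$ with $F^hx=p^ax$, and note that $W(k)\{x,Fx,\dots,F^{h-1}x\}$ is $F$-stable because $a\ge 1$. As $H\cap N_+$ is also a $W(k)$-lattice of $N_+$, there exists $q\in\NN$ with $p^qL_+\subseteq H\cap N_+$, and $p^qL_+$ remains $F$-stable (as $F$ is $\sigma$-linear and $p\in\ZZ_p$). The maximality clause in Definition \ref{Def-ell} then forces $p^qL_+\subseteq O_+$, so $O_+$ contains a lattice of $N_+$. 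The same argument applied to $F^{-1}$ on $(N_-,F^{-1})$, whose slopes are now positive, handles $O_-$.

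For $O_0$, Dieudonn\'e--Manin over the algebraically closed residue field gives $N_0=(N_0)^{F=1}\otimes_{\QQ_p}B(k)$ with $\dim_{\QQ_p}(N_0)^{F=1}=\dim_{B(k)}N_0$; choosing any $\ZZ_p$-lattice $A$ of $(N_0)^{F=1}$ and setting $L_0=A\otimes_{\ZZ_p}W(k)$ gives a $W(k)$-lattice of $N_0$ on which $F=1\otimes\sigma$ acts bijectively, so $F(L_0)=L_0$. For $q$ sufficiently large, $p^qL_0\subseteq H\cap N_0$ and still satisfies $F(p^qL_0)=p^qL_0$, hence $p^qL_0\subseteq O_0$ by maximality. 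Assembling the three containments shows that $O$ is finitely generated and $B(k)$-spans $H[\tfrac 1p]$, which is the definition of a lattice. No substantive obstacle is anticipated: the only nontrivial ingredient is the standard existence of $F$-stable integral structures on isoclinic isocrystals over $W(k)$, which is immediate from Dieudonn\'e--Manin given that $k$ is algebraically closed.
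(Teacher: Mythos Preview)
Your argument is correct. The paper takes a slightly different and more direct route for $O_+$ and $O_-$: rather than invoking Dieudonn\'e--Manin to build an $F$-stable lattice and then appealing to maximality, it argues pointwise. For any $z\in N_+$, positivity of the slopes forces the sequence $(F^q(z))_{q\ge 0}$ to converge to $0$ $p$-adically, so for $s\gg 0$ every $F^q(p^sz)$ lies in $H$; by the explicit description $O_+=\{z\in N_+\mid F^q(z)\in H\ \forall q\ge 0\}$ this gives $p^sz\in O_+$, whence $O_+[\tfrac 1p]=N_+$. Finite generation is handled as you do, via $O_+\subseteq H$. The case of $O_-$ is symmetric, and for $O_0$ the paper can simply cite the identification $O_0=A_0\otimes_{\ZZ_p}W(k)$ already recorded before the lemma.

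The trade-off: the paper's proof is shorter and uses only the qualitative consequence of positive slopes (convergence of $F$-orbits), whereas your approach imports the full Dieudonn\'e--Manin classification but in return treats all three summands uniformly by exhibiting an explicit sublattice and invoking maximality. Both are standard and neither has a real advantage here.
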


\begin{proof}
As all slopes of $(N_+,F)$ are positive, for each $z\in N_+$ the sequence
$(F^t(z))_{t\in\NN}$ of elements of $N_+$ converges to $0$ in the $p$-adic topology.
Therefore there exists $s\in\NN$ such that $p^sz\in O_+$. Thus we have
$O_+[\frac 1p]=N_+$. As $O_+$ is a $W(k)$-submodule of the finitely generated
$W(k)$-module $H$, we conclude that $O_+$ is a lattice of $N_+$. A similar argument
shows that $O_0$ and $O_-$ are lattices of $N_0$ and $N_-$ (respectively). Thus $O$ is a lattice of $H[\frac 1p]$.
\end{proof}

\begin{Defn}
The {\it level torsion} of $D$ and $E$ is the smallest non-negative integer $\ell_{D,E}$ such that we have
$$p^{\ell_{D,E}}\Hom_{W(k)}(M,L)\subseteq O\subseteq \Hom_{W(k)}(M,L).$$
If $D=E$ we write $\ell_{D,D}=\ell_D$.
\end{Defn}

If $D$ and $E$ are isoclinic, then this definition of $\ell_{D,E}$ coincides with the one in \cite[Def.~4.1 (c)]{Va3}. 
The level torsion is symmetric:

\begin{Lemma}
\label{Le-ell-symm}
We have $\ell_{D,E}=\ell_{E,D}=\ell_{D^\vee,E^\vee}$.
\end{Lemma}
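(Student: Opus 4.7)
The plan is to split the lemma into the swap symmetry $\ell_{D,E}=\ell_{E,D}$ and the Cartier-duality symmetry $\ell_{D,E}=\ell_{D^\vee,E^\vee}$, and to deduce the latter from the former via a formal Cartier-duality argument, so that the essential content lies in the swap symmetry.

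For the Cartier part, I would use $W(k)$-linear duality to exhibit a natural isomorphism $\Hom_{W(k)}(M,L)\cong\Hom_{W(k)}(L^\vee,M^\vee)$, where $M^\vee$ is the covariant Dieudonn\'e module of $D^\vee$ (with the roles of $F$ and $V$ exchanged under the standard duality on $M$). A direct check using the formulas for $F$ and $V$ on $M^\vee$ shows that this isomorphism is Frobenius-equivariant, hence preserves the slope decomposition and the $F^{\pm 1}$-stability conditions defining $O_+$, $O_0$ and $O_-$. It thus identifies the level modules and gives $\ell_{D,E}=\ell_{E^\vee,D^\vee}$; combining with the swap symmetry applied to $(D^\vee,E^\vee)$ then yields $\ell_{D,E}=\ell_{D^\vee,E^\vee}$.

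For the swap symmetry, my tool is the trace pairing
$$
\operatorname{tr}\colon H_{D,E}\otimes_{W(k)}H_{E,D}\longrightarrow W(k),\qquad(\flat,\sharp)\mapsto\operatorname{tr}(\sharp\circ\flat),
$$
a perfect $W(k)$-bilinear pairing satisfying $\operatorname{tr}(F(\sharp)\cdot F(\flat))=\sigma(\operatorname{tr}(\sharp\flat))$, as follows from $F(\flat)=F_L\flat F_M^{-1}$ and $F(\sharp)=F_M\sharp F_L^{-1}$. This $\sigma$-equivariance forces slope orthogonality, so the pairing restricts to perfect pairings between the opposite slope components $N_\lambda\times N_{-\lambda}$ for each $\lambda$. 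From these restrictions I would derive the lattice dualities $\pi_+(H_{D,E})^\perp=H_{E,D}\cap N_-(H_{E,D})$ and $(H_{D,E}\cap N_+)^\perp=\pi_-(H_{E,D})$, and analogues for the negative-slope and slope-zero components; here $\pi_\bullet$ denotes the rational projection onto $N_\bullet$. Combining these with the defining formulas $O_+=\bigcap_{q\ge 0}F^{-q}(H_{D,E}\cap N_+)$, $O_-=\bigcap_{q\ge 0}F^{q}(H_{D,E}\cap N_-)$, and the analogous intersection formula for $O_0$, and using the $\sigma$-equivariance to translate the forward $F$-orbit condition on the $(D,E)$ side into the backward $F$-orbit condition on the $(E,D)$ side, I would show that the finite $W(k)$-modules $H_{D,E}/O_{D,E}$ and $H_{E,D}/O_{E,D}$ have matching elementary divisors, and in particular the same $p$-exponent.

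The hardest step will be this final matching. The obstruction is that $H_{D,E}$ does not in general decompose as $\bigoplus_\bullet (H_{D,E}\cap N_\bullet)$, so one must keep track simultaneously of the integral intersections $H\cap N_\bullet$ and the rational projections $\pi_\bullet(H)$, and reconcile the intersection formulas defining $O_\bullet$ on the $(D,E)$ side with the sum formulas (arising from $\left(\bigcap_q F^{-q}L_q\right)^\perp=\sum_q F^{-q}L_q^\perp$) on the $(E,D)$ side. I expect this to be settled by a Smith-normal-form argument for the three chains $O_\bullet\subseteq H\cap N_\bullet\subseteq\pi_\bullet(H)$ aligned with the $F$-action on each slope component, but it will require careful bookkeeping across the three slope pieces.
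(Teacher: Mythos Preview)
Your trace pairing is exactly the $W(k)$-duality $H_{E,D}\cong H_{D,E}^\vee$ that the paper uses, so you are on the right track; but you are missing the key simplification that makes the bookkeeping disappear. The paper introduces the \emph{dual} construction $\tilde O_{D,E}\supseteq H_{D,E}$: the \emph{minimal} lattice containing $H_{D,E}$ that splits as $\tilde O_+\oplus\tilde O_0\oplus\tilde O_-$ with the same $F^{\pm 1}$-stability conditions. Two observations then finish the proof in a few lines. First, if $\tilde\ell_{D,E}$ is the $p$-exponent of $\tilde O_{D,E}/H_{D,E}$, then $\ell_{D,E}=\tilde\ell_{D,E}$: from $H\subseteq p^{-\ell}O$ and minimality one gets $\tilde O\subseteq p^{-\ell}O$, hence $\tilde\ell\le\ell$; from $p^{\tilde\ell}\tilde O\subseteq H$ and maximality one gets $p^{\tilde\ell}\tilde O\subseteq O$, hence $\ell\le\tilde\ell$. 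Second, under the perfect pairing $H_{D,E}\times H_{E,D}\to W(k)$, the \emph{maximal-sublattice} condition defining $O_{E,D}$ dualises to the \emph{minimal-overlattice} condition defining $\tilde O_{D,E}$ (and the $F$-stability conditions match because $F_{E,D}=(F_{D,E}^{-1})^\vee$), so $O_{E,D}\cong\tilde O_{D,E}^\vee$ and thus $\ell_{E,D}=\tilde\ell_{D,E}=\ell_{D,E}$. The same duality gives $O_{D^\vee,E^\vee}\cong\tilde O_{D,E}^\vee$ and the second equality.

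Your proposed endgame, showing that $H_{D,E}/O_{D,E}$ and $H_{E,D}/O_{E,D}$ have the \emph{same elementary divisors}, is both stronger than needed and not obviously true: the duality identifies $H_{E,D}/O_{E,D}$ with the Pontryagin dual of $\tilde O_{D,E}/H_{D,E}$, not of $H_{D,E}/O_{D,E}$, and there is no a priori reason for $H/O$ and $\tilde O/H$ to have matching elementary divisors (only matching $p$-exponents, by the argument above). So the Smith-normal-form route you sketch would at best be a detour, and the ``careful bookkeeping across the three slope pieces'' you anticipate is precisely what the $\tilde O$-trick eliminates. Your Cartier-duality reduction is fine but also becomes unnecessary once you have the $\tilde O$ argument, since that handles both symmetries at once.
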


\begin{proof}
Let $\tilde O=\tilde O_{D,E}\subseteq H[\frac 1p]$ be the 
minimal $W(k)$-submodule which contains $H$ and which takes 
the form $\tilde O=\tilde O_+\oplus\tilde O_0\oplus\tilde O_-$
with $\tilde O_+\subseteq N_+$, $\tilde O_0\subseteq N_0$,
and $\tilde O_-\subseteq N_-$, such that 
$F(\tilde O_+)\subseteq\tilde O_+$, $F(\tilde O_0)=\tilde O_0$,
and $F^{-1}(\tilde O_-)\subseteq\tilde O_-$. Let $\tilde\ell_{D,E}\in\NN$
be the minimal number such that 
$p^{\tilde\ell_{D,E}}\tilde O\subseteq H$.
First we show that $\ell_{D,E}=\tilde\ell_{D,E}$.
We have $H\subseteq p^{-\ell_{D,E}}O$, thus
$O\subseteq H\subseteq\tilde O\subseteq p^{-\ell_{D,E}}O$ 
by the minimality of $\tilde O$, and therefore
$\tilde\ell_{D,E}\leq\ell_{D,E}$. Similarly we have
$p^{\tilde\ell_{D,E}}\tilde O\subseteq O\subseteq H\subseteq \tilde O$ by the maximality of $O$, and therefore $\ell_{D,E}\leq\tilde\ell_{D,E}$.

Let $F_{D,E}=F$ be the Frobenius of $H_{D,E}[\frac 1p]$.
For a $W(k)$-module $A$ we write $A^\vee=\Hom_{W(k)}(A,W(k))$.
The Dieudonn\'e modules of $D^\vee$ and $E^\vee$ 
can be identified with $(M^\vee,V^\vee,F^\vee)$ and $(L^\vee,V^\vee,F^\vee)$
(respectively). Hence we have natural isomorphisms 
$H_{E,D}\cong H_{D,E}^\vee\cong H_{D^\vee,E^\vee}$.
In terms of these isomorphisms we have $F_{E,D}=(F_{D,E}^{-1})^\vee=F_{D^\vee,E^\vee}$, and they induce isomorphisms of $W(k)$-submodules
$O_{E,D}\cong\tilde O_{D,E}^\vee\cong O_{D^\vee,E^\vee}$ because
these $W(k)$-submodules are all defined by the same maximality property. 
Hence we have $\ell_{E,D}=\tilde\ell_{D,E}=\ell_{D^\vee,E^\vee}$.
This proves the lemma as we have $\tilde\ell_{D,E}=\ell_{D,E}$.
\end{proof}

\begin{Lemma}
\label{Le-perm-l}
For each algebraically closed field $\kappa\supseteq k$
we have $\ell_{D,E}=\ell_{D_\kappa,E_\kappa}$.
\end{Lemma}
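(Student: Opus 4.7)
The plan is to show that the level module is preserved under base change:
$$O_{D_\kappa,E_\kappa}=O_{D,E}\otimes_{W(k)}W(\kappa)$$
as $W(\kappa)$-submodules of $H_{D_\kappa,E_\kappa}[\tfrac{1}{p}]=H_{D,E}[\tfrac{1}{p}]\otimes_{B(k)}B(\kappa)$. Granting this identification, the equality $\ell_{D,E}=\ell_{D_\kappa,E_\kappa}$ is immediate from faithful flatness of $W(k)\to W(\kappa)$: the condition $p^mH_{D_\kappa,E_\kappa}\subseteq O_{D_\kappa,E_\kappa}$ is equivalent to $p^mH_{D,E}\subseteq O_{D,E}$.

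The preliminary identifications are routine. Since the covariant Dieudonn\'e modules satisfy $M_\kappa=M\otimes_{W(k)}W(\kappa)$ and $L_\kappa=L\otimes_{W(k)}W(\kappa)$, one obtains $H_\kappa=H\otimes_{W(k)}W(\kappa)$, with Frobenius $F_\kappa=F\otimes\sigma_\kappa$ (where $\sigma_\kappa$ extends $\sigma$). Because both $k$ and $\kappa$ are algebraically closed, the Dieudonn\'e--Manin classification applied to the $F$-isocrystal $(H[\tfrac{1}{p}],F)$ shows that the slope decomposition is compatible with base change: $N_{\pm,\kappa}=N_\pm\otimes_{B(k)}B(\kappa)$ and $N_{0,\kappa}=N_0\otimes_{B(k)}B(\kappa)$.

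The heart of the argument is the identity $O_{?,\kappa}=O_?\otimes_{W(k)}W(\kappa)$ for each $?\in\{+,0,-\}$. For the slope-zero part this is direct: the explicit description $O_0=A_0\otimes_{\ZZ_p}W(k)$ with $A_0=\Hom(D,E)$, combined with Lemma \ref{Le-perm} (a) (which gives $\Hom(D,E)\cong\Hom(D_\kappa,E_\kappa)$), yields $O_{0,\kappa}=A_0\otimes_{\ZZ_p}W(\kappa)=O_0\otimes_{W(k)}W(\kappa)$. For $O_+$ I would exploit the description $O_+=\bigcap_{q\geq 0}F^{-q}(H\cap N_+)$. Since $F$ has strictly positive slopes on $N_+$, it is $p$-adically contracting; a standard bookkeeping argument then shows that the decreasing chain $\bigcap_{q=0}^{Q}F^{-q}(H\cap N_+)$ stabilizes for $Q$ sufficiently large. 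Thus $O_+$ is in fact a \emph{finite} intersection of $W(k)$-submodules of $N_+$, each of which is the kernel of a $\sigma^q$-semilinear $W(k)$-module map $N_+\to N_+/(H\cap N_+)$. Base change along the flat ring map $W(k)\to W(\kappa)$ commutes with kernels of such Frobenius-twisted maps (because Frobenius twist commutes with tensor by $W(\kappa)$, the Frobenius $\sigma_\kappa$ restricting to $\sigma$ on $W(k)$) and with finite intersections, yielding $O_{+,\kappa}=O_+\otimes_{W(k)}W(\kappa)$. The argument for $O_-$ is entirely dual, using that $F^{-1}$ is $p$-adically contracting on $N_-$.

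The principal obstacle is the verification for $O_+$ (and symmetrically $O_-$): one must carefully argue that the \emph{a priori} infinite intersection defining $O_+$ collapses to a finite one, and that flat base change commutes with the semilinear constructions involved. The remaining pieces---faithful flatness of $W(k)\to W(\kappa)$ and the base-change behaviour of $\Hom(D,E)$---are standard inputs already available in the paper.
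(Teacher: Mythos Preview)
Your proposal is correct and follows the same approach as the paper: the paper's proof consists of the single sentence ``One can check that $H_\kappa=H\otimes_{W(k)}W(\kappa)$ and $O_\kappa=O\otimes_{W(k)}W(\kappa)$,'' and you have supplied a careful outline of precisely this verification, treating $O_+$, $O_0$, $O_-$ separately. Your observation that the infinite intersection defining $O_+$ stabilizes (since the decreasing chain lies between the lattices $H\cap N_+$ and $O_+$, whose quotient is finite) is the right way to justify the base-change step that the paper leaves implicit.
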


\begin{proof}
Let $H_\kappa$ and $O_\kappa$ be the analogues of $H$ and $O$ defined with
respect to $D_\kappa$ and $E_\kappa$ instead of $D$ and $E$.
One can check that $H_\kappa=H\otimes_{W(k)}W(\kappa)$ and $O_\kappa=O\otimes_{W(k)}W(\kappa)$. The assertion follows.
\end{proof}

\subsection{The inequality $e_{D,E}\le\ell_{D,E}$}

For $x\in O$ we write $x=x_++x_0+x_{-}$, where $x_+\in O_+$, $x_0\in
O_0$, and $x_{-}\in O_-$. We call $x_+$, $x_0$, $x_{-}$ the components of $x$ in $O$. 

\begin{Lemma}
\label{end0}
Let $x\in O$. The equation in $X$ 
\begin{equation}\label{Eq-in-O}
x=F(X)-X
\end{equation}
has a solution in $O$ which is unique up to the addition by an arbitrary element of $A_0$. If $x\in p^sO$ for some $s\in\NN$,
then there exists a solution in $p^sO$.
\end{Lemma}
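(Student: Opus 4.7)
The plan is to split the equation according to the direct sum decomposition $O = O_+ \oplus O_0 \oplus O_-$, which is $F$-stable, and to solve the three resulting equations separately.

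On $O_+$, all slopes of $F$ are positive, so the iterates $F^n(y)$ of any $y \in O_+$ converge to $0$ in the $p$-adic topology (as was already used to show $O_+ \subseteq N_+$ is a lattice). Hence $1 - F$ is invertible on $O_+$ with convergent Neumann inverse, and the unique solution of $F(X_+) - X_+ = x_+$ in $O_+$ is
$$
X_+ = -\sum_{n \geq 0} F^n(x_+).
$$
Symmetrically, on $O_-$ all slopes of $F$ are negative, so $F^{-1}$ is a topological contraction on $O_-$; rewriting $F(X_-) - X_- = x_-$ as $(1 - F^{-1})X_- = F^{-1}(x_-)$ yields the unique solution
$$
X_- = \sum_{n \geq 1} F^{-n}(x_-)
$$
in $O_-$. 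Both series show at once that if $x_\pm \in p^s O_\pm$ then $X_\pm \in p^s O_\pm$. Uniqueness on $O_\pm$ is immediate: any $F$-fixed (resp.\ $F^{-1}$-fixed) vector in $O_+$ (resp.\ $O_-$) must be zero by iterating.

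On $O_0$, the explicit description $O_0 = A_0 \otimes_{\ZZ_p} W(k)$ given just before the lemma identifies the Frobenius $F$ with $\id_{A_0} \otimes \sigma$, since elements of $A_0$ are $F$-fixed. Thus the equation becomes $(\id \otimes (\sigma-1))(X_0) = x_0$. For $k$ algebraically closed the map $\sigma - 1 \colon W(k) \to W(k)$ is surjective with kernel $\ZZ_p$ (the standard Artin--Schreier--Witt fact); writing $x_0 = \sum_i a_i \otimes w_i$ with $a_i \in A_0$, a solution is obtained by picking $v_i \in W(k)$ with $\sigma(v_i) - v_i = w_i$ and setting $X_0 = \sum_i a_i \otimes v_i$. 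Two solutions differ by an element of $A_0 \otimes \ZZ_p = A_0$, so uniqueness of the total solution in $O$ modulo $A_0$ is established.

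It remains to check the $p$-adic refinement on $O_0$: if $x_0 \in p^s O_0$ we may take the $w_i$ in $p^s W(k)$, write $w_i = p^s w_i'$, solve $\sigma(v_i') - v_i' = w_i'$, and set $v_i = p^s v_i'$; this gives $X_0 \in p^s O_0$. Combining the three components produces the desired solution in $p^s O$. The only nontrivial input beyond formal manipulation is the surjectivity of $\sigma - 1$ on $W(k)$, which is where algebraic closedness of $k$ enters; everything else is routine convergence in the $p$-adic topology on the $F$-stable lattices $O_\pm$.
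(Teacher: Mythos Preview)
Your proof is correct and follows essentially the same approach as the paper: both split along $O=O_+\oplus O_0\oplus O_-$, solve on $O_\pm$ via the convergent geometric series $X_+=-\sum_{n\ge 0}F^n(x_+)$ and $X_-=\sum_{n\ge 1}F^{-n}(x_-)$, and handle $O_0$ by reducing to the surjectivity of $\sigma-1$ on $W(k)$ via the description $O_0=A_0\otimes_{\ZZ_p}W(k)$. The only cosmetic difference is that the paper treats the $p^s$-refinement uniformly (solve for $x'=p^{-s}x$ and multiply by $p^s$) rather than component-wise as you do.
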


\begin{proof}
We define $y_{+}=-\sum_{i=0}^{\infty} F^i(x_+)$ and $y_{-}=\sum_{i=1}^{\infty} F^{-i}(x_{-})$. We have $x_+=F(y_+)-y_+$ and $x_{-}=F(y_{-})-y_{-}$.
Let $\Upsilon_0=(e_{1},\ldots,e_{s})$ be a $\ZZ_p$-basis of $A_0$. Then $\Upsilon_0$ is a $W(k)$-basis of $O_0$, and thus we can
write $x_0=\sum_{i=1}^{s} \gamma_{i} e_{i}$ with $\gamma_{i}\in W(k)$. For $y_{0}=\sum_{i=1}^{s} z_{i} e_{i}$ with $z_{i}\in W(k)$ we have $x_0=F(y_0)-y_0$ if and only if for each $i\in\{1,\ldots,s\}$ we have
$$\sigma(z_{i})-z_{i}=\gamma_{i}.$$
It is well-known that this equation in $z_{i}$ has solutions in $W(k)$. Therefore $y=y_++y_0+y_-$ is a solution of \eqref{Eq-in-O} in $O$. Two solutions of \eqref{Eq-in-O} in $O$ differ by a solution of the equation $F(X)=X$ and thus they differ by an arbitrary element of $A_0$. 
If $x=p^s x'$ with $x'\in O$, then there exists an element $y'\in O$
with $x'=F(y')-y'$, and $p^sy'$ is a solution of \eqref{Eq-in-O} in $p^sO$.\end{proof}

\begin{Lemma}
\label{Le-e-ell}
Let $m\in\NN^*$. Each homomorphism of truncated Dieudonn\'e modules
$$
\zeta_m:(M/p^mM,F,V)\to(L/p^mL,F,V)
$$
can be lifted to a $W(k)$-linear map $\zeta:M\to L$ 
such that $F(\zeta)-\zeta\in p^mH$.
\end{Lemma}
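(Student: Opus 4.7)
My plan is to lift $\zeta_m$ to any $W(k)$-linear map $\zeta_0 \in H$ and then adjust it by $p^m h$ for a well-chosen $h \in H$. Since $\zeta_m$ commutes with $V$ modulo $p^m$, the $\sigma^{-1}$-linear map $\zeta_0 V - V\zeta_0 : M \to L$ is divisible by $p^m$, so I can write it as $p^m \gamma$ for some $\sigma^{-1}$-linear $\gamma : M \to L$. Expanding $F(\zeta)(x) = V^{-1}(\zeta(Vx))$ for the candidate $\zeta = \zeta_0 + p^m h$ yields
$$
F(\zeta)(x) - \zeta(x) \;=\; p^m\, V^{-1}\!\bigl(\gamma(x) + h(Vx) - V h(x)\bigr).
$$
Since $V h(x) \in V(L)$ automatically, the desired conclusion $F(\zeta) - \zeta \in p^m H$ is equivalent to the single congruence $h(Vx) \equiv -\gamma(x) \pmod{V(L)}$ for every $x \in M$. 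The whole problem therefore reduces to producing such an $h$.

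The decisive observation is that the $F$-compatibility of $\zeta_m$, which I have not yet used, constrains $\gamma$ on $FM$. Writing $\zeta_0 F - F\zeta_0 = p^m \varepsilon$ for a $\sigma$-linear $\varepsilon : M \to L$ and using $VF = p$, a one-line computation gives $\gamma(Fx) = -V(\varepsilon(x)) \in V(L)$ for every $x \in M$. Hence the reduced map $-\bar\gamma : M \to L/V(L)$, which is $\sigma^{-1}$-linear and already $p$-torsion-valued, vanishes on $FM$; since $pM \subseteq FM$, it factors through $M/FM$. Next, the operator $V$ induces on $M/pM$ a $\sigma^{-1}$-linear map with kernel $FM/pM$ and image $V(M)/pM$, i.e.\ an isomorphism $\bar V : M/FM \xrightarrow{\sim} V(M)/pM$. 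Transporting $-\bar\gamma$ along $\bar V^{-1}$ produces a $W(k)$-linear map $V(M)/pM \to L/V(L)$, which extends $k$-linearly to $M/pM \to L/V(L)$ and lifts to a $W(k)$-linear $h : M \to L$ (using that $M$ is free and $L \twoheadrightarrow L/V(L)$ is surjective). By construction $h(Vx) + \gamma(x) \in V(L)$ for every $x$, and so $\zeta := \zeta_0 + p^m h$ is the required lift.

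The main obstacle is the identity $\gamma(Fx) \in V(L)$: this is what makes the obstruction disappear, and it uses the $F$- and $V$-compatibility of $\zeta_m$ simultaneously. Once it is in hand, everything else is a diagram chase between the semi-linear pieces, together with the routine fact that $k$-linear maps from $M/pM$ into a quotient of $L$ lift to $W(k)$-linear maps out of the free module $M$.
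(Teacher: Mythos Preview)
Your argument is correct. The key computation $\gamma(Fx)=-V\varepsilon(x)\in VL$ is right, and once $\bar\gamma:M\to L/VL$ factors through $M/FM$ the construction of $h$ via $\bar V^{-1}$ goes through as you describe; the semi-linearities cancel so that the transported map $VM/pM\to L/VL$ is genuinely $k$-linear and hence extends and lifts.

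Your route differs from the paper's. The paper first reduces to endomorphisms by passing to $M\oplus L$, then uses a normal decomposition $M=J\oplus T$ with $VM=J\oplus pT$ to build the good lift $\zeta$ in one stroke: on $T$ it lifts $\zeta_m$, on $J$ it lifts the $V$-conjugate $\zeta_m'=\bar V_m\zeta_m\bar V_m^{-1}$ and takes values in $VM$. This simultaneous lift automatically satisfies $\zeta V-V\zeta\in p^mVM$. Your approach instead takes an arbitrary lift $\zeta_0$ and corrects by $p^mh$, isolating the obstruction as a map $M/FM\to L/VL$ and killing it explicitly. The paper's version is shorter once one has the display-theoretic normal decomposition in hand (which the paper uses elsewhere), while yours avoids the reduction to endomorphisms, stays with $\Hom(M,L)$ throughout, and makes transparent exactly where the $F$-compatibility of $\zeta_m$ is consumed---namely to force the obstruction $\bar\gamma$ to vanish on $FM$.
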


It is not true that every lift of $\zeta_m$ satisfies
$F(\zeta)-\zeta\in p^mH$.

\begin{proof}
By passing to $M\oplus L$ it suffices to consider endomorphisms
instead of homomorphisms, so we consider $\End_{W(k)}(M)$ instead
of $\Hom_{W(k)}(M,L)$. Let $Q=VM$. Then $V$ induces a bijective $\sigma^{-1}$-linear
map $\bar V_m:M/p^mM\to Q/p^mQ$. Let $\zeta_m'=\bar V_m\zeta_m\bar V^{-1}_m:Q/p^mQ\to Q/p^mQ$.
There exists an element $\zeta\in\End_{W(k)}(M)$ which lifts $\zeta_m$ and $\zeta_m'$
at the same time. Indeed, let $M=J\oplus T$ be such that $Q=J\oplus pT$.
Choose $\zeta:T\to M$ to be any lift of the restriction of
$\zeta_m$ to $T/p^m T$ and 
$\zeta:J\to Q$ to be any lift of the restriction
of $\zeta_m'$ to $J/p^mJ$. Then the resulting $W(k)$-linear
map $\zeta:M\to M$
lifts $\zeta_m$ and $\zeta_m'$. It follows that 
$\zeta V-V\zeta$ maps $M$ to $p^m Q$, thus $V^{-1}\zeta V-\zeta$
maps $M$ to $p^mM$, which means that $F(\zeta)-\zeta\in p^m\End_{W(k)}(M)$ as required. 
\end{proof}

\begin{Prop}
\label{Pr-e-ell}
We have $e_{D,E}\leq\ell_{D,E}.$
\end{Prop}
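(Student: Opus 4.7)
The plan is to combine the two preceding lemmas directly. Write $\ell=\ell_{D,E}$. By Lemma \ref{Le-eDE} (b) it suffices to fix one positive integer $n$ and show that every $\zeta_n\in\Hom(D[p^n],E[p^n])$ which admits a lift to some $\zeta_{n+\ell}\in\Hom(D[p^{n+\ell}],E[p^{n+\ell}])$ already lies in the image of the restriction map $\Hom(D,E)\to\Hom(D[p^n],E[p^n])$. Translated to covariant Dieudonn\'e modules, the hypothesis means we have a homomorphism of truncated Dieudonn\'e modules $\zeta_{n+\ell}:M/p^{n+\ell}M\to L/p^{n+\ell}L$, and the conclusion we want is that its further reduction modulo $p^n$ comes from some element of $A_0=\Hom(D,E)\subseteq H$.

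The first step is to apply Lemma \ref{Le-e-ell} (at level $n+\ell$) to choose a $W(k)$-linear lift $\zeta:M\to L$ of $\zeta_{n+\ell}$ such that the defect
\[
x:=F(\zeta)-\zeta
\]
lies in $p^{n+\ell}H$. By the definition of the level torsion we have $p^{\ell}H\subseteq O$, so $x\in p^{n+\ell}H=p^n\cdot p^{\ell}H\subseteq p^nO$.

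The second step is to integrate this defect inside $O$. Lemma \ref{end0}, applied with $s=n$, produces $y\in p^nO$ satisfying $x=F(y)-y$. Then $\zeta':=\zeta-y$ is a $W(k)$-linear map $M\to L$ with $F(\zeta')=\zeta'$, i.e.\ an element of $A_0=\Hom(D,E)$. Since $y\in p^nO\subseteq p^nH$, the maps $\zeta$ and $\zeta'$ coincide modulo $p^nH$, so the reduction of $\zeta_{n+\ell}$ to level $n$ equals the reduction of $\zeta'\in\Hom(D,E)$. This yields $e_{D,E}\leq\ell$, as required.

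There is no real obstacle: the whole argument is an immediate application of Lemmas \ref{Le-e-ell} and \ref{end0}, once one observes that the inclusion $p^{\ell}H\subseteq O$ is precisely what is needed to push the obstruction $F(\zeta)-\zeta$ into a subgroup where Artin--Schreier-type equations are solvable. The only mildly subtle point is keeping track of the $p$-adic order of the correction $y$, so that subtracting it does not disturb the reduction modulo $p^n$; this is exactly the content of the ``$s$-refinement'' in Lemma \ref{end0}.
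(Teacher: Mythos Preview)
Your proof is correct and follows essentially the same route as the paper: lift the truncated homomorphism via Lemma~\ref{Le-e-ell}, then use $p^{\ell}H\subseteq O$ to push the defect $F(\zeta)-\zeta$ into $p^nO$ and correct it with Lemma~\ref{end0}. The only cosmetic difference is that the paper specializes to $n=1$ (which already suffices by Lemma~\ref{Le-eDE}~(b)), whereas you carry a general $n$ throughout.
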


\begin{proof}
Let $m=\ell_{D,E}+1$. We have to show that for each homomorphism 
$$
\zeta_m:(M/p^mM,F,V)\to (L/p^mL,F,V)
$$
of truncated Dieudonn\'e modules, its reduction $\zeta_1:M/pM\to L/pL$ lifts to a homomorphism
of Dieudonn\'e modules $\zeta:(M,F,V)\to (L,F,V)$. By
Lemma \ref{Le-e-ell}, $\zeta_m$ can be lifted to a
$W(k)$-linear map $\zeta':M\to L$ with $F(\zeta')-\zeta'\in p^m H$.
As $p^m H\subseteq pO$, by Lemma \ref{end0} there exists an element
$\xi\in pO$ such that $F(\xi)-\xi=F(\zeta')-\zeta'$. Thus $\zeta=\zeta'-\xi$ has the desired property. 
\end{proof}

\subsection{The inequality $\ell_{D,E}\leq f_{D,E}$}

Let $\mfr_{\cal R}$ be the maximal ideal of a local ring $\cal R$. 

\begin{Lemma}
\label{Le-series}
For $P\in k[[t]]$ and $x\in uk[[u]]$ the expression $P(x)\in k[[u]]$ is well-defined. 
Moreover we have $P(x)=0$ only if either $P=0$ or $x=0$.
\end{Lemma}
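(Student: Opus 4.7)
The plan is to treat the two assertions separately, and both reduce to elementary facts about $u$-adic convergence in $R'=k[[u]]$.

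For well-definedness, I would write $P=\sum_{n\ge 0}a_n t^n$ with $a_n\in k$. The hypothesis $x\in\mathfrak{r}_{R'}$ means $x$ has zero constant term, so the $u$-adic valuation of $x^n$ is at least $n$. Hence the partial sums of $\sum_{n\ge 0}a_n x^n$ form a Cauchy sequence in the $u$-adically complete ring $R'$, and the limit $P(x)$ is well-defined.

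For the non-vanishing claim, I would argue by contrapositive: assume $P\neq 0$ and $x\neq 0$, and show $P(x)\neq 0$. Since $R=k[[t]]$ is a discrete valuation ring, I can factor $P=t^N\widetilde P$ with $\widetilde P(0)\neq 0$, so $\widetilde P$ is a unit of $R$. Substituting gives $P(x)=x^N\,\widetilde P(x)$ in $R'$. Because $x\in\mathfrak{r}_{R'}$, evaluating at $x$ preserves constant terms, so $\widetilde P(x)$ has the same nonzero constant term as $\widetilde P$ and is therefore a unit of $R'$. Finally, $x^N\neq 0$ since $R'$ is an integral domain and $x\neq 0$. Multiplying a unit by a nonzero element gives a nonzero element, so $P(x)\neq 0$.

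There is no real obstacle: the only subtlety is ensuring that substitution commutes with extracting the ``lowest order'' factor in $R$, which is automatic because $x$ has zero constant term so $\widetilde P(x)$ picks up precisely the constant $\widetilde P(0)$ in its degree-zero part. This justifies the factorisation $P(x)=x^N\widetilde P(x)$ and concludes the proof.
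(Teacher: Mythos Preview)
Your proof is correct and is essentially the same argument as the paper's. The paper phrases it in terms of valuations---if $x$ has $u$-adic order $r\ge 1$ and $P$ has $t$-adic order $s$, then $P(x)$ has $u$-adic order exactly $rs$---while you factor $P=t^N\widetilde P$ with $\widetilde P$ a unit and observe $P(x)=x^N\widetilde P(x)$ is a nonzero element times a unit; these are two ways of saying the same thing.
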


\begin{proof}
The power series $P(x)$ converges $u$-adically, so $P(x)$
is well-defined. If $x\neq 0$ and $P\neq 0$, let $r,s\in\NN$
be minimal such that $x\in u^rk[[u]]$ and $P\in t^sk[[t]]$.
Then $r>0$ and $P(x)\in u^{rs}k[[u]]\setminus u^{rs+1}k[[u]]$. Thus $P(x)\neq 0$.
\end{proof}

\begin{Prop}
\label{Pr-ell-f}
We have $\ell_{D,E}\leq f_{D,E}$.
\end{Prop}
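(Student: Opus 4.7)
My plan is to argue by contradiction: assume $\ell := \ell_{D,E} > f := f_{D,E}$, so there exists $x \in H = \Hom_{W(k)}(M,L)$ with $p^f x \notin O$. Using the slope decomposition $H[1/p] = N_+ \oplus N_0 \oplus N_-$, I write $x = x_+ + x_0 + x_-$; then at least one of the inclusions $p^f x_+ \in O_+$, $p^f x_0 \in O_0$, $p^f x_- \in O_-$ must fail. The three cases are parallel (the $N_-$ version uses $F^{-1}$ in place of $F$, and the $N_0$ version uses the surjectivity of $F-\mathrm{id}$ on $O_0$ established in Lemma \ref{end0}), so I focus on the case $p^f x_+ \notin O_+$, in which there exists a smallest $q_0 \ge 0$ with $p^f F^{q_0}(x_+) \notin H$.

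For each $n \ge 1$, I plan to construct an $R$-point, with $R = k[[t]]$, of the algebraic group $\uHom(D[p^{n+f}], E[p^{n+f}])$ whose image in $\uHom(D[p^n], E[p^n])$ is not a constant $R$-point. By specializing $t$ to two distinct values in $\mfr_{R'}$ for $R' = k[[u]]$ and applying Lemma \ref{Le-series} to the resulting power-series coefficients to rule out accidental cancellation, such a non-constant family yields two distinct $k$-valued homomorphisms in the image of $\tau_{n+f, n}$. Hence this image has positive dimension, contradicting the characterisation of $f_{D,E}$ supplied by Lemma \ref{Le-f}.

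Concretely, I seek $\zeta_n(t) \in (H \otimes_{W(k)} W(R))/p^{n+f}$ satisfying $(F \otimes \sigma)\zeta_n(t) \equiv \zeta_n(t) \pmod{p^{n+f}}$, where $\sigma$ denotes both the Witt Frobenius on $W(R)$ and the Frobenius on $W(k)$. A natural Ansatz is the truncated Teichm\"uller series $\zeta_n(t) = \sum_{q=0}^{Q} [t]^{p^q} \otimes F^q(y)$, with $y$ derived from $x_+$ and $Q$ taken large enough so that the upper boundary term $[t]^{p^{Q+1}} \otimes F^{Q+1}(y)$ is absorbed into $p^{n+f}$ by the positive slopes of $F$ on $N_+$. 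The lower boundary term $-[t] \otimes y$ must be cancelled by a correction built from an analogue of Lemma \ref{end0} over $W(R)$; the hypothesis $p^f x_+ \notin O_+$ is precisely what prevents this correction from collapsing into a $k$-constant at level $n$, since otherwise one would recover an element of $O$ solving $F(\tilde y) - \tilde y = p^f x$.

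The main obstacle is the explicit construction of the correction term and the verification that its contribution to $\zeta_n(t)$ survives non-trivially after reduction modulo $p^n$; this is where Lemma \ref{Le-series} plays its essential role, ensuring that no unintended cancellation among the Teichm\"uller coefficients can occur in the power series describing $\tau_{n+f,n}(\zeta_n(t))$. The cases of $x_0$ and $x_-$ then follow by analogous constructions, with no new conceptual difficulty.
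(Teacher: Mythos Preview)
Your overall strategy---assume $\ell_{D,E}>f_{D,E}$, build a family of truncated homomorphisms via series in a Teichm\"uller parameter, and invoke Lemma~\ref{Le-series} to rule out cancellation---is indeed the paper's strategy. But your implementation has a genuine gap.

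The problem is the case split. You take $x\in H$ with $p^f x\notin O$, decompose $x=x_++x_0+x_-$ along $H[\tfrac1p]=N_+\oplus N_0\oplus N_-$, and propose to treat the three slope pieces separately. But the components $x_+,x_0,x_-$ lie in $N_\pm,N_0$, \emph{not} in $H$; there is no reason for $F^q(x_+)$ to be integral, so your Ansatz $\zeta_n(t)=\sum_{q=0}^Q[t]^{p^q}\otimes F^q(y)$ with ``$y$ derived from $x_+$'' does not produce an element of $(H\otimes W(R))/p^{n+f}$, and hence does not give a homomorphism of truncated Dieudonn\'e modules. No correction term built from Lemma~\ref{end0} will repair this, because the defect is already in the main term. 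The three cases are therefore not parallel and cannot be decoupled.

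The paper avoids this by a different choice of witness: it takes $x\in p^{\ell_{D,E}}H\cap(O\setminus pO)$, so that $x$ lies in the level module $O$ and its components $x_+\in O_+$, $x_0\in O_0$, $x_-\in O_-$ are automatically in $H$ and stable under the relevant iterates of $F^{\pm 1}$. It then assembles all three series into a single element $y_\eta\in O\subseteq H$ satisfying $\eta x=F(y_\eta)-y_\eta$; since $x\in p^{\ell_{D,E}}H$, the reduction of $y_\eta$ modulo $p^{\ell_{D,E}}$ is a genuine truncated homomorphism. The slope-zero piece requires a further idea you have underestimated: neither $F$ nor $F^{-1}$ contracts on $O_0$, so $p$-adic convergence fails, and the paper passes to an algebraically closed extension containing $k'[[t]]$ so that $\sum_{i\ge 0}F^i(\eta x_0)$ converges $t$-adically in $O'\otimes W(R)$. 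Only after this unified construction does the contradiction emerge, via linear functionals on $O/pO$ and Lemma~\ref{Le-series}, showing simultaneously that all $F^n(x_+)$, $F^n(x_0)$, $F^{-n}(x_-)$ lie in $pH$, whence $x\in pO$.
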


\begin{proof}
We show that the assumption that $f_{D,E}<\ell_{D,E}$ leads to a contradiction.
By Lemmas \ref{Le-perm-f} and \ref{Le-perm-l} we can replace the field $k$ by some algebraically closed field extension of it.
Therefore we can assume that there exists an algebraically closed 
field $k^\prime$ and an inclusion of rings $R=k'[[t]]\subset k$
such that $D$ and $E$ are the base change of $p$-divisible groups
$D^\prime$ and $E'$ over $k^\prime$. 

Let $(M^\prime,F^\prime,V^\prime)$ and $(L',F',V')$ be the Dieudonn\'e modules over $k^\prime$ of $D^\prime$ and $E'$ (respectively). We have 
$(M,F,V)=(M^\prime\otimes_{W(k^\prime)} W(k),F^\prime\otimes\sigma,V^\prime\otimes\sigma^{-1})$ 
and similarly for $L$. Let $H'$ and $O^\prime=O^\prime_+\oplus O^\prime_0\oplus O^\prime_{-}$ be the analogues of $H$ and $O=O_+\oplus O_0\oplus O_{-}$ obtained working with $D^\prime$ and $E'$ instead of $D$ and $E$. We have $H=H^\prime\otimes_{W(k^\prime)} W(k)$ and
$O=O^\prime\otimes_{W(k^\prime)} W(k)$.

Let $x\in p^{\ell_{D,E}}H'\cap (O'\setminus pO')$. 
Then $x\in O\setminus pO$.
For each $\eta\in W(\mfr_R)$ we define 
$y_{\eta}=y_{\eta,+}+y_{\eta,0}+y_{\eta,-}\in O$ by
the two $p$-adically convergent series
$$y_{\eta,+}=-\sum_{i=0}^{\infty} F^i(\eta x_+)\in O_+,\qquad
y_{\eta,-}=\sum_{i=1}^{\infty} F^{-i}(\eta x_{-})\in O_-,$$
and by the $t$-adically convergent series in 
$O'\otimes_{W(k')}W(R)\subset O$
$$y_{\eta,0}=-\sum_{i=0}^{\infty} F^i(\eta x_{0}).$$
The last series converges because $\sigma^{p^i}(\eta)\to 0$ for 
$i\to \infty$, so for each $n\in\NN$ the series maps to
a finite sum in $W(R/\mfr_R^n)$. We have
$$\eta x=F(y_{\eta})-y_{\eta}.$$

We note that for every $\eta\in W(k)$ the last equation
can be solved for $y_\eta$ (see Lemma \ref{end0}) and that
the components $y_{\eta,\pm}$ are always given by the
above $p$-adic series, but in general we do not have a natural choice
for $y_{\eta,0}$ and in particular no explicit formula for it.

The reduction modulo $p^{\ell_{D,E}}$ of $y_{\eta}:M\to L$ 
is a homomorphism of truncated Dieu\-donn\'e modules 
$(M/p^{\ell_{D,E}}M,F,V)\to(L/p^{\ell_{D,E}}L,F,V)$.
Indeed, the relation $\eta x=F\circ y_{\eta}\circ F^{-1}-y_{\eta}=V^{-1}\circ y_{\eta}\circ V-y_{\eta}$ gives
$\eta x\circ F=F\circ y_{\eta}-y_{\eta}\circ F$ and 
$V\circ\eta x=-V\circ y_{\eta}+y_{\eta}\circ V$. As $x(M)\subseteq p^{\ell_{D,E}}L$, we conclude that $F\circ y_{\eta}$ and $y_{\eta}\circ F$ (resp. $V\circ y_{\eta}$ and $y_{\eta}\circ V$) coincide modulo $p^{\ell_{D,E}}$. 

By classical Dieudonn\'e theory it follows that each $y_\eta$
defines a homomorphism $D[p^{\ell_{D,E}}]\to E[p^{\ell_{D,E}}]$.
The assumption $f_{D,E}<\ell_{D,E}$ implies that the resulting restrictions
$D[p]\to E[p]$ take only finitely many values, which means that
the reductions of $y_\eta$ in $H/pH$ take only finitely many
values. As the assignment $\eta\mapsto y_\eta$ is additive
and as $\mfr_R$ is infinite it follows that there exists
an element $\eta=(\eta_0,\eta_1,\ldots)\in W(\mfr_R)$
with $\eta_0\neq 0$ such that $y_\eta\in O\cap pH$.

From the relation $y_\eta\in O\cap pH$ we want to deduce the following relations.

\theoremstyle{plain}
\newtheorem*{Claim*}{Claim}

\begin{Claim*} For each $n\in\NN$
the elements $F^n(x_+)$, $F^n(x_0)$, $F^{-n}(x_-)$ lie in $pH$.
\end{Claim*}

\noindent
By the definition of $O$ this implies that $p^{-1}x=p^{-1}(x_++x_0+x_-)\in O$,
which contradicts the assumption that $x\not\in pO$. 
Hence to end the proof of Proposition \ref{Pr-ell-f} 
it remains to prove the claim.

Let $\Vcal=O\cap pH$. We have $pO\subseteq\Vcal\subseteq O$.
Thus $\bar\Vcal=\Vcal/pO$ is a $k$-vector subspace of $O/pO$.
Similarly let $\Vcal'=O'\cap pH'$ and $\bar\Vcal'=\Vcal'/pO'$.
We have $\bar\Vcal=\bar\Vcal'\otimes_{k'}k$.
For an element $b\in O$ (resp.\ $b\in W(k)$)
we denote by $\bar b\in O/pO$ (resp.\ $\bar b\in k$)
its reduction modulo $p$. 
We denote also by $F$ (resp. $F^{-1}$) the $\sigma$-linear (resp. $\sigma^{-1}$-linear) endomorphism of $O_+/pO_+$ and $O_0/pO_0$ (resp. $O_-/pO_-$) induced by $F$ (resp. $F^{-1}$). Let $n_+$ (resp.\ $n_-$) be the minimal non-negative integer such that 
$F^{n_+}(x_+)\in pO_+$ (resp.\ $F^{-n_-}(x_-)\in pO_-$).

An element $\bar z\in O/pO$ lies in $\bar\Vcal$ if and only if 
for every $k'$-linear map $\varpi:O'/pO'\to k'$ with 
$\varpi(\bar\Vcal')=0$ we have $(\varpi\otimes 1_k)(\bar z)=0$.
For $n\in\NN$ we consider the elements of $k'$
$$
a_{+,n}=\varpi (F^n(\bar x_+)),\quad
a_{0,n}=\varpi (F^n(\bar x_0)),\quad
a_{-,n}=\varpi (F^{-n}(\bar x_-)).
$$
For $n\ge n_+$ we have $a_{+,n}=0$ and for $n\ge n_-$ 
we have $a_{-,n}=0$. To prove the claim we have to
show that $a_{+,n}=a_{0,n}=a_{-,n}=0$ for all $n\geq 0$.

As $y_\eta\in\Vcal$ we have $(\varpi\otimes 1_k)(\bar y_\eta)=0$.
From the definition of $y_\eta$ we get
$$
\sum_{n=0}^{n_+}a_{+,n}\eta_0^{p^n}+
\sum_{n=0}^\infty a_{0,n}\eta_0^{p^n}-
\sum_{n=1}^{n_-}a_{-,n}\eta_0^{p^{-n}}=0.
$$
This expression can be viewed as a power series in $R=k'[[t]]$
evaluated at the element $\eta_0^{p^{-n_-}}$ of the
maximal ideal of $R'=k'[[t^{p^{-n_-}}]]$.
By Lemma \ref{Le-series} it follows that we have
$a_{-,n}=0$ for $n\geq 1$ and $a_{+,n}+a_{0,n}=0$
for $n\geq 0$; in particular we have $a_{0,n}=0$ for $n\ge n_+$.
As $F$ is bijective on $O_0/pO_0$, the subspace of
$O_0/pO_0$ generated by $\{F^n(\bar x_0)\mid n\geq 0\}$
is equal to the subspace generated by 
$\{F^n(\bar x_0)\mid n\ge n_+\}$.
Thus we get $a_{0,n}=0$ for $n\geq 0$, which gives also 
$a_{+,n}=0$ for $n\geq 0$. 
As $x\in p^{\ell_{D,E}}H$ and as $0\leq f_{D,E}<\ell_{D,E}$ 
we have $x\in pH$ and hence $x\in\Vcal$. Thus
$0=\varpi(\bar x)=a_{+,0}+a_{0,0}+a_{-,0}$ and therefore
$a_{-,0}=0$.
\end{proof}

\subsection{Conclusions}
\label{Se-conclusions}

\begin{Thm}
\label{Th-f-ell-e}
For two $p$-divisible groups $D$ and $E$ over $k$ we have
$$
f_{D,E}=\ell_{D,E}=e_{D,E}.
$$
\end{Thm}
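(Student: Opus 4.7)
The plan is to observe that the three inequalities already established in this section form a cycle, and so all three of $f_{D,E}$, $e_{D,E}$, $\ell_{D,E}$ must coincide. Specifically, Proposition \ref{Pr-f-e} gives $f_{D,E}\leq e_{D,E}$ (coarse homomorphism numbers are bounded above by homomorphism numbers, since any finitely generated image of $\Hom(D,E)$ is itself finite in the $p^n$-torsion of $\Hom(D[p^n],E[p^n])$). Proposition \ref{Pr-e-ell} gives $e_{D,E}\leq\ell_{D,E}$ (any homomorphism of truncations of level $\ell_{D,E}+1$ can be lifted, via Lemma \ref{Le-e-ell} and the solvability of $x=F(X)-X$ in $O$ up to shifts by $A_0$ provided by Lemma \ref{end0}). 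Finally, Proposition \ref{Pr-ell-f} gives $\ell_{D,E}\leq f_{D,E}$, this being the most technical input and the only one requiring the hypothesis that $k$ be algebraically closed.

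Concatenating these three yields
$$
f_{D,E}\leq e_{D,E}\leq\ell_{D,E}\leq f_{D,E},
$$
forcing equalities throughout, which is exactly the assertion of the theorem.

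There is no remaining obstacle: the real technical content sits inside Proposition \ref{Pr-ell-f}, where one needed to produce, from a putative strict inequality $f_{D,E}<\ell_{D,E}$, an explicit element $x\in p^{\ell_{D,E}}H\cap(O\setminus pO)$ descending to a subfield $k'\subset k$ of the form $k=\overline{k'((t))}$, and then exploit the infinitude of $W(\mathfrak{m}_R)$ together with the power-series independence in Lemma \ref{Le-series} to extract the component-wise vanishing that contradicts $x\notin pO$. All of that machinery is already in place, so the present theorem is a clean corollary.

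As a remark worth recording after the theorem, the resulting equality specialises to $n_D=\ell_D=e_D$ in the non-ordinary case (Theorem \ref{Th-n-ell-e}) by combining with Theorem \ref{Th-f-n}; this in turn gives the effective lifting statement of Corollary \ref{Co-end-lift} once Theorem \ref{Th-n-upper} is invoked.
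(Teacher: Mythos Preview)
Your proof is correct and follows exactly the paper's approach: the theorem is deduced immediately from the cycle of inequalities $f_{D,E}\leq e_{D,E}\leq\ell_{D,E}\leq f_{D,E}$ given by Propositions \ref{Pr-f-e}, \ref{Pr-e-ell}, and \ref{Pr-ell-f}. The paper's proof is a single sentence citing these three propositions, and your elaboration simply unpacks what each contributes.
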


\begin{proof}
This follows from Propositions \ref{Pr-f-e}, \ref{Pr-e-ell}, and \ref{Pr-ell-f}.
\end{proof}

\begin{Cor}
\label{Co-e-symm}
We have $e_{D,E}=e_{E,D}=e_{D^\vee,E^\vee}$ and similarly for $f$.
\end{Cor}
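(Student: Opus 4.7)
The plan is to assemble this corollary directly from two results already in hand. Theorem~\ref{Th-f-ell-e} gives, for any ordered pair $(D,E)$ of $p$-divisible groups over $k$, the identification
\[
f_{D,E} \;=\; \ell_{D,E} \;=\; e_{D,E}.
\]
Lemma~\ref{Le-ell-symm} asserts the corresponding symmetries for the pivot invariant:
\[
\ell_{D,E} \;=\; \ell_{E,D} \;=\; \ell_{D^\vee,E^\vee}.
\]
Chaining these yields the desired equalities: for instance $e_{D,E} = \ell_{D,E} = \ell_{E,D} = e_{E,D}$, and analogously $e_{D,E} = \ell_{D,E} = \ell_{D^\vee,E^\vee} = e_{D^\vee,E^\vee}$. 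Replacing $e$ by $f$ throughout and using $f_{D,E}=\ell_{D,E}$ gives the symmetries for $f$.

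There is no real obstacle, so the ``proof'' is a one-liner citing the two previous results. What is worth flagging for the reader is that the corollary extracts nontrivial content from Theorem~\ref{Th-f-ell-e}: neither of the symmetries $e_{D,E}=e_{E,D}$ and $f_{D,E}=f_{E,D}$ is visible from the original definitions, which compare lifting (or finite-image) properties of homomorphisms $D[p^n]\to E[p^n]$ in one fixed direction. The Cartier-dual symmetry is similarly opaque at the level of truncation maps. By contrast, the level torsion $\ell_{D,E}$ is defined via the $F$-isocrystal $H_{D,E}[\tfrac1p]$ and its slope decomposition, and the construction of $O_{D,E}$ is manifestly invariant under interchanging $M$ and $L$ and under simultaneous Cartier duality on both factors --- this is precisely what was used in the proof of Lemma~\ref{Le-ell-symm} via the dual module $\tilde O_{D,E}$. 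Thus the level torsion plays the role of a bridge making the symmetries of $e$ and $f$ transparent once Theorem~\ref{Th-f-ell-e} is established.
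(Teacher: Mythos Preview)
Your proof is correct and matches the paper's approach exactly: the paper also derives the corollary in one line by combining Theorem~\ref{Th-f-ell-e} with Lemma~\ref{Le-ell-symm}. Your additional commentary about why the symmetries are not obvious from the definitions of $e_{D,E}$ and $f_{D,E}$ is apt and echoes the paper's own remark following the corollary.
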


The equation $e_{D,E}=e_{E,D}$ 
seems not to be obvious from the definitions.

\begin{proof}
This follows from Theorem \ref{Th-f-ell-e} and Lemma \ref{Le-ell-symm}.
\end{proof}

\begin{proof}[Proof of Theorem  \ref{Th-n-ell-e}]
It follows from Theorems \ref{Th-f-ell-e} and \ref{Th-f-n}.
\end{proof}

\begin{Prop}
We have $\ell_{D\oplus E}=\max\{\ell_D,\ell_E,\ell_{D,E}\}$ and $\ell_{D,E}\le n_{D\oplus E}$.
\end{Prop}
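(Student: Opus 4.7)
My plan is to establish both statements by exploiting the block decomposition of the endomorphism ring $\End_{W(k)}(M\oplus L)$. First I would write
$$
\End_{W(k)}(M\oplus L)=\End(M)\oplus\Hom(L,M)\oplus\Hom(M,L)\oplus\End(L)
$$
and observe that conjugation by the Frobenius $F=F_M\oplus F_L$ of $M\oplus L$ preserves this block decomposition, since $F$ and (after inverting $p$) $F^{-1}$ are themselves block-diagonal. Consequently the $F$-isocrystal structure on $\End(M\oplus L)[\tfrac{1}{p}]$ splits as a direct sum of the four blockwise $F$-isocrystals, and this splitting refines the slope decomposition $N_+\oplus N_0\oplus N_-$. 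Because the maximality conditions defining $O_+$, $O_0$, and $O_-$ in Definition \ref{Def-ell} apply independently to each direct summand of the isocrystal, this should formally yield
$$
O_{D\oplus E}=O_D\oplus O_{D,E}\oplus O_{E,D}\oplus O_E
$$
inside $\End_{W(k)}(M\oplus L)[\tfrac{1}{p}]$.

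Once this identification is in hand, the containment $p^m\End_{W(k)}(M\oplus L)\subseteq O_{D\oplus E}$ decouples blockwise, giving
$$
\ell_{D\oplus E}=\max\{\ell_D,\ell_E,\ell_{D,E},\ell_{E,D}\};
$$
applying the symmetry $\ell_{D,E}=\ell_{E,D}$ from Lemma \ref{Le-ell-symm} then yields the first equation.

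For the inequality $\ell_{D,E}\leq n_{D\oplus E}$, the first equation already gives $\ell_{D,E}\leq\ell_{D\oplus E}$. If $D\oplus E$ is not ordinary, Theorem \ref{Th-n-ell-e} identifies $\ell_{D\oplus E}$ with $n_{D\oplus E}$ and the bound is immediate. In the ordinary case, both $D$ and $E$ are ordinary; writing $M=M_{\text{et}}\oplus M_{\text{mult}}$ and $L=L_{\text{et}}\oplus L_{\text{mult}}$, I expect a short verification to show that on each of the four slope-pure blocks of $H_{D,E}$ the relevant operator ($F$ on the zero- and positive-slope blocks, $F^{-1}$ on the negative-slope block) preserves the integral structure, forcing $O_{D,E}=H_{D,E}$ and hence $\ell_{D,E}=0\leq 1=n_{D\oplus E}$. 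The only delicate point in the whole argument is the compatibility of the slope and block decompositions, but this is automatic because $F$ is block-diagonal on $\End(M\oplus L)[\tfrac{1}{p}]$; I do not foresee any serious obstacle.
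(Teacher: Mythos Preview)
Your proposal is correct and follows essentially the same approach as the paper: the block decomposition of $\End(M\oplus L)$ compatible with $F$, the resulting splitting $O_{D\oplus E}=O_D\oplus O_E\oplus O_{D,E}\oplus O_{E,D}$, and the appeal to Lemma~\ref{Le-ell-symm} are exactly what the paper does. For the inequality the paper likewise invokes Theorem~\ref{Th-n-ell-e} in the non-ordinary case and the remark following it (that $\ell=0$ in the ordinary case) for the remaining case, so your separate treatment of the ordinary situation is just an explicit verification of that remark.
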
 

\begin{proof}
We have a direct sum decomposition 
$\End_{W(k)}(M\oplus L)=\End_{W(k)}(M)\oplus \End_{W(k)}(L)\oplus\Hom_{W(k)}(L,M)\oplus\Hom_{W(k)}(M,L)$
 into $W(k)$-modules which is compatible with $F$ and therefore which induces a direct sum decomposition $O_{D\oplus E}=O_D\oplus O_E\oplus O_{D,E}\oplus O_{E,D}$. It follows that $\ell_{D\oplus E}=\max\{\ell_D,\ell_E,\ell_{D,E},\ell_{E,D}\}$. From this and Lemma \ref{Le-ell-symm} we get $\ell_{D\oplus E}=\max\{\ell_D,\ell_E,\ell_{D,E}\}$.
By Theorem \ref{Th-n-ell-e} and the subsequent remark we have
$\ell_{D\oplus E}\leq n_{D\oplus E}$. Thus $\ell_{D,E}\leq n_{D\oplus E}$.
\end{proof}

\begin{Prop}
\label{Pr-min-ell}
If $D$ and $E$ are minimal then $\ell_{D,E}\le 1$ with
equality if and only if both $D$ and $E$ are non-ordinary
minimal.
\end{Prop}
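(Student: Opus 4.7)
The plan is to combine the formula $\ell_{D\oplus E}=\max\{\ell_D,\ell_E,\ell_{D,E}\}$ of the preceding proposition with Theorem \ref{Th-n-ell-e} for the upper bound, and then to perform an explicit block-wise analysis of the isoclinic decomposition in order to determine when equality holds.

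First, since $D$ and $E$ are minimal, so is $D\oplus E$, and hence $n_{D\oplus E}=1$. Theorem \ref{Th-n-ell-e} then yields $\ell_{D\oplus E}=1$ if $D\oplus E$ is non-ordinary and $\ell_{D\oplus E}=0$ otherwise. The preceding proposition therefore gives $\ell_{D,E}\le\ell_{D\oplus E}\le 1$, and already settles the case in which both $D$ and $E$ are ordinary (so that $D\oplus E$ is ordinary as well), giving $\ell_{D,E}=0$.

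Next I would decompose $M=\bigoplus_i M_i$ and $L=\bigoplus_j L_j$ into isoclinic minimal summands of slopes $\lambda_i$ and $\mu_j$. The Frobenius $F(\flat)=F_L\flat F_M^{-1}$ preserves the decomposition $\bigoplus_{i,j}\Hom(M_i,L_j)$, each summand being $F$-isoclinic of slope $\mu_j-\lambda_i$; hence $O_{D,E}=\bigoplus_{i,j}O_{D_i,E_j}$ and $\ell_{D,E}=\max_{i,j}\ell_{D_i,E_j}$. If one of $M_i,L_j$ is ordinary (slope in $\{0,1\}$), the appropriate Frobenius ($F$ or $F^{-1}$, according to the sign of $\mu_j-\lambda_i$) preserves $\Hom(M_i,L_j)$ and hence $O_{D_i,E_j}=\Hom(M_i,L_j)$, giving $\ell_{D_i,E_j}=0$. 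For example, if $M_i$ is \'etale then $F_{M_i}$ is a $\sigma$-linear bijection of $M_i$, so for $\flat\in\Hom(M_i,L_j)$ we have $F(\flat)(M_i)=F_{L_j}(\flat(M_i))\subseteq F_{L_j}(L_j)\subseteq L_j$; the three dual sub-cases (\'etale $L_j$, multiplicative $M_i$ or $L_j$) follow from the analogous formulas $F^{-1}(\flat)=V_{L_j}\flat V_{M_i}^{-1}$ and $F(\flat)=V_{L_j}^{-1}\flat V_{M_i}$. Consequently $\ell_{D,E}=0$ whenever at least one of $D,E$ is ordinary.

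It remains to show that if both $M_i$ and $L_j$ are non-ordinary isoclinic minimal then $\ell_{D_i,E_j}\ge 1$, which together with the upper bound gives $\ell_{D,E}=1$ in the case that both $D$ and $E$ are non-ordinary. Using the standard cyclic basis $e_0,\ldots,e_{n_i-1}$ of $M_i$ with $F_{M_i}(e_k)=e_{k+1}$ for $k<d_i$ and $F_{M_i}(e_k)=pe_{k+1\bmod n_i}$ for $k\ge d_i$ (where $n_i=c_i+d_i$ and $\lambda_i=c_i/n_i$, with $c_i,d_i>0$), and a similar basis $f_0,\ldots,f_{n_j-1}$ of $L_j$, I pick $x_0=e_{d_i}$ (at which $F_{M_i}$ performs a ``$p$-step'', possible because $c_i>0$) and $y_0=f_0$ (at which $F_{L_j}(f_0)=f_1$ is a ``non-$p$-step'', possible because $d_j>0$). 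Defining $\flat:M_i\to L_j$ by $\flat(e_{d_i})=f_0$ and $\flat(e_k)=0$ for $k\ne d_i$ yields $\flat\in\Hom(M_i,L_j)$; however, $F(\flat)(e_{d_i+1})=F_{L_j}(\flat(e_{d_i}/p))=f_1/p\notin L_j$, so $F(\flat)\notin\Hom(M_i,L_j)$ and $\flat\notin O_{D_i,E_j}$. The main obstacle is this explicit construction, which exploits the interplay between a ``$p$-step'' of $F_{M_i}$ on the source side and a ``non-$p$-step'' of $F_{L_j}$ on the target side --- both of which are guaranteed precisely by the non-ordinariness of $M_i$ and $L_j$.
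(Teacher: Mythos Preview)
Your route differs from the paper's on both halves. For $\ell_{D,E}\le 1$ the paper does not go through Theorem~\ref{Th-n-ell-e}: it reduces to simple $D,E$ and, using the $F$-valuation characterisation of minimal modules (Proposition~\ref{Pr-isocl-min}), sandwiches $\Hom(M,L)$ between two lattices in $\Hom(M,L)_\QQ$ that are stable under $F$ and $F^{-1}$ respectively and whose quotient is killed by $p$; this forces $O$ to contain the smaller lattice. For the equality criterion the paper invokes $\ell_{D,E}=f_{D,E}$ (Theorem~\ref{Th-f-ell-e}) together with the observation that $\Hom(D[p],E[p])$ is finite precisely when one of $D,E$ is ordinary; no explicit element of $H\setminus O$ is constructed. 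Your argument is more hands-on but leans on more of the already-assembled machinery of Section~\ref{Se-Hom}.

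There is a gap in the explicit lower bound. From $F(\flat)\notin\Hom(M_i,L_j)$ you infer $\flat\notin O_{D_i,E_j}$, but this is justified only when the $F$-slope $\mu_j-\lambda_i$ of $\Hom(M_i,L_j)_\QQ$ is non-negative. If $\mu_j<\lambda_i$ then $O_{D_i,E_j}=O_-$, which is characterised by $F^{-q}(z)\in H$ for all $q\ge 0$, and $F(O_-)$ need not lie in $H$, so $F(\flat)\notin H$ says nothing. Your $\flat$ in fact still works: using the non-$p$-step at $e_{d_i-1}$ (which exists since $d_i\ge 1$) and the $p$-step at $f_{n_j-1}$ (which exists since $c_j\ge 1$) one computes $F^{-1}(\flat)(e_{d_i-1})=F_{L_j}^{-1}(f_0)=p^{-1}f_{n_j-1}\notin L_j$, hence also $\flat\notin O_-$. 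Alternatively, invoke Lemma~\ref{Le-ell-symm} to swap $D_i$ and $E_j$ and thereby flip the sign of the slope. A second, minor point: the cyclic basis you use exists for \emph{simple} minimal summands, so you should refine the isoclinic decomposition one step further before running the construction (isoclinic minimal modules are direct sums of simple ones, cf.\ the proof of Proposition~\ref{Pr-isocl-min}).
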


\begin{proof}
This is proved in \cite[Thm.~1.6]{Va3}. 
We give here also a direct argument.
The group $\Hom(D[p],E[p])$ is finite if and only if one of $D$ and
$E$ is ordinary, so $\ell_{D,E}=f_{D,E}$ is zero if and only if one of $D$ and $E$ is ordinary. 
Thus it suffices to prove that $\ell_{D,E}\le 1$
if $D$ and $E$ are minimal. We can assume that $D$ and $E$ are
simple. By Proposition \ref{Pr-isocl-min} 
there are $F$-valuations $w$ on $M^\vee_\QQ$ and $u$ on $L_\QQ$ such that $M^\vee=(M_\QQ^\vee)^{w\ge 0}$ and $L=(L_\QQ)^{u\ge 0}$.
Let $w\otimes u$ be the product valuation on the isoclinic
$\DD$-module $(M^\vee\otimes L)_\QQ=\Hom_{W(k)}(M,L)_\QQ$. 
This is an $F$-valuation. As $M^\vee$ and $L$ have valuative
$W(k)$-bases consisting of elements with valuation in $[0,1)$, 
$M^\vee\otimes L$ has a valuative $W(k)$-basis consisting of
elements with valuations in $[0,2)$. Therefore
$$
(M^\vee\otimes L)_\QQ^{(w\otimes u)\geq 1}
\subseteq\Hom_{W(k)}(M,L)\subseteq
(M^\vee\otimes L)_\QQ^{(w\otimes u)\geq 0}.
$$
As the outer $W(k)$-modules are stable under either $F$ or $F^{-1}$
and as their quotient is annihilated by $p$, we conclude that 
$(M^\vee\otimes L)_\QQ^{(w\otimes u)\geq 1}\subseteq O$ and
$\ell_{D,E}\le 1$.
\end{proof}

\section{Values of homomorphism numbers}
\label{Se-val-hom}

In this section we study possible values of the homomorphism
numbers $e_{D,E}$ and the isomorphism numbers $n_D$; the latter
is a special case of the former by Theorem \ref{Th-n-ell-e}.
In particular, we prove Theorem \ref{Th-n-upper}.
We actually work with $f_{D,E}$ and $\ell_D$, which gives 
equivalent results by Theorems \ref{Th-n-ell-e} and \ref{Th-f-ell-e}.

\subsection{Upper bounds}
\label{Up-b}

We fix $p$-divisible groups $D$ and $D'$ over $k$. The dimension, codimension, height, and Newton polygon
of $D$ (resp.\ $D'$) are denoted $d,c$, $h$, and $\nu$ 
(resp.\ $d',c'$, $h'$, and $\nu'$). 
Recall that Theorem \ref{Th-n-upper} claims
that if $D$ is not ordinary, then the isomorphism number $n_D$ is at most $\lfloor 2\nu(c)\rfloor$. We will provide an upper bound for the coarse homomorphism number $f_{D',D}$, which will imply Theorem \ref{Th-n-upper} by setting $D=D'$.

Let $d^{+}=d+d'$ be the dimension, $c^{+}=c+c'$ be the codimension,
and $h^{+}=h+h'=c^{+}+d^{+}$ be the height of $D\oplus D'$. 
Let $\nu^{+}$ be the Newton polygon of $D\oplus D'$. Let  
$\lambda^{+}_1<\cdots<\lambda^{+}_{r^{+}}$ be the slopes
of $\nu^{+}$.
For each $j^{+}\in\{1,\ldots,r^{+}\}$ let 
$\nu^{+}_{j^{+}}:\RR\to\RR$ be the unique 
linear function of slope $\lambda^{+}_{j^{+}}$ 
such that for all $t\in[0,h^{+}]$ we have
$\nu^{+}(t)=\max\{\nu^{+}_{{j^{+}}}(t)\mid 1\leq j^{+}\leq r^{+}\}$.
Let $\Jcal_D\subseteq\{1,\ldots,r^{+}\}$ be the set of indices
$j^{+}$ such that $\lambda^{+}_{j^{+}}$ is a
slope of $D$.

\begin{Thm}
\label{Th-fDE-upper}
We have $f_{D',D}\leq\max\{\nu^{+}_{j^{+}}(c^{+})\mid 
j^{+}\in \Jcal_D\}$.
\end{Thm}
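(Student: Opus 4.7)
By Theorem~\ref{Th-f-ell-e} we have $f_{D',D}=\ell_{D',D}$, so the statement is equivalent to the inclusion $p^{s^*}\Hom_{W(k)}(M',M)\subseteq O_{D',D}$ with $s^*=\max\{\nu^+_{j^+}(c^+)\mid j^+\in\Jcal_D\}$. I would establish this by working inside the Dieudonn\'e module $M^+=M\oplus M'$ of $D\oplus D'$. The $F$-stable decomposition
$$\End_{W(k)}(M^+)=\End_{W(k)}(M)\oplus\End_{W(k)}(M')\oplus\Hom_{W(k)}(M,M')\oplus\Hom_{W(k)}(M',M)$$
induces a splitting $O_{D\oplus D'}=O_D\oplus O_{D'}\oplus O_{D,D'}\oplus O_{D',D}$, so the target inequality becomes a statement about one specific summand of $\End_{W(k)}(M^+)/O_{D\oplus D'}$.

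The heart of the argument is to reduce to the case $a(M^+)=1$ and then invoke Proposition~\ref{Pr-fil}. Because $a(M\oplus M')=a(M)+a(M')\ge 2$ whenever both $D$ and $D'$ are nontrivial, no Oort-type deformation of $D\oplus D'$ preserves the direct sum structure and yields an $a$-number-one fibre, so going down (Theorem~\ref{Th-semicont-f}(b)) applied to constant deformations does not suffice. Instead, I would mimic the passage from the $a=1$ case of Proposition~\ref{Pr-fil} to general $a$ carried out in the proof of Corollary~\ref{Co-max-min}: for each $\flat\in\Hom_{W(k)}(M',M)$, exhibit an $\EE$-cyclic submodule $M^+_\flat=\EE\tilde z\subseteq M^+$ that carries the data needed to witness $\flat$, automatically has $a(M^+_\flat)=1$, and whose Newton polygon $\nu^+_\flat\preceq\nu^+$ detects only the slopes $\lambda^+_{j^+}$ with $j^+\in\Jcal_D$ at which $\flat$ has a non-trivial isoclinic component.

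With $a(M^+_\flat)=1$ in hand, Lemma~\ref{Le-present} presents $M^+_\flat=\EE/\EE\Psi^+_\flat$ and Proposition~\ref{Pr-fil} yields $(N^+_\flat)^{\beta_\flat+}\subseteq pM^+_\flat$ with $(\beta_\flat)_j=\nu^+_{\flat,j}(c^+_\flat)$. Transporting this containment to a lattice bound on the $\Hom_{W(k)}(M',M)$-block of $\End_{W(k)}(M^+)/O_{D\oplus D'}$, and using the fact that sub-polygons lie weakly below $\nu^+$ so that $\nu^+_{\flat,j}(c^+_\flat)\le\nu^+_{j^+}(c^+)$ for the relevant indices, produces the required exponent $s^*$. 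The main obstacle I foresee is the construction of $\tilde z$: one must choose it so that $\nu^+_\flat$ detects exactly the slopes indexed by $\Jcal_D$ (corresponding to target slopes of $\flat$ in $M$) without inadvertently picking up extra slopes that would replace the asymmetric maximum $\max_{j^+\in\Jcal_D}\nu^+_{j^+}(c^+)$ by the larger, uniform $\nu^+(c^+)$. Carrying out this combinatorial control simultaneously with the translation from a submodule containment to a block-wise statement in the level module is where I expect the bulk of the technical work to lie.
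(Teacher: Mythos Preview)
Your proposal takes a genuinely different route from the paper, and it has a real gap at two points.

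First, the reduction step. You dismiss going down because $a(M\oplus M')\ge 2$, but the paper never needs $a(M^+)=1$. It only needs $a_D=1$ and $a_{D'}=1$ \emph{separately}. Deform $D$ over $k[[t]]$ to a family with generic $a$-number $\le 1$ (Oort), then deform $D'$ similarly; Theorem~\ref{Th-semicont-f}(b) applied to this two-parameter family gives $f_{D',D}\le f_{D'_{\bar K},D_{\bar K}}$. So the semicontinuity argument you rejected is exactly what the paper uses, and it suffices.

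Second, once $a(M)=a(M')=1$, the paper does \emph{not} work inside $\End(M\oplus M')$ or the level module at all. The key device you are missing is that with $M'=\EE/\EE\Psi'$ cyclic, evaluation at the generator $z'$ gives
\[
\Hom_\EE(M'/p^mM',\,M/p^mM)\;\cong\;\Ker\bigl(\Psi':M/p^mM\to M/p^mM\bigr).
\]
Thus $f_{D',D}>m-1$ means there are infinitely many $x\in M$ with $\Psi'x\in p^mM$ and distinct reductions mod $p$. Now Proposition~\ref{Pr-fil} is applied to $M$ alone (where $a(M)=1$), and Lemma~\ref{Le-Phi} is applied to the operator $\Psi'$ acting on $N=M_\QQ$: since $\ww_{\lambda_j}(\Psi')=\nu'_j(c')$, the map $\Psi':N^{\underline 0}/N^{\beta+}\to N^{\beta'}/N^{(\beta+\beta')+}$ has finite kernel, forcing $m\le\beta_j+\beta'_j=\nu_j(c)+\nu'_j(c')=\nu^+_{j^+}(c^+)$ for some $j$. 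The asymmetry in $\Jcal_D$ comes for free because $j$ indexes the slopes of $M$.

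Your route through $\ell_{D',D}$ and cyclic submodules of $M^+$ could perhaps be made to work, but the step ``transporting this containment to a lattice bound on the $\Hom(M',M)$-block of $\End(M^+)/O_{D\oplus D'}$'' is not a step, it is the whole problem: Proposition~\ref{Pr-fil} bounds $N^{\beta+}$ inside $pM$, which is a statement about minimal Dieudonn\'e lattices (cf.\ Theorem~\ref{Th-max-min}), not about the level module $O$. You have not explained how one passes from the former to the latter, and there is no lemma in the paper that does this for you.
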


As $f_{D',D}$ is symmetric (see Corollary \ref{Co-e-symm}), 
by interchanging $D$ and $D'$ we
get another, possibly better upper bound of it. For example, if
either $D$ or $D'$ is ordinary, in this way we get that
$f_{D',D}=0$, which is easily verified directly.
Before proving Theorem \ref{Th-fDE-upper} we deduce some corollaries of it.

\begin{Cor} 
\label{Co-fDE-upper}
The following inequalities hold
$$f_{D',D}\le\nu^{+}(c^{+})\le\nu(c)+\nu'(c')
\le cd/h+c'd'/h'\le c^{+}d^{+}/h^{+}\le h^{+}/4.$$
\end{Cor}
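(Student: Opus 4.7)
The plan is to verify each of the five inequalities in turn. The first, $f_{D',D}\le\nu^{+}(c^{+})$, is immediate from Theorem \ref{Th-fDE-upper}: by the defining property of $\nu^{+}$ as the pointwise maximum of the linear functions $\nu^{+}_{j^{+}}$ on $[0,h^{+}]$, each value $\nu^{+}_{j^{+}}(c^{+})$ lies on or below $\nu^{+}(c^{+})$, hence so does their maximum over $j^{+}\in\Jcal_{D}$.

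For the second inequality $\nu^{+}(c^{+})\le\nu(c)+\nu'(c')$, I would encode each Newton polygon as a nondecreasing slope sequence indexed by unit subintervals of its domain; this is well-defined because Newton polygons of $p$-divisible groups have breakpoints at integer lattice points (each isoclinic slab of slope $d_{j}/h_{j}$ covers an integer-length interval). Writing these sequences as $\lambda_{1}\le\cdots\le\lambda_{h}$ for $\nu$ and $\lambda'_{1}\le\cdots\le\lambda'_{h'}$ for $\nu'$, one has $\nu(c)=\sum_{i=1}^{c}\lambda_{i}$ and $\nu'(c')=\sum_{i=1}^{c'}\lambda'_{i}$, while the slope sequence of $\nu^{+}$ is the sorted union of the two multisets. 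Thus $\nu^{+}(c^{+})$ equals the sum of the $c^{+}$ smallest elements of the combined multiset, which is by definition the minimum over all selections of $c^{+}$ elements, and in particular is bounded above by the selection formed by the $c$ smallest of $\nu$ together with the $c'$ smallest of $\nu'$, whose sum is $\nu(c)+\nu'(c')$.

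The remaining three inequalities are entirely elementary. The third, $\nu(c)+\nu'(c')\le cd/h+c'd'/h'$, is the concavity estimate: $\nu$ is concave on $[0,h]$ with $\nu(0)=0$ and $\nu(h)=d$, so $\nu(c)\le cd/h$, and analogously $\nu'(c')\le c'd'/h'$. For the fourth, $cd/h+c'd'/h'\le c^{+}d^{+}/h^{+}$, clearing denominators and cancelling the obvious common terms reduces the claim to $(dh'-d'h)(ch'-c'h)\le 0$; and using $c=h-d$ and $c'=h'-d'$ one computes $ch'-c'h=-(dh'-d'h)$, so the product equals $-(dh'-d'h)^{2}\le 0$. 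Finally, $c^{+}d^{+}/h^{+}\le h^{+}/4$ is the AM--GM inequality $c^{+}d^{+}\le((c^{+}+d^{+})/2)^{2}=(h^{+})^{2}/4$ divided through by $h^{+}$. No step presents a real obstacle; only the combinatorial comparison behind the second inequality requires any thought, and it amounts to the trivial observation that the sum of the $c^{+}$ smallest elements of a multiset is minimal among all $c^{+}$-subsets.
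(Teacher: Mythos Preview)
Your proof is correct and fills in precisely the elementary verification that the paper omits (the paper's proof reads in full: ``This is elementary and left to the reader''). One terminological slip: in the third inequality you call $\nu$ \emph{concave}, but in this paper's convention $\nu$ is the pointwise maximum of linear functions $\nu_j$ and hence \emph{convex}; it is convexity that places $\nu$ below the chord from $(0,0)$ to $(h,d)$ and yields $\nu(c)\le cd/h$ (the paper states this inequality explicitly just after Theorem~\ref{Th-b-maximal}). The conclusion you draw is the right one, so this is purely a wording issue, not a gap.
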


\begin{proof}
This is elementary and left to the reader.
\end{proof}

\begin{Cor}
\label{best}
 We have the following relations
$$n_{D',D}=e_{D',D}=\ell_{D',D}=f_{D',D}\leq \max\{\nu^{+}_{j^{+}}(c^{+})\mid j^{+}\in \Jcal_D\}.$$
\end{Cor}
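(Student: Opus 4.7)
The proof will be essentially a one-line assembly of results already established in the excerpt, so the plan is to combine them cleanly rather than do new work.

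First I would invoke Theorem \ref{Th-f-ell-e}, which asserts the equalities $f_{D',D}=\ell_{D',D}=e_{D',D}$ for any pair of $p$-divisible groups over $k$. This takes care of the middle two equalities in the statement. Next, to identify $n_{D',D}$ with this common value, I would appeal to the interpretation recorded in Remark \ref{Re-f-n}: the invariant $n_{D',D}$ is defined as the minimal non-negative integer $m$ such that the truncation map $\Ext^{1}(D',D)\to\Ext^{1}(D'[p^{m}],D[p^{m}])$ is injective, and by the result of \cite{GV} cited there we have $n_{D',D}=f_{D',D}$. Combining these two observations yields $n_{D',D}=e_{D',D}=\ell_{D',D}=f_{D',D}$.

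Finally, the upper bound is immediate: Theorem \ref{Th-fDE-upper} states precisely that
\[
f_{D',D}\leq\max\{\nu^{+}_{j^{+}}(c^{+})\mid j^{+}\in\Jcal_D\},
\]
so substituting the common value established above gives the desired inequality for $n_{D',D}$, $e_{D',D}$, and $\ell_{D',D}$.

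There is no genuine obstacle here; every input is already in place in the paper, and the only thing to watch is a clean citation of Remark \ref{Re-f-n} (since $n_{D',D}$, unlike $n_D$, was only introduced as an $\Ext^1$-injectivity threshold in that remark, not in the main definitions), together with the fact that Theorem \ref{Th-f-ell-e} applies to arbitrary pairs $(D',D)$, not just $D=E$. Accordingly, the proof in the paper will be a two-sentence reference to Theorem \ref{Th-fDE-upper}, Theorem \ref{Th-f-ell-e}, and Remark \ref{Re-f-n}.
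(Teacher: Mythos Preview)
Your proposal is correct and matches the paper's proof exactly: the paper simply says the corollary follows from Remark \ref{Re-f-n}, Theorem \ref{Th-f-ell-e}, and Theorem \ref{Th-fDE-upper}, which is precisely the assembly you describe.
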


\begin{proof}
This follows from Remark \ref{Re-f-n}, Theorem \ref{Th-f-ell-e}, and Theorem \ref{Th-fDE-upper}.
\end{proof}

\begin{Cor}
\label{Co-fD-upper}
We have $n_D=f_D\le\lfloor 2\nu(c)\rfloor$.
\end{Cor}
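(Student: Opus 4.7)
The plan is to apply Theorem~\ref{Th-fDE-upper} to the special case $D'=D$ and then unwind the right-hand side.

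First I would set $D'=D$ in Theorem~\ref{Th-fDE-upper}, so $D\oplus D'=D\oplus D$. Then $h^+=2h$, $c^+=2c$, $d^+=2d$, and the Newton polygon $\nu^+$ of $D\oplus D$ has exactly the same slopes $\lambda_1<\cdots<\lambda_r$ as $\nu$, with each multiplicity $h_j$ replaced by $2h_j$. In particular, every slope of $\nu^+$ is a slope of $D$, so $\Jcal_D=\{1,\ldots,r^+\}=\{1,\ldots,r\}$.

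Next I would relate the support lines $\nu^+_j$ to the lines $\nu_j$ attached to $\nu$ in Notation~\ref{Notat}. Writing $x_j=h_1+\cdots+h_j$ for the abscissae of the breakpoints of $\nu$, the corresponding breakpoints of $\nu^+$ are at $2x_j$, with heights $\nu^+(2x_j)=2\nu(x_j)$. Since $\nu^+_j$ is the linear function of slope $\lambda_j$ passing through the $j$-th breakpoint of $\nu^+$, the scaling identity
\[
\nu^+_j(t)=2\,\nu_j(t/2)\qquad\text{for all }t\in\RR
\]
holds. Evaluating at $t=c^+=2c$ gives $\nu^+_j(c^+)=2\nu_j(c)$, and therefore
\[
\max\{\nu^+_j(c^+)\mid j\in\Jcal_D\}=2\max_{1\leq j\leq r}\nu_j(c)=2\nu(c).
\]

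Finally, Theorem~\ref{Th-fDE-upper} yields $f_D=f_{D,D}\leq 2\nu(c)$, and since $f_D\in\NN$ this gives $f_D\leq\lfloor 2\nu(c)\rfloor$, as desired. There is no real obstacle here: the only thing to verify is the scaling identity for $\nu^+_j$, which is immediate from the doubling of multiplicities, and the passage from $2\nu(c)$ to $\lfloor 2\nu(c)\rfloor$ is automatic by integrality of $f_D$.
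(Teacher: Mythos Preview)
Your proof is correct and follows essentially the same route as the paper: set $D'=D$, observe that $r^+=r$ and $c^+=2c$, and then use Theorem~\ref{Th-fDE-upper} together with the identity $\nu^+_{j}(2c)=2\nu_j(c)$ (equivalently $\nu^+(2c)=2\nu(c)$) to conclude $f_D\le 2\nu(c)$, whence the floor bound by integrality. The only difference is that you spell out the scaling identity $\nu^+_j(t)=2\nu_j(t/2)$ explicitly, whereas the paper simply writes $\nu^+(2c)=2\nu(c)$; the content is the same.
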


\begin{proof} 
For $D'=D$ we have $r^{+}=r$, $c^{+}=2c$. Thus from Theorem \ref{Th-fDE-upper} we get that $f_D=f_{D,D}\le\max\{\nu^{+}_j(2c)\mid1\le j\le r\}=\nu^{+}(2c)=2\nu(c)$. From this and Theorem \ref{Th-f-n} we get that the corollary holds. 
\end{proof}

\begin{proof}[Proof of Theorem \ref{Th-n-upper}]
It follows from Theorem \ref{Th-f-n} and Corollary \ref{Co-fD-upper}.
\end{proof}

\begin{Lemma}
\label{Le-f-upper-prelim}
To prove Theorem \ref{Th-fDE-upper} we can assume that $D$ and $D'$
are connected with connected duals and that $a_D=a_{D'}=1$.
\end{Lemma}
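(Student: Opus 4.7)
The plan is to mirror the proof of Lemma~\ref{Le-b-upper-prelim} with two independent reductions: first to $a_D,a_{D'}\le 1$, then to the case where $D$ and $D'$ are connected with connected Cartier dual.

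For the first reduction, I would invoke \cite[Prop.~2.8]{Oo1} to produce a $p$-divisible group $\scrD$ over $k[[t]]$ with constant Newton polygon $\nu$, special fibre $D$, and geometric generic fibre of $a$-number at most $1$. Pairing $\scrD$ with the constant family $\scrE=D'\times_k k[[t]]$ (also of constant Newton polygon) and applying the semicontinuity principle of Theorem~\ref{Th-semicont-f}\,(b) gives
$$
f_{D',D}\;\le\;f_{D'_{\bar K},\scrD_{\bar K}},
$$
where $\bar K$ is an algebraic closure of $k((t))$. Since the Newton polygons are preserved under the specialisation and under the base change from $k$ to $\bar K$ (Lemma~\ref{Le-perm-f}), the right-hand side of Theorem~\ref{Th-fDE-upper} is the same for the pair $(D'_{\bar K},\scrD_{\bar K})$ as for $(D',D)$, so I may replace $D$ by $\scrD_{\bar K}$. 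Running the same argument with the roles of $D$ and $D'$ swapped then also achieves $a_{D'}\le 1$.

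For the second reduction, I would decompose $D=D^{\text{ord}}\oplus D^\circ$ and $D'=D'^{\text{ord}}\oplus D'^\circ$ as in Lemma~\ref{Le-prelim-f}, so that $f_{D',D}=f_{D'^\circ,D^\circ}$. If $D^\circ=0$ or $D'^\circ=0$, then $D$ or $D'$ is ordinary, $f_{D',D}=0$, and the assertion is trivial, so I may assume both are non-zero. The remaining task---the main obstacle, though only a bookkeeping one---is to check that the right-hand side of Theorem~\ref{Th-fDE-upper} is unchanged under this reduction. I expect to verify this by a direct computation on Newton polygons: the ordinary slopes $0$ and $1$ of $\nu^+$, when present, correspond to linear functions $\nu^+_{j^+}$ passing through $(0,0)$ and through the endpoint $(h^+,d^+)$ with slope $1$ respectively, so $\nu^+_{j^+}(c^+)=0$ in both cases and these slopes do not contribute to the maximum; and for each bi-nilpotent slope, the starting $t$-coordinate of its segment in $\nu^+$ is shifted from its counterpart in $\nu^{+,\circ}$ by exactly the étale height of $D\oplus D'$, which also equals the amount by which $c^+$ exceeds $c^{+,\circ}$, so the two shifts cancel and yield $\nu^+_{j^+}(c^+)=\nu^{+,\circ}_{j^{+,\circ}}(c^{+,\circ})$. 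Since $\Jcal_{D^\circ}$ consists exactly of the bi-nilpotent indices in $\Jcal_D$, the maximum is preserved.

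After both reductions, a $D$ which is connected with connected Cartier dual and satisfies $a_D\le 1$ must in fact have $a_D=1$ unless $D=0$: the condition $a_D=0$ reads $M=FM+VM$ for the covariant Dieudonn\'e module $M$, and since $F$ and $V$ are nilpotent on $M/pM$ with $FV=VF=p\equiv 0\pmod p$, iterating $\bar F$ on the identity $M/pM=\bar F(M/pM)+\bar V(M/pM)$ forces $\bar F(M/pM)=\bar F^n(M/pM)=0$ for large $n$, and symmetrically $\bar V(M/pM)=0$, hence $M/pM=0$ and $M=0$ by Nakayama. The trivial case $D=0$ gives $f_{D',D}=0$. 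Arguing identically for $D'$ yields the required reduction to $a_D=a_{D'}=1$.
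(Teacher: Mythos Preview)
Your proposal is correct and follows essentially the same approach as the paper, which simply instructs the reader to mirror the proof of Lemma~\ref{Le-b-upper-prelim} with Lemma~\ref{Le-prelim-f} in place of Lemma~\ref{Le-prelim} and Theorem~\ref{Th-semicont-f}\,(b) in place of Theorem~\ref{Th-going-down-b}. You carry out precisely this substitution and additionally spell out the bookkeeping---that the right-hand side of Theorem~\ref{Th-fDE-upper} is unchanged under both reductions, and that $a_D\le 1$ forces $a_D=1$ in the bi-nilpotent case---which the paper leaves implicit.
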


\begin{proof}
This is similar to the proof of Lemma \ref{Le-b-upper-prelim}.
One has to replace Lemma \ref{Le-prelim} (b) by Lemma \ref{Le-prelim-f} and Theorem \ref{Th-going-down-b} by Theorem \ref{Th-semicont-f} (b). If $D=D'$ one can also use Theorem \ref{Th-going-down-n}
instead of Theorem \ref{Th-semicont-f} (b).
\end{proof}

\begin{proof}[Proof of Theorem \ref{Th-fDE-upper}]
Let $M$ and $M'$ be the Dieudonn\'e modules of $D$ and $D'$ (respectively). By Lemma \ref{Le-f-upper-prelim} we can assume
that $M$ and $M'$ are bi-nilpotent and that $a(M)=a(M')=1$.
Let $z\in M$ and $z'\in M'$ be
generators as $\EE$-modules. Let $M=\EE/\EE\Psi$
and $M'=\EE/\EE\Psi'$ be the associated presentations
given by Lemma \ref{Le-present} (b). For $m\in\NN$ we have
canonical isomorphisms
\begin{multline*}
\Hom(D'[p^m],D[p^m])\cong\Hom_\EE(M'/p^mM',M/p^mM) \\
\cong\Ker(\Psi':M/p^mM\to M/p^mM),
\end{multline*}
where the second isomorphism maps a homomorphism $\phi$ to $\phi(z')$.

Let $m=f_{D',D}$.
By the definition of $f_{D',D}$ there exists an infinite
set $\Lcal$ and a subset $\{x_l\mid l\in\Lcal\}$ of $M$ such that
$\Psi' x_l\in p^mM$ for all $l\in\Lcal$ and such that
the reductions $\bar x_l\in M/pM$
of the elements $x_l$ are pairwise distinct. 

We use Notation \ref{Notat} with respect to $M$,
so that $N=M_\QQ$, the slopes of $N$ are
$\lambda_1<\cdots<\lambda_r$, and for $j\in\{1,\ldots,r\}$ 
we have $\beta_j=\nu_j(c)$. 
For $j\in\{1,\ldots,r\}$ let
$\nu'_j:\RR\to\RR$ be the maximal linear function
of slope $\lambda_j$ such that we have $\nu'_j(t)\leq\nu'(t)$
for all $t\in[0,h']$, and let $\beta'_j=\nu_j'(c')$.
This defines $\beta'=(\beta'_1,\ldots,\beta'_r)\in\RR^r$. We note that $\beta'$ is not 
the analogue of $\beta$ for $M'$ in place of $M$.
By Lemma \ref{Le-Newton} the polygon $\nu'$ coincides with 
$\nu_{\Psi'}$ shifted to the right by $c'$.
Using \eqref{Eq-ww-lambda} this implies
$$
\beta_j'=\nu'_j(c')=\nu_{\Psi',\lambda_j}(0)=\ww_{\lambda_j}(\Psi').
$$

As $N^{\beta+}\subseteq pM$ by Proposition \ref{Pr-fil},
the images of $x_l$ in $N^{\underline 0}/N^{\beta+}$
are all distinct. By Lemma \ref{Le-Phi} the operator
$\Psi'$ induces a homomorphism 
$N^{\underline 0}/N^{\beta+}\to N^{\beta'}/N^{(\beta+\beta')+}$
with finite kernel. Hence $\Psi'x_l\not\in N^{(\beta+\beta')+}$
for all but finitely many $l\in\Lcal$. 
Recall that $\underline m=(m,\ldots,m)\in\RR^r$. 
As $\Psi' x_l\in p^mM$
and $p^mM\subseteq N^{\underline m}$, it follows that 
$N^{\underline m}\not\subseteq N^{(\beta+\beta')+}$. 
Hence for some index $j\in\{1,\ldots,r\}$ we have 
$m\leq\beta_j+\beta'_j$.
There exists a unique $j^{+}\in \Jcal_D\subseteq\{0,\ldots,r^{+}\}$
such that $\lambda_j=\lambda^{+}_{j^{+}}$.
By Lemma \ref{Le-calculate} below we have 
$\beta_j+\beta'_j=\nu^{+}_{j^{+}}(c^{+})$ and thus $f_{D',D}=m\le\nu^{+}_{j^{+}}(c^{+})$.
\end{proof}

\begin{Lemma}
\label{Le-calculate}
With the above notations, we have 
$\nu_j(c)+\nu'_j(c')=\nu^{+}_{j^{+}}(c^{+})$.
\end{Lemma}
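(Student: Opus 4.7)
The plan is to exploit the fact that $\nu^{+}_{j^{+}}$, $\nu_j$, and $\nu'_j$ are all affine functions of the same slope $\lambda_j = \lambda^{+}_{j^{+}}$, so the required identity amounts to matching up their constant terms. I would do this by evaluating all three at appropriately chosen tangency points where each touches its respective polygon.

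First, set $T_j \in [0,h]$ to be the total multiplicity in $\nu$ of slopes strictly less than $\lambda_j$, so that the breakpoint of $\nu$ immediately preceding the segment of slope $\lambda_j$ is $(T_j, \nu(T_j))$. Since $\nu_j$ is the supporting line of slope $\lambda_j$ to $\nu$ from below, we have $\nu_j(T_j) = \nu(T_j)$. Analogously, set $T'_j$ to be the total multiplicity in $\nu'$ of slopes strictly less than $\lambda_j$; I then need the tangency $\nu'_j(T'_j) = \nu'(T'_j)$. This holds whether or not $\lambda_j$ is itself a slope of $\nu'$: in the former case the supporting line of slope $\lambda_j$ agrees with $\nu'$ along the whole segment of that slope, which begins at $t = T'_j$, while in the latter case the convexity of $\nu'$ forces the supporting line to touch $\nu'$ precisely at the breakpoint where the slope of $\nu'$ jumps across $\lambda_j$.

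Next I would invoke the basic combinatorial property that the Newton polygon $\nu^{+}$ of the direct sum $D \oplus D'$ has slopes equal to the sorted merge of the slopes of $\nu$ and of $\nu'$, with multiplicities added. Consequently, at horizontal coordinate $T_j + T'_j$ one has exhausted precisely those slopes of $\nu^{+}$ that are strictly less than $\lambda_j$, so the breakpoint of $\nu^{+}$ immediately preceding the segment of slope $\lambda^{+}_{j^{+}} = \lambda_j$ is located at $(T_j + T'_j,\, \nu(T_j) + \nu'(T'_j))$. Since $\nu^{+}_{j^{+}}$ is the supporting line of slope $\lambda_j$ to $\nu^{+}$, this yields
\[
\nu^{+}_{j^{+}}(T_j + T'_j) \;=\; \nu(T_j) + \nu'(T'_j) \;=\; \nu_j(T_j) + \nu'_j(T'_j).
\]

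Finally, the function $(s, s') \mapsto \nu^{+}_{j^{+}}(s + s') - \nu_j(s) - \nu'_j(s')$ is affine in $(s,s')$ with both partial derivatives equal to $\lambda_j - \lambda_j = 0$, hence constant, and the previous display shows this constant is zero at $(T_j, T'_j)$. Setting $(s, s') = (c, c')$ and using $c^{+} = c + c'$ gives $\nu^{+}_{j^{+}}(c^{+}) = \nu_j(c) + \nu'_j(c')$, which is the claim. The only genuine (but mild) obstacle is the tangency verification $\nu'_j(T'_j) = \nu'(T'_j)$ when $\lambda_j$ is not a slope of $\nu'$; this is settled by the brief convexity argument indicated above.
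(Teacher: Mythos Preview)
Your proof is correct and follows essentially the same approach as the paper: both identify the tangency points $T_j,T'_j,T_j+T'_j$ (the paper calls them $x,x',x^{+}$) where each supporting line meets its polygon, use the merge property $\nu^{+}(T_j+T'_j)=\nu(T_j)+\nu'(T'_j)$, and then exploit that the difference of the three affine functions of equal slope is constant. Your treatment is slightly more explicit about the case when $\lambda_j$ is not a slope of $\nu'$, which the paper leaves implicit.
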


\begin{proof}
Let $x^{+}\in[0,h^{+}]$, $x\in[0,h]$, and $x'\in[0,h']$
be the maximal elements such that $\nu^{+}$, $\nu$, and $\nu'$
have slope $<\lambda_j$ on the intervals $[0,x^{+}]$,
$[0,x]$, and $[0,x']$ (respectively). 
Using that $x^{+}=x+x'$, $c^{+}=c+c'$, 
and $\lambda^+_{j^+}=\lambda_j$, we compute
$\nu^{+}_{j^{+}}(c^{+})-\nu_j(c)-\nu'_j(c')=
\nu^{+}_{j^{+}}(x^{+})-\nu_j(x)-\nu'_j(x')=
\nu^{+}(x^{+})-\nu(x)-\nu'(x')=0.$
\end{proof}

\begin{Remark}
Corollary \ref{Co-fD-upper} can be proved directly by
letting $D=D'$ in the proof of Theorem \ref{Th-fDE-upper}. 
Then the last two lines of that proof can be replaced
by the conclusion
$f_D\le\beta_j+\beta'_j=2\beta_j\le 2\nu(c)$,
which avoids Lemma \ref{Le-calculate}.
\end{Remark}

Homomorphism numbers have bounded variation under isogenies:

\begin{Prop}
\label{Pr-f-change} Let $E$ denote a $p$-divisible group, and
let $g:D\to D'$ be an isogeny of $p$-divisible
groups over $k$ such that $p$ annihilates the
kernel of $g$. Then we have $|f_{D,E}-f_{D',E}|\leq 1$
and $|f_D-f_{D'}|\le 2$, thus $|n_D-n_{D'}|\le 2$
by Theorem \ref{Th-f-n}.
\end{Prop}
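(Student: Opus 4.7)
The plan is to establish first the inequality $|f_{D,E}-f_{D',E}|\le 1$; the other two assertions will follow easily from it. By Theorem \ref{Th-f-ell-e} it suffices to prove $|\ell_{D,E}-\ell_{D',E}|\le 1$. Since $p$ annihilates $\ker(g)$, there is a companion isogeny $g':D'\to D$ satisfying $g'\circ g=p_{D}$ and $g\circ g'=p_{D'}$, so by symmetry the task reduces to proving $\ell_{D,E}\le \ell_{D',E}+1$.

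Let $M,M',L$ be the covariant Dieudonné modules and set $H=\Hom_{W(k)}(M,L)$, $H'=\Hom_{W(k)}(M',L)$. The map $\tilde g:M\to M'$ induced by $g$ is injective with $pM'\subseteq \tilde g(M)\subseteq M'$ (via the relation $\tilde g'\circ \tilde g=p_M$), so precomposition
\[
\tilde g^\sharp:H'\to H,\qquad \phi\mapsto\phi\circ\tilde g,
\]
is $F$-equivariant, becomes an $F$-isocrystal isomorphism after inverting $p$, and satisfies $pH\subseteq \tilde g^\sharp(H')\subseteq H$. The central claim, which will allow the transport of level torsion, is that
\[
pO_{D,E}\;\subseteq\;\tilde g^\sharp(O_{D',E})\;\subseteq\;O_{D,E}.
\]

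Because $\tilde g^\sharp$ respects the slope decomposition, the image $\tilde g^\sharp(O'_\bullet)$ (for $\bullet\in\{+,0,-\}$) is the maximal $W(k)$-submodule of $\tilde g^\sharp(H')\cap N_\bullet$ with the appropriate $F$-closure property of Definition \ref{Def-ell}, and is therefore contained in the analogously defined $O_\bullet$ by maximality, since $\tilde g^\sharp(H')\subseteq H$; this yields the upper inclusion of the central claim. For the lower inclusion, $pO_\bullet$ is itself an $F$-closed $W(k)$-submodule of $pH\cap N_\bullet\subseteq \tilde g^\sharp(H')\cap N_\bullet$, hence is contained in $\tilde g^\sharp(O'_\bullet)$ by the same maximality. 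Applying $\tilde g^\sharp$ to the inclusion $p^{\ell_{D',E}}H'\subseteq O_{D',E}$, combining with $pH\subseteq \tilde g^\sharp(H')$, and using the upper inclusion of the central claim then yields $p^{\ell_{D',E}+1}H\subseteq O_{D,E}$, whence $\ell_{D,E}\le\ell_{D',E}+1$.

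For $|f_D-f_{D'}|\le 2$, the first inequality is applied twice, once in each argument of $f$, using the symmetry $f_{X,Y}=f_{Y,X}$ of Corollary \ref{Co-e-symm}: the triangle $|f_{D,D}-f_{D',D'}|\le|f_{D,D}-f_{D',D}|+|f_{D',D}-f_{D',D'}|$ is bounded by $1+1$. Finally, $|n_D-n_{D'}|\le 2$ follows via the identification $n_D=f_D$ of Theorem \ref{Th-f-n} when $D$ (hence the isogenous $D'$) is non-ordinary, and is immediate in the ordinary case where $n_D=n_{D'}=1$. The step requiring the most care is verifying that the three defining properties of $O_+,O_0,O_-$ all transport correctly under $\tilde g^\sharp$; in particular the equality $F(O_0)=O_0$ in the slope-zero part (as opposed to a mere inclusion) must be preserved, for which the bijectivity of $F$ on the isocrystal $N_0$ is essential.
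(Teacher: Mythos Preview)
Your argument is correct but takes a different route from the paper's. The paper proves $|f_{D,E}-f_{D',E}|\le 1$ directly from the definition of $f_{D,E}$: writing $H_m=\Hom(D[p^m],E[p^m])$ and $H'_m=\Hom(D'[p^m],E[p^m])$, it builds a small commutative square comparing the restriction maps $\tau_{m+2,1}:H_{m+2}\to H_1$ and $\tau_{m+2,2}:H'_{m+2}\to H'_2$ via precomposition with $g$ and $g'$, and reads off that finiteness of the image of one forces finiteness of the image of the other. This is entirely elementary and does not invoke Theorem~\ref{Th-f-ell-e}. You instead pass through $f_{D,E}=\ell_{D,E}$ and compare the level modules $O_{D,E}$ and $O_{D',E}$ via the $F$-equivariant lattice embedding $\tilde g^\sharp$; the key sandwich $pO_{D,E}\subseteq\tilde g^\sharp(O_{D',E})\subseteq O_{D,E}$ (of which you actually use only the right inclusion, the left being superfluous once you have invoked the companion isogeny $g'$ for symmetry) then gives $\ell_{D,E}\le\ell_{D',E}+1$. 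Both proofs finish the same way, bounding $|f_D-f_{D'}|$ by a triangle inequality through $f_{D,D'}$ using the symmetry of Corollary~\ref{Co-e-symm}. Your approach is closer in spirit to \cite[Prop.~1.4.4]{Va3} (which the paper cites as an alternative in Remark~\ref{Re-n-upper}); the paper's direct argument has the advantage of being independent of the substantial Theorem~\ref{Th-f-ell-e}.
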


\begin{proof}
Let $g':D'\to D$ be the isogeny such that $g'g=p \cdot 1_D$.
We write $H_m=\Hom(D[p^m],E[p^m])$ and 
$H'_m=\Hom(D'[p^m],E[p^m])$. For each $m\in\NN$ 
we have a commutative diagram
$$
\xymatrix@M+0.2em@C+2em{
H_{m+2} \ar[d]_{u\mapsto ug'}
\ar[rr]^{\tau_{m+2,1}} &&
H_1 \ar[d]^\rho \\
H'_{m+2} \ar[r]^-{\tau_{m+2,2}} &
H'_2 \ar[r]^-{u\mapsto ug} &
H_2
}
$$
where $\rho$ maps a homomorphism $u:D[p]\to E[p]$ to the obvious
composition $D[p^{2}]\to D[p]\xrightarrow uE[p]\to E[p^{2}]$.
Let $m=f_{D',E}$. Then the image of $\tau_{m+2,2}$ is finite 
by the definition of $f_{D',E}$.
As $\rho$ is injective, the image of $\tau_{m+2,1}$ is
finite too, thus $f_{D,E} \leq m + 1$. By interchanging
the roles of $D$ and $D'$ we get that
$|f_{D,E}-f_{D',E}|\leq 1$.
A similar argument (or Corollary \ref{Co-e-symm}) gives
also $|f_{E,D}-f_{E,D'}|\leq 1$, thus
$|f_D-f_{D'}|\leq|f_D-f_{D,D'}|+|f_{D,D'}-f_{D'}|\le 2$.
\end{proof}

\smallskip
\begin{Remark}
\label{Re-n-upper}
The preceding proposition offers another approach to
bound $f_D$ from above; see Remark \ref{Re-b-bound}.
For a non-ordinary minimal $p$-divisible group $D_0$ 
we have $n_{D_0}=f_{D_0}=e_{D_0}=\ell_{D_0}=1$;
see \cite[Thm.~1.2]{Oo3} and \cite[Thm.~1.6]{Va3}. 
Here $\ell_{D_0}=1$ is proved in Proposition \ref{Pr-min-ell}, 
and the rest follows 
by Theorems \ref{Th-f-ell-e} and \ref{Th-f-n}.
Thus Proposition \ref{Pr-f-change} 
(or \cite[Prop.~1.4.4]{Va3}) gives $f_D\le 1+2q_D$. Together with
Theorem \ref{Th-min-isog} we get $f_D\leq 1+2\lfloor\nu(c)\rfloor$.
As $\lfloor 2\nu(c)\rfloor\leq 1+2\lfloor\nu(c)\rfloor$
with equality if and only if $\lfloor 2\nu(c)\rfloor$ is odd,
this is slightly weaker than Corollary \ref{Co-fD-upper},
but we get as close as possible taking into account
that this approach necessarily gives an odd upper bound.
\end{Remark}

\subsection{Lower bounds in the isoclinic case}
\label{Subsec-n-isocl}

We assume now that the $p$-divisible group $D$ over
$k$ is isoclinic and continue to study the 
possible values of $f_D=\ell_D$ in this case.

Let us recall from \cite{Va3} how to compute $\ell_D$
for isoclinic groups.
Let $M$ be the Dieudonn\'e module of $D$ and let 
$O\subseteq\End_{W(k)} (M)$ be its level module. 
We have $O=O_0$, cf.\ Definition \ref{Def-ell}.
For each integer $t$ let $\alpha_t(M)$
be the maximal integer and let $\beta_t(M)$ be the minimal
integer such that
$$
p^{\beta_t(M)}M\subseteq F^tM\subseteq p^{\alpha_t(M)}M.
$$
Let $\delta_t(M)=\beta_t(M)-\alpha_t(M)$.

\begin{Lemma}
\label{Le-delta-t}
The natural number $\delta_t(M)$ is equal to the minimal integer 
$\delta$ such that $p^\delta\End_{W(k)}( M)\subseteq\End_{W(k)}(F^tM)$ 
as subgroups of $\End_{W(k)}(M)_\QQ$.
\end{Lemma}

\begin{proof}
The optimality of $\alpha_t(M)$ and $\beta_t(M)$ means
that the inclusion maps $p^{\beta_t(M)}M\to F^tM$ and
$p^{-\alpha_t(M)}M^\vee\to (F^tM)^\vee$ have non-zero
reductions modulo $p$. This property carries
over to the tensor product of the two
maps, which is an inclusion 
$p^{\beta_t(M)-\alpha_t(M)}\End_{W(k)}(M)\to\End_{W(k)}(F^tM)$.
\end{proof}

\begin{Lemma}
[{\cite[Prop.~4.3 (a)]{Va3}}]
\label{Le-ell-isocl}
We have $\ell_D=\max\{\delta_t(M)\mid t\in\NN\}$.
\end{Lemma}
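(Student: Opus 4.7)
The plan is to unwind the definition of the level module $O$ in the isoclinic case and identify $\ell_D$ with the supremum of the integers $\delta_q(M)$, then use a symmetry of elementary divisors to restrict the supremum to $q\in\NN$. Since $D$ is isoclinic of slope $\lambda$, the $F$-isocrystal $(\End(M_\QQ),F)$ with $F(\flat)=F\flat F^{-1}$ has all its slopes equal to $0$; hence in the notation of Definition \ref{Def-ell} we have $N_0=\End(M_\QQ)$, $N_+=N_-=0$, and therefore $O=O_0$.

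Next I would translate the conjugation action into a statement about lattices. For each $q\in\ZZ$ and $g\in\End(M_\QQ)$, the element $F^{-q}\cdot g=F^{-q}gF^q$ lies in $H=\End M$ if and only if $g(F^qM)\subseteq F^qM$, so
$$F^q(H)=\End(F^qM)$$
as subsets of $\End(M_\QQ)$, for every $q\in\ZZ$. Combining the two descriptions $O_0=\bigcap_{q\in\NN}F^q(H)=\bigcap_{q\in\NN}F^{-q}(H)$ from Definition \ref{Def-ell} yields
$$O=\bigcap_{q\in\ZZ}\End(F^qM),$$
and Lemma \ref{Le-delta-q} then gives $\ell_D=\max_{q\in\ZZ}\delta_q(M)$.

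To reduce the maximum to $q\in\NN$ I would use the elementary divisor identity $\delta_{-q}(M)=\delta_q(M)$, which holds for any pair of $W$-lattices and does not rely on the isoclinic hypothesis. Indeed, if $a_1\leq\cdots\leq a_h$ are the elementary divisors of $F^qM$ inside $M$, so that $\alpha_q(M)=a_1$ and $\beta_q(M)=a_h$, then applying the $\sigma^{-q}$-linear automorphism $F^{-q}$ of $M_\QQ$ to the chain $p^{a_h}M\subseteq F^qM\subseteq p^{a_1}M$ produces the chain $p^{-a_1}M\subseteq F^{-q}M\subseteq p^{-a_h}M$. This gives $\alpha_{-q}(M)=-\beta_q(M)$ and $\beta_{-q}(M)=-\alpha_q(M)$, hence $\delta_{-q}(M)=\delta_q(M)$. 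We conclude that $\max_{q\in\ZZ}\delta_q(M)=\max_{q\in\NN}\delta_q(M)$, which proves the lemma.

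The only delicate point is the identification $F^q(H)=\End(F^qM)$, which requires carefully unpacking how conjugation by $F^q$ acts on endomorphisms; everything else reduces to formal manipulations of the definitions together with the standard symmetry of elementary divisors under $q\mapsto -q$.
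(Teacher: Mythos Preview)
Your proof is correct and follows essentially the same approach as the paper, but with an unnecessary detour. The paper uses only the single description $O_0=\bigcap_{q\in\NN}F^q(H\cap N_0)$ to get $O=\bigcap_{q\in\NN}\End(F^qM)$ directly, and then Lemma \ref{Le-delta-q} immediately yields $\ell_D=\max_{q\in\NN}\delta_q(M)$; there is no need to pass through $q\in\ZZ$ and then invoke the symmetry $\delta_{-q}(M)=\delta_q(M)$ to come back.

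Your extra step is not wrong---the symmetry $\alpha_{-q}(M)=-\beta_q(M)$, $\beta_{-q}(M)=-\alpha_q(M)$ is valid and is in fact used elsewhere in the paper (see the proof of Example \ref{EX3})---but it is redundant here. A minor quibble: the phrase ``elementary divisors of $F^qM$ inside $M$'' is slightly misleading since $F^qM\not\subseteq M$ for $q<0$; you mean the elementary divisors of one lattice relative to the other in $M_\QQ$, which is how your computation actually proceeds.
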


\begin{proof}
The level module $O=O_0$ is the intersection in
$\End_{W(k)}(M)_\QQ$ of all $\End_{W(k)}(F^tM)$ for $t\in\NN$. 
Thus $\ell_D$ is minimal with 
$p^{\ell_D}\End_{W(k)}(M)\subseteq\End_{W(k)}(F^tM)$ for all $t\in\NN$.
The lemma follows from Lemma \ref{Le-delta-t}.
\end{proof}

\begin{Lemma} 
\label{Le-ell-isocl-a}
Assume that $a_D=1$.

\smallskip
(i) For $0\le t\le c$ we have $\alpha_t(M)=0$.

\smallskip
(ii) For $0\le t\le d$ we have $\beta_t(M)=t$.
\end{Lemma}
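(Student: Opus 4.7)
The plan is to reduce to an explicit computation using the $W$-basis of $M$ provided by Lemma \ref{Le-present}. Since $D$ is isoclinic with $a_D=1$, its slope must lie in the open interval $(0,1)$---otherwise $D$ would be étale or multiplicative, forcing $a_D=0$. Hence $M$ is bi-nilpotent, and Lemma \ref{Le-present} applies. I fix any lift $z\in M$ of a generator of the one-dimensional $k$-vector space $M/(FM+VM)$; then $z$ generates $M$ as an $\EE$-module and $\Upsilon=\{F^iz:1\le i\le c\}\cup\{V^iz:0\le i\le d-1\}$ is a $W$-basis of $M$ whose reductions form a $k$-basis of $M/pM$.

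For (i), the inclusion $F^qM\subseteq M$ is automatic, giving $\alpha_q(M)\ge 0$. For the reverse inequality I will exhibit a single element of $F^qM$ not lying in $pM$: whenever $0\le q\le c$, the element $F^qz$ (read as $z$ when $q=0$) belongs to $\Upsilon$, so its image in $M/pM$ is nonzero. This forces $F^qM\not\subseteq pM$, hence $\alpha_q(M)\le 0$.

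For (ii), the upper bound $\beta_q(M)\le q$ is immediate from the identity $p^qM=F^qV^qM\subseteq F^qM$, which uses $FV=VF=p$ on $M$. For the matching lower bound I will show that $p^{q-1}z\notin F^qM$ for $1\le q\le d$. Writing $p^{q-1}z=F^{q-1}V^{q-1}z$ and $F^qM=F^{q-1}(FM)$, the injectivity of $F^{q-1}$ on $M$ reduces the claim to showing $V^{q-1}z\notin FM$. This in turn is clear from the proof of Lemma \ref{Le-present}(a), where it is observed that $(V^iz\bmod FM)_{0\le i\le d-1}$ is a $k$-basis of $M/FM$, and we have $0\le q-1\le d-1$. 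The only delicate step is the injectivity argument, but this is automatic: $F$ is injective on any Dieudonné module (from $Fx=0\Rightarrow px=VFx=0\Rightarrow x=0$, as $M$ is $W$-free), so no real obstacle arises.
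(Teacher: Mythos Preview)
Your proof is correct and follows essentially the same approach as the paper: both reduce to the explicit $W$-basis $\Upsilon$ of Lemma \ref{Le-present}. Part (i) is identical. For part (ii) the paper argues symmetrically that $V^qM\not\subseteq pM$ (using that $(V^iz)_{1\le i\le d}$ is a $k$-basis of $VM/pM$) and then observes this is equivalent to $p^qM\not\subseteq pF^qM$; your reduction via injectivity of $F^{q-1}$ to the statement $V^{q-1}z\notin FM$ is a minor rephrasing of the same idea.
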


\begin{proof}
We write $M=\EE/\EE\Psi$ as in Lemma \ref{Le-present} (c).
Let $z=1+\EE\Psi\in M$.
For $0\le t\le c$ we have $F^tM\subseteq M$,
and $F^tM\not\subseteq pM$ because $F^tz$ is part 
of the $W(k)$-basis $\Upsilon$ of $M$ defined in 
Lemma \ref{Le-present} (a). 
This proves (i). Similarly, for $0\le t\le d$ we have
$V^tM\subseteq M$ and $V^tM\not\subseteq pM$,
which is equivalent to $p^tM\subseteq F^tM$
and $p^tM\not\subseteq pF^tM$. This proves (ii).
\end{proof}

We have the following lower bound of $\ell_D$.

\begin{Prop}
\label{Pr-ell-lower}
If $D$ is isoclinic with $a_D=1$ then $\ell_D\ge\min\{c,d\}$.
\end{Prop}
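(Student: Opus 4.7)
The plan is to read off the lower bound directly from the two preceding lemmas. First I would invoke Lemma \ref{Le-ell-isocl}, which, in the isoclinic case, identifies $\ell_D$ with $\max\{\delta_q(M)\mid q\in\NN\}$, where $\delta_q(M)=\beta_q(M)-\alpha_q(M)$ measures the ratio between the best upper and lower containments $p^{\beta_q(M)}M\subseteq F^qM\subseteq p^{\alpha_q(M)}M$. So it suffices to exhibit a single value of $q$ for which $\delta_q(M)\geq\min\{c,d\}$.

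The natural choice is $q_0=\min\{c,d\}$. Since $q_0\le c$ and $q_0\le d$, both clauses of Lemma \ref{Le-ell-isocl-a} apply simultaneously: clause (i) yields $\alpha_{q_0}(M)=0$, i.e.\ $F^{q_0}M\subseteq M$ with $F^{q_0}M\not\subseteq pM$, while clause (ii) yields $\beta_{q_0}(M)=q_0$, i.e.\ $p^{q_0}M\subseteq F^{q_0}M$ with $p^{q_0}M\not\subseteq pF^{q_0}M$. Combining these gives $\delta_{q_0}(M)=q_0-0=\min\{c,d\}$.

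Plugging back into Lemma \ref{Le-ell-isocl} gives $\ell_D\geq\delta_{q_0}(M)=\min\{c,d\}$, which is the desired inequality. The argument is immediate once one accepts the characterisation of $\ell_D$ in terms of the $\delta_q(M)$ and the elementary behaviour of $F^q$ on the cyclic $\EE$-module $M=\EE/\EE\Psi$ provided by the $W(k)$-basis $\Upsilon$ of Lemma \ref{Le-present}; there is no real obstacle, since the non-trivial input (existence and optimality of that basis, which produces both the non-vanishing of $F^{q_0}z$ modulo $pM$ and the non-vanishing of $V^{q_0}z$ modulo $pM$) has already been carried out in Lemma \ref{Le-ell-isocl-a}.
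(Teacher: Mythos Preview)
Your proof is correct and follows exactly the same route as the paper: pick $q=\min\{c,d\}$, apply both parts of Lemma \ref{Le-ell-isocl-a} to get $\delta_q(M)=q$, and conclude via Lemma \ref{Le-ell-isocl}. The paper's version is just more terse.
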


\begin{proof}
Let $t=\min\{c,d\}$. 
We calculate $\ell_D\ge\delta_t(M)=\beta_t(M)-\alpha_t(M)=t$
using Lemmas \ref{Le-ell-isocl} and \ref{Le-ell-isocl-a}.
\end{proof}

\begin{Cor}
\label{Co-ell-lower}
Assume that $D$ is isoclinic with $a_D=1$ and $|c-d|\le 2$.
Then we have $\ell_D=\min\{c,d\}$.
\end{Cor}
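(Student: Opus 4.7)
The plan is to sandwich $\ell_D$ between the general lower bound of Proposition \ref{Pr-ell-lower} and the upper bound coming from Corollary \ref{Co-fD-upper} via the identification $\ell_D=f_D$, and then to observe that for $|c-d|\le 2$ the two bounds coincide. Since no proof is required beyond combining known results with an elementary numerical check, I expect no real obstacle; the only thing to verify carefully is the arithmetic.

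First I would invoke Theorem \ref{Th-f-ell-e} to rewrite the desired quantity as $\ell_D=f_D$. For the lower bound, Proposition \ref{Pr-ell-lower} gives directly $\ell_D\ge\min\{c,d\}$ using the hypothesis $a_D=1$. For the upper bound, Corollary \ref{Co-fD-upper} gives $f_D\le\lfloor 2\nu(c)\rfloor$, and since $D$ is isoclinic the Newton polygon $\nu$ is the linear function of slope $d/(c+d)$ joining $(0,0)$ to $(c+d,d)$, so $\nu(c)=cd/(c+d)$. Hence
\[
\min\{c,d\}\;\le\;\ell_D\;\le\;\bigl\lfloor 2cd/(c+d)\bigr\rfloor.
\]

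To close the argument I would verify that under the assumption $|c-d|\le 2$ the right-hand side equals $\min\{c,d\}$. By Lemma \ref{Le-ell-symm} (or equivalently Lemma \ref{Le-prelim}) we may assume $c\le d$ after passing to the Cartier dual if necessary, noting that $a_{D^\vee}=a_D=1$ and that $\min\{c,d\}$ is invariant. Writing $d=c+\varepsilon$ with $\varepsilon\in\{0,1,2\}$, a direct computation gives
\[
\frac{2cd}{c+d}-c\;=\;\frac{c(d-c)}{c+d}\;=\;\frac{c\varepsilon}{2c+\varepsilon},
\]
and this quantity equals $0$ for $\varepsilon=0$, equals $c/(2c+1)<1$ for $\varepsilon=1$, and equals $c/(c+1)<1$ for $\varepsilon=2$. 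In every case the floor of $2cd/(c+d)$ equals $c=\min\{c,d\}$, which forces equality throughout the sandwich and completes the proof.
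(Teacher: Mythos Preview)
Your proof is correct and follows exactly the same approach as the paper: sandwich $\ell_D$ between $\min\{c,d\}$ from Proposition \ref{Pr-ell-lower} and $\lfloor 2cd/(c+d)\rfloor$ from Corollary \ref{Co-fD-upper} (via $\ell_D=f_D$), then observe these coincide when $|c-d|\le 2$. The paper simply asserts the arithmetic equality $\lfloor 2cd/(c+d)\rfloor=\min\{c,d\}$ without spelling it out, whereas you have verified it explicitly; otherwise the arguments are identical.
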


\begin{proof}
We have $\min\{c,d\}\le\ell_D=f_D\le
\lfloor 2\nu(c)\rfloor\le\lfloor 2cd/(c+d)\rfloor=\min\{c,d\}$, cf. Theorem \ref{Th-f-ell-e} and Corollary \ref{Co-fD-upper}.
\end{proof}

The lower bound in Proposition \ref{Pr-ell-lower} is optimal:

\begin{Example}
\label{EX3}
Let $D$ be the isoclinic $p$-divisible group with
Dieudonn\'e module $M=\EE/\EE\Psi$ for $\Psi=F^c+V^d$
where $cd>0$. Then $\ell_D=\min\{c,d\}$.
\end{Example}

\begin{proof}
This is a particular case of \cite[Thm.~1.5.2]{Va3}.
We give here a direct proof. 
As the $W(k)$-basis 
$\Upsilon$ of $M$ defined in Lemma \ref{Le-present} (a)
is annihilated by $F^{c+d}+p^d$
we have $F^{c+d}M=p^dM$. It follows that
$\alpha_{t+c+d}(M)=\alpha_t(M)+d$ and
$\beta_{t+c+d}(M)=\beta_t(M)+d$ for $t\in\ZZ$.
We also have $\alpha_{-t}(M)=-\beta_{t}(M)$.
Thus Lemma \ref{Le-ell-isocl-a} gives a complete
description of $\delta_t(M)$ for all $t$,
which implies that $\delta_t(M)\le\min\{c,d\}$
with equality when $t=\min\{c,d\}$.
Therefore $\ell_D=\min\{c,d\}$ by Lemma \ref{Le-ell-isocl}.
\end{proof}

We recall that Corollary \ref{Co-n-upper} is a direct consequence of Theorem \ref{Th-n-upper} and of the following optimality result.

\begin{Prop}
\label{Pr-n-isocl}
Assume that $\nu$ is linear and $cd>0$.
Then there exists a $p$-divisible group $D$ over $k$
with Newton polygon $\nu$ such that
$\ell_D=\lfloor 2\nu(c)\rfloor$.
(Note that $n_D=e_D=f_D=\ell_D$ by Theorems \ref{Th-f-n}
and \ref{Th-f-ell-e}.)
\end{Prop}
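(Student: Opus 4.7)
The plan is to exhibit an explicit isoclinic $p$-divisible group $D$ with $a_D=1$ attaining the bound $\ell_D=\lfloor 2\nu(c)\rfloor$. The upper bound $\ell_D\leq\lfloor 2\nu(c)\rfloor$ already follows from Corollary \ref{Co-fD-upper} via Theorem \ref{Th-f-ell-e}, so only its optimality needs a construction. By Lemma \ref{Le-present}, such a Dieudonn\'e module $M$ may be presented as $\EE/\EE\Psi$ for $\Psi$ of the given form; by Lemma \ref{Le-ell-isocl}, it suffices to exhibit one $q\in\NN$ realising $\delta_q(M)=\beta_q(M)-\alpha_q(M)=\lfloor 2cd/(c+d)\rfloor$.

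Writing $g=\gcd(c,d)$, $c=gc_0$, $d=gd_0$, my first attempt is
\[
\Psi=\sum_{k=0}^{g} p^{d_0 k}\,F^{c-(c_0+d_0)k}\in\EE,
\]
whose monomials lie exactly on the Newton polygon line of slope $\lambda=d/(c+d)$; by Lemma \ref{Le-Newton}, the resulting $M=\EE/\EE\Psi$ has Newton polygon $\nu$. Direct matrix computations in small cases such as $(c,d)=(2,6)$, $(3,5)$, and the supersingular case $c=d$ suggest that the right choice is $q=d$: the generators $F^qV^iz$ of $F^qM$ expand immediately as $p^{\min(q,i)}F^{q-i}z$ or $p^iV^{i-q}z$ in the basis $\Upsilon$ of Lemma \ref{Le-present}(a), while the generators $F^{q+i}z$ for $1\leq i\leq c$ expand via iterated use of the relation $F^cz=-\sum_{k\geq 1} p^{d_0 k}F^{c-(c_0+d_0)k}z$; a block-wise Smith normal form analysis of the resulting $(c+d)\times(c+d)$ matrix should identify the maximal and minimal invariant factors and yield $\delta_d(M)=\lfloor 2\nu(c)\rfloor$.

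The main obstacle is twofold. First, when no interior lattice points exist on the Newton polygon, notably when $\gcd(c,d)=1$ and $\min\{c,d\}<\lfloor 2cd/(c+d)\rfloor$, as happens for $(c,d)=(3,7)$, the above $\Psi$ degenerates to $F^c+V^d$ and only realises $\ell_D=\min\{c,d\}$ by Example \ref{EX3}; one then has to perturb $\Psi$ by further terms with the minimal admissible valuations $v(a_i)=\lceil\lambda i\rceil$ and $v(b_i)=\lceil\lambda c-(1-\lambda)i\rceil$ and verify that the resulting $\delta_q$ still attains the upper bound. Second, carrying out the Smith normal form computation uniformly in $c,d,g$ rather than case by case requires a careful bookkeeping of how $F^q$ permutes the basis $\Upsilon$ up to prescribed powers of $p$, with the invariant factors organising into $g+1$ (or more) diagonal blocks corresponding to the lattice points of $\nu$. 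Once $\ell_D=\lfloor 2\nu(c)\rfloor$ is established, Theorems \ref{Th-f-n} and \ref{Th-f-ell-e} give $n_D=e_D=f_D=\ell_D$, completing both the proposition and the proof of Corollary \ref{Co-n-upper}.
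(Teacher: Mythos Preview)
Your proposal is a plan rather than a proof: you yourself identify two obstacles (the degeneration of your $\Psi$ when $\gcd(c,d)=1$, and the uniform Smith normal form computation) and do not resolve either. Both obstacles are real, and the second is where your approach is genuinely harder than it needs to be. Computing all invariant factors of $F^qM$ in the basis $\Upsilon$ is overkill when you only need $\delta_q(M)\ge m$ for a single $q$.

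The paper's argument avoids both difficulties by two simple choices. First, assume $c\ge d$ (by Cartier duality) and take $q=c$ rather than $q=d$: then Lemma \ref{Le-ell-isocl-a}(i) gives $\alpha_c(M)=0$ for free, so it suffices to show $\beta_c(M)\ge m$, i.e.\ to exhibit a single element $x\in M\setminus pM$ with $F^cx\in p^mM$. Second, rather than putting a monomial at every lattice point of $\nu$, the paper adds \emph{one} extra term: for $c>d$ set
\[
\Psi=F^c+p^{2d-m}F^{c-2d}+V^d,\qquad x=-(p^{m-d}F^d+V^d)z.
\]
A one-line computation using $\Psi z=0$ gives $F^cx=p^mz$, and since $m\ge d$ and $c>d$ one has $x\in M\setminus pM$. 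The single extra monomial lies on or above the Newton line (as $2d-m\ge 2d\lambda$), so $M$ is isoclinic by Lemma \ref{Le-Newton}. The case $c=d$ is even simpler: $\Psi=F^c+V^c$ and $x=V^cz$ already work. This sidesteps the $\gcd(c,d)=1$ issue entirely, since the extra term need not sit at a lattice point of the Newton line, and replaces your block Smith normal form analysis by the verification of a single identity.
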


\begin{proof}
By passing to the dual if necessary we can assume that
$c\geq d$. We have $\nu(c)=cd/(c+d)$. Let $m=\lfloor 2\nu(c)\rfloor$.
We will construct explicitly the Dieudonn\'e module of $D$
to be $M=\EE/\EE\Psi$ for a suitable $\Psi\in\EE$ as in
Lemma \ref{Le-present} (b). Let $z=1+E\Psi\in M$. 
By Corollary \ref{Co-fD-upper} and Theorem \ref{Th-f-ell-e},
in order that $\ell_D=m$ it suffices to show that $\ell_D\geq m$.

We will show that $\Psi$ can be chosen such that $M$ is
isoclinic and there exists $x\in M\setminus pM$ with $F^cx=p^mz$. 
This implies that $\beta_c(M)\ge m$.
By Lemmas \ref{Le-ell-isocl} and \ref{Le-ell-isocl-a} we
get $\ell_D\ge\delta_c(M)=\beta_c(M)\ge m$ as required.

It remains to construct $\Psi$ and $x$.
If $c=d$, then $m=c=d$ and we can take $\Psi=F^c+V^c$ and
$x=V^cz$ (or use Corollary \ref{Co-ell-lower}).
If $c>d$, then we can take $\Psi=F^c+\Phi+V^d$ for
$$
\Phi=p^{2d-m}F^{c-2d}=p^{c-m}V^{2d-c}.
$$
A priori $\Phi$ is an element of $\DD=\EE\otimes\QQ$,
but actually $\Phi$ lies in $\EE$ (as $m<2d$ and $m<c$).
We take $x=-(p^{m-d}F^d+V^d)z$. 
It is easy to see that as $\Psi z=0$ we have
$F^cx=p^mz$. As $m\ge d$ and $c>d$, we have $x\in M\setminus pM$.
The Newton polygon of $M$ is linear of slope $\lambda=d/(c+d)$.
Indeed, as the exponents of $F$ in $\Psi$ satisfy
$c>c-2d>-d$, this is equivalent to $2d-m\geq 2d\lambda$
(cf.\ Lemma \ref{Le-Newton}) and thus $m\le 2cd/(c+d)$
which obviously holds.
\end{proof}

\begin{Example}
More generally, for each integer $m$ with
$\min\{c,d\}\leq m\leq\lfloor 2cd/(c+d)\rfloor$
there exists an isoclinic $p$-divisible group $D$
with $\ell_D=m$. Indeed, we can assume that $c\geq d$
and that $m<\lfloor 2cd/(c+d)\rfloor$.
Define $D$ such that its Dieudonn\'e module is $\EE/\EE\Psi$
with $\Psi=F^c+p^{2d-m}F^{c-2d}+V^d$.
Then the proof of Proposition \ref{Pr-n-isocl}
gives $\ell_D\ge m$. 

We sketch a proof of the other 
inequality $\ell_D=f_D\le m$ using the method of the proof
of Theorem \ref{Th-fDE-upper}.
Let $w$ be the minimal $F$-valuation of slope 
$\lambda=d/(c+d)$ on $N=M_\QQ$ such that
$w(M)\ge 0$. Let $\beta=cd/(c+d)$.
Let $\psi:M\to M/p^{m+1}M$ be induced by $\Psi$.
We say that a subgroup of $M$ has infinite 
reduction if its image in $M/pM$ is infinite.

Assume that $f_D>m$. This means that $\Ker\psi$
has infinite reduction. 
As $p^{m+1}M\subseteq N^{w\ge m+1}$ and as 
$\ww_\lambda(\Psi)=\beta$, using Lemma \ref{Le-Phi}
it follows that the kernel of the restriction
$\psi':M\cap N^{w\ge m+1-\beta}\to M/p^{m+1}M$
has infinite reduction as well. Thus the
kernel of the induced map $\psi'':M\cap N^{w\ge m+1-\beta}
\to M/(p^{m+1}M+N^{w>2\beta})$ has infinite reduction.
If we identify $M$ with $W(k)^{c+d}$ using the basis 
$\Upsilon$ defined in Lemma \ref{Le-present} (a), then
the map $\psi''$ defined for $\Psi$ is
equal to the analogous map $\psi''_0$ defined for 
$\Psi_0=F^c+V^d$. This is true because 
$\ww_\lambda(\Phi)+m+1-\beta>2\beta$, 
which holds as $\ww_\lambda(\Phi)=3\beta-m$.

We continue with $M_0=\EE/\EE\Psi_0$.
Let $z=1+\EE\Psi_0\in M_0$. 
As before let $w$ be the minimal $F$-valuation
of slope $\lambda$ on $N_0=(M_0)_\QQ$ such that
$w(M_0)\ge 0$.
Let $M_1=\Psi_0(M_0)$ and $M_2=p^{d+1}M_0+N_0^{w>2\beta}$.
It is easy to see that $M_2\subseteq pM_1$, using that
the elements $(p^iF^{c-i}z)_{0\le i\le d}$,
$(p^dF^jz)_{0<j<c-d}$, and $(p^iV^{d-i}z)_{0<i\le d}$
form a basis of $M_1$.
By the previous paragraph, the kernel of the 
map $\psi_0:M_0\to M_0/M_2$ induced by $\Psi_0$
has infinite reduction.
Thus the kernel of $\psi_0:M_0\to M_0/pM_1$ has infinite
reduction, and the kernel of $\psi_0:M_0/pM_0\to M_1/pM_1$
is infinite. This is impossible.

Therefore we have $\ell_D=f_D\le m$.
\end{Example}

\subsection{Possible values of isomorphism numbers}

For a non-ordinary Newton polygon $\nu$
let $\scrN_\nu$ be the set of all possible values of 
$n_D=e_D=f_D=\ell_D$ for
$p$-divisible groups $D$ with Newton polygon $\nu$. We make two
fragmentary remarks on the structure of $\scrN_\nu$. First, as
two isogenous groups can be linked by a chain of isogenies with
kernels annihilated by $p$, in view of Proposition
\ref{Pr-f-change} the difference between two consecutive numbers in $\scrN_\nu$ is at most $2$.
In certain cases we can say more (cf.\ Proposition \ref{Pr-b-values}):

\begin{Prop}
Assume that $\lfloor 2\nu(c)\rfloor$ is odd and lies in
$\scrN_\nu$.
Then $\scrN_\nu$ contains all odd integers between $1$
and $\lfloor 2\nu(c)\rfloor$.
\end{Prop}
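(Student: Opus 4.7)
The plan is to realize every odd value in $[1,M]$, where $M:=\lfloor 2\nu(c)\rfloor$, as the isomorphism number of a group in a single chain of $p$-isogenies connecting the minimal representative to a maximal one in the isogeny class. First, using that $M$ is odd, I would write $M=2r+1$ with $r=\lfloor\nu(c)\rfloor$: indeed, $M$ odd gives $2\nu(c)\in[2r+1,2r+2)$, so $\nu(c)\in[r+\tfrac{1}{2},r+1)$. By hypothesis there exists $D_{\max}$ with Newton polygon $\nu$ and $n_{D_{\max}}=2r+1$. Combining the inequality $n_D\leq 1+2q_D$ from Remark~\ref{Re-n-upper} with the upper bound $q_D\leq\lfloor\nu(c)\rfloor$ of Theorem~\ref{Th-min-isog} yields $2r+1\leq 1+2q_{D_{\max}}\leq 1+2r$, forcing $q_{D_{\max}}=r$.

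Next, let $D_0$ denote the unique minimal $p$-divisible group in the isogeny class of $D_{\max}$, and fix an isogeny $f\colon D_{\max}\to D_0$ whose kernel $K$ is annihilated by $p^r$. For $0\leq i\leq r$ set $D_i:=D_{\max}/K[p^{r-i}]$, producing a chain
$$
D_{\max}=D_r\to D_{r-1}\to\cdots\to D_1\to D_0
$$
in which each consecutive kernel $K[p^{r-i+1}]/K[p^{r-i}]$ is annihilated by $p$, and the induced map $D_i\to D_0$ has kernel $K/K[p^{r-i}]$ annihilated by $p^i$. Every $D_i$ is isogenous to $D_{\max}$, hence has Newton polygon $\nu$, and Remark~\ref{Re-n-upper} applied to each $D_i$ yields $n_{D_i}\leq 1+2q_{D_i}\leq 2i+1$.

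Finally, Proposition~\ref{Pr-f-change} gives $|n_{D_i}-n_{D_{i-1}}|\leq 2$ for each $i$. Since $D_0$ is minimal and non-ordinary (as $\nu$ is non-ordinary), $n_{D_0}=1$, and telescoping yields
$$
2r \;=\; n_{D_r}-n_{D_0} \;=\; \sum_{i=1}^{r}(n_{D_i}-n_{D_{i-1}}) \;\leq\; 2r,
$$
which forces equality at every step. Combined with $n_{D_i}\leq 2i+1$, a quick induction gives $n_{D_i}=2i+1$ for every $i\in\{0,\ldots,r\}$, so each odd $m\in[1,2r+1]$ is realized as $n_{D_{(m-1)/2}}\in\scrN_\nu$. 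The only potentially delicate point is the verification that the chain must descend in lockstep by exactly $2$; but this is automatic once $q_{D_{\max}}=r$ pins down the chain length, and all ingredients are already in place, so no serious obstacle is anticipated.
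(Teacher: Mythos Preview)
Your proof is correct and follows essentially the same approach as the paper's: build a length-$r$ chain of $p$-isogenies from a group with $n_D=2r+1$ down to the minimal group using Theorem~\ref{Th-min-isog}, then pin down $n_{D_i}=2i+1$ using Proposition~\ref{Pr-f-change}. The paper phrases the final step as two separate bounds (applying Proposition~\ref{Pr-f-change} upward from $D_0$ gives $n_{D_i}\le 1+2i$, and downward from $D_r$ gives $n_{D_i}\ge 1+2i$), whereas you use a telescoping sum; these are equivalent, and your detour through $q_{D_{\max}}=r$ and Remark~\ref{Re-n-upper} is redundant once the telescope is in place, but harmless.
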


\begin{proof}
Let $m=\lfloor\nu(c)\rfloor$. We choose $D$ such that
$f_D=\lfloor 2\nu(c)\rfloor=2m+1$.
By Theorem \ref{Th-min-isog} there exists a chain of isogenies
$D=D_m\to \cdots\to D_0$
such that all consecutive kernels are annihilated
by $p$ and $D_0$ is minimal. As $f_{D_0}=1$ (resp.\ $f_D=2m+1$),
for each $i\in\{0,1,\ldots,m\}$ we get from Proposition \ref{Pr-f-change} that $f_{D_i}\le 1+2i$ (resp. $f_{D_i}\ge 1+2i$). Thus 
$f_{D_i}=1+2i$.
\end{proof}

\subsection{The principally quasi-polarised case}
\label{Se-p.q.p.}

Let $D$ be a $p$-divisible group over $k$ of dimension $d$
equipped with a 
principal quasi-polarisa\-tion $\lambda:D\to D^\vee$; thus $c=d>0$. 
The isomorphism number $n_{D,\lambda}$ of $(D,\lambda)$ 
is the least level $m$ such that $(D[p^m],\lambda[p^m])$ 
determines $(D,\lambda)$ up to isomorphism. 
It is proved in \cite[Subsect.~6.3]{GV} that $n_{D,\lambda}\le n_D$ if $p>2$ and $n_{D,\lambda}\le n_D+1$ if $p=2$. If $D$ is not supersingular, then Theorem \ref{Th-n-upper} implies that
$n_D\le d-1$. If $D$ is supersingular and $d>0$, then we have
$n_{D,\lambda}\le d$ by \cite[Thm.~1.3]{NV1}.
Together we get in all cases:

\begin{Cor}\label{Co-p.q.p.}
We have $n_{D,\lambda}\le d$. 
\end{Cor}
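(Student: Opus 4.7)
The plan is a three-way case analysis that assembles previously established inputs: the comparison between polarised and unpolarised isomorphism numbers from \cite[Subsect.~7.3]{GV}, the sharp upper bound on $n_D$ in Theorem \ref{Th-n-upper}, and the supersingular bound of \cite[Thm.~1.3]{NV1}.

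First I would dispose of the supersingular case by quoting \cite[Thm.~1.3]{NV1} directly, which gives $n_{D,\lambda}\leq d$ with no further work. So from this point on I may assume $D$ is not supersingular.

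For the non-supersingular case the strategy is to bound $n_D$ first and then convert. The comparison \cite[Subsect.~7.3]{GV} gives $n_{D,\lambda}\leq n_D$ if $p>2$ and $n_{D,\lambda}\leq n_D+1$ if $p=2$, so it suffices to prove $n_D\leq d-1$. Since $c=d$, the standard Dieudonn\'e--Mazur estimate yields $\nu(c)\leq cd/(c+d)=d/2$, with equality precisely when $\nu$ is linear at $c$ of slope $1/2$, i.e.\ precisely when $D$ is supersingular. Hence in the remaining case $\nu(c)<d/2$, so $\lfloor 2\nu(c)\rfloor\leq d-1$, and Theorem \ref{Th-n-upper} gives $n_D\leq d-1$. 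Combining with the GV inequality yields $n_{D,\lambda}\leq(d-1)+1=d$ in either characteristic.

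The only point requiring care is to ensure that the ordinary subcase of Theorem \ref{Th-n-upper} is absorbed: when $D$ is ordinary one has $n_D=1$ directly, and the rigidity of the principal polarisation on an ordinary $p$-divisible group still gives $n_{D,\lambda}\leq d$ (in fact $n_{D,\lambda}=1\leq d$ since $d\geq 1$). With these three cases, the bound $n_{D,\lambda}\leq d$ holds uniformly, which is the claim.
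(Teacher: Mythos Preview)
Your proof is correct and follows essentially the same route as the paper: split into the supersingular case (handled by \cite[Thm.~1.3]{NV1}) and the non-supersingular case (where $\nu(c)<d/2$, so Theorem~\ref{Th-n-upper} gives $n_D\le d-1$, and the comparison from \cite[Subsect.~7.3]{GV} finishes). You are in fact slightly more careful than the paper in isolating the ordinary edge case, where Theorem~\ref{Th-n-upper} does not literally apply; your direct observation that $n_{D,\lambda}=1\le d$ there is the right patch.
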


\noindent
This bound is optimal by \cite[Example 3.3]{NV1}.

\subsection{The number $N_h$}

\noindent
Recall that $N_h$ is defined in Section \ref{Se-prelim} as the minimal number such that for all $p$-divisible groups $D$ and $E$ over $k$ of height at most $h$, we have $e_{D,E} \leq N_h$.

\begin{Prop} 
\label{Pr-Nh}
We have $N_h=\lfloor {h/2}\rfloor$.
\end{Prop}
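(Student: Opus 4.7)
The strategy is to prove the two inequalities $N_h\leq\lfloor h/2\rfloor$ and $N_h\geq\lfloor h/2\rfloor$ separately, using Corollary \ref{Co-fDE-upper} for the upper bound and Proposition \ref{Pr-n-isocl} combined with Theorem \ref{Th-n-ell-e} for the lower bound.

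For the upper bound, let $D$ and $E$ be $p$-divisible groups over $k$ of height at most $h$. By Theorem \ref{Th-f-ell-e} we have $e_{D,E}=f_{D,E}$, so it suffices to bound $f_{D,E}$. Corollary \ref{Co-fDE-upper} yields $f_{D,E}\leq h^{+}/4$, where $h^{+}$ is the height of $D\oplus E$. Since $h^{+}=h_D+h_E\leq 2h$, we conclude that $f_{D,E}\leq h/2$, and since $f_{D,E}\in\NN$ this forces $e_{D,E}\leq\lfloor h/2\rfloor$. Hence $N_h\leq\lfloor h/2\rfloor$.

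For the lower bound, we need to exhibit $D$ (taking $E=D$) of height $\leq h$ with $e_D=\lfloor h/2\rfloor$. If $h\leq 1$ then $\lfloor h/2\rfloor=0$ and there is nothing to prove. Otherwise let $c=d=\lfloor h/2\rfloor\geq 1$, so that the linear Newton polygon $\nu$ of slope $1/2$ on $[0,c+d]$ satisfies $\nu(c)=cd/(c+d)=c/2$, hence $\lfloor 2\nu(c)\rfloor=c=\lfloor h/2\rfloor$. Proposition \ref{Pr-n-isocl} produces an isoclinic $p$-divisible group $D$ over $k$ with Newton polygon $\nu$ and $\ell_D=\lfloor 2\nu(c)\rfloor=\lfloor h/2\rfloor$. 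By Theorem \ref{Th-n-ell-e} we have $e_D=\ell_D=\lfloor h/2\rfloor$, and the height of $D$ is $c+d=2\lfloor h/2\rfloor\leq h$. Therefore $N_h\geq e_D=\lfloor h/2\rfloor$.

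Combining the two inequalities gives $N_h=\lfloor h/2\rfloor$. The argument is essentially mechanical given the earlier results; the only mild obstacle is arranging the lower bound example so that its height genuinely fits within the bound $h$, which is handled by the equality $2\lfloor h/2\rfloor\leq h$.
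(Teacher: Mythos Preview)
Your proof is correct and follows essentially the same approach as the paper: the upper bound via Theorem~\ref{Th-f-ell-e} and Corollary~\ref{Co-fDE-upper}, and the lower bound via an isoclinic group of slope $1/2$ and dimension $\lfloor h/2\rfloor$. The only cosmetic difference is that the paper invokes Example~\ref{EX3} directly (the group with $\Psi=F^c+V^c$) and Theorem~\ref{Th-f-ell-e} rather than Proposition~\ref{Pr-n-isocl} and Theorem~\ref{Th-n-ell-e}, but in the case $c=d$ these coincide.
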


\begin{proof}
By Theorem \ref{Th-f-ell-e} and Corollary \ref{Co-fDE-upper} we have $N_h\leq {h/ 2}$.
By Example \ref{EX3} there exists an isoclinic $p$-divisible group $D$ over $k$ of slope $1/2$ 
and dimension $\lfloor {h/ 2}\rfloor$ with $f_D=\lfloor {h/ 2}\rfloor$; its height is $2\lfloor {h/ 2}\rfloor\le h$. As we have $e_{D,D}=
f_D$ by Theorem \ref{Th-f-ell-e}, we get that $N_h\geq \lfloor {h/ 2}\rfloor$. We conclude that $N_h=\lfloor {h/ 2}\rfloor$.
\end{proof}

\begin{Remark} 
In the last proof the reference to Example \ref{EX3} can be replaced
by \cite[Example 3.3]{NV1} if we use that $f_D=n_D$.
\end{Remark}

\bigskip\noindent
{\bf Acknowledgments.} The second author was supported by a
postdoctoral research fellowship of the FQRNT while enjoying the
hospitality of the Institut de math\'ematiques de Jussieu
(Universit\'e Paris $7$). He also thanks T. It\=o for arranging
good work conditions at the University of Ky\=oto in December
$2008$. The third author would like to thank Binghamton University
and Tata Institute for Fundamental Research, Mumbai for good
work conditions and Offer Gabber for helpful discussions. 
He was partially supported by the NSF grant DMS \#0900967.
\bigskip


\bigskip\bigskip

\hbox{Eike Lau}
\hbox{Email: elau@math.upb.de}

\hbox{Address: Universit\"at Paderborn, Institut f\"ur Mathematik, }
\hbox{Warburger Str.~100, 33098 Paderborn, Germany.}

\bigskip
\hbox{Marc--Hubert Nicole}
\hbox{Email: nicole@iml.univ-mrs.fr} 
\hbox{Address: Institut math\'ematique de Luminy,} 
\hbox{Campus de Luminy, Case 907, 
13288 Marseille cedex 9, 
France.}

\bigskip
\hbox{Adrian Vasiu}
\hbox{Email: adrian@math.binghamton.edu}

\hbox{Address: Department of Mathematical Sciences,
Binghamton University,} 
\hbox{Binghamton, P. O. Box 6000, NY 13902-6000, U.S.A.}
\end{document}